\newtheorem{theorem}{Theorem}[section]
\newtheorem{claim}[theorem]{Claim}
\newtheorem*{claim*}{Claim}
\newtheorem{corollary}[theorem]{Corollary}
\newtheorem{definition}[theorem]{Definition}
\newtheorem{lemma}[theorem]{Lemma}
\newtheorem{proposition}[theorem]{Proposition}
\newtheorem{prop}[theorem]{Proposition}
\newtheorem{remark}[theorem]{Remark}
\newtheorem{question}[theorem]{Question}
\newtheorem{theoremletter}{Theorem}
\newcommand{\Ce}{{\mathcal C}}
\newcommand{\PPP}{{\mathbb{P}}}
\renewcommand{\emptyset}{\varnothing}
\numberwithin{equation}{section}
\newcommand{\ran}[1]{{{\rm{ran}}(#1)}}
\newcommand{\crit}[1]{{{\rm{crit}}(#1)}}
\newcommand{\cof}[1]{{{\rm{cof}}(#1)}}
\newcommand{\seq}[2]{\langle{#1}~\vert~{#2}\rangle}
\newcommand{\map}[3]{{#1}:{#2}\longrightarrow{#3}}
\newcommand{\pmap}[4]{{#1}:{#2}\xrightarrow{#4}{#3}}
\newcommand{\id}{{\rm{id}}}
\newcommand{\Set}[2]{\{{#1}~ \vert~{#2}\}}
\newcommand{\POT}[1]{{\mathcal{P}}({#1})}
\newcommand{\length}[1]{{\rm{lh}}({#1})}
\newcommand{\dom}[1]{{{\rm{dom}}(#1)}}
\newcommand{\UXSR}{\mathrm{UXSR}}
\newcommand{\XSR}{\mathrm{XSR}}
\newcommand{\OD}{\mathrm{OD}}
\newcommand{\HOD}{\mathrm{HOD}}
\newcommand{\ZFC}{\mathrm{ZFC}}
\newcommand{\Ord}{{\rm{Ord}}}
\begin{document}

\title{Large cardinals beyond HOD}

\author[Aguilera]{Juan P. Aguilera}
\address{Institut f\"ur diskrete Mathematik und Geometrie, TU Wien. Wiedner Hauptstrasse 8-10, 1040 Vienna, Austria.}
\email{aguilera@logic.at}

\author[Bagaria]{Joan Bagaria}
\address{ICREA (Instituci\'o Catalana de Recerca i Estudis Avan\c{c}ats) and
\newline \indent Departament de Matem\`atiques i Inform\`atica, Universitat de Barcelona. 
Gran Via de les Corts Catalanes, 585,
08007 Barcelona, Catalonia.}
\email{joan.bagaria@icrea.cat}

\author[Goldberg]{Gabriel Goldberg}
\address{Department of Mathematics, University of California, Berkeley. 970 Evans Hall, Berkeley, CA 94720-3840 USA.}

\email{ggoldberg@berkeley.edu }

\author[L\"ucke]{Philipp L\"{u}cke}
\address{Fachbereich Mathematik, Universit\"at Hamburg, Bundesstra{\ss}e 55, Hamburg, 20146, Germany}
\email{philipp.luecke@uni-hamburg.de}

\date{\today }

%
%
%

\date{\today }
\subjclass[2010]{03E55, 03E45, 18A15, 03C55}
\keywords{Large cardinal, Exacting cardinal, Ultraexacting cardinal, Hereditarily Ordinal Definable sets, I0 embedding,  rank-to-rank embedding, Structural Reflection, HOD Conjecture}

\begin{abstract}
Exacting and ultraexacting cardinals are large cardinal numbers compatible with the Zermelo-Fraenkel axioms of set theory, including the Axiom of Choice. In contrast with standard large cardinal notions, their existence implies that the set-theoretic universe $V$ is not equal to G\"odel's subuniverse of Hereditarily Ordinal Definable ($\HOD$) sets.

We prove that the existence of an ultraexacting cardinal is equi\-consistent with the well-known axiom I0; moreover, the existence of ultraexacting cardinals together with other standard large cardinals is equiconsistent with generalizations of I0 for fine-structural models of set theory extending $L(V_{\lambda+1})$. We prove tight bounds on the strength of exacting cardinals, placing them strictly between the axioms I3 and I2. The argument extends to show that I2 implies the consistency of Vop\v{e}nka's Principle together with an exacting cardinal and the HOD Hypothesis. In particular, we obtain the following result: the existence of an extendible cardinal above an exacting cardinal does not refute the HOD Hypothesis. 

We also give several new characterizations of exacting and ultraexacting cardinals; first in terms of strengthenings of the axioms I3 and I1 with the addition of Ordinal Definable predicates, and finally also in terms of principles of Structural Reflection which characterize exacting and ultraexacting cardinals as natural two-cardinal forms of strong unfoldability.
\end{abstract}
\maketitle
\setcounter{tocdepth}{1}
\tableofcontents

\section{Introduction}\label{SectIntro}
\textit{Large cardinals} are infinite cardinal numbers with structural properties making them so large that their existence cannot be proved on the basis of the standard Zermelo-Fraenkel axioms of set theory with the Axiom of Choice ($\ZFC$). Their existence nonetheless has profound consequences throughout mathematics, leading to applications in algebra (see e.g., \cite{MS:AlmostFree, EM:AFM, LAVER:LDA, Ba-Ma:GR}), algebraic topology (see e.g., \cite{BCMR}), general topology (see e.g., \cite{BdS:NMSC} for an example and \cite{Ku:handbook} for a collection of surveys), measure theory (see e.g., \cite{Solovay2, So:Mod, SW}), game theory  (see e.g., \cite{MS:PPD, Woodin:PNAS}), and category theory (see e.g., \cite{AdR:LPAC}).
In addition to their applications, large cardinals are studied for their own sake, as they are one of the main sources of insight into the nature of infinity and the structure of the universe of sets (see \cite{Kan:THI} for a general survey of large cardinals).

\textit{Exacting} and \textit{ultraexacting} cardinals are large cardinals recently introduced in \cite{ABL}. A cardinal $\lambda$ is called \textit{exacting} if for every cardinal $\zeta>\lambda$, there is an elementary substructure $X$ of $V_\zeta$ (the collection of all sets of von Neumann rank less than $\zeta$) such that $V_\lambda \cup \{\lambda\} \subseteq X$ and an elementary (i.e., truth-preserving) embedding
\[j: X \to V_\zeta\]
such that $j(\lambda) = \lambda$ and $j\upharpoonright \lambda \neq \mathrm{Id}$. We say $\lambda$ is \textit{ultraexacting} if we  additionally require that $j\upharpoonright V_\lambda \in X$. One of the most important properties of exacting and ultraexacting cardinals is that their existence implies that $V$ is not equal to HOD, the sub-universe of Hereditarily Ordinal Definable sets; in other words, they imply that there exist sets that cannot be defined in any reasonable way. This is in contrast to ``traditional'' large cardinals, all of which are consistent with the hypothesis $V = \mathrm{HOD}$. Moreover, in contrast with all large cardinals studied so far, ultraexacting cardinals become much stronger in the presence of other large cardinals, thus calling into question the established idea that large cardinals form a linear hierarchy of increasing strength. All this poses a challenge to our current  conception of large cardinals, which calls for a further study of their consistency strength, especially in combination with other large cardinals, and for obtaining further evidence of their naturalness as axioms.

The purpose of this article is twofold. First, we settle the questions concerning the  strength of exacting and ultraexacting cardinals in relation to traditional large cardinals, thereby placing them in their exact position within the current hierarchy of large cardinals, as well as the question of whether exacting cardinals suffice to replicate the ``blow-up'' behavior of ultraexacting cardinals in the presence of other large cardinals. Second, we prove new equivalences of these large cardinals with  some straightforward  enhancements of traditional large cardinals, as well as with some simple forms of reflection that generalize the reflection properties of traditional large cardinals, attesting to the naturalness of exacting and ultraexacting cardinals. 

It was proved in \cite{ABL} that ultraexacting cardinals are consistent relative to the existence of an I0 embedding, i.e., a nontrivial elementary embedding $j : L(V_{\lambda+1}) \to L(V_{\lambda+1})$ with $j\upharpoonright \lambda \neq \mathrm{Id}$. The first ordinal moved by such an embedding is a large cardinal, also known as an I0 cardinal. They were first introduced by Woodin in the 1980s to establish the consistency of the Axiom of Determinacy, and they are located at the upper end of the roster of traditional large cardinals. Our first result, proved in \S\ref{SectGood}, is the converse of this theorem, thus establishing: 
\begin{theoremletter}\label{TheoremIntroI0}
The following are equiconsistent over $\ZFC$:
    \begin{enumerate}
        \item There exists an I0 embedding.
        \item There exists an ultraexacting cardinal.
    \end{enumerate}
\end{theoremletter}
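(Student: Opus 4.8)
The implication from (1) to the consistency of (2) was established in \cite{ABL}, so the content of the theorem is the reverse implication: assuming that there is an ultraexacting cardinal $\lambda$, I want to produce a nontrivial elementary embedding $k : L(V_{\lambda+1}) \to L(V_{\lambda+1})$ with $\crit{k} < \lambda$, that is, an I0 embedding, and thereby establish $\mathrm{Con}(\ZFC + \text{``there is an I0 embedding''})$. Since I0 is consistent with (and does not imply) $V = \HOD$, there is no tension with the fact that ultraexactingness forces $V \neq \HOD$, and exhibiting the embedding outright is sufficient.

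First I would extract an I3 embedding from the ultraexacting data. Fix a cardinal $\zeta > \lambda$ together with $X \prec V_\zeta$ and an elementary $j : X \to V_\zeta$ witnessing that $\lambda$ is ultraexacting, so that $V_\lambda \cup \{\lambda\} \subseteq X$, $j(\lambda) = \lambda$, $e := j \upharpoonright V_\lambda \neq \mathrm{Id}$, and, crucially, $e \in X$. Since $V_\lambda \subseteq X \prec V_\zeta$ and $j(\lambda)=\lambda$, a routine absoluteness computation shows that $e : V_\lambda \to V_\lambda$ is a nontrivial elementary self-embedding, so $\kappa := \crit{e} < \lambda$ and $e$ is an I3 embedding. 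I then pass to its canonical extension $\hat{e} : V_{\lambda+1} \to V_{\lambda+1}$, $\hat{e}(A) = \bigcup_{\alpha<\lambda} e(A \cap V_\alpha)$, which is always a $\Sigma_0$-elementary extension of $e$.

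The heart of the argument is to promote $\hat{e}$ to a fully elementary self-embedding of $L(V_{\lambda+1})$. Following the standard analysis of I0 embeddings, every element of $L(V_{\lambda+1})$ has the form $t^{L(V_{\lambda+1})}(A, \vec{\alpha})$ for a Skolem term $t$, a parameter $A \in V_{\lambda+1}$, and ordinals $\vec{\alpha}$, and I would set $k\bigl(t^{L(V_{\lambda+1})}(A,\vec{\alpha})\bigr) = t^{L(V_{\lambda+1})}(\hat{e}(A), \vec{\alpha})$. Well-definedness and elementarity of $k$ then reduce to the single assertion that $\hat{e}$ preserves the first-order theory of $L(V_{\lambda+1})$: for every formula $\varphi$ and every $A \in V_{\lambda+1}$,
\[ L(V_{\lambda+1}) \models \varphi(A) \iff L(V_{\lambda+1}) \models \varphi(\hat{e}(A)). \]
This is exactly where the \emph{ultra} hypothesis $e \in X$ is indispensable, and where a merely exacting cardinal falls short. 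Because $e$ lies in $X$, the elementary map $j$ can be applied to statements about the embedding itself; the resulting coherence between $e$ and its image $j(e)$ is the tool that drives the induction on formula complexity needed to verify the displayed equivalence.

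The main obstacle is precisely this verification, i.e. crossing the large gap from I1 to the much stronger I0. Two technical points demand care. First, truth in the proper class $L(V_{\lambda+1})$ is not uniformly first-order expressible in any $V_\zeta$; I would circumvent this by proving $\Sigma_n$-elementarity separately for each $n$, using the definability of the $\Sigma_n$-satisfaction relation for $L(V_{\lambda+1})$ together with the reflection behaviour of the levels $L_\gamma(V_{\lambda+1})$, and by taking the witness $\zeta$ (hence $X$ and $j$) large relative to $n$ and to the level reflecting $\varphi$. Second, the definition only guarantees $V_\lambda \subseteq X$ rather than $V_{\lambda+1} \subseteq X$, so not every parameter $A$ is available in a single witness; here I would exploit that the ultraexacting property supplies such witnesses for \emph{every} $\zeta > \lambda$ and take a coherent union of the resulting approximations to obtain preservation for all $A \in V_{\lambda+1}$. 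Once the displayed equivalence is secured, $k$ is a well-defined nontrivial elementary embedding of $L(V_{\lambda+1})$ into itself with $\crit{k} = \kappa < \lambda$, i.e. an I0 embedding. Together with the implication $(1) \Rightarrow \mathrm{Con}(2)$ of \cite{ABL}, this establishes the claimed equiconsistency.
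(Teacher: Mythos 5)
Your reduction of the theorem to the outright implication ``ultraexacting $\Rightarrow$ there exists an I0 embedding'' is not merely harder than you anticipate; it is provably impossible, and the paper points this out explicitly after Theorem \ref{TheoremEquiconsistencyI0}. If $\lambda$ is the \emph{least} cardinal at which I0 holds and $G$ is generic for $\mathrm{Add}(\lambda^+,1)$, then in $L(V_{\lambda+1})[G]$ the cardinal $\lambda$ is ultraexacting (by {\cite[Theorem 3.29]{ABL}}), yet I0 fails everywhere in that model: below $\lambda$ by minimality and absoluteness, above $\lambda$ trivially, and at $\lambda$ because $L(V_{\lambda+1})[G]$ is a set-forcing extension of $\HOD^{L(V_{\lambda+1})}$ by Vop\v{e}nka's theorem, and so by Hamkins--Kirmayer--Perlmutter {\cite[Corollary 9]{HamkinsKirmayerPerlmutter}} it contains no elementary embedding of that inner model into itself. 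Hence no $\ZFC$ argument can extract an I0 embedding from an ultraexacting cardinal, and the step you yourself identify as the heart of your argument is exactly where any such attempt must break. Concretely: the ultra hypothesis $e\in X$ gives, for each \emph{ordinal definable} $A\subseteq V_{\lambda+1}$ (for instance each $\Sigma_n$-satisfaction predicate of $L(V_{\lambda+1})$, or the theory $A_\alpha$ of a level $L_\alpha(V_{\lambda+1})$), \emph{some} elementary embedding of $(V_{\lambda+1},A)$ into itself --- but a different embedding for each $A$. There is no single $\hat e$ that preserves them all, and your proposed ``coherent union of approximations'' over varying witnesses $\zeta$, $X$, $j$ is precisely the coherence that cannot exist. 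A further local defect: your $k$ fixes all ordinal parameters, which only makes sense when every element is definable from $V_{\lambda+1}$-parameters alone --- that is, on levels $L_\alpha(V_{\lambda+1})$ with $\alpha$ a good ordinal below $\Theta^{L(V_{\lambda+1})}$ --- or in the presence of indiscernibles, as in the sharp case; for $L(V_{\lambda+1})$ itself this prescription is incoherent.

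What the hypothesis actually yields, and what the paper proves (Lemma \ref{lemma_good_ord} and Corollary \ref{CorollaryUltraexactingImpliesInternalI0}), is \emph{internal} I0 at $\lambda$: for every $\alpha<\Theta^{L(V_{\lambda+1})}$ there is an elementary embedding of $L_\alpha(V_{\lambda+1})$ into itself, and these embeddings belong to, and are absolute to, $L(V_{\lambda+1})$. The consistency transfer is then completed model-theoretically rather than by exhibiting an embedding in $V$: Woodin's theorem {\cite[Theorem 2.5]{WoBrink}} converts internal I0 into a generic extension of an inner model of $L(V_{\lambda+1})$ satisfying $\ZFC$ together with I0 (at a cardinal above $\lambda$). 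This passage to a different model is not an optional convenience --- the counterexample above shows it is forced. Your treatment of the other direction, citing \cite{ABL} for the fact that I0 yields a model with an ultraexacting cardinal, does agree with the paper.
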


Here, recall that axioms $T_1$ and $T_2$ are said to be \textit{equiconsistent over} ZFC if the arithmetical formalization of the sentence ``ZFC + $T_1$ is consistent'' is equivalent to that of ``ZFC + $T_2$ is consistent'' (with the equivalence provable arithmetically).

Theorem \ref{TheoremIntroI0} is a corollary of a result that shows how ultraexacting cardinals
fit into a previously studied paradigm
for producing large cardinal-like hypotheses that contradict the HOD Conjecture: 
\begin{theoremletter}
    If \(\lambda\) is a cardinal, the following are equivalent:
    \begin{enumerate}
        \item \(\lambda\) is ultraexacting.
        \item For every ordinal definable \(A\subseteq V_{\lambda+1}\)
        there is a non-trivial elementary embedding from \((V_{\lambda+1},A)\) to itself.
    \end{enumerate}
\end{theoremletter}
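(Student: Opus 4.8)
The plan is to prove the two implications separately, in both directions using the canonical extension of an elementary map \(V_\lambda\to V_\lambda\) as the bridge between the embedding-theoretic and the structural formulation. Recall that every elementary embedding \(e\colon V_\lambda\to V_\lambda\) extends canonically to a map \(e^+\colon V_{\lambda+1}\to V_{\lambda+1}\) via \(e^+(a)=\bigcup_{\xi<\lambda}e(a\cap V_\xi)\), that \(e^+\) is \(\in\)-preserving, and that \(e^+\upharpoonright V_\lambda=e\).

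For \((1)\Rightarrow(2)\), fix an ordinal definable \(A\subseteq V_{\lambda+1}\), defined over \(V\) by a formula \(\varphi\) with an ordinal parameter \(\eta\). Using the Reflection Theorem I would first choose a cardinal \(\zeta>\max\{\lambda,\eta\}\) large enough that \(A=\{x\in V_{\lambda+1}:V_\zeta\models\varphi(x,\eta)\}\), and then apply ultraexactingness at \(\zeta\) to obtain \(X\prec V_\zeta\) with \(V_\lambda\cup\{\lambda\}\subseteq X\) and an elementary \(j\colon X\to V_\zeta\) with \(j(\lambda)=\lambda\), \(j\upharpoonright\lambda\neq\mathrm{Id}\), and \(e:=j\upharpoonright V_\lambda\in X\). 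A short computation, using \(j(\lambda)=\lambda\) and the fact that elementary embeddings do not decrease ordinals, shows that \(e\) maps \(V_\lambda\) to \(V_\lambda\) elementarily and that \(j\) and \(e^+\) agree on \(V_{\lambda+1}\cap X\); hence \(k:=e^+\) is a nontrivial \(\in\)-preserving self-map of \(V_{\lambda+1}\), nontrivial because \(k\upharpoonright V_\lambda=e\neq\mathrm{Id}\). The remaining claim is that \(k\) is elementary as a map \((V_{\lambda+1},A)\to(V_{\lambda+1},A)\), which I would verify by induction on formulas, reducing satisfaction in \((V_{\lambda+1},A)\) to satisfaction in \(V_\zeta\) through \(\varphi\) and exploiting that \(e\in X\) lets the embedding be analysed internally to \(X\prec V_\zeta\).

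The main obstacle in this direction is precisely the behaviour of \(k\) on the ordinal parameter \(\eta\): since \(j\) fixes \(\lambda\) but in general moves other ordinals, one cannot simply transport the defining instance \(\varphi(\,\cdot\,,\eta)\) along \(j\), and there is no reason to expect \(j(\eta)=\eta\). This is exactly where the strengthening from exacting to ultraexacting (the clause \(e\in X\)) does the real work: it allows me to form \(e^+\in X\) and carry out the elementarity verification inside \(X\), so that the predicate \(A\) is absorbed by reflecting its definition into \(X\) rather than by demanding that the defining ordinal be fixed. I expect this coherence argument — that the canonical extension of the internally represented map \(e\) preserves every ordinal definable predicate — to be the technical heart of the implication.

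For \((2)\Rightarrow(1)\), fix a cardinal \(\zeta>\lambda\); I must produce an ultraexacting witness at \(\zeta\). I would take an elementary substructure \(X\prec V_\zeta\) of size \(|V_\lambda|\) with \(V_\lambda\cup\{\lambda\}\subseteq X\), transport \((X,\in)\) along a bijection into \(V_\lambda\), and code the resulting membership relation as a predicate \(A\subseteq V_{\lambda+1}\) equipped with markers for the code of \(\lambda\) and for the set of codes of elements of \(V_\lambda\); some care is needed to arrange that this coded structure is genuinely ordinal definable (one route being to code the complete theory of \(V_\zeta\) over names for \(V_\lambda\cup\{\lambda\}\) and realize its terms inside \(V_\zeta\)). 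Applying \((2)\) to \(A\) yields a nontrivial elementary \(k\colon(V_{\lambda+1},A)\to(V_{\lambda+1},A)\); decoding \(k\) produces an elementary self-embedding \(h\colon X\to X\), and I set \(j\) to be \(h\) followed by the inclusion \(X\hookrightarrow V_\zeta\). The markers guarantee \(j(\lambda)=\lambda\) and that \(j\) maps \(V_\lambda\) into \(V_\lambda\), while the standard fact that a self-embedding fixing the generators \(V_\lambda\cup\{\lambda\}\) of \(X\) must be the identity upgrades the nontriviality of \(k\) to \(j\upharpoonright\lambda\neq\mathrm{Id}\). The main obstacle here is the ultraexacting clause \(j\upharpoonright V_\lambda\in X\): the decoded restriction is a priori only an element of \(V_{\lambda+1}\), not of the smaller structure \(X\). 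I would address this by enriching \(A\) so that the coded structure internalizes the relevant restriction of the embedding, making \(j\upharpoonright V_\lambda\) an element definable inside \(X\); showing that such an enrichment is still compatible with the existence of a self-embedding — essentially a fixed-point argument closing the structure under its own self-map — is, I expect, the most delicate point, and it is exactly the feature that separates ultraexacting (I0-level) from exacting.
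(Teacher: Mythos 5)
Your proposal correctly identifies the fatal obstacle in the direction $(1)\Rightarrow(2)$ --- that $j$ need not fix the ordinal parameter $\eta$ in the definition of $A$ --- but the repair you sketch does not exist as an argument. Having $e=j\restriction V_\lambda\in X$ does nothing to address the parameter problem: if $\eta\notin X$, the definition of $A$ cannot be ``reflected into $X$'' at all, and if $\eta\in X$, elementarity of $j$ only yields that $k=e^+$ is elementary from $(V_{\lambda+1},A)$ to $(V_{\lambda+1},j(A))$, where $j(A)$ is the set defined by $\varphi(\,\cdot\,,j(\eta))$ and is in general a different set. The clause $e\in X$ serves a different purpose entirely: it makes $k$ a \emph{total} map on $V_{\lambda+1}$ that is an \emph{element} of $X$, so that ``$k$ is an elementary self-embedding of $(V_{\lambda+1},A)$'' becomes a statement that $X$ verifies and that transfers to $V_\zeta$ and hence to $V$. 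The missing idea, which the paper uses, is a definable-selection device: argue by contradiction and let $A$ be the $<_\OD$-least subset of $V_{\lambda+1}$ admitting no elementary self-embedding. That $A$ is then definable in $V_\zeta$ from $\lambda$ alone (since $\zeta\in C^{(2)}$, $V_\zeta$ is correct about ordinal definability and about the existence of such embeddings), hence $A\in X$ and $j(A)=A$ --- exactly the fact your plan lacks. Without it, even the atomic case $x\in A\iff k(x)\in A$ of your induction on formulas cannot be established.

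In the direction $(2)\Rightarrow(1)$ the gap you flag (ordinal definability of the coded structure) is equally fatal, and neither of your suggested routes closes it. A single $X\prec V_\zeta$ with a bijection onto $V_\lambda$ is in general not ordinal definable --- and cannot be expected to be, since $(2)$ implies $V\neq\HOD$; the ``complete theory plus term realization'' idea fails because $V_\zeta$ has no definable Skolem functions (no OD well-ordering is available to select witnesses), so no canonical term model sits inside $V_\zeta$. The paper's proof changes the shape of the argument: it takes $A$ to be the set of \emph{all} well-founded extensional relations $E\subseteq V_\lambda\times V_\lambda$ coding some elementary substructure of $V_\zeta$ containing $V_\lambda\cup\{\lambda\}$, where $\zeta$ is the least element of $C^{(2)}$ above $\lambda$; this set is OD even though no single member is. The embedding $j$ given by $(2)$ then carries a code $E$ to a possibly different code $F=j(E)$, producing an elementary embedding between two \emph{different} substructures $X_E\to X_F\subseteq V_\zeta$, which is all the definition of an ultraexact embedding requires (it asks for $j:X\to V_\zeta$, not for a self-embedding of $X$). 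Finally, the ultraexacting clause $i\restriction V_\lambda\in X_E$ is obtained not by your proposed ``fixed-point closure of $X$ under its own self-map'' --- a genuinely circular demand --- but by reversing the order of choices: first obtain $j$ from $(2)$ applied to the full set of codes, and only then pick $E\in A$ whose transitive collapse contains $j\restriction V_\lambda$ (possible by L\"owenheim--Skolem precisely because $A$ contains codes of all suitable substructures); agreement of the decoded embedding with $j$ on $V_\lambda$ then gives the clause for free.
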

From this perspective, one can view ultraexacting cardinals as a local version of the hypothesis that there is an elementary embedding from \(\HOD_{V_{\lambda+1}}\) to itself.
This hypothesis was considered by Woodin \cite{WOEM1},
who showed that the existence of such a \(\lambda\) above an extendible cardinal is 
consistent with ZFC relative to choiceless large cardinal assumptions \cite[p.\ 335]{WOEM1}
but contradicts the HOD Conjecture \cite[Theorem 199]{WOEM1}.
Woodin \cite[Theorem 200]{WOEM1} obtained the same conclusion using a hypothesis on \(\lambda\) that he showed was consistent with ZFC assuming \(\text{Con}(\text{ZFC} + \textnormal{I0})\); namely, the existence of an elementary embedding \(j : V_{\lambda+1}\to V_{\lambda+1}\) such that for any \(\Sigma_2\)-formula \(\varphi(x)\)
and any \(a\in V_{\lambda+1}\), \(V\vDash \varphi(a)\) if and only if \(V\vDash \varphi(j(a))\).
In Corollary \ref{coroSigma2truth}, we show that this hypothesis is equivalent to $\lambda$ being ultraexacting.

One of the most novel aspects of ultraexacting cardinals is their non-trivial interaction with other large cardinals. For instance, in contrast to Theorem \ref{TheoremIntroI0}, the existence of \textit{two} I0 cardinals is much weaker than the existence of two ultraexacting cardinals (see \cite[Theorem D]{ABL}). Our next theorem, proved in \S\ref{SectUltraexactGen} and of which Theorem \ref{TheoremIntroI0} is in fact a particular case, clarifies this phenomenon:

\begin{theoremletter}\label{TheoremIntroUltraexactGen}
Let $\varphi$ be a formula in the language of set theory. Then, the following two theories are equiconsistent, modulo $\ZFC$:
\begin{enumerate}
    \item There is an ultraexacting cardinal $\lambda$ and a countably iterable Mitchell-Steel $V_{\lambda+1}$-premouse $M$ satisfying $\varphi$.
    \item There is a countably iterable Mitchell-Steel $V_{\lambda+1}$-premouse $M$ satisfying $\varphi$ and an elementary embedding \[j:L(V_{\lambda+1},M)\to L(V_{\lambda+1},M) \] 
    with critical point below $\lambda$.
\end{enumerate}
\end{theoremletter}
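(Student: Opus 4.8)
The plan is to prove the equiconsistency as two consistency implications, both generalizing the arguments underlying the equiconsistency of Theorem~A (the case of trivial $M$, where $L(V_{\lambda+1},M)=L(V_{\lambda+1})$). The organizing principle is the characterization of ultraexactingness from Theorem~B: $\lambda$ is ultraexacting exactly when every ordinal-definable $A\subseteq V_{\lambda+1}$ carries a nontrivial self-embedding of $(V_{\lambda+1},A)$. A single embedding $j\colon L(V_{\lambda+1},M)\to L(V_{\lambda+1},M)$ simultaneously encodes self-embeddings of $(V_{\lambda+1},A)$ for all $A$ definable over $L(V_{\lambda+1},M)$, while the premouse $M$ both carries the strength expressed by $\varphi$ and, through its iterability, supplies the fine-structural control needed to build or descend through such an embedding.

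For the implication from (2) to the consistency of (1), I would begin with $j\colon L(V_{\lambda+1},M)\to L(V_{\lambda+1},M)$, $\crit{j}<\lambda$. Standard I0-theory, applied over $M$, gives $j(V_{\lambda+1})=V_{\lambda+1}$, $j(\lambda)=\lambda$, and $j(M)=M$, so $k=j\upharpoonright V_{\lambda+1}$ is a nontrivial self-embedding of $V_{\lambda+1}$ preserving every predicate definable over $L(V_{\lambda+1},M)$ from parameters fixed by $j$. Working inside $N=L(V_{\lambda+1},M)$, this yields, via Theorem~B, that $\lambda$ is ultraexacting in $N$; the one delicate point is that $j$ moves some ordinals above $\lambda$, so a given defining parameter must first be replaced by one fixed by a suitable iterate of $j$ through a Skolem-hull/reflection argument. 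Since $M\models\varphi$ and the countable iteration trees on $M$ lie in $N$, the structure $N$ satisfies a choiceless version of (1). The remaining task is to pass to a model of $\ZFC+(1)$: as $N$ need not satisfy choice, I would adjoin a well-ordering by the ABL-style forcing over $N$ that lifts the self-embeddings, obtaining a choiceful model in which $\lambda$ is ultraexacting---necessarily with $V\neq\HOD$---and $M$ still satisfies $\varphi$.

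For the implication from (1) to the consistency of (2), the goal is to manufacture the single embedding $j$ from the many self-embeddings provided by ultraexactingness. Let $T_n$ be the $\Sigma_n$-satisfaction predicate of $(L(V_{\lambda+1},M),\in,M)$ restricted to parameters in $V_{\lambda+1}$, together with a predicate coding the action of $M$ on $V_{\lambda+1}$; each $T_n$ is ordinal definable, so Theorem~B supplies a nontrivial self-embedding $k_n$ of $V_{\lambda+1}$ that is $\Sigma_n$-elementary for $L(V_{\lambda+1},M)$-truth and respects $M$. If these can be amalgamated into one embedding $k$ that preserves the full theory of $L(V_{\lambda+1},M)$ with parameters in $V_{\lambda+1}$, then $k$ extends canonically (exactly as a theory-preserving I1 embedding extends to an I0 embedding) to the required $j\colon L(V_{\lambda+1},M)\to L(V_{\lambda+1},M)$ with $\crit{j}<\lambda$, and $M\models\varphi$ is retained. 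The amalgamation is carried out by an inverse-limit reflection argument in the style of Woodin and Cramer, adapted to premice over $V_{\lambda+1}$, in which the countable iterability of $M$ underwrites the comparison and condensation that force the tower of approximations to cohere.

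I expect this amalgamation to be the main obstacle. The predicates $T_n$ are obtained from separate, a priori unrelated self-embeddings, and no single ordinal-definable subset of $V_{\lambda+1}$ captures the full satisfaction predicate of the proper-class structure $L(V_{\lambda+1},M)$, so Theorem~B cannot be applied to one all-encompassing predicate; the limit embedding must instead be extracted from the approximations $k_n$. Ensuring that the pointwise limit exists, remains nontrivial below $\lambda$, and preserves every $T_n$ simultaneously is precisely what requires the fine-structural machinery attached to $M$, and it is here that countable iterability is indispensable. A secondary difficulty, in the first implication, is the choiceless nature of $L(V_{\lambda+1},M)$, which the forcing step resolves while preserving both the self-embeddings and the validity of $\varphi$ in $M$.
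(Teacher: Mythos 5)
The main gap is in your direction from (1) to the consistency of (2). You propose to amalgamate the approximate embeddings $k_n$ into a single embedding $k$ preserving the full $V_{\lambda+1}$-parameter theory of $L(V_{\lambda+1},M)$, and then to extend $k$ to the required $j\colon L(V_{\lambda+1},M)\to L(V_{\lambda+1},M)$ \emph{at the same} $\lambda$. That would prove the outright implication (1) $\Rightarrow$ (2), which is false, so no amount of fine-structural machinery can make the amalgamation go through. Already for trivial $M$: if $\lambda$ is least with I0 and $G\subseteq {\rm Add}(\lambda^+,1)$ is generic, then $\lambda$ is ultraexacting in $L(V_{\lambda+1})[G]$, yet no embedding as in (2) can exist there, since any such $j$ would restrict to an elementary embedding of $L(V_{\lambda+1})$ into itself, while $L(V_{\lambda+1})[G]$ is a set-generic extension of its $\HOD$ by Vop\v{e}nka's theorem and hence admits no such self-embedding by the Hamkins--Kirmayer--Perlmutter theorem; this is exactly the paper's remark following Theorem \ref{TheoremEquiconsistencyI0}. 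Your own observation --- that no single ordinal-definable subset of $V_{\lambda+1}$ codes the full satisfaction relation of the proper-class structure $L(V_{\lambda+1},M)$ --- is the genuine obstruction, not a technical hurdle. The paper's route accepts this: ultraexactingness only yields \emph{internal} I0 relative to $M$ (Lemma \ref{lemma_good_ord}), i.e.\ self-embeddings of the set-sized levels $L_\alpha(V_{\lambda+1},M)$ for $\alpha<\Theta$, and the full embedding is then produced in a \emph{different} model via Lemma \ref{LemmaBrink} (Woodin's Theorem 2.3 adapted to $M$): one finds a self-embedding of $(V_{\lambda+1},M)$ whose $\omega$-th iterate $j_\omega\colon (V_{\lambda+1},M)\to(N_\omega,\bar M)$ extends to a fully elementary $k\colon L(N_\omega,\bar M)\to L(N_\omega,\bar M)$ lying inside $L(V_{\lambda+1},M)$, with $\mathcal{P}(N_\omega)\cap L(N_\omega,\bar M)[k]=\mathcal{P}(N_\omega)\cap L(N_\omega,\bar M)$. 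The model of (2) is then $L(N_\omega,\bar M)[k][G]$ for $G\subseteq{\rm Add}(\lambda_\omega^+,1)$ generic, where the role of $V_{\lambda+1}$ is played by the direct limit $N_\omega$, not by your original $V_{\lambda+1}$. This shift to the $\omega$-th iterate is the idea your proposal is missing.

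Your direction from (2) to the consistency of (1) is essentially the paper's argument, but with a secondary gap: $j(M)=M$ is not ``standard I0-theory.'' The structure $L(V_{\lambda+1},M)$ is a pure $\in$-structure that does not carry $M$ as a predicate, and $M$ is not recoverable from it, so a priori $j$ may move $M$. One must first normalize $M$ --- pass to the least initial segment satisfying $\varphi$, then replace it by its core to ensure soundness --- and invoke the comparison argument of Lemma \ref{LemmaMisOD} to conclude that this normalized $M$ is unique, hence definable from $\lambda$ alone and therefore fixed by $j$. This is also where countable iterability actually does its work in both directions (together with verifying that iterability persists to the final models), rather than in any embedding-amalgamation step, where you located it.
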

According to Theorem \ref{TheoremIntroUltraexactGen}, ultraexacting cardinals stretch the strength of large cardinals above them by producing elementary embeddings which strengthen I0 by incorporating inner models into the domain of the embedding. It was proved in \cite{ABL} that the existence of an ultraexacting cardinal $\lambda$ together with $V_{\lambda+1}^\#$ implies the consistency of a proper class of I0 cardinals. Theorem \ref{TheoremIntroUltraexactGen} strengthens and generalizes this fact optimally.

We also investigate whether exacting cardinals are sufficient to replicate this ``blow-up'' phenomenon of large cardinal strength. We answer the question negatively in \S\ref{SectExacting} while simultaneously bounding the strength of exacting cardinals dramatically. Its statement involves the notion of an $\mathrm{I3_{wf(0)}}$-embedding. This is a technical strengthening of I3 much weaker than I2 (see \S\ref{SectExacting} for the definitions of I3 and I2).

\begin{theoremletter}\label{TheoremIntroExacting}
Suppose that $j: V_\lambda \to V_{\lambda}$ is an $\mathrm{I3_{wf(0)}}$-embedding with critical point $\kappa$. Then there is a normal ultrafilter $U$ on $\kappa$ such that if $G$ is a Prikry-generic $U$-sequence, then 
\[V[G]\models \text{``$\kappa$ is exacting.''}\]
\end{theoremletter}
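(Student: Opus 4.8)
The plan is to singularize \(\kappa\) by Prikry forcing with the normal ultrafilter derived from \(j\), and then to convert \(j\)—whose critical point is \(\kappa\), so that \(j\) moves \(\kappa\) \emph{upward}—into elementary maps that instead \emph{fix} \(\kappa\) while acting nontrivially below it, using the generic Prikry sequence as their critical sequence. First I would let \(U=\{A\subseteq\kappa : \kappa\in j(A)\}\); since \(j\upharpoonright V_\lambda\) is elementary and fixes every ordinal below \(\kappa\), a routine calculation shows that \(U\) is a normal ultrafilter on \(\kappa\). Let \(\mathbb P_U\) be the associated Prikry forcing, let \(G\) be generic over \(V\), and let \(\langle s_n : n<\omega\rangle\) be the resulting Prikry sequence, which is cofinal in \(\kappa\). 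By the Prikry property, \(\mathbb P_U\) adds no bounded subsets of \(\kappa\), so \(V_\kappa^{V[G]}=V_\kappa^V\), all cardinals and cofinalities other than that of \(\kappa\) are preserved, and \(\cf^{V[G]}(\kappa)=\omega\). Write \(\kappa_0=\kappa\) and \(\kappa_{n+1}=j(\kappa_n)\), so that \(\langle\kappa_n : n<\omega\rangle\) is the critical sequence of \(j\), with \(\sup_n\kappa_n=\lambda\).

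The heart of the argument is to produce, in \(V[G]\), the exacting embeddings for \(\kappa\). Note first that any exacting witness \(i\colon X\to V_\zeta^{V[G]}\) restricts to a nontrivial elementary self-map \(i\upharpoonright V_\kappa^{V[G]}\colon V_\kappa^{V[G]}\to V_\kappa^{V[G]}\) with \(\crit{i}<\kappa\) and critical sequence cofinal in \(\kappa\); since \(\kappa\) is regular in \(V\) while any such self-map forces \(\cf(\kappa)=\omega\), no such map exists there, so it must genuinely be manufactured by the forcing. The mechanism is to transport \(j\) down to the level \(\kappa\) along the Prikry sequence. Using the well-foundedness built into the definition of an \(\mathrm{I3_{wf(0)}}\)-embedding (see \S\ref{SectExacting}) together with the genericity of \(\langle s_n : n<\omega\rangle\), I would construct, in \(V[G]\), an elementary embedding
\[ \sigma\colon V_\kappa^{V[G]}\longrightarrow V_\lambda \]
with \(\sigma(s_n)=\kappa_n\) and with \(j[\mathrm{ran}(\sigma)]\subseteq\mathrm{ran}(\sigma)\); equivalently, a \(j\)-closed elementary substructure \(N\prec V_\lambda\) containing the critical sequence whose transitive collapse is exactly \(V_\kappa^{V[G]}\). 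Here the Prikry points supply the generic ``thread'' \(s_n\mapsto\kappa_n\) along which \(\sigma\) is assembled from iterates of \(j\), and the \(\mathrm{I3_{wf(0)}}\) hypothesis is what guarantees that this limit is well-founded and that the collapse is onto all of \(V_\kappa^{V[G]}\) rather than a proper initial segment.

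Granting \(\sigma\), the map \(i_0:=\sigma^{-1}\circ j\circ\sigma\) is a nontrivial elementary self-embedding of \(V_\kappa^{V[G]}\) with \(i_0(s_n)=s_{n+1}\), and hence with critical sequence cofinal in \(\kappa\): it moves ordinals strictly below \(\kappa\) while having \(\kappa\) as the supremum of that sequence, the configuration required for an exacting witness to fix \(\kappa\). To pass from \(i_0\) to genuine witnesses, I would, for each cardinal \(\zeta>\kappa\), take \(X\prec V_\zeta^{V[G]}\) to be a hull containing \(V_\kappa^{V[G]}\cup\{\kappa\}\) and extend the conjugation to an elementary \(i\colon X\to V_\zeta^{V[G]}\) with \(i\upharpoonright V_\kappa^{V[G]}=i_0\), \(i(\kappa)=\kappa\), and the generators above \(\kappa\) fixed, so that \(i\upharpoonright\kappa\neq\mathrm{Id}\). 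Moreover, a witness \(i'\colon X'\to V_{\zeta'}^{V[G]}\) for some \(\zeta'>\zeta\) that additionally fixes \(\zeta\) restricts, by intersecting \(X'\) with \(V_\zeta^{V[G]}\), to a witness for \(\zeta\); so it suffices to produce, for cofinally many \(\zeta'\), witnesses fixing a prescribed smaller rank, and for these I would run the \(\sigma\)-construction with \(V_\lambda\) replaced by the higher well-founded limit structures that the \(\mathrm{I3_{wf(0)}}\) hypothesis makes available.

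The main obstacle is precisely the critical-point mismatch: \(j\) exhibits the largeness of \(\kappa\) by moving it \emph{up}, whereas exactness demands an embedding that fixes \(\kappa\) and is nontrivial strictly \emph{below} it. Overcoming this is what compels both ingredients of the hypothesis—the Prikry singularization, which supplies the cofinal \(\omega\)-sequence that becomes the critical sequence of the new embeddings, and the well-foundedness packaged into \(\mathrm{I3_{wf(0)}}\), which is what allows the down-transported limit to collapse onto a genuine \(V_\kappa^{V[G]}\) (and, at higher ranks, onto the \(V_\zeta^{V[G]}\) needed for arbitrary \(\zeta\)) rather than onto an ill-founded model. I expect the construction and well-foundedness of \(\sigma\)—equivalently, the existence in \(V[G]\) of the \(j\)-closed substructure collapsing exactly onto \(V_\kappa^{V[G]}\)—to be the technically decisive step, since it is here that the precise strength of \(\mathrm{I3_{wf(0)}}\), as opposed to that of a bare I3-embedding, is consumed.
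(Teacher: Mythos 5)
Your opening moves (deriving the normal ultrafilter $U$ from $j$, noting that Prikry forcing adds no bounded subsets of $\kappa$, and observing that any exact embedding at $\kappa$ in $V[G]$ forces $\cf(\kappa)=\omega$ and so must be created by the forcing) match the paper's setup, but from there the proposal has two genuine gaps, and the second one shows the architecture cannot work as described. First, the central object $\sigma$ --- a $j$-closed $N\prec V_\lambda$ whose transitive collapse is \emph{exactly} $V_\kappa^{V[G]}$, with $\pi(\kappa_n)=s_n$ --- is never constructed; ``using the well-foundedness built into $\mathrm{I3}_{\rm{wf}(0)}$ together with genericity'' is not an argument. The content of $\mathrm{I3}_{\rm{wf}(0)}$ is that $j_{0,\omega}[\kappa_0^+]\subseteq W^j_\omega$, which licenses \emph{upward} transport: a structure of size $\kappa_0$ coded by a relation on $\kappa_0$ has a well-founded image under $j_{0,\omega}$. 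It says nothing about the existence, in a Prikry extension, of substructures of $V_\lambda$ with prescribed transitive collapse $V_\kappa$. Moreover, genericity runs the wrong way for your ``thread'' $s_n\mapsto\kappa_n$: the Mathias criterion makes the \emph{critical sequence} $\seq{\kappa_n}{n<\omega}$ Prikry-generic over suitable small models for the transported measure $j_{0,\omega}(U)$ (this is exactly what the paper exploits), but genericity of $\seq{s_n}{n<\omega}$ over $V$ provides no elementary map tying the $s_n$ to the $\kappa_n$.

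Second, even granting $\sigma$ and $i_0=\sigma^{-1}\circ j\circ\sigma$, the step ``take a hull $X\prec V_\zeta^{V[G]}$ containing $V_\kappa^{V[G]}\cup\{\kappa\}$ and extend $i_0$ to an elementary $i:X\to V_\zeta^{V[G]}$ fixing $\kappa$ and the generators'' is not a legitimate move: producing such extensions for suitable $\zeta$ \emph{is} the definition of exactness, and if it followed routinely from having an I3-type self-embedding of $V_\kappa$ in the model, then I3 would outright imply exactingness --- contradicting the strict consistency hierarchy this very paper establishes (exacting cardinals imply the consistency of stationarily many I3 embeddings; none of the arrows in Figure \ref{fig:enter-label} reverse). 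Your fallback, ``higher well-founded limit structures made available by $\mathrm{I3}_{\rm{wf}(0)}$,'' does not exist at the required ranks: the hypothesis only guarantees well-foundedness of images of ordinals below $\kappa_0^+$, i.e., structures of height roughly below $\lambda^+$ in the direct limit, nowhere near arbitrary $\zeta$. The paper's proof (Theorem \ref{theorem:PrikryExacting}) avoids ever constructing exact embeddings in $V[G]$: it argues by contradiction via the weak homogeneity of $\PPP_U$, collapses a hull $X\prec V_\alpha$ (with $\alpha\gg\lambda$) of size $\kappa_0$ to a structure coded on $\kappa_0$, transports it upward by $j_{0,\omega}$ to a well-founded model $N\supseteq V_\lambda$ (this is where $\mathrm{I3}_{\rm{wf}(0)}$ is consumed), conjugates $j$ to $i:N\to N$, uses the Mathias criterion to see that the critical sequence of $j$ is $\PPP^N_{\bar U}$-generic over $N$ so that $i$ lifts to $i_*:N[G]\to N[G]$, and then refutes ``$\lambda$ is not exacting in $N[G]$'' using the $\mathrm{ZF}$ characterization of exactness (Theorem \ref{lemma:exact_char}) together with a tree-absoluteness argument. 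Reworking your proposal along a direct-construction route would require supplying precisely the machinery whose absence these two gaps mark, so the reflection-plus-contradiction architecture is not an optional stylistic choice but the mechanism by which the hypothesis can actually be applied.
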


We also show that the use of an $\mathrm{I3_{wf(0)}}$-embedding in Theorem \ref{TheoremIntroExacting} cannot be replaced by a simple I3 embedding, as the existence of exacting cardinals implies the consistency of a proper class of I3 cardinals. Thus, Theorem \ref{TheoremIntroExacting} is close to an equiconsistency and exacting cardinals are located strictly between the hypotheses I3 and I2.

Theorem \ref{TheoremIntroExacting} also yields models of exacting cardinals together with any large cardinal which is preserved by ``small'' Prikry forcing, indicating a key difference between exacting and ultraexacting cardinals.

Perhaps the most striking consequence of exacting cardinals, established in \cite{ABL} is that the consistency of an exacting cardinal above a strongly compact cardinal refutes Woodin's HOD Conjecture. In sharp contrast to this, we derive the following result from Theorem \ref{TheoremIntroExacting} (Corollary \ref{coro5.6}):

\begin{theoremletter}\label{TheoremIntroExactingwithHOD}
Suppose $\ZFC + \mathrm{I2}$ is consistent. Then, $\ZFC$ is consistent with the existence of an exacting cardinal together with Vop\v{e}nka's principle and the HOD Hypothesis.
\end{theoremletter}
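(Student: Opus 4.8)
The plan is to derive the result from Theorem~\ref{TheoremIntroExacting} by arranging the exacting cardinal to sit \emph{below} the large cardinals that witness Vop\v{e}nka's Principle and the HOD Hypothesis, and then to invoke Woodin's HOD Dichotomy Theorem. Working from $\mathrm{Con}(\ZFC+\mathrm{I2})$, I would first fix a model $W$ of $\ZFC$ together with: an $\mathrm{I3_{wf(0)}}$-embedding $j\colon V_\lambda\to V_\lambda$ with $\crit{j}=\kappa$; Vop\v{e}nka's Principle; an extendible cardinal $\delta>2^\kappa$; and $V=\HOD$. The consistency of this combined theory follows from $\mathrm{Con}(\ZFC+\mathrm{I2})$, since the $\mathrm{I3_{wf(0)}}$-embedding is the strongest ingredient and is obtained by restricting an $\mathrm{I2}$-embedding to $V_\lambda$, while $\VP$, a single extendible cardinal, and $V=\HOD$ all lie well below $\mathrm{I2}$ in consistency strength and can be layered onto it. Observe that $V=\HOD$ makes the HOD Hypothesis hold trivially in $W$, and that $\VP$ supplies, for every $n$, a proper class of $C^{(n)}$-extendible cardinals. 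I would then apply Theorem~\ref{TheoremIntroExacting} to obtain a normal ultrafilter $U$ on $\kappa$ and force with the associated Prikry forcing $\PPP_U$, landing in a model $W[G]\models\text{``}\kappa\text{ is exacting.''}$

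Next I would check that Vop\v{e}nka's Principle survives the forcing. As $\PPP_U$ has size $2^\kappa<\delta$, it is small relative to every $C^{(n)}$-extendible cardinal of $W$ lying above $2^\kappa$; by the Levy--Solovay-type preservation of $C^{(n)}$-extendibility under small forcing, each such cardinal remains $C^{(n)}$-extendible in $W[G]$. Since $\VP$ is equivalent to the existence, for every $n$, of a $C^{(n)}$-extendible cardinal (Bagaria), and the witnesses above $2^\kappa$ are untouched, we obtain $W[G]\models\VP$. In particular $\delta$ remains extendible in $W[G]$ and still lies above the exacting cardinal $\kappa$, which is exactly the configuration asserted in the statement.

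The main content is to show that the HOD Hypothesis persists into $W[G]$, and here the placement of $\kappa$ below $\delta$ is decisive. Working in $W[G]$ with the extendible $\delta$, I would invoke Woodin's HOD Dichotomy Theorem: either $\HOD$ is ``close'' above $\delta$, in which case a proper class of regular cardinals above $\delta$ fail to be $\omega$-strongly measurable in $\HOD$ and the HOD Hypothesis holds, or else every regular cardinal $\geq\delta$ is $\omega$-strongly measurable in $\HOD^{W[G]}$. The task is to rule out this second, ``far'' alternative. For this I would use that $\PPP_U$ is small and that $W\models V=\HOD$: the forcing is $\gamma$-c.c.\ for every regular $\gamma>2^\kappa$, so it preserves the (ordinal-definable) Solovay partitions of $\{\xi<\gamma:\cf{\xi}=\omega\}$ into $\gamma$ stationary pieces that are available in $W=\HOD^W$, and these partitions remain available to $\HOD^{W[G]}$ for a proper class of regular $\gamma>\delta$. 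Such $\gamma$ are then \emph{not} $\omega$-strongly measurable in $\HOD^{W[G]}$, placing us in the close alternative and yielding the HOD Hypothesis. This stands in sharp contrast to the configuration of \cite{ABL}, where the exacting cardinal sits \emph{above} a strongly compact cardinal and is thereby forced into the far alternative.

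I expect the delicate point to be the last step: certifying that the relevant stationary partitions genuinely descend into $\HOD^{W[G]}$. The subtlety is that small forcing need not fix $\HOD$ pointwise, so one must argue either through the ground-model definability of $W$ inside $W[G]$ with an ordinal-definable parameter (using $V=\HOD$ together with the smallness of $\PPP_U$), or through a direct preservation lemma to the effect that small forcing over a model of $V=\HOD$ keeps $\HOD$ close above any preserved extendible; the conceptual core is that a small forcing cannot manufacture the pervasive $\omega$-strong measurability demanded by the far alternative. Granting this, the model $W[G]$ simultaneously satisfies $\ZFC$, the existence of an exacting cardinal $\kappa$, Vop\v{e}nka's Principle, and the HOD Hypothesis, which is the desired consistency.
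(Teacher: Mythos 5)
Your overall plan --- prepare a model of $\ZFC$ + Vop\v{e}nka's Principle + $V=\HOD$ containing the relevant rank-to-rank embedding, force with the Prikry forcing of Theorem \ref{TheoremIntroExacting}, and check that everything survives --- is the same skeleton as the paper's proof of Corollary \ref{coro5.6}, and your treatment of Vop\v{e}nka's Principle (Levy--Solovay preservation of $C^{(n)}$-extendible cardinals above $|\PPP_U|$ plus Bagaria's characterization of $\VP$) is fine. But two steps have genuine gaps. The first is the construction of your base model $W$: you justify the joint consistency of an $\mathrm{I3_{wf(0)}}$-embedding with $\VP$, an extendible cardinal, and $V=\HOD$ by saying that the latter ``lie well below I2 in consistency strength and can be layered onto it.'' Consistency-strength comparisons never yield joint consistency by themselves (for instance, $V=L$ costs no consistency strength at all, yet cannot be layered onto a measurable cardinal), and making $V=\HOD$ hold alongside a rank-to-rank embedding is a theorem, not an observation. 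The paper does exactly this work: by \cite[Theorem 1.7]{MR3320593} and \cite[Section 3]{MR3902806}, $\mathrm{Con}(\ZFC+\mathrm{I2})$ yields a model with an $\mathrm{I3}_1$-embedding $j:V_\lambda\to V_\lambda$ in which every $H_{\kappa^+}$, for $\kappa$ inaccessible, carries a parameter-free definable well-ordering; then $V_\lambda$ itself is the required $\ZFC$ model of $\VP+V{=}\HOD$ containing the needed embedding, and the final model is $V_\lambda[G]$.

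The second gap is the one you flag yourself, and it is the heart of the theorem: preservation of the HOD Hypothesis. Weak homogeneity of $\PPP_U$ together with its ordinal definability in $W$ gives $\HOD^{W[G]}\subseteq W$, but your dichotomy argument needs the Solovay partitions, which live in $W$, to lie in $\HOD^{W[G]}$; that requires the \emph{opposite} inclusion $W\subseteq\HOD^{W[G]}$, i.e., that $W$ is ordinal-definable as a class in $W[G]$, and neither of your proposed patches delivers this as stated: the Laver--Woodin ground-definability parameter (roughly $\mathcal{P}\bigl((|\PPP_U|^+)\bigr)^W$) is not obviously OD in $W[G]$, and there is no off-the-shelf lemma that small homogeneous forcing over a model of $V=\HOD$ cannot shrink $\HOD$. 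One genuine way to close the gap would be to arrange in addition that $W$ has no proper grounds; then, by Usuba's downward directedness of grounds, $W$ is the mantle of $W[G]$, hence parameter-free definable there, whence $\HOD^{W[G]}=W$ and your stationary-partition argument runs (after also absorbing the points of $W$-cofinality $\kappa$ into one piece of the partition). But that is a substantial extra idea absent from your sketch. The paper sidesteps the entire issue by citing the theorem that both $\VP$ and the HOD Hypothesis are preserved by \emph{arbitrary} set forcing (\cite[Corollary 8]{MR3821636}); since the HOD Hypothesis holds trivially in $V_\lambda$ (where $V=\HOD$), no dichotomy analysis, and no extendible cardinal in the ground model, is needed at all.
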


In particular,
the consistency of $\mathrm{ZFC}$ together with exacting cardinal \textit{below} an extendible cardinal does not refute the HOD Conjecture (assuming the consistency of I2).

Finally, in \S\ref{SectSR} we give new characterizations of ultraexacting and exacting cardinals in terms of principles of structural reflection, improving the results in \cite{ABL}. The characterizations show that these cardinals fit nicely in the hierarchy of large cardinals when they are viewed under the general framework of structural reflection (see \cite{Ba:SR}). 

It is shown in  \cite{BL2} that the smallest $C^{(n)}$-strongly unfoldable cardinal can be characterized in terms of Structural Reflection as the smallest $\mu$ with the property that if $\mathcal{C}$ is a class of structures of the same signature, which is  $\Sigma_{n+1}$-definable from parameters in $V_\mu$, and $B \in \mathcal{C}$ has size $\mu$, then there is $A \in \mathcal{C}$ of size $<\mu$ and an elementary embedding 
\[j: A\to B,\]
provided that $\mathcal{C}$ contains some structure of size $<\mu$. In general, this characterizes $C^{(n)}$-strong unfoldability, with the exception that limits of $C^{(n-1)}$-extendible cardinals also satisfy this  property.

We end section \S\ref{SectSR} by showing (Theorem \ref{TheoremExactingUnfoldable})  that  exacting  cardinals are also characterized in terms of Structural Reflection as a two-cardinal variant of $C^{(n)}$-strong unfoldability. The characterization is obtained by adding a second cardinal constraint to the structures considered. Let $n \geq 2$. Then, $\lambda$ is exacting if and only if for some $\mu$, for every class of structures $\mathcal{C}$ of the same signature $\Sigma_n$-definable from parameters in $V_\mu \cup \{\lambda\}$ and every $B\in \mathcal{C}$ of type $\langle\mu,\lambda\rangle$, there is $A\in \mathcal{C}$ of type $\langle\nu,\lambda\rangle$ with \(\nu < \mu\) and an elementary embedding $j: A\to B$. Ultraexacting cardinals admit a similar characterization in which the embedding $j$ is required to be a square root of a fixed embedding. 
\section{Preliminaries}
Let us recall the definitions of exact and ultraexact embeddings. Recall that an embedding $j: M \to N$ is \textit{elementary} if it is truth-preserving, i.e., if for all tuples $a \in [M]^{<\mathbb{N}}$ and all formulas $\phi$, we have $M\models \phi(a)$ if and only if $N\models\phi(j(a))$. By convention, all elementary embeddings occurring in this article are assumed to be non-trivial, i.e., different from the identity.

\begin{definition}[\cite{BL, ABL}]
  Let $n>0$ be a natural number and let  $\lambda$   be a limit cardinal. 
 Given a cardinal $\lambda<\eta\in C^{(n)}$, an elementary submodel $X$ of $V_\eta$ with $V_\lambda\cup\{\lambda\}\subseteq X$, and a cardinal $\lambda<\zeta\in C^{(n+1)}$, an elementary embedding $j:X\to V_\zeta$ is an \emph{$n$-exact embedding at $\lambda$} if $j(\lambda)=\lambda$,  and $j\restriction\lambda$ is not the identity on $\lambda$. If, moreover, we require that $j\restriction V_\lambda\in X$, then we say that $j$ is an \emph{$n$-ultraexact embedding at $\lambda$}.
\end{definition}

The following lemma from \cite{ABL} shows that the notions of exact and ultraexact  embedding are independent of $n$ (for $n>0$).

\begin{lemma}\label{lemma:Equivalentexact}
Given a natural number $n>0$, the following statements are equivalent for every  limit ordinal $\lambda$ and every set $x$: 
\begin{enumerate}
    \item There is an $n$-exact ($n$-ultraexact) embedding $j:X\to V_\zeta$ at $\lambda$ with $x\in X$ and $j(x)=x$. 
    \item There are elements $\eta$ and $\zeta$ of $C^{(2)}$ greater than $\lambda$, an elementary submodel $X$ of $V_\eta$ with $V_\lambda\cup\{\lambda,x\}\subseteq X$, and an elementary embedding $j:X\to V_\zeta$ with $j(\lambda)=\lambda$, $j(x)=x$, $j\restriction\lambda \neq {\rm Id}_\lambda$ (and $j\restriction V_\lambda \in X$).
    \item For every $\zeta>\lambda$ with $x\in V_\zeta$, there is an elementary submodel $X$ of $V_\zeta$ with  $V_\lambda\cup\{\lambda,x\}\subseteq X$, and an elementary embedding $j:X\to V_\zeta$ with $j(\lambda)=\lambda$, $j(x)=x$, $j\restriction \lambda \neq {\rm Id}_\lambda$  (and $j\restriction V_\lambda \in X$). 
\end{enumerate}
\end{lemma}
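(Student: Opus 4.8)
The plan is to run the cycle by disposing of the routine directions and reducing the content to a single ``transfer engine'' that both $(1)$ and $(2)$ feed into. First I would observe that $(3)\Rightarrow(1)$ is immediate: pick any $\zeta\in C^{(n+1)}$ with $\zeta>\lambda$ and $x\in V_\zeta$, apply $(3)$ at this $\zeta$, and set $\eta=\zeta$, which lies in $C^{(n)}$ because $C^{(n+1)}\subseteq C^{(n)}$; the resulting datum is exactly an $n$-exact embedding. The implication $(3)\Rightarrow(2)$ is the same argument with $\zeta\in C^{(2)}$. Conversely, both $(1)$ and $(2)$ supply the \emph{same reduced data}: an elementary $X\prec V_\eta$ with $V_\lambda\cup\{\lambda,x\}\subseteq X$ and an elementary $j:X\to V_\zeta$ satisfying $j(\lambda)=\lambda$, $j(x)=x$, $j\restriction\lambda\neq\mathrm{Id}_\lambda$ (and, in the ultraexact case, $j\restriction V_\lambda\in X$), where $\eta\in C^{(1)}$ and $\zeta\in C^{(2)}$. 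Indeed $C^{(n)}\subseteq C^{(1)}$ and $C^{(n+1)}\subseteq C^{(2)}$ for every $n>0$, so it suffices to show that this reduced data already yields $(3)$; this is what explains the independence from $n$, since only the inclusions into $C^{(1)}$ and $C^{(2)}$ are used.

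For the engine, put $e:=j\restriction V_\lambda$. Because $V_\lambda\subseteq X\prec V_\eta$ and $j(\lambda)=\lambda$, the map $e$ sends $V_\lambda$ into $V_\lambda$ and is nontrivial (as $j\restriction\lambda\neq\mathrm{Id}_\lambda$); by a Gaifman-style argument it is in fact a fully elementary self-embedding of $V_\lambda$. The first computation is the transfer equation: for every formula $\phi$, every tuple $\bar a\in V_\lambda$, and every tuple $\bar c$ of elements of $X$ fixed by $j$,
\[
V_\eta\models\phi(\bar a,\bar c,\lambda,x)\iff V_\zeta\models\phi(e\bar a,\bar c,\lambda,x),
\]
obtained by composing $X\prec V_\eta$ with the elementarity of $j$. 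Combining this with $\eta\in C^{(1)}$ and $\zeta\in C^{(2)}\subseteq C^{(1)}$ gives \emph{$\Sigma_1$-correctness of $e$ over $V$}: for every $\Sigma_1$ formula and $j$-fixed parameters, $V\models\phi(\bar a,\bar c,\lambda,x)\iff V\models\phi(e\bar a,\bar c,\lambda,x)$.

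The decisive point is that the satisfaction predicate ``$V_\gamma\models\psi$'' is uniformly $\Delta_1$ in $\gamma$ and its arguments. Hence, for \emph{any} formula $\psi$ whatsoever, the assertion ``$V_\gamma\models\psi(\bar a,\lambda,x)$'' is $\Sigma_1$, so $\Sigma_1$-correctness applies to it. Feeding in any ordinal $\gamma\in X$ that is fixed by $j$ (as an extra fixed parameter $\bar c$) therefore yields \emph{full} preservation at level $\gamma$: for all $\psi$ and all $\bar a\in V_\lambda$, $V_\gamma\models\psi(\bar a,\lambda,x)\iff V_\gamma\models\psi(e\bar a,\lambda,x)$. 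Letting $X_\gamma$ be a Skolem hull of $V_\lambda\cup\{\lambda,x\}$ in $V_\gamma$ and setting $j_\gamma\big(t^{V_\gamma}(\bar a,\lambda,x)\big):=t^{V_\gamma}(e\bar a,\lambda,x)$ then produces a witness for $(3)$ at level $\gamma$: $j_\gamma$ is well-defined and fully elementary by the preservation just established, $j_\gamma(\lambda)=\lambda$, $j_\gamma(x)=x$, and $j_\gamma\restriction\lambda=e\restriction\lambda\neq\mathrm{Id}_\lambda$. (The ultraexact clause is maintained by additionally carrying $j\restriction V_\lambda$ as a fixed parameter.) This already covers $\gamma=\lambda+1$, $\gamma=\lambda\cdot 2$, and every $\gamma\in C^{(2)}$ fixed by $j$.

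The step I expect to be the main obstacle is upgrading this to $(3)$ at \emph{every} $\zeta'>\lambda$, including ordinals that $j$ \emph{moves}. For such $\zeta'$ the same computation only yields the cross-level identity $V_{\zeta'}\models\psi(\bar a,\lambda,x)\iff V_{j(\zeta')}\models\psi(e\bar a,\lambda,x)$, which (taking $\bar a$ empty) gives an elementary equivalence $V_{\zeta'}\equiv V_{j(\zeta')}$ but not a self-embedding. The plan here is to promote this cross-level theory-isomorphism to a self-isomorphism over $V_\lambda$ by a back-and-forth between $V_{\zeta'}$ and $V_{j(\zeta')}$, exploiting their elementary equivalence together with the elementary structure of $e$; alternatively, to reduce an arbitrary $\zeta'$ to the fixed-point case by passing to a $j$-closed point above it (where the engine applies) and then pulling the witness back down, taking care that the recovered map at $\zeta'$ is \emph{fully} elementary rather than merely $\Sigma_1$-elementary. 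This last point is precisely the subtlety the $\Delta_1$-compression of satisfaction was designed to handle, and I expect the bulk of the genuine work to lie in making the descent to moved levels produce full, rather than partial, elementarity.
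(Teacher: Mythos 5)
There is a genuine gap, and you locate it yourself: your engine only produces witnesses at levels $\gamma\in X$ that are fixed by $j$, and neither of your two suggestions for levels that $j$ moves can work. A back-and-forth between $V_{\zeta'}$ and $V_{j(\zeta')}$ cannot produce the required map, since these structures have different ordinal heights and are not isomorphic; and ``pulling a witness back down'' from a fixed point $\gamma>\zeta'$ re-encounters the identical obstruction one level lower, because $\zeta'$ need not belong to the new domain nor be fixed by the new embedding. The missing idea is a \emph{least-counterexample} argument exploiting the $\Sigma_2$-correctness of the target $\zeta\in C^{(2)}$ --- a hypothesis your argument never actually uses, even though it is exactly what $C^{(2)}$ is there for. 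Concretely: since a witnessing pair $(X',j')$ at level $\zeta'$ is an object of rank about $\zeta'+\omega$ and elementarity is decided by the ($\Delta_1$, hence absolute) satisfaction relation for set structures, the statement ``there is some $\zeta'>\lambda$ with $x\in V_{\zeta'}$ admitting no witnessing pair'' is $\Sigma_2$ in the parameters $\lambda,x$. So if $(3)$ failed, then $V_\zeta\prec_{\Sigma_2}V$ (with $\lambda,x=j(x)\in V_\zeta$) would see a counterexample below $\zeta$; pulling this back through $j$ and using $X\prec V_\eta$, the model $V_\eta$ also sees one, and both $V_\eta$ and $V_\zeta$ compute the \emph{least} counterexample $\zeta'_0$ correctly. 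Hence $\zeta'_0$ is definable from $\lambda,x$ in $V_\eta$, so $\zeta'_0\in X$ and $j(\zeta'_0)=\zeta'_0$; then $X\cap V_{\zeta'_0}\prec V_{\zeta'_0}$ and $j\restriction(X\cap V_{\zeta'_0}):X\cap V_{\zeta'_0}\to V_{\zeta'_0}$ is a witnessing pair at $\zeta'_0$ (including the ultraexact clause, as $j\restriction V_\lambda\in X$ and $j\restriction V_\lambda\in V_{\lambda+1}\subseteq V_{\zeta'_0}$), a contradiction. This definability-plus-restriction mechanism is precisely what the present paper displays in the proof of the Proposition immediately following the Lemma (for the Lemma itself it cites \cite{ABL}).

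A second, repairable flaw: at a $j$-fixed level $\gamma$ you build the witness as a term map on ``the Skolem hull of $V_\lambda\cup\{\lambda,x\}$ in $V_\gamma$,'' but $V_\gamma$ has no definable Skolem functions in general (no definable well-ordering), so the hull of elements definable from $V_\lambda\cup\{\lambda,x\}$ need not be an elementary submodel of $V_\gamma$; conversely, an elementary hull obtained from externally chosen Skolem functions is not covered by your definable terms. Since the Lemma requires the domain to be an elementary submodel of $V_\gamma$, the construction as written does not deliver a witness. The correct move at a fixed $\gamma\in X$ is again restriction of the \emph{given} embedding: $X\cap V_\gamma\prec V_\gamma$ and $j\restriction(X\cap V_\gamma)$ is elementary into $V_{j(\gamma)}=V_\gamma$, so no new hull is needed. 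Your routine directions $(3)\Rightarrow(1)$, $(3)\Rightarrow(2)$, and the reduction of $(1)$ and $(2)$ to common data with $\eta\in C^{(1)}$, $\zeta\in C^{(2)}$, are all correct.
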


 Thus, an $n$-exact ($n$-ultraexact)  embedding at $\lambda$ exists (for $n>0$) if and only if for some $\zeta \in C^{(2)}$ greater than $\lambda$ (equivalently, for the least such $\zeta$), there is an elementary embedding $j:X\to V_\zeta$, where $X$ is an elementary substructure of $V_\zeta$ that contains $V_\lambda \cup \{ \lambda\}$, such that $j(\lambda)=\lambda$, $j\restriction \lambda \ne {\rm Id}_\lambda$ (and $j\restriction V_\lambda \in X$).  

In view of Lemma \ref{lemma:Equivalentexact}, we shall say that an embedding $j:X\to V_\zeta$ is an \emph{exact} (\emph{ultraexact})  embedding at $\lambda$ if it is an $n$-exact ($n$-ultraexact) embedding at $\lambda$, for some $n>0$. Now we define:

\begin{definition}\label{defn:exacting}
A cardinal $\lambda$ is \emph{exacting} (\emph{ultraexacting}) if there exists an exact (ultraexact) embedding at $\lambda$.   
\end{definition}

Thus, $\lambda$ is exacting (ultraexacting) if and only if there is an elementary embedding $j:X\to V_\zeta$, where $\zeta$ is some cardinal in $C^{(2)}$ above $\lambda$ (equivalently, the least such) and $X$ is an elementary substructure of $V_\zeta$ that contains $V_\lambda \cup \{ \lambda\}$, such that  $j(\lambda)=\lambda$, $j\restriction \lambda \ne {\rm Id}_\lambda$ (and $j\restriction V_\lambda \in X$). Equivalently, $\lambda$ is exacting (ultraexacting) if such an embedding
exists for all $\zeta>\lambda$.

The following proposition shows that our definition of exacting  and ultraexacting cardinals are equivalent to the definitions given in \cite[2.4, 3.3]{ABL}.

\begin{prop}
 A cardinal $\lambda$ is \emph{exacting} (\emph{ultraexacting}) iff for every $\zeta$ in $C^{(2)}$ above $\lambda$ (equivalently, the least such) and every $\alpha <\lambda$  there is an elementary embedding $j:X\to V_\zeta$, where  $X$ an elementary substructure  $V_\zeta$ that contains $V_\lambda \cup \{ \lambda\}$,  such that  $j(\lambda)=\lambda$, $j\restriction \alpha = {\rm Id}_\alpha$, $j\restriction \lambda \ne {\rm Id}_\lambda$ (and $j\restriction V_\lambda \in X$).     
\end{prop}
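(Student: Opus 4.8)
The backward direction is immediate: applying the right-hand side with $\alpha=0$ produces an exact (ultraexact) embedding at $\lambda$, so $\lambda$ is exacting (ultraexacting) by Definition~\ref{defn:exacting}. The content is the forward direction, which amounts to showing that when $\lambda$ is exacting the critical points of exact embeddings at $\lambda$ are unbounded in $\lambda$; for then, given $\alpha<\lambda$, any embedding with critical point $>\alpha$ satisfies $j\restriction\alpha=\mathrm{Id}_\alpha$. The plan is to fix $\zeta^*$ to be the least element of $C^{(2)}$ above $\lambda$, prove the statement for embeddings into $V_{\zeta^*}$, and transfer to an arbitrary $\zeta$ at the end via Lemma~\ref{lemma:Equivalentexact} (this is the content of the parenthetical ``equivalently, the least such''). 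Throughout, the ultraexacting case is handled by carrying the extra clause $j\restriction V_\lambda\in X$ through each step.

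First I would establish the structural heart of the argument: an exact embedding canonically restricts to an elementary self-embedding of $V_\lambda$. That is, if $\map{j}{X}{V_{\zeta^*}}$ is exact at $\lambda$, then $e:=j\restriction V_\lambda$ is a nontrivial elementary embedding $V_\lambda\to V_\lambda$. That $e$ maps $V_\lambda$ into $V_\lambda$ follows from $j(\lambda)=\lambda$ and elementarity applied to ``$\mathrm{rank}(a)<\lambda$''. For full elementarity, fix a formula $\phi$ and $a\in V_\lambda$, and let $\psi(y,\ell)$ be the relativization of $\phi(y)$ to $V_\ell$. Since $\lambda,V_\lambda\in X\prec V_{\zeta^*}$, we have $X\models\psi(a,\lambda)$ iff $V_\lambda\models\phi(a)$; applying elementarity of $j$ and $j(\lambda)=\lambda$ gives $X\models\psi(a,\lambda)$ iff $V_\lambda\models\phi(j(a))$, whence $V_\lambda\models\phi(a)\Leftrightarrow V_\lambda\models\phi(e(a))$. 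Nontriviality is exactly $j\restriction\lambda\neq\mathrm{Id}_\lambda$.

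Next I would show that the critical sequence $\kappa_0=\crit{e}$, $\kappa_{n+1}=e(\kappa_n)$ is cofinal in $\lambda$. Suppose not, and set $\mu=\sup_n\kappa_n<\lambda$. Applying $e$ to $\seq{\kappa_n}{n<\omega}\in V_\lambda$ shows $e(\mu)=\mu$, hence $e(\mu+k)=\mu+k$ for all $k<\omega$, and in particular $e$ maps $V_{\mu+2}$ into $V_{\mu+2}$. Running the elementarity computation of the previous paragraph at the level $\mu+2<\lambda$ (so that satisfaction in $V_{\mu+2}$ is expressed inside $V_\lambda$) shows that $e\restriction V_{\mu+2}\colon V_{\mu+2}\to V_{\mu+2}$ is elementary and nontrivial, contradicting Kunen's inconsistency theorem. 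Thus $\lambda=\sup_n\kappa_n$, and since the critical sequence of $j$ coincides with that of $e$, the critical point of $j$ is the first term of a sequence cofinal in $\lambda$.

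It then remains to produce, for each $n$, an exact embedding into $V_{\zeta^*}$ whose critical point is $\kappa_n$, and this boosting step is the crux. The plan is to reflect the statement $\Psi_{\geq}(\sigma)\equiv$ ``there is an exact embedding into $V_{\zeta^*}$ at $\lambda$ with critical point $\geq\sigma$ (with $j\restriction V_\lambda\in X$ in the ultraexacting case)'' through a second exact embedding $\map{j}{X}{V_{\zeta'}}$, with $\zeta'\in C^{(2)}$ chosen large enough that all exact embeddings into $V_{\zeta^*}$ lie in $V_{\zeta'}$. Since $j(\zeta^*)=\zeta^*$ and $\Psi_{\geq}$ is absolute between $V_{\zeta'}$ and $V$, elementarity of $j$ yields $\Psi_{\geq}(\sigma)\Rightarrow\Psi_{\geq}(j(\sigma))$ for every $\sigma<\lambda$; starting from $\Psi_{\geq}(\crit{j})$ and iterating along the critical sequence of $j$, which by the previous step is cofinal in $\lambda$, gives $\Psi_{\geq}(\sigma)$ for cofinally many — hence, by downward monotonicity, all — $\sigma<\lambda$. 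The hard part, and the step I expect to demand the most care, is the base case: one must know that $\crit{j}$ is itself realized as (or bounded by) a critical point of an exact embedding into the fixed target $V_{\zeta^*}$, not merely into the larger $V_{\zeta'}$ — indeed the least critical point genuinely grows with the target, so this is not automatic. This is exactly where Lemma~\ref{lemma:Equivalentexact} must be invoked, as it is the tool for moving exact embeddings between targets while preserving the data below $\lambda$; checking that it supplies the critical point needed for the base case, and that the clause $j\restriction V_\lambda\in X$ survives the transfer, is the delicate point of the proof.
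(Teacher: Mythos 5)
Your route is genuinely different from the paper's, and most of it is sound. Step 1 (an exact embedding restricts to a nontrivial elementary $e\colon V_\lambda\to V_\lambda$), Step 2 (the critical sequence of $e$ is cofinal in $\lambda$, via Kunen's inconsistency applied to $e\restriction V_{\mu+2}$ at $\mu=\sup_n\kappa_n$), and the reflection scheme $\Psi_\geq(\sigma)\Rightarrow\Psi_\geq(j(\sigma))$ iterated along the critical sequence are all correct. For comparison, the paper argues by contradiction: it takes the least counterexample $\alpha$ (with witnessing $\zeta$, and $\zeta'$ the next element of $C^{(2)}$), notes that $\alpha$ and $\zeta$ are then definable in $V_{\zeta'}$ from $\lambda$ and hence fixed by any exact $j\colon X\to V_{\zeta'}$, and concludes by restricting $j$ to $X\cap V_\zeta$; your critical-sequence analysis is essentially the machinery that underwrites that terse final step, so your write-up is in this respect more explicit than the paper's.

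The genuine gap sits exactly where you flag it, and the tool you defer to cannot close it. Lemma \ref{lemma:Equivalentexact} preserves only the data ``$x\in X$ and $j(x)=x$'' for a single designated set $x$; fixing one set does not yield $j\restriction\alpha=\mathrm{Id}_\alpha$ or any lower bound on the critical point, so invoking that lemma gives you neither the base case $\Psi_\geq(\crit{j})$ nor the final transfer from $\zeta^*$ to arbitrary $\zeta$. What closes both is the restriction argument that is the centerpiece of the paper's proof: if $j\colon X\to V_{\zeta'}$ is exact and $\xi$ is an ordinal with $\lambda<\xi<\zeta'$, $\xi\in X$ and $j(\xi)=\xi$, then $X\cap V_\xi\preceq V_\xi$ and $j\restriction(X\cap V_\xi)\colon X\cap V_\xi\to V_\xi$ is elementary with the \emph{same} critical point, fixes $\lambda$, and in the ultraexacting case satisfies $j\restriction V_\lambda\in X\cap V_\xi$ (that set has rank $\lambda+1<\xi$), so the extra clause survives automatically. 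Taking $\xi=\zeta^*$, which is fixed by $j$ because it is definable from $\lambda$ in $V_{\zeta'}$, gives your base case at once. For the transfer, note that an embedding into $V_{\zeta^*}$ cannot be upgraded to one into a larger $V_\zeta$, since $V_{\zeta^*}\not\prec V_\zeta$; so rather than proving the statement for $\zeta^*$ and moving up, run your whole argument separately for each $\zeta\in C^{(2)}$ above $\lambda$: let $\zeta'$ be the least element of $C^{(2)}$ above $\zeta$, so that $\zeta$ is definable in $V_{\zeta'}$ as the largest element of $C^{(2)}$ and is therefore fixed by any exact $j\colon X\to V_{\zeta'}$; then restriction gives the base case for the target $V_\zeta$, and your iteration along the critical sequence finishes the proof.
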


\begin{proof}
 Suppose, aiming for a contradiction, that for some $\zeta \in C^{(2)}$ above $\lambda$, for some $\alpha <\lambda$ the required embedding does not exist. Let $\alpha$ be the least witness.   Let $\zeta'$ be the first cardinal in $C^{(2)}$ above $\zeta$. Let $j: X\to V_{\zeta'}$ witness that $\lambda$ is exacting (ultraexacting). As $\alpha$ and $\zeta$ are definable in $V_{\zeta'}$, both $\alpha$ and $\zeta$ belong to $X$ and they are fixed by $j$. Then $X\cap V_\zeta$ is an elementary substructure of $V_\zeta$ and $j\restriction X\cap V_\zeta :X\cap V_\zeta \to V_\zeta$ witnesses that $\lambda$ is exacting (ultraexacting), with $j\restriction \alpha= {\rm Id}_\alpha$, contrary to our assumption on $\alpha$. 
\end{proof}

Recall that an \emph{${\rm I}0$ embedding} (at a cardinal $\lambda$) is an 
elementary embedding $$j:L(V_{\lambda +1})\to L(V_{\lambda +1})$$ with critical point less than $\lambda$. Axiom ${\rm I}0$ asserts that there exists an ${\rm I}0$ embedding (definable, possibly from parameters).

We shall show (Theorem \ref{TheoremEquiconsistencyI0}) that the existence of an ultraexacting cardinal is equiconsistent with the existence of an ${\rm I}0$ embedding. But notice that since being an ultraexacting cardinal is $\Sigma_3$-expressible, the first ultraexacting cardinal, if it exists, is below the first extendible cardinal.

\section{Large cardinals and ordinal definable sets} \label{SectOD}
Recall that an ${\rm I}1$ embedding is an elementary embedding from $V_{\lambda +1}$ to itself, with $\lambda$ a limit ordinal. If $j:L(V_{\lambda +1})\to L(V_{\lambda +1})$ is an ${\rm I}0$ embedding, then clearly $j\restriction V_{\lambda +1}$ is an ${\rm I}1$ embedding. Further, if $A$ is subset of $V_{\lambda +1}$ that is definable in $L(V_{\lambda +1})$ with parameters in $V_{\crit{j}}\cup \{\lambda\}$, then $j\restriction V_{\lambda +1}:(V_{\lambda +1}, A)\to (V_{\lambda +1}, A)$ is an elementary embedding.

We shall prove next a characterization of exacting and ultraexacting cardinals in terms of ${\rm I}1$ embeddings expanded with ordinal definable predicates.
We prove this characterization  from $\mathrm{ZF} + \mathrm{DC}_\lambda$, rather than $\ZFC$. Recall that $\mathrm{DC}_\lambda$ (or $\lambda{-}\mathsf{DC}$) is the assertion that every ${<}\lambda$-closed tree with no terminal points has a branch of length $\lambda$.
Moreover, recall that $\OD$ is the class of all \emph{Ordinal Definable} sets. It is well known that $\OD$ is a $\Sigma_2$ class, and that there exists a $\Sigma_2$-definable well-ordering, $<_\OD$, of  $\OD$ (see, e.g., \cite{Jech}).

\begin{theorem}[$\mathrm{ZF} + \mathrm{DC}_\lambda$]
    \label{lemma:ultra_exact_char}
   A cardinal $\lambda$ is ultraexacting if and only if for every ordinal definable subset $A$ of $V_{\lambda+1}$, there exists an elementary embedding
        \(j: (V_{\lambda+1},A) \to (V_{\lambda +1} , A)\).
        \end{theorem}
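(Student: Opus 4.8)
The plan is to prove the two implications separately, reading the equivalence as a translation between ultraexact embeddings $j\colon X\to V_\zeta$ (which live above $\lambda$) and self-embeddings of the expanded structure $(V_{\lambda+1},A)$ (which live at $\lambda$). Throughout I will use Lemma~\ref{lemma:Equivalentexact}, which lets me work with any convenient $\zeta\in C^{(2)}$ above $\lambda$, together with the $\Sigma_2$-definable global well-ordering $<_\OD$ and the fact that $\zeta\in C^{(2)}$ makes $V_\zeta$ compute $\OD$ and $<_\OD$ correctly.

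For the forward direction, fix an ordinal definable $A\subseteq V_{\lambda+1}$ and start from an ultraexact embedding $j\colon X\to V_\zeta$ with $\zeta\in C^{(2)}$ as provided by Definition~\ref{defn:exacting}. The restriction $e:=j\restriction V_\lambda$ lies in $X$, is elementary and, since $j\restriction\lambda\neq\mathrm{Id}_\lambda$, nontrivial on $V_\lambda$; because $j(\lambda)=\lambda$ a routine argument shows $e\colon V_\lambda\to V_\lambda$ is cofinal. Using $\mathrm{DC}_\lambda$ I prolong $e$ to its canonical extension $\hat e\colon V_{\lambda+1}\to V_{\lambda+1}$, $\hat e(B)=\bigcup_{\alpha<\lambda}e(B\cap V_\alpha)$, and verify by the standard argument that a cofinal elementary self-map of $V_\lambda$ induces an $\mathrm{I}1$-style elementary $\hat e$ of $V_{\lambda+1}$; moreover $\hat e$ and $j$ agree on $X\cap V_{\lambda+1}$. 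It then remains only to see that $\hat e$ preserves $A$.

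Arranging that $\hat e$ preserves $A$ is the substance of this direction, and I expect it to be the main obstacle. Writing membership in $A$ through $<_\OD$, one has $x\in A\iff V_\zeta\models\psi(x,c)$ for $x\in V_{\lambda+1}$, where $c$ is the $<_\OD$-least code of $A$ and $\psi$ is otherwise parameter-free; here $\zeta\in C^{(2)}$ guarantees this computation is correct. The embedding preserves $A$ exactly when it fixes $c$, so I choose the ultraexact embedding to fix $c$: its defining ordinals below $\lambda$ are absorbed by pushing $\crit{j}$ above them (via the Proposition preceding this section, which allows $j\restriction\alpha=\mathrm{Id}_\alpha$ for any $\alpha<\lambda$), while the remaining parameters are handled by putting $c$ into $X$ and exploiting the $<_\OD$-minimality of $c$ together with the elementarity of $j\colon X\prec V_\zeta\to V_\zeta$ to force $j(c)=c$. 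Granting $j(c)=c$, the computation $x\in A\iff V_\zeta\models\psi(x,c)\iff V_\zeta\models\psi(j(x),c)\iff j(x)\in A$ shows that $j$, and hence $\hat e$, preserves $A$ on $X\cap V_{\lambda+1}$, and unwinding the $<_\OD$-definition (again with $\mathrm{DC}_\lambda$ and the $\Sigma_2$-correctness of $V_\zeta$) transfers this to all of $V_{\lambda+1}$. Securing $j(c)=c$ for the canonical code is the delicate point.

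For the backward direction I reverse this coding. By Lemma~\ref{lemma:Equivalentexact} it suffices, for the least $\zeta\in C^{(2)}$ above $\lambda$, to build $X\prec V_\zeta$ with $V_\lambda\cup\{\lambda\}\subseteq X$ and an elementary $j\colon X\to V_\zeta$ with $j(\lambda)=\lambda$, $j\restriction\lambda\neq\mathrm{Id}_\lambda$ and $j\restriction V_\lambda\in X$. I fix a single ordinal definable $A_0\subseteq V_{\lambda+1}$ which, via a pairing of $V_{\lambda+1}$ with itself, codes both a $\Sigma_2$-satisfaction predicate and $<_\OD$ on the relevant sets, and an isomorphic copy of a chosen elementary substructure $\bar X\prec V_\zeta$ of size $2^\lambda$ containing $V_\lambda\cup\{\lambda\}$ (here $|V_{\lambda+1}|=2^\lambda$ and $\mathrm{DC}_\lambda$ are used to construct $\bar X$ and the coding in the absence of full choice). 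Any elementary self-embedding $e_0\colon(V_{\lambda+1},A_0)\to(V_{\lambda+1},A_0)$ is nontrivial, so a Kunen-style argument gives $\crit{e_0}<\lambda$ and $e_0(\lambda)=\lambda$; decoding $e_0$ through $A_0$ then yields the required $j\colon\bar X\to V_\zeta$. The main obstacle here is the ultraexact clause $j\restriction V_\lambda\in X$: I must design $A_0$ so that the decoded model places $e_0\restriction V_\lambda$ inside $X$, and so that decoding produces a genuine embedding into $V_\zeta$ rather than a merely internal self-map of the coded structure. This is exactly where $\mathrm{DC}_\lambda$ is essential, both to perform the size-$2^\lambda$ coding and to extract, from the $\lambda$-indexed approximations determined by $e_0$, the branch witnessing $e_0\restriction V_\lambda\in X$.
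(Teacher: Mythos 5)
Your proposal does not go through; both directions have genuine gaps, and in each case the missing idea is precisely the one the paper's proof is built around.

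In the forward direction there are two problems. First, the claim that ``a cofinal elementary self-map of $V_\lambda$ induces an I1-style elementary $\hat e$ of $V_{\lambda+1}$'' by a standard argument is false: the canonical extension $e^+$ of an I3-embedding is in general only $\Sigma_0$-elementary (this is exactly the gap between I3 and I1, which have very different consistency strengths). In the present setting $\hat e$ \emph{is} fully elementary, but the only available proof of this is the reflection argument through $X$: since $j\restriction V_\lambda\in X$, the extension $\hat e$ is an \emph{element} of $X$, and one checks elementarity of $\hat e$ for parameters in $X\cap V_{\lambda+1}$ and then quotes $X\preceq V_\zeta$ to conclude that $X$ (hence $V$) believes $\hat e$ is elementary on all of $(V_{\lambda+1},A)$; so ultraexactness is needed already here, not only for preserving $A$. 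Second, and more seriously, your mechanism for securing $j(c)=c$ cannot work. The $<_\OD$-minimality of $c$ is minimality \emph{among codes of $A$}, so elementarity only gives you that $j(c)$ is the least code of $j(A)$; to extract $j(c)=c$ you would already need $j(A)=A$, which is what you are trying to prove --- the argument is circular. Moreover, the ordinal parameters occurring in $c$ may lie above $\lambda$, and there is no way to prescribe that an ultraexact embedding fix finitely many arbitrary ordinals above $\lambda$ (Lemma \ref{lemma:Equivalentexact} transfers a fixed $x$ between levels $\zeta$; it does not let you fix a new $x$). The paper's proof dissolves this difficulty with a least-counterexample argument: one assumes $A$ is the $<_\OD$-\emph{least} subset of $V_{\lambda+1}$ admitting no self-embedding. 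That property mentions no parameter except $\lambda$, so $A$ is definable in $V_\zeta$ from $\lambda$ alone, whence $A\in X$ and $j(A)=A$ come for free. A per-$A$ argument of the kind you propose cannot be patched into this; the contradiction structure is essential.

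In the backward direction the central object of your construction, ``an isomorphic copy of a \emph{chosen} elementary substructure $\bar X\prec V_\zeta$,'' is not ordinal definable: without something like a definable well-ordering of $V_{\lambda+1}$ there is no canonical choice of a single substructure or of a single coding of it, so your $A_0$ is not $\OD$ and the hypothesis cannot be applied to it. The paper instead takes $A$ to be the set of \emph{all} well-founded extensional relations $E\subseteq V_\lambda\times V_\lambda$ whose collapse is (isomorphic to) some $X\preceq V_\zeta$ with $V_\lambda\cup\{\lambda\}\subseteq X$; this \emph{set} is $\Delta_2$-definable from $\lambda$ and $\zeta$, even though no single member of it is $\OD$. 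This also resolves the obstacle you flag but do not overcome, namely arranging $j\restriction V_\lambda\in X$: with the set of all codes one may choose the code $E\in A$ \emph{after} the self-embedding $e_0$ of $(V_{\lambda+1},A)$ is in hand, so that $e_0\restriction V_\lambda$ lies in the collapse $M_E$ (such an $E$ exists by L\"owenheim--Skolem plus $\mathrm{DC}_\lambda$, since $e_0\restriction V_\lambda\in V_{\lambda+1}$); then $F=e_0(E)$ is again in $A$ because $e_0$ preserves $A$, and composing the collapse isomorphisms with the map $M_E\to M_F$ induced by $e_0$ yields an elementary $i:X_E\to V_\zeta$ agreeing with $e_0$ on $V_\lambda$ and satisfying $i\restriction V_\lambda\in X_E$. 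With a single structure fixed in advance, as in your plan, there is no reason whatsoever that $e_0\restriction V_\lambda$ should land inside it, and no amount of $\mathrm{DC}_\lambda$ repairs this.
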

    \begin{proof}
    First, assume $\lambda$ is ultraexacting and, towards a contradiction, that   
        \(A\) is the \(<_\OD\)-least subset of \(V_{\lambda+1}\) such that there is no elementary embedding
        from \((V_{\lambda+1},A)\) to itself.
        Fix \(\zeta\in C^{(2)}\) and  \(X\preceq V_{\zeta}\) such that \(V_\lambda \cup \{\lambda\}\subseteq X\), and
        \(j : X \to V_\zeta\) is an elementary embedding with \(\crit{j} < \lambda\), $j(\lambda)=\lambda$,  and \(j\restriction V_\lambda \in X\).
         Note that \(A\in V_\zeta\)
        and \(A\) is definable in \(V_\zeta\) from \(\lambda\),
        and therefore \(A\in X\) and \(j(A) = A\).
        Let \(k = j\restriction V_{\lambda+1}\). Then \(k\in X\), since \(j\restriction V_\lambda \in X\),
        and \(k\) is definable from \(j\restriction V_\lambda\) by $k(x)= \bigcup \{ j(x\cap V_\alpha):\alpha <\lambda\}$. 

        The main point is that, in \(X\), \(k\) is an elementary embedding from \((V_{\lambda+1},A)\) to itself.
        The latter claim follows from the fact that for all \(x\in V_{\lambda+1}\cap X\), 
        $$X\vDash ``(V_{\lambda+1},A)\vDash \varphi(x)"$$
        if and only if $$V_\zeta\vDash ``(V_{\lambda+1},A)\vDash \varphi(k(x))"$$
        by the elementarity of \(k\), and this holds if and only if 
        $$X\vDash ``(V_{\lambda+1},A)\vDash \varphi(k(x))"$$ since \(k(x)\in X\) and
        \(X\preceq V_\zeta\). This contradicts the choice of \(A\) as
        the \(\OD\)-least subset of $V_{\lambda+1}$ such that there is no elementary embedding
        from \((V_{\lambda+1},A)\) to itself.
        
        \medskip

For the converse, let $\zeta$ be the least element of $C^{(2)}$ above $\lambda$, and let $A$ be the set of all well-founded extensional relations $E\subseteq V_{\lambda}\times V_{\lambda}$ such that $\langle V_\lambda , E\rangle$ is isomorphic to some elementary substructure $X\preceq V_\zeta$ with $V_{\lambda}\cup \{\lambda\}\subseteq X$. Then $A$ is $\Delta_2$-definable with $\zeta$ and $\lambda$ as parameters. Note that $A$ is non-empty. To prove this, first observe that $\mathrm{DC}_\lambda$ implies that $V_\alpha$ can be wellordered for each $\alpha<\lambda$ (by induction on $\alpha$); using this and $\mathsf{DC}$, we see that $|V_\lambda| = \lambda$. Using this fact, $\mathrm{DC}_\lambda$ allows us to carry out the proof of the L\"owenheim-Skolem theorem to construct elementary substructures of $V_\zeta$ containing $V_\lambda$. Thus, indeed $A$ is nonempty.
        
Now, let $j : (V_{\lambda+1}, A)\to (V_{\lambda+1},A)$ be an elementary embedding and fix $E\in A$ such that $j\restriction V_\lambda$ belongs to the transitive collapse $M_E$ of $\langle V_\lambda ,E\rangle$. Let $F = j(E)$ and let $M_F$ be the transitive collapse of $\langle V_\lambda , F\rangle$. Then $j \restriction M_E : M_E\to M_F$ is elementary. Moreover there are $X_E,X_F\preceq V_\zeta$, both  including $V_{\lambda}\cup \{\lambda\}$, and isomorphisms $\pi_E: X_E \cong M_E$  and $\pi_F: X_F\cong M_F$. Now letting ${\rm Id}_{X_F}:X_F\to V_\zeta$ be the identity map, and letting 
        $$i:= {\rm Id}_{X_F}\circ \pi^{-1}_F\circ j\restriction M_E \circ \pi_E$$
        we have that $i:X_E\to V_\zeta$ is an elementary embedding that agrees with $j$ on $V_{\lambda}$, and moreover $i\restriction V_\lambda\in X_E$. Thus, $X_E$ and $i$ witness that $\lambda$ is ultraexacting.
\end{proof}

The characterization of ultraexacting cardinals given by Theorem \ref{lemma:ultra_exact_char} makes no reference to elementary substructures of $V_\zeta$, so it motivates the following re-definition of ultraexacting cardinals, which is the definition we shall use in the context where $\mathrm{DC}_\lambda$ fails:

\begin{definition}[$\mathrm{ZF}$] \label{DefUltraexactingZF} A cardinal $\lambda$ is \emph{ultraexacting} if and only if for every ordinal definable $A\subset V_{\lambda+1}$ there is an elementary embedding $j:(V_{\lambda+1},A) \to (V_{\lambda+1},A)$ with critical point ${<}\lambda$.
\end{definition}

\begin{remark}\label{remark1}
The proof of Theorem \ref{lemma:ultra_exact_char} above gives some additional information. Namely, given $\lambda$, if $\zeta$ is the least element of $C^{(2)}$ greater than $\lambda$, then the following are equivalent:
\begin{enumerate}
    \item $\lambda$ is ultraexacting.
    \item There is an elementary embedding from $(V_{\lambda +1}, A)$ to itself, where $A$ is the subset of $V_{\lambda +1}$ used in the proof of the theorem above and is $\Delta_2$-definable from the parameters $\lambda$ and $\zeta$. Namely, $A$ is the set of all well-founded extensional relations $E\subseteq V_{\lambda}\times V_{\lambda}$  isomorphic to some elementary substructure $X\preceq V_\zeta$ with $V_{\lambda}\cup \{\lambda\}\subseteq X$.
\end{enumerate}
\end{remark}

We also have the following equivalence in the case of exacting cardinals:

\begin{theorem}[$\mathrm{ZF} + \mathrm{DC}_\lambda$]\label{lemma:exact_char}
A cardinal $\lambda$ is exacting
if and only if or every non\-empty ordinal definable   subset $A$ of $V_{\lambda+1}$, there exist $x,y\in A$ and an elementary embedding $j: (V_{\lambda },x)\to (V_{\lambda },y)$.
\end{theorem}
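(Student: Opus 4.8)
The plan is to prove both directions in parallel with the proof of Theorem~\ref{lemma:ultra_exact_char}, the essential change being that the witnessing embeddings now go between the two \emph{distinct} structures \((V_\lambda,x)\) and \((V_\lambda,y)\). This reflects the fact that an exact embedding \(j\colon X\to V_\zeta\) need not satisfy \(j\restriction V_\lambda\in X\), so we can no longer expect a single predicate \(A\) to be preserved by the embedding.

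For the forward direction, suppose \(\lambda\) is exacting and, towards a contradiction, let \(A\) be the \(<_\OD\)-least nonempty ordinal definable subset of \(V_{\lambda+1}\) for which there are no \(x,y\in A\) admitting an elementary embedding \((V_\lambda,x)\to(V_\lambda,y)\). Since this property refers only to \(\lambda\) and to the \(\Sigma_2\)-definable order \(<_\OD\), the set \(A\) is definable from \(\lambda\). Fixing \(\zeta\in C^{(2)}\) above \(\lambda\) large enough for the definition of \(A\) to be absolute to \(V_\zeta\), and an exact embedding \(j\colon X\to V_\zeta\) with \(V_\lambda\cup\{\lambda\}\subseteq X\preceq V_\zeta\), \(j(\lambda)=\lambda\) and \(j\restriction\lambda\neq\Id\), we get \(A\in X\) and \(j(A)=A\). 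As \(A\neq\emptyset\) and \(A\in X\preceq V_\zeta\), choose \(x\in A\cap X\) and put \(y=j(x)\in j(A)=A\). Exactly as in Theorem~\ref{lemma:ultra_exact_char}, using that \(x,V_\lambda\in X\), that \(X\preceq V_\zeta\), and that \(j((V_\lambda,x))=(V_\lambda,y)\), the map \(j\restriction V_\lambda\) is elementary from \((V_\lambda,x)\) to \((V_\lambda,y)\), and it is nontrivial since \(j\restriction\lambda\neq\Id\). This contradicts the choice of \(A\).

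For the converse, assume the reflection property and let \(\zeta\) be the least element of \(C^{(2)}\) above \(\lambda\). Let \(A\subseteq V_{\lambda+1}\) be the ordinal definable set of codes \(E\) of elementary substructures \(X\preceq V_\zeta\) with \(V_\lambda\cup\{\lambda\}\subseteq X\), as in Theorem~\ref{lemma:ultra_exact_char}, but with the coding normalized so that the transitive collapse \(c_E\) of \(\langle V_\lambda,E\rangle\) fixes every ordinal \(\alpha<\lambda\) and sends a distinguished, definable marker of \(E\) to \(\lambda\). The set \(A\) is nonempty by the same L\"owenheim--Skolem argument, which uses \(\mathrm{DC}_\lambda\) and \(|V_\lambda|=\lambda\). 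Applying the hypothesis to \(A\) yields \(E,F\in A\) and an elementary embedding \(h\colon(V_\lambda,E)\to(V_\lambda,F)\); transporting \(h\) through the collapses gives an elementary embedding \(\bar h=c_F\circ h\circ c_E^{-1}\colon M_E\to M_F\), and composing with the collapse isomorphisms \(\pi_E\colon X_E\cong M_E\), \(\pi_F\colon X_F\cong M_F\) and the inclusion \(X_F\hookrightarrow V_\zeta\) produces \(i\colon X_E\to V_\zeta\), just as the embedding \(i\) constructed in Theorem~\ref{lemma:ultra_exact_char}.

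The main obstacle, and the point that distinguishes this from the ultraexacting case, is to verify that \(i\) really is an exact embedding, i.e. that \(i(\lambda)=\lambda\) and \(i\restriction\lambda\neq\Id\). Since \(h\) is not the restriction of a global embedding, its nontriviality must be carried through the transitive collapse by hand, and this is exactly what the two coding conventions secure: the marker gives \(\bar h(\lambda)=\lambda\), hence \(i(\lambda)=\lambda\), while the self-representation of the ordinals below \(\lambda\) gives \(\bar h\restriction\lambda=h\restriction\lambda\), hence \(i\restriction\lambda=h\restriction\lambda\). Finally, \(h\) cannot be the identity on \(V_\lambda\) (that would force \(E=F\) and make \(h\) trivial), so a standard induction on rank shows that the \(\in\)-elementary map \(h\) has a critical point below \(\lambda\); thus \(i\restriction\lambda\neq\Id\), and \(i\) witnesses that \(\lambda\) is exacting. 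I expect this transfer of the critical point across the collapse to be the step requiring the most care.
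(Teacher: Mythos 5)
Your proof is correct and follows the paper's own route: the forward direction is essentially verbatim the paper's argument (the $<_\OD$-least counterexample is definable from $\lambda$ alone, hence lies in $X$ and is fixed by an exact embedding, whose restriction to $V_\lambda$ then contradicts the choice of counterexample), and the converse uses the same ordinal definable set of well-founded extensional relations coding elementary substructures of $V_\zeta$, collapsed and recomposed into an embedding $i: X_E \to V_\zeta$. The one place you go beyond the paper is the normalization of the coding, and this is a genuine refinement rather than an idle precaution. The paper's converse simply reuses the unnormalized set $A$ from Theorem \ref{lemma:ultra_exact_char} and asserts that ``letting $i$ be as before'' one gets a witness to exactness; but the properties $i(\lambda)=\lambda$ and $i\restriction\lambda\neq\id_\lambda$ are exactly what must be checked, and in the exacting setting they do not come for free. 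In the ultraexacting case they do: there the given embedding is elementary on all of $(V_{\lambda+1},A)$, and the collapse of $\langle V_\lambda,E\rangle$, restricted to codes of elements of $V_\lambda\cup\{\lambda\}$, is uniformly definable over $V_{\lambda+1}$ from $E$, so the embedding automatically carries $E$-codes to the corresponding $F$-codes. In the exacting case $h$ is elementary only on $(V_\lambda,\in,E)$, a structure that cannot define its own collapse, so with an arbitrary coding one only obtains a nontrivial elementary $i:X_E\to V_\zeta$ whose least moved ordinal could lie at or above $\lambda$ and which need not fix $\lambda$. Your two conventions (ordinals below $\lambda$ code themselves; a fixed $\in$-definable marker codes $\lambda$) are precisely what yields $\bar h\restriction\lambda=h\restriction\lambda$ and $\bar h(\lambda)=\lambda$, after which the critical-point argument for nontrivial $\in$-elementary self-maps of $V_\lambda$ finishes the proof; this also matches the normalized codings the paper itself adopts for the classes $\mathcal U$ and $\mathcal E$ in \S\ref{SectSR}. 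So your writeup supplies a verification that the paper's terse converse leaves implicit.
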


\begin{proof}
Suppose $\lambda$ is exacting and, towards a contradiction, let $A$ be the $\OD$-least nonempty subset of $V_{\lambda +1}$ for which there are no $x,y\in A$ with an elementary embedding $j: (V_{\lambda},x)\to (V_{\lambda},y)$. Let $\zeta$ be the least ordinal in $C^{(2)}$ greater than the least ordinal parameters appearing in a definition of $A$. Then fix $X\preceq V_\zeta$ and $j : X\to V_\zeta$ as in the definition of exacting cardinal, and note that $A\in X$ and $j(A)= A$. Therefore for any $x\in A\cap X$, setting $y = j(x)$ and taking the restriction $j\restriction V_\lambda: (V_\lambda , x)\to (V_\lambda , y)$, we obtain a contradiction.

The converse is proved similarly as in the previous theorem, using the same $A$.  As $A$ is nonempty, let $E, F\in A$ be such that there is an elementary embedding $j : (V_{\lambda}, E)\to (V_{\lambda},F)$.  There are $X_E,X_F\preceq V_\zeta$, both  including $V_{\lambda}\cup \{\lambda\}$, and isomorphisms $\pi_E: X_E \cong M_E$  and $\pi_F: X_F\cong M_F$, with $M_E$ and $M_F$ transitive. Then letting $i$ be as before, 
        we have that  $X_E$ and $i$ witness that $\lambda$ is exacting.
\end{proof}

As in the case of ultraexacting cardinals, the theorem above  motivates the following re-definition of exacting cardinals, which may be used in the context where $\mathrm{DC}_\lambda$ fails: 
\begin{definition}[$\mathrm{ZF}$]\label{def:exacting-zf} A cardinal $\lambda$ is \emph{exacting} if and only if for every nonempty ordinal definable $A\subset V_{\lambda+1}$ there are $x,y\in A$ and an elementary embedding $j:(V_{\lambda},x) \to (V_{\lambda},y)$ with critical point $<\lambda$.
\end{definition}

Similar considerations, as in Remark \ref{remark1}, also apply in this case. Namely,

\begin{remark}\label{remark2}
Given any cardinal $\lambda$, if $\zeta$ is the least element of  $C^{(2)}$ greater than $\lambda$, then the  following are equivalent:
\begin{enumerate}
    \item $\lambda$ is exacting.

    \item There is an elementary embedding $j:(V_{\lambda}, x)\to (V_{\lambda}, y)$, where $x,y$ belong to the subset $A\subseteq V_{\lambda +1}$ which is  used in the proof of the theorem and is $\Delta_2$-definable from the parameters $\lambda$ and $\zeta$. 
\end{enumerate}
\end{remark}

\subsection{Some corollaries}

We shall next obtain several corollaries of Theorem \ref{lemma:ultra_exact_char} above. The first one shows that the existence of an ultraexacting cardinal $\lambda$ is equivalent to the existence of an elementary embedding  $j : V_{\lambda +1}\to V_{\lambda+1}$ that preserves the $\Sigma_2$-truth predicate of $V$, an axiom first considered by Woodin in \cite[Theorem 200]{WOEM1}.

\begin{corollary}
\label{coroSigma2truth}
A cardinal $\lambda$ is ultraexacting if and only if there exists an elementary embedding $j : V_{\lambda +1}\to V_{\lambda+1}$ such that for any \(\Sigma_2\)-formula \(\varphi(x)\) and any \(a\in V_{\lambda+1}\),
\(V\vDash \varphi(a)\) if and only if \(V\vDash \varphi(j(a))\).
\end{corollary}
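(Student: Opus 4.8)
The plan is to prove Corollary \ref{coroSigma2truth} by reducing both directions to Theorem \ref{lemma:ultra_exact_char}, since that theorem characterizes ultraexactingness via elementary embeddings $j:(V_{\lambda+1},A)\to(V_{\lambda+1},A)$ for every ordinal definable $A\subseteq V_{\lambda+1}$. The key observation I would exploit is that the $\Sigma_2$-truth predicate, when restricted to parameters in $V_{\lambda+1}$, is itself essentially an ordinal definable subset of $V_{\lambda+1}$. Recall the standard fact (via the reflection theorem and the definability of the $C^{(2)}$ hierarchy) that $V\models\varphi(a)$ for a $\Sigma_2$-formula $\varphi$ iff $V_\zeta\models\varphi(a)$ for any $\zeta\in C^{(2)}$ with $a\in V_\zeta$; thus $\Sigma_2$-truth about parameters in $V_{\lambda+1}$ is computed inside $V_\zeta$ for $\zeta$ the least element of $C^{(2)}$ above $\lambda$, and the relevant truth set is definable from $\lambda$ and $\zeta$.

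For the forward direction, suppose $\lambda$ is ultraexacting. I would define $A\subseteq V_{\lambda+1}$ to be the set coding $\Sigma_2$-truth, i.e. let $A=\{\langle\ulcorner\varphi\urcorner,a\rangle : \varphi\text{ is }\Sigma_2,\ a\in V_{\lambda+1},\ V\models\varphi(a)\}$, suitably coded as a subset of $V_{\lambda+1}$. This $A$ is ordinal definable (indeed $\Delta_2$-definable from $\lambda$ and the least $\zeta\in C^{(2)}$ above $\lambda$). By Theorem \ref{lemma:ultra_exact_char} there is an elementary embedding $j:(V_{\lambda+1},A)\to(V_{\lambda+1},A)$. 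The fact that $j$ preserves the predicate $A$ is exactly the statement that $j$ preserves the $\Sigma_2$-truth predicate: for a $\Sigma_2$-formula $\varphi$ and $a\in V_{\lambda+1}$, we have $\langle\ulcorner\varphi\urcorner,a\rangle\in A$ iff $\langle\ulcorner\varphi\urcorner,j(a)\rangle\in A$ (using that $j$ fixes the code $\ulcorner\varphi\urcorner$, which is a finite ordinal), i.e. $V\models\varphi(a)$ iff $V\models\varphi(j(a))$. Restricting $j$ to $V_{\lambda+1}$ gives the desired embedding.

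For the converse, suppose $j:V_{\lambda+1}\to V_{\lambda+1}$ is elementary and preserves $\Sigma_2$-truth. I want to verify the right-hand side of Theorem \ref{lemma:ultra_exact_char}, i.e. that for \emph{every} ordinal definable $A\subseteq V_{\lambda+1}$ there is an elementary embedding of $(V_{\lambda+1},A)$ to itself. The point is that any such $A$ is definable in $V$ by a formula $\psi(x,\alpha)$ with an ordinal parameter $\alpha$, and membership $a\in A$ is then expressible as a $\Sigma_2$ (indeed $\Delta_2$) fact about $a$ and $\alpha$. Since $j$ preserves $\Sigma_2$-truth and fixes the ordinal parameter $\alpha$ (as $j\restriction\lambda$ maps ordinals to ordinals and any ordinal parameter defining an OD set below $\lambda$, or more carefully $j(\alpha)=\alpha$ for the relevant parameters — see below), we get $a\in A\iff j(a)\in A$, so $j$ itself is an elementary embedding of $(V_{\lambda+1},A)$ to itself. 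By Theorem \ref{lemma:ultra_exact_char}, $\lambda$ is ultraexacting.

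The main obstacle I anticipate is the handling of the ordinal parameters in the converse direction: an ordinal definable $A$ may be defined using an ordinal parameter $\alpha$ that is far above $\lambda$, so one cannot assume $j$ moves or fixes $\alpha$ directly (indeed $j$ only acts on $V_{\lambda+1}$). The resolution is that the property $a\in A$ is a $\Sigma_2$ assertion about the set $a\in V_{\lambda+1}$ with an ordinal parameter, and one must argue that preservation of $\Sigma_2$-truth with parameters in $V_{\lambda+1}$ suffices because the defining ordinal can be absorbed: concretely, $a\in A$ iff $V\models\exists\alpha\,\psi(a,\alpha)$ (or a bounded variant), which is $\Sigma_2$ in the parameter $a$ alone. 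One must verify that the standard coding of OD sets makes ``$a\in A$'' uniformly $\Sigma_2$ in $a$ as a parameter from $V_{\lambda+1}$ without needing to move the external ordinal parameter, using the $\Sigma_2$-definable well-ordering $<_{\OD}$ and the $\Sigma_2$-definability of $\OD$ recalled before Theorem \ref{lemma:ultra_exact_char}; this is the step requiring the most care, but it is a routine consequence of the standard theory of ordinal definability.
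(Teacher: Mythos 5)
Your forward direction is correct and is essentially the paper's own argument: the restriction of the $\Sigma_2$-satisfaction predicate to parameters in $V_{\lambda+1}$ is ordinal definable (definable from $\lambda$, by reflecting to the least $\zeta\in C^{(2)}$ above $\lambda$), so Theorem \ref{lemma:ultra_exact_char} hands you an elementary embedding of $(V_{\lambda+1},A)$ into itself, which preserves $\Sigma_2$-truth since it fixes the (hereditarily finite) formula codes. The converse, however, has a genuine gap. You try to show that the given $j$ is itself elementary from $(V_{\lambda+1},A)$ to itself for \emph{every} ordinal definable $A$, and you handle the defining ordinal parameter $\alpha$ (which may lie far above $\lambda$, outside the domain of $j$) by ``absorbing'' it into an existential quantifier: but $\{a : \exists\alpha\,\psi(a,\alpha)\}$ is in general a different set from $A=\{a:\psi(a,\alpha_0)\}$, so the resulting $\Sigma_2$ formula no longer defines $A$. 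The uniformity you then appeal to --- that membership in an arbitrary OD subset of $V_{\lambda+1}$ is expressible as a $\Sigma_2$ fact about parameters in $V_{\lambda+1}$ alone --- is not a routine consequence of the theory of ordinal definability; it is false in general. By counting, there are at most $2^{|V_\lambda|}$ sets that are $\Sigma_2$-definable from parameters in $V_{\lambda+1}$, whereas (for instance under $V=\HOD$) there can be $2^{2^{|V_\lambda|}}$ OD subsets of $V_{\lambda+1}$. Worse, a single $j$ preserving every OD subset of $V_{\lambda+1}$ would be a self-embedding of the structure $(V_{\lambda+1},\in,(A)_{A\in\OD\cap V_{\lambda+2}})$, a hypothesis in the vicinity of Woodin's embedding of $\HOD_{V_{\lambda+1}}$ to itself, which the paper explicitly treats as (ostensibly) stronger than ultraexactness; so your strategy is trying to prove more than the hypothesis can give.

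The missing idea is the paper's minimization trick, which reduces the problem to a single OD set that \emph{is} definable from $\lambda$ alone. Argue by contradiction and let $A$ be the $<_\OD$-least subset of $V_{\lambda+1}$ such that $(V_{\lambda+1},A)$ has no elementary self-embedding. Because this $A$ is singled out by a definition whose only parameter is $\lambda$, the satisfaction predicate $S$ of $(V_{\lambda+1},A)$ is $\Sigma_2$-definable over $V$ from $\lambda$; since $\lambda\in V_{\lambda+1}$ and $j(\lambda)=\lambda$ (as $\lambda$ is definable in $V_{\lambda+1}$), your preservation hypothesis applies to that formula and gives $u\in S$ if and only if $j(u)\in S$. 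Hence $j$ maps the satisfaction predicate of $(V_{\lambda+1},A)$ into itself, i.e.\ $j$ is elementary from $(V_{\lambda+1},A)$ to itself, contradicting the choice of $A$; then Theorem \ref{lemma:ultra_exact_char} (or Definition \ref{DefUltraexactingZF}) yields that $\lambda$ is ultraexacting. In short: one never needs $j$ to preserve all OD sets, only the least counterexample, and that one is $\lambda$-definable, which is exactly what $\Sigma_2$-preservation with parameters in $V_{\lambda+1}$ can reach.
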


\begin{proof}
If $\lambda$ is ultraexacting, then since the restriction \(A\) of the \(\Sigma_2\)-satisfaction predicate of \(V\) to \(V_{\lambda+1}\) is ordinal definable,  Theorem  \ref{lemma:ultra_exact_char} yields the desired elementary embedding.

Conversely, suppose there is an elementary $j : V_{\lambda +1}\to V_{\lambda+1}$ such that for any \(\Sigma_2\)-formula \(\varphi(x)\) and any \(a\in V_{\lambda+1}\),
\(V\vDash \varphi(a)\) if and only if \(V\vDash \varphi(j(a))\). Suppose towards a contradiction that there is an ordinal definable set \(A\) for which there is no elementary embedding \(i : (V_{\lambda+1},A)\to (V_{\lambda+1},A)\). Let \(A\) be the OD-least such set. Then the satisfaction predicate \(S\) of \((V_{\lambda+1},A)\) is \(\Sigma_2\)-definable over \(V\) from the parameter \(\lambda\). Fix a \(\Sigma_2\)-formula \(\varphi(x)\) such that \(u\in S\) if and only if \(V\vDash \varphi(u,\lambda)\).
Then by our hypothesis, \(u\in S\) if and only if \(V\vDash \varphi(u,\lambda)\) if and only if \(V\vDash \varphi(i(u),\lambda)\) if and only if \(i(u)\in S\). Since \(j\) maps the satisfaction predicate of \((V_{\lambda+1},A)\) into itself, \(j\) is an elementary embedding from \((V_{\lambda+1},A)\) to itself, contrary to our hypothesis that no such embedding exists.
\end{proof}

Recall from \cite[Definition 132]{WOEM1} that a cardinal $\delta$ is \emph{$\HOD$-super\-compact} if for all $\eta >\delta$ there exists an elementary embedding $j:V\to M$, $M$ transitive, with critical point $\delta$, $j(\delta)>\eta$, $^{V_\eta}M\subseteq M$, and $j(\HOD \cap V_\delta)\cap V_\eta = \HOD \cap V_{\eta}$.
Woodin's Theorem \cite[200]{WOEM1} then shows that, assuming the $\HOD$ Conjecture,  the existence of a nontrivial elementary embedding from $V_{\lambda+1}$ to itself that preserves the $\Sigma_2$-truth predicate of $V$ implies there is no $\HOD$-supercompact cardinal  below $\lambda$. He also notes that such an embedding exists in the forcing extension of $L(V_{\lambda+1})$ that well-orders $V_{\lambda+1}$ in order-type $\lambda^+$. 
Then the contrapositive of Woodin's theorem, together with Corollary \ref{coroSigma2truth}, yields that if $\lambda$ is an ultraexacting cardinal, and there is a $\HOD$-supercompact cardinal below $\lambda$, then the $\HOD$ Conjecture fails.    However, a stronger result follows from 
\cite[section 6.1]{ABL} together with Goldberg's \cite[Section 2.2]{GG} which shows  that Woodin's $\HOD$ Dichotomy follows from the existence of a strongly compact cardinal, yielding that if there exists a strongly compact cardinal below an exacting cardinal, then the $\HOD$ Conjecture fails.

\medskip

Theorem \ref{lemma:ultra_exact_char} also yields a simpler proof of the following result from \cite[Theorem 3.22]{ABL}:
\begin{corollary}
    If \(\lambda\) is an ultraexacting cardinal and \(V_{\lambda+1}^\#\) exists\footnote{Recall that the existence of $V_{\lambda +1}^\#$  is equivalent to the existence of an elementary embedding $j:L(V_{\lambda +1})\to L(V_{\lambda +1})$ with critical point above $\lambda$.}, then \({\rm I}0\) holds at \(\lambda\).
    \begin{proof}
        First, recall that, by Solovay,  \(V_{\lambda+1}^\#\) exists if and only if there exists a \emph{remarkable character} for $V_{\lambda +1}$ (see, e.g.,  \cite{C:sharps}), and so \(V_{\lambda+1}^\#\) can be coded as an ordinal definable subset, $A$, of $V_{\lambda +1}$. Now by Theorem \ref{lemma:ultra_exact_char}, ultraexactness yields an elementary embedding \(j\) from \((V_{\lambda+1},A)\)
        to itself. The embedding \(j\) can then be extended to an elementary embedding
        \(i\) from \(L(V_{\lambda+1})\) to itself
        by setting \(i(t(x,\overline \xi)) = t(j(x),\overline \xi)\)
        whenever \(t\) is a canonical weak Skolem term, 
        \(x\) is an element of \(V_{\lambda+1}\), and \(\overline \xi\) is a finite increasing
        sequence of Silver indiscernibles for \(L(V_{\lambda+1})\). It is then easily checked  that \(i\) is well-defined
        and elementary, and so \(i\) witnesses that \({\rm I}0\) holds at \(\lambda\).
    \end{proof}
\end{corollary}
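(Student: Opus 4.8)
The plan is to exploit the normal-form description of $L(V_{\lambda+1})$ furnished by the sharp. Assuming $V_{\lambda+1}^\#$ exists, there is a closed unbounded class $I$ of Silver indiscernibles for $L(V_{\lambda+1})$ (ordinals above $\lambda$) such that every element of $L(V_{\lambda+1})$ has the form $t(x,\bar\xi)$, where $t$ is a canonical weak Skolem term, $x\in V_{\lambda+1}$, and $\bar\xi\in[I]^{<\omega}$ is a finite increasing tuple. The first step is to record this normal-form data as an ordinal definable set: by Solovay's theorem, the existence of $V_{\lambda+1}^\#$ is equivalent to the existence of a \emph{remarkable character} for $V_{\lambda+1}$, which is canonically, hence ordinal, definable from $\lambda$, and which can be coded as an ordinal definable subset $A\subseteq V_{\lambda+1}$. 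The purpose of the coding is that, for each $x\in V_{\lambda+1}$ and each finite syntactic description $\sigma$ of a formula $\psi$, Skolem terms $t_1,\dots,t_n$, and an indiscernible pattern, membership of the pair $\langle x,\sigma\rangle$ in $A$ records whether $L(V_{\lambda+1})\vDash\psi(t_1(x,\bar\xi),\dots,t_n(x,\bar\xi))$; by indiscernibility this truth value does not depend on the particular $\bar\xi$ realizing the pattern.

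With $A$ in hand, I would apply Theorem \ref{lemma:ultra_exact_char} to obtain an elementary embedding $j:(V_{\lambda+1},A)\to(V_{\lambda+1},A)$ with $\crit{j}<\lambda$. I then define $i:L(V_{\lambda+1})\to L(V_{\lambda+1})$ by fixing every indiscernible and setting
\[ i(t(x,\bar\xi)) = t(j(x),\bar\xi) \]
for each Skolem term $t$, each $x\in V_{\lambda+1}$, and each $\bar\xi\in[I]^{<\omega}$. Taking $t$ to be the identity term gives $i\restriction V_{\lambda+1}=j$, so $i$ agrees with $j$ on $V_{\lambda+1}$; in particular $i$ is nontrivial with $\crit{i}=\crit{j}<\lambda$, and it therefore suffices to show that $i$ is a well-defined elementary embedding, which will exhibit $i$ as an I0 embedding at $\lambda$.

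The crux, and the main obstacle, is the verification that $i$ is well-defined and elementary, and this is precisely where the preservation of $A$ by $j$ is used. For well-definedness, if $t_1(x_1,\bar\xi)=t_2(x_2,\bar\xi)$ in $L(V_{\lambda+1})$ after merging the indiscernible tuples into a common $\bar\xi$, then this equality is recorded in $A$ as a statement about the parameters $x_1,x_2$; applying $j$, which preserves $A$ and sends $x_k\mapsto j(x_k)$, yields $t_1(j(x_1),\bar\xi)=t_2(j(x_2),\bar\xi)$. Running the same argument with an arbitrary formula $\psi$ in place of equality gives
\[ L(V_{\lambda+1})\vDash\psi(t_1(x_1,\bar\xi),\dots) \iff L(V_{\lambda+1})\vDash\psi(t_1(j(x_1),\bar\xi),\dots), \]
which is exactly the elementarity of $i$. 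The delicate point to get right is the coding in the first step: one must ensure that $A$ genuinely captures the dependence of the $L(V_{\lambda+1})$-theory on the $V_{\lambda+1}$-parameters, and not merely the pure theory of the indiscernibles, so that an embedding preserving $(V_{\lambda+1},A)$ automatically preserves the full Skolem-term structure. Once the coding is set up correctly, the remaining checks are routine, since $j$ is elementary as a self-map of $(V_{\lambda+1},A)$ and the indiscernibles are fixed by $i$.
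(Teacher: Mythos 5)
Your proposal is correct and follows essentially the same route as the paper's proof: code the sharp as an ordinal definable $A \subseteq V_{\lambda+1}$ via Solovay's remarkable-character theorem, apply Theorem \ref{lemma:ultra_exact_char} to get $j : (V_{\lambda+1},A) \to (V_{\lambda+1},A)$, and extend to $L(V_{\lambda+1})$ by $i(t(x,\bar\xi)) = t(j(x),\bar\xi)$ with the indiscernibles fixed. The only difference is that you spell out the well-definedness and elementarity verification (via the fact that $A$ records the $L(V_{\lambda+1})$-truth of Skolem-term statements as a function of their $V_{\lambda+1}$-parameters), which the paper leaves as "easily checked."
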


\begin{proposition}\label{ultraexacting+forcing}
    If $\lambda$ is an ultraexacting cardinal, then for every cardinal $\gamma >\lambda$ 
    there is a $\gamma$-directed closed forcing notion which forces  that $\lambda$ remains ultraexacting and 
    $V_\lambda\subseteq \HOD$.
    \begin{proof}
    Let \(\zeta\in C^{(2)}\) and  \(X\preceq V_{\zeta}\) be such that \(V_\lambda \cup \{\lambda\}\subseteq X\), and
        \(j : X \to V_\zeta\) is an elementary embedding with \(\crit{j} < \lambda\), $j(\lambda)=\lambda$,  and \(j\restriction V_\lambda \in X\).
        Let $\lhd$ be a wellordering of $V_\lambda$ of order-type $\lambda$ such that $j(\lhd)=\lhd$. 
This can be obtained as follows:  Let $\langle \lambda_n :n<\omega\rangle$ be the critical sequence of $j$. Pick a wellordering $\lhd_0$ of $V_{\lambda_0}$, and let $\lhd_1=j(\lhd_0)\setminus \lhd_0$. Given $\lhd_m$ for some $0<m<\omega$, set  $\lhd_{m+1}=j(\lhd_m)$. Finally, let $\lhd=\bigcup_{m<\omega}\lhd_m$.

Note that, since $j\restriction V_\lambda \in X$, $\lhd \in X$. So, as $j(\lhd)=\lhd$, arguing  like in the proof of Theorem \ref{lemma:ultra_exact_char}, using $\lhd$ as a parameter, we have that for all sets $A\subseteq V_{\lambda+1}$ that are ordinal definable from $\lhd$, there is an elementary embedding from $ (V_{\lambda+1},A)$ to itself.

        Given any $\gamma >\lambda$, we may  code  $\lhd$ into the power-set function on the regular cardinals above $\gamma$ by a $\gamma$-directed closed homogeneous forcing that is ordinal definable from $\lhd$. Then in the generic extension, $V[G]$,
        $\lhd$ is ordinal definable,  hence $V_\lambda\subseteq \HOD.$ 
        
       By the homogeneity of the forcing, if $A\subseteq V_{\lambda+1}$ is ordinal definable  in $V[G]$, then $A$ is in \(V\) and \(A\) is ordinal definable from $\lhd$ in $V$. Thus, there is, in $V$ and therefore also in $V[G]$, an elementary embedding from $(V_{\lambda +1}, A)$ to itself. Hence by Theorem \ref{lemma:ultra_exact_char},  $\lambda$ is ultraexacting in $V[G]$.
    \end{proof}
\end{proposition}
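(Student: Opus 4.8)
The plan is to prove the proposition through the ordinal-definable characterization of ultraexactness given by Theorem~\ref{lemma:ultra_exact_char} and Definition~\ref{DefUltraexactingZF}, rather than working directly with elementary substructures of $V_\zeta$. The overall strategy is to manufacture a single wellordering $\lhd$ of $V_\lambda$ that is \emph{fixed} by a witnessing embedding, and then to code $\lhd$ into the universe by a tame forcing: coding $\lhd$ will force $V_\lambda\subseteq\HOD$, while the fixedness of $\lhd$ will let ultraexactness survive.

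First I would fix $\zeta\in C^{(2)}$, an $X\preceq V_\zeta$ with $V_\lambda\cup\{\lambda\}\subseteq X$, and an ultraexact $j:X\to V_\zeta$ with $\crit{j}<\lambda$, $j(\lambda)=\lambda$, and $j\restriction V_\lambda\in X$, and set $k=j\restriction V_{\lambda+1}$. Then $k$ is an $\mathrm{I}1$ embedding, so $\lambda$ is a strong limit of cofinality $\omega$; in particular $\lvert V_\lambda\rvert=\lambda$. Along the critical sequence $\langle\lambda_m:m<\omega\rangle$ of $k$ I would build $\lhd$: starting from any wellordering $\lhd_0$ of $V_{\lambda_0}$, set $\lhd_1=k(\lhd_0)\setminus\lhd_0$ and $\lhd_{m+1}=k(\lhd_m)$, and let $\lhd=\bigcup_m\lhd_m$. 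A rank computation shows $\bigcup_{i\le m}\lhd_i=k^m(\lhd_0)$ wellorders $V_{\lambda_m}$, so $\lhd$ wellorders $V_\lambda$ in order type $\lambda$; since $\crit{k}=\lambda_0$ fixes $\lhd_0$ pointwise, the critical-sequence bookkeeping gives $k(\lhd)=\lhd$. As each $\lhd_m\in V_\lambda$, the whole recursion is computed from $j\restriction V_\lambda\in X$ inside $V_\zeta$, so $\lhd\in X$ and $j(\lhd)=\lhd$. With $\lhd\in X$ fixed by $j$ I would then rerun the argument of Theorem~\ref{lemma:ultra_exact_char} with $\lhd$ as an extra parameter: the $<_\OD$-least subset of $V_{\lambda+1}$ \emph{ordinal definable from $\lhd$} that has no self-embedding would be definable from $\lhd$ and $\lambda$ alone, hence would lie in $X$ and be fixed by $j$, and then $k$ would self-embed it, a contradiction. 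This yields the \emph{relativized} conclusion: every $A\subseteq V_{\lambda+1}$ ordinal definable from $\lhd$ admits an elementary $(V_{\lambda+1},A)\to(V_{\lambda+1},A)$ with critical point $<\lambda$.

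Next I would define the forcing. From $\lhd$ I obtain an $\OD$-from-$\lhd$ set $S\subseteq\lambda$ coding $(V_\lambda,\in)$ together with $\lhd$, and I let $\mathbb{Q}$ code $S$ into the continuum function on a definable $\lambda$-sequence of regular cardinals above $\gamma$ (blowing up $2^{\kappa_i}$ according to whether $i\in S$). This $\mathbb{Q}$ is $\gamma$-directed closed, weakly homogeneous, and ordinal definable from $\lhd$. Since $\mathbb{Q}$ is $\gamma$-directed closed and $\lvert V_\lambda\rvert=\lambda<\gamma$, it adds no new subsets of $V_\lambda$, so $V_{\lambda+1}^{V[G]}=V_{\lambda+1}^{V}$. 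In $V[G]$ the continuum function is ordinal definable, hence so are $S$ and, by decoding, $\lhd$; as every element of $V_\lambda$ is the unique element of some $\lhd$-rank $\alpha<\lambda$, every element of $V_\lambda$ is ordinal definable, so $V_\lambda\subseteq\HOD^{V[G]}$.

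Finally I would verify that $\lambda$ stays ultraexacting via Definition~\ref{DefUltraexactingZF}. Let $A\subseteq V_{\lambda+1}$ be ordinal definable in $V[G]$, say $A=\{x:V[G]\models\psi(x,\bar\alpha)\}$. Since $A\subseteq V_{\lambda+1}^{V[G]}=V_{\lambda+1}^{V}$, every $x\in A$ lies in $V$, and by weak homogeneity of $\mathbb{Q}$ the condition $V[G]\models\psi(x,\bar\alpha)$ is equivalent to $\mathbb{1}\Vdash_{\mathbb{Q}}\psi(\check x,\bar\alpha)$; as the forcing relation is definable in $V$ from $\mathbb{Q}$ and $\mathbb{Q}$ is $\OD$-from-$\lhd$, the set $A=\{x\in V_{\lambda+1}^{V}:\mathbb{1}\Vdash_{\mathbb{Q}}\psi(\check x,\bar\alpha)\}$ lies in $V$ and is ordinal definable from $\lhd$ there. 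The relativized conclusion of the second paragraph then produces, in $V$, an elementary $k:(V_{\lambda+1},A)\to(V_{\lambda+1},A)$ with critical point $<\lambda$; since $V_{\lambda+1}$ is unchanged, the same $k$ witnesses the requirement in $V[G]$, so $\lambda$ is ultraexacting there. The main obstacle I expect is the second paragraph: producing a single wellordering $\lhd$ that is genuinely fixed by the embedding (the bookkeeping behind $k(\lhd)=\lhd$), together with the homogeneity descent in the last paragraph that pushes every $V[G]$-ordinal-definable subset of $V_{\lambda+1}$ back into $V$ as an $\OD$-from-$\lhd$ set; the remaining coding is routine.
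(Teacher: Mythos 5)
Your proposal is correct and follows essentially the same route as the paper's own proof: the same recursion along the critical sequence to build a $j$-fixed wellordering $\lhd\in X$, the same relativization of Theorem~\ref{lemma:ultra_exact_char} to sets ordinal definable from $\lhd$, and the same coding of $\lhd$ into the power-set function by a $\gamma$-directed closed homogeneous forcing with the homogeneity descent pulling $V[G]$-OD subsets of $V_{\lambda+1}$ back to OD-from-$\lhd$ sets in $V$. The extra bookkeeping you supply (the rank computation for $k(\lhd)=\lhd$ and the explicit forcing-relation argument) just fills in steps the paper leaves implicit.
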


It was shown in \cite[2.10]{ABL} that if $\lambda$ is exacting, then $\lambda$ is regular in HOD. Thus, $V_{\lambda+1} \nsubseteq \mathrm{HOD}$, so Proposition \ref{ultraexacting+forcing} is best possible.

 \begin{corollary}
      If $\lambda$ is ultraexacting, then in some forcing extension $\lambda$ remains ultraexacting and there is no exacting cardinal below $\lambda$.      
        \end{corollary}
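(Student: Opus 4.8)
The plan is to deduce this directly from Proposition \ref{ultraexacting+forcing} together with the fact recorded just above, that an exacting cardinal $\mu$ always satisfies $V_{\mu+1}\nsubseteq\HOD$. The idea is that forcing $V_\lambda\subseteq\HOD$ while preserving the ultraexactness of $\lambda$ automatically rules out exacting cardinals \emph{below} $\lambda$, since any such cardinal would have to reach below $\lambda$ and hence into the part of the universe that has been absorbed into $\HOD$.

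First I would fix any cardinal $\gamma>\lambda$ and apply Proposition \ref{ultraexacting+forcing} to obtain a $\gamma$-directed closed forcing notion $\PPP$ such that, in the generic extension $V[G]$, the cardinal $\lambda$ remains ultraexacting and $V_\lambda\subseteq\HOD$ (where $\HOD$ is computed in $V[G]$). Working in $V[G]$, I would then suppose towards a contradiction that some $\mu<\lambda$ is exacting. Since every element of $V_{\mu+1}$ has rank at most $\mu<\lambda$, we have $V_{\mu+1}\subseteq V_\lambda$, and combining this with $V_\lambda\subseteq\HOD$ yields $V_{\mu+1}\subseteq\HOD^{V[G]}$.

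Finally I would invoke the cited characterization. The implication ``if $\mu$ is exacting then $V_{\mu+1}\nsubseteq\HOD$'' (established in \cite[2.10]{ABL} together with the remark preceding this corollary) is a theorem of $\ZFC$, so it applies internally inside $V[G]$ to the exacting cardinal $\mu$. This gives $V_{\mu+1}\nsubseteq\HOD^{V[G]}$, contradicting the conclusion of the previous step. Hence no $\mu<\lambda$ is exacting in $V[G]$, while $\lambda$ itself is ultraexacting there, as required.

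The substantive content of the argument lives entirely in Proposition \ref{ultraexacting+forcing} and in the $\HOD$-characterization of exacting cardinals; the corollary itself is a short deduction. The only points requiring care are the rank bookkeeping that transfers $V_\lambda\subseteq\HOD$ down to $V_{\mu+1}\subseteq\HOD$, and the observation that the exacting-versus-$\HOD$ fact is an absolute $\ZFC$-theorem and may therefore be applied inside the generic extension rather than in the ground model. I do not anticipate a genuine obstacle beyond correctly citing these two inputs.
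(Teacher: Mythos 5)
Your proof is correct and takes essentially the same approach as the paper: both apply Proposition \ref{ultraexacting+forcing} to pass to $V[G]$ with $V_\lambda\subseteq\HOD$ and then rule out an exacting $\mu<\lambda$ there using \cite[2.10]{ABL}. The only difference is cosmetic: the paper phrases the contradiction as regularity of $\mu$ in $\HOD_{V_\mu}$ transferring to $V[G]$ (against the singularity of exacting cardinals), while you invoke the equivalent packaged fact $V_{\mu+1}\nsubseteq\HOD$ recorded in the remark preceding the corollary, applied inside $V[G]$.
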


        \begin{proof}
        By Proposition \ref{ultraexacting+forcing}, let $V[G]$ be a forcing extension in which $\lambda$ remains ultraexacting and $V_\lambda =V_\lambda^{V[G]} \subseteq \HOD$. Then in $V[G]$ no cardinal $\mu <\lambda$ can be exacting, as it would imply that $\mu$ is a regular cardinal in $\HOD_{V_\mu}$ (by \cite[2.10]{ABL}), and therefore also in $V[G]$.    
        \end{proof}

\section{The strength of ultraexacting cardinals}\label{SectGood}
In this section, we prove  equiconsistencies between ultraexacting cardinals and strengthenings of I0, depending on the kinds of large cardinals which exist in inner models extending $V_{\lambda+1}$. In particular, we establish the equiconsistency between the existence of an ultraexacting cardinal and I0. 

We work with models of the form $L(V_{\lambda+1}, E)$, where $E \subseteq V_{\lambda+1}$, assuming $\ZFC$ holds externally in $V$. All what follows is true also in the particular case $E = \varnothing$.
These models have a fine structure similar to that of $L(V_{\lambda+1})$, just like $L(x)$ has a fine structure similar to that of $L$ when $x \in\mathbb{R}$, and just like how $L(\mathbb{R},A)$ has a fine structure similar to that of $L(\mathbb{R})$ when $A\subseteq \mathbb{R}$ and $\mathrm{DC}_\mathbb{R}$ holds.

For an ordinal $\alpha$, let $A_\alpha^E$ be the theory of $L_\alpha (V_{\lambda +1},E)$, with parameters in $V_{\lambda +1}$. Thus, $A_\alpha^E$ may be identified with the set of all pairs $(\varphi, a)$ such that $a\in V_{\lambda +1}$, $\varphi$ is a formula of the language of set theory with an added predicate $\dot E$ and with only one free variable, and $L_\alpha (V_{\lambda +1},E )\models \varphi(a)$. Notice that $A^E_\alpha\in V_{\lambda +2}$.

Let us call an ordinal $\alpha$ an \emph{$E$-good ordinal} (or just \emph{good}, if $E$ is clear from context; see Laver \cite{La:Refl}) if every element of  $L_\alpha (V_{\lambda +1},E)$ is definable in $L_\alpha (V_{\lambda +1},E)$ from parameters in $V_{\lambda +1}$ in the language $\mathcal{L}_{\in, \dot E}$. It is easily seen that every good ordinal is strictly less than $\Theta^{L(V_{\lambda +1},E)}$, where $\Theta^{L(V_{\lambda +1},E)}$ is the supremum of the set of ordinals $\gamma$ such that there exists a surjection $f:V_{\lambda +1} \to \gamma$ with $f\in L(V_{\lambda +1})$. Moreover, the argument of \cite[Lemma 1]{La:Refl} shows that the good ordinals are unbounded in $\Theta^{L(V_{\lambda +1},E)}$.

\begin{lemma}
\label{lemma1}\label{lemma_good_ord}
Suppose that $\lambda$ is ultraexacting and $E\subseteq V_{\lambda+1}$ is ordinal definable.
Then for every $E$-good ordinal $\alpha$ there exists an elementary  embedding $i:L_\alpha (V_{\lambda +1},E)\to L_\alpha (V_{\lambda +1},E)$, with $\lambda$ being the supremum of its critical sequence, and moreover $i \in L(V_{\lambda+1},E)$ and $i(E) = E$.   
\end{lemma}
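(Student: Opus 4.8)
The plan is to feed the ordinal-definable characterization of ultraexactness (Theorem \ref{lemma:ultra_exact_char}) the predicate that codes the theory of $L_\alpha(V_{\lambda+1},E)$, and then to \emph{lift} the resulting self-embedding of $V_{\lambda+1}$ to a self-embedding of $L_\alpha(V_{\lambda+1},E)$ by exploiting the goodness of $\alpha$. The only nonroutine issue will be the internality clause $i\in L(V_{\lambda+1},E)$; the embedding statement and the fixing of $E$ are essentially bookkeeping.

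First I would fix an $E$-good ordinal $\alpha$ and set $A:=A_\alpha^E$, the $V_{\lambda+1}$-theory of $L_\alpha(V_{\lambda+1},E)$, regarded after the evident coding of formula--parameter pairs as a subset of $V_{\lambda+1}$ (recall $A_\alpha^E\in V_{\lambda+2}$). Since $E$ is ordinal definable and $\alpha$ is an ordinal, $A$ is ordinal definable, so Theorem \ref{lemma:ultra_exact_char} yields an elementary embedding $j:(V_{\lambda+1},A)\to(V_{\lambda+1},A)$ with $\crit{j}<\lambda$. Writing $k=j\restriction V_{\lambda+1}$ and unwinding the coding, preservation of $A$ says exactly that $k$ preserves the theory of $L_\alpha(V_{\lambda+1},E)$: for every formula $\varphi$ and every $a\in V_{\lambda+1}$, one has $L_\alpha(V_{\lambda+1},E)\models\varphi(a)$ iff $L_\alpha(V_{\lambda+1},E)\models\varphi(k(a))$.

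Next, using that $\alpha$ is good, I would define $i$ on $L_\alpha(V_{\lambda+1},E)$ by the term-model recipe: each $b$ is the unique solution of some $\varphi(x,a)$ with $a\in V_{\lambda+1}$, and I set $i(b)$ to be the unique solution of $\varphi(x,k(a))$. Theory-preservation of $k$ shows $i$ is well defined (independent of the chosen definition of $b$), total, and elementary, by the usual check that any statement about finitely many elements of $L_\alpha(V_{\lambda+1},E)$ translates, via their defining formulas, into a statement recorded in $A$ and hence preserved by $k$. Taking $\varphi(x,a):=(x=a)$ gives $i\restriction V_{\lambda+1}=k$; in particular $\crit{i}=\crit{k}<\lambda$, and since $E$ is parameter-free definable over $L_\alpha(V_{\lambda+1},E)$ it is fixed, so $i(E)=E$. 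That $\lambda$ is the supremum of the critical sequence of $i$ then follows from the standard fact that the critical sequence of any nontrivial $\mathrm{I}1$ embedding $k:V_{\lambda+1}\to V_{\lambda+1}$ is cofinal in $\lambda$, together with the observation that the critical sequence of $i$ coincides with that of $k$.

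The genuinely new point, and the step I expect to be the main obstacle, is showing $i\in L(V_{\lambda+1},E)$, since the embedding $k$ delivered by ultraexactness lives in $V$ and is a priori external. The key observation is that $k\restriction V_\lambda$ is a function from $V_\lambda$ to $V_\lambda$, hence, identifying it with its graph of Kuratowski pairs, a subset of $V_\lambda$, so $k\restriction V_\lambda\in\mathcal{P}(V_\lambda)=V_{\lambda+1}\subseteq L(V_{\lambda+1},E)$. From $k\restriction V_\lambda$ the whole of $k$ is recovered inside $L(V_{\lambda+1},E)$ by the continuity formula $k(x)=\bigcup\{k(x\cap V_\beta):\beta<\lambda\}$ (exactly as in the proof of Theorem \ref{lemma:ultra_exact_char}), and the satisfaction relation of the set $L_\alpha(V_{\lambda+1},E)$ is definable over $L(V_{\lambda+1},E)$. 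Hence the function $i$, being defined from $k$ and this satisfaction relation by the recipe above, is itself an element of $L(V_{\lambda+1},E)$. I would close by noting that full AC in $V$ (as assumed throughout this section) enters only through the appeal to Theorem \ref{lemma:ultra_exact_char}; the lifting and internalization steps are elementary.
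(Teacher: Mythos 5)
Your proposal is correct and takes essentially the same route as the paper's proof: apply Theorem \ref{lemma:ultra_exact_char} to the ordinal definable theory predicate $A^E_\alpha$, lift the resulting self-embedding of $(V_{\lambda+1},A^E_\alpha)$ to $L_\alpha(V_{\lambda+1},E)$ via the definability of every element from $V_{\lambda+1}$-parameters (goodness of $\alpha$), and get $i\in L(V_{\lambda+1},E)$ because the embedding is definable over an initial segment of $L(V_{\lambda+1},E)$ from data available there. Your write-up is somewhat more explicit than the paper's on the internality step (recovering $k$ from $k\restriction V_\lambda\in V_{\lambda+1}$ by continuity) and on why $\lambda$ is the supremum of the critical sequence, but the underlying argument is identical.
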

\begin{proof}
Since \(A^E_\alpha\) is ordinal definable, Theorem \ref{lemma:ultra_exact_char} implies that there is a nontrivial elementary embedding \(j : (V_{\lambda+1},A^E_\alpha)\to (V_{\lambda+1},A^E_\alpha)\). We extend \(j\) to an elementary embedding \(i : L_\alpha (V_{\lambda +1},E) \to L_\alpha (V_{\lambda +1},E)\)
using the fact that every element of \(L_\alpha (V_{\lambda +1},E)\) is definable in \(L_\alpha (V_{\lambda +1},E)\) from parameters in $V_{\lambda +1}$ in the language $\mathcal{L}_{\in, \dot E}$.
More precisely, if \(a\) is definable in \(L_\alpha (V_{\lambda +1},E)\) from parameters $x\in V_{\lambda +1}$ in the language $\mathcal{L}_{\in, \dot E}$, let
\(i(a)\) be the element of \(L_\alpha(V_{\lambda +1},E)\) defined by the same formula from \(j(x).\) This definition immediately yields $i(E) = E$.

The fact that \(i :  L_\alpha (V_{\lambda +1},E)\to L_\alpha (V_{\lambda +1},E)\) is well-defined and elementary is immediate from the elementarity of \(j\) on \((V_{\lambda+1},A^E_\alpha)\). Moreover, one can verify that \(i\restriction V_{\lambda+1}= j\) since each \(x\in V_{\lambda+1}\) is trivially definable from itself. Finally, \(i\in L(V_{\lambda+1},E)\) since \(i\) is definable over \(L_{\alpha+1}(V_{\lambda+1},E)\) from parameters in \(\{j,E\}\subseteq L_1(V_{\lambda+1},E)\). 
\end{proof}

\subsection{Internal ${\rm I0}$}\label{SectInternalI0}
Let $E\subseteq V_{\lambda+1}$.
We say that \textit{internal \({\rm I}0\) relative to $E$ holds at \(\lambda\)} 
if for all \(\alpha < \Theta^{L(V_{\lambda+1},E)}\),
there is an elementary embedding from \(L_\alpha(V_{\lambda+1},E)\) to itself fixing \(E\), with $\lambda$ the supremum of its critical sequence. We speak of \textit{internal I0} rather than ``internal I0 relative to $\varnothing$.''

Internal I0 was first isolated by Woodin in \cite{WoBrink}. The point is that while \({\rm I}0\) at $\lambda$  
cannot hold in \(L(V_{\lambda+1})\), internal \({\rm I}0\) is \textit{absolute} between
\(L(V_{\lambda+1})\) and \(V\). 
Indeed, internal \({\rm I}0\) relative to $E\subseteq V_{\lambda+1}$ is absolute between
\(L(V_{\lambda+1},E)\) and \(V\). The reason is that there exist arbitrarily large good ordinals $\alpha < \Theta^{L(V_{\lambda+1},E)}$. For such $\alpha$, the existence of elementary embeddings from $L_\alpha(V_{\lambda+1},E)$ to itself fixing \(E\) is equivalent to the existence of elementary embeddings from $(V_{\lambda+1}, A^E_\alpha)$ to itself. If these exist, then they can be recovered from their restriction to $V_\lambda$ as in the proof of Lemma \ref{lemma_good_ord} and thus belong to $L(V_{\lambda+1}, E)$.
Moreover, the existence of elementary embeddings from $L_\alpha(V_{\lambda+1},E)$ to itself fixing \(E\) for arbitrarily large \(\alpha < \Theta^{L(V_{\lambda+1},E)}\) easily implies the existence of such embeddings for all \(\alpha <  \Theta^{L(V_{\lambda+1},E)}\) by a minimization argument. 

Woodin \cite{WoBrink} showed
that the theory $\ZFC + ``{\rm I}0 \mbox{ at }\lambda$" is conservative over the theory $\ZFC + ``\text{Internal \({\rm I}0\) at $\lambda$"}$ 
for first-order statements about \(L(V_{\lambda+1})\).  

Lemma \ref{lemma_good_ord}, together with Laver's argument \cite[Lemma 1]{La:Refl} that the good ordinals are unbounded in $\Theta^{L(V_{\lambda +1},E)}$, yields the following:

\begin{corollary}\label{CorollaryUltraexactingImpliesInternalI0}
Suppose that \(\lambda\) is ultraexacting and $E\subseteq V_{\lambda+1}$ is ordinal definable. Then internal \({\rm I}0\) relative to $E$ holds at \(\lambda\), both in $V$ and in $L(V_{\lambda+1}, E)$.
\end{corollary}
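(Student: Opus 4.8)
The plan is to derive Corollary~\ref{CorollaryUltraexactingImpliesInternalI0} directly from Lemma~\ref{lemma_good_ord} by combining it with the unboundedness of the good ordinals and the minimization argument already sketched in the discussion of internal $\mathrm{I}0$. First I would unwind the definition: to establish that internal $\mathrm{I}0$ relative to $E$ holds at $\lambda$, I must produce, for \emph{every} $\alpha < \Theta^{L(V_{\lambda+1},E)}$, an elementary embedding $L_\alpha(V_{\lambda+1},E) \to L_\alpha(V_{\lambda+1},E)$ fixing $E$ with $\lambda$ the supremum of its critical sequence. Lemma~\ref{lemma_good_ord} already delivers exactly such an embedding whenever $\alpha$ is $E$-good, since $\lambda$ is ultraexacting and $E$ is ordinal definable by hypothesis; moreover it gives the extra conclusions $i \in L(V_{\lambda+1},E)$ and $i(E) = E$ for free.

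The step I would carry out next handles the non-good ordinals. By Laver's argument \cite[Lemma~1]{La:Refl}, the $E$-good ordinals are unbounded in $\Theta^{L(V_{\lambda+1},E)}$, so given an arbitrary $\alpha < \Theta^{L(V_{\lambda+1},E)}$ I can fix a good ordinal $\beta \geq \alpha$ and apply Lemma~\ref{lemma_good_ord} to obtain an embedding $i_\beta : L_\beta(V_{\lambda+1},E) \to L_\beta(V_{\lambda+1},E)$. The minimization argument referenced in \S\ref{SectInternalI0} then lets me descend to $\alpha$: since $\lambda$ is the supremum of the critical sequence of $i_\beta$, the embedding fixes $V_{\lambda+1}$ setwise and more importantly restricts to (or induces) an embedding of the rank-initial segment $L_\alpha(V_{\lambda+1},E)$ into itself. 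Concretely, because $\alpha \leq \beta$ and $L_\alpha(V_{\lambda+1},E)$ is definable inside $L_\beta(V_{\lambda+1},E)$, elementarity of $i_\beta$ forces $i_\beta$ to map $L_\alpha(V_{\lambda+1},E)$ to $L_{i_\beta(\alpha)}(V_{\lambda+1},E)$; a standard reflection/minimization then produces the desired embedding at $\alpha$ itself while preserving $i(E)=E$ and the critical sequence condition.

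The absoluteness clause---that internal $\mathrm{I}0$ relative to $E$ holds both in $V$ and in $L(V_{\lambda+1},E)$---follows from the membership conclusion of Lemma~\ref{lemma_good_ord} together with the absoluteness discussion already given. Each embedding we produce lies in $L(V_{\lambda+1},E)$, being recoverable from its restriction to $V_\lambda$ exactly as in the proof of Lemma~\ref{lemma_good_ord}; and the statement ``there exists an elementary embedding of $L_\alpha(V_{\lambda+1},E)$ to itself fixing $E$'' is evaluated identically in $V$ and in $L(V_{\lambda+1},E)$ since all the relevant objects (the structures $L_\alpha(V_{\lambda+1},E)$, the predicate $E$, and the witnessing embeddings) already belong to $L(V_{\lambda+1},E)$. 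Hence internal $\mathrm{I}0$ relative to $E$ holds in both models.

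The main obstacle I anticipate is not in invoking Lemma~\ref{lemma_good_ord}, which is essentially a black box here, but in making the \emph{descent from good $\beta$ to arbitrary $\alpha$} fully rigorous while preserving the requirement that $\lambda$ be the supremum of the critical sequence and that $E$ be fixed. One must verify that restricting or reflecting $i_\beta$ down to $L_\alpha(V_{\lambda+1},E)$ does not disturb the critical sequence (which lives below $\lambda$ and inside $V_\lambda \subseteq L_\alpha(V_{\lambda+1},E)$) and that $i(E)=E$ survives the passage. Fortunately the paper has already indicated the shape of this minimization argument in the paragraph preceding the corollary, so the work reduces to checking that the embeddings obtained at good ordinals cohere appropriately and that the elementary definition of $L_\alpha(V_{\lambda+1},E)$ within $L_\beta(V_{\lambda+1},E)$ transfers the embedding correctly.
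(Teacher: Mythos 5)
Your proposal is correct and is essentially the paper's own proof: the paper derives the corollary in a single line from Lemma~\ref{lemma_good_ord} together with Laver's unboundedness of the $E$-good ordinals, relying on the same minimization argument and the same observation (embeddings at good ordinals are recoverable from their restrictions to $V_\lambda$, hence lie in $L(V_{\lambda+1},E)$) that you invoke for absoluteness, all of which the paper had already sketched in \S\ref{SectInternalI0}. The one step you flag as needing care---that $i_\beta$ maps $L_\alpha(V_{\lambda+1},E)$ to $L_{i_\beta(\alpha)}(V_{\lambda+1},E)$ rather than to $L_\alpha(V_{\lambda+1},E)$, so one must pass to a least counterexample fixed by the embedding---is precisely the ``minimization argument'' the paper also leaves implicit, so your write-up sits at the same level of detail as the text.
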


The following lemma yields the converse implication in  mild forcing extensions of $L(V_{\lambda +1},E)$.

\begin{lemma}\label{CorollaryInternalI0ImpliesUltraexacting}
Let $E\subseteq V_{\lambda+1}$.
Suppose that internal \({\rm I}0\) relative to $E$ holds at \(\lambda\).
Then, \(\lambda\) is ultraexacting in any generic extension of \(L(V_{\lambda+1},E)\) by an ordinal definable
homogeneous forcing notion that does not change \(V_{\lambda+1}\).
\begin{proof}
We first show that \(\lambda\) is ultraexacting in \(L(V_{\lambda+1},E)\). It suffices to show that for any \(A\subseteq V_{\lambda+1}\) that is \(\text{OD}\) in \(L(V_{\lambda+1},E)\), there is a nontrivial elementary embedding from \((V_{\lambda+1},A)\) to itself. 
By condensation, any such \(A\) is definable from \(E\) in \(L_\gamma(V_{\lambda+1}, E)\) for some \(\gamma < \Theta = \Theta_{V_{\lambda+1}}^{L(V_{\lambda+1},E)}\), and 
by Internal I0 there is an elementary embedding from \(L_{\gamma}(V_{\lambda+1},E)\) to itself fixing \(E\), with critical point below \(\lambda\);
this restricts to a nontrivial elementary embedding from \((V_{\lambda+1},A)\) to itself (which belongs to \(L(V_{\lambda+1},E)\) since it is induced by its restriction to \(V_\lambda\)). This shows that $\lambda$ is ultraexacting in $L(V_{\lambda+1}, E)$.
        
Now suppose that \(\mathbb P\in \OD^{L(V_{\lambda+1},E)}\)
is a homogeneous forcing notion that does not change \(V_{\lambda+1}\), and \(G\subseteq \mathbb P\) is \(L(V_{\lambda+1},E)\)-generic. Then every set \(A\subseteq V_{\lambda+1}\) that is OD in \(L(V_{\lambda+1},E)[G]\) is already OD in \(L(V_{\lambda+1},E)\), so the fact that \(\lambda\) is ultraexacting in \(L(V_{\lambda+1},E)\) immediately implies that it is ultraexacting in \(L(V_{\lambda+1},E)[G]\).
    \end{proof}
\end{lemma}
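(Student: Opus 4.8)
The plan is to prove the statement in two stages: first establish that $\lambda$ is ultraexacting in the ground model $L(V_{\lambda+1},E)$ itself (in the sense of Definition \ref{DefUltraexactingZF}), and then transfer this downward through the homogeneous forcing. Throughout I write $M=L(V_{\lambda+1},E)$ and $\Theta=\Theta^{L(V_{\lambda+1},E)}$, and I argue the ground-model stage by contradiction in the style of Theorem \ref{lemma:ultra_exact_char}.

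For the ground model, suppose some $\OD^M$ subset of $V_{\lambda+1}$ admits no elementary self-embedding (with critical point below $\lambda$, in $M$), and let $A$ be the $<_\OD^M$-least such set. The point of passing to the least counterexample is that $A$ is then definable over $M$ from the parameter $\lambda$ alone: the defining property — that $(V_{\lambda+1},A)$ has no elementary self-embedding in $M$, which is a quantification over functions coded in $V_{\lambda+2}\subseteq M$ — refers only to $V_{\lambda+1}$, hence only to $\lambda$, and $<_\OD^M$ is definable without parameters. Using that the good ordinals are unbounded in $\Theta$ (Laver's argument \cite{La:Refl}), I then fix a good ordinal $\gamma<\Theta$ large enough that this whole computation reflects, so that $A$ is definable over $L_\gamma(V_{\lambda+1},E)$ from the two parameters $\lambda$ and $E$.

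With $\gamma$ in hand, internal ${\rm I}0$ relative to $E$ supplies an elementary embedding $\pi:L_\gamma(V_{\lambda+1},E)\to L_\gamma(V_{\lambda+1},E)$ that fixes $E$ and whose critical sequence has supremum $\lambda$; in particular $\crit{\pi}<\lambda$, and since $V_{\lambda+1}$ is the base of the hierarchy, $\pi(\lambda)=\lambda$. Because $A$ is definable over $L_\gamma(V_{\lambda+1},E)$ from $\lambda$ and $E$, both of which $\pi$ fixes, we obtain $\pi(A)=A$; hence $j:=\pi\restriction V_{\lambda+1}$ is an elementary embedding of $(V_{\lambda+1},A)$ into \emph{itself} with critical point below $\lambda$, and, exactly as in Lemma \ref{lemma_good_ord}, $j$ is recovered from $\pi\restriction V_\lambda$ and so lies in $M$. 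This contradicts the choice of $A$, and therefore $\lambda$ is ultraexacting in $M$.

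For the transfer, let $\mathbb P\in\OD^M$ be homogeneous and leave $V_{\lambda+1}$ unchanged, and let $G$ be $M$-generic. The key claim is that every $A\subseteq V_{\lambda+1}$ which is $\OD$ in $M[G]$ already lies in $M$ and is $\OD$ there: each $x\in V_{\lambda+1}^{M[G]}=V_{\lambda+1}^{M}$ has canonical name $\check x\in M$, so if $x\in A\iff M[G]\models\varphi(x,\vec\beta)$ with $\vec\beta$ ordinals, then by homogeneity and $\mathbb P\in\OD^M$ this is equivalent to $\mathbb 1\Vdash_{\mathbb P}\varphi(\check x,\check{\vec\beta})$, a condition definable in $M$ from $\vec\beta$, $\mathbb P$, and $V_{\lambda+1}$, so $A\in\OD^M\subseteq M$. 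The ground-model stage then yields an elementary self-embedding of $(V_{\lambda+1},A)$ in $M$ with critical point below $\lambda$, and since $V_{\lambda+1}$ is computed identically in $M$ and $M[G]$, this same embedding witnesses the required instance of ultraexactness in $M[G]$. The step I expect to be the main obstacle is the bounded-level capture of $A$: one must ensure not merely that $A$ is definable over some $L_\gamma(V_{\lambda+1},E)$ with $\gamma<\Theta$, but that it is so definable using only parameters fixed by $\pi$, so that $\pi(A)=A$ and $\pi$ restricts to a genuine self-embedding of $(V_{\lambda+1},A)$ rather than an embedding into a twisted copy $(V_{\lambda+1},\pi(A))$. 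Reducing to the $<_\OD^M$-least counterexample, definable from $\lambda$ alone, is precisely what makes the parameter set $\{\lambda,E\}$ suffice, and the careful verification that this canonical definition reflects to a good ordinal below $\Theta$ is the technical heart of the argument.
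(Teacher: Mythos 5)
Your proposal is correct and follows essentially the same two-stage route as the paper's proof: first establish ultraexactness inside $L(V_{\lambda+1},E)$ by applying an internal ${\rm I}0$ embedding to a level $L_\gamma(V_{\lambda+1},E)$ with $\gamma<\Theta$ over which the relevant set is definable from parameters the embedding fixes, and then transfer to the generic extension by the standard homogeneity argument, which you reproduce exactly. The one difference worth recording is how the ordinal-parameter issue is handled: the paper asserts outright, ``by condensation,'' that \emph{every} $\OD$ subset of $V_{\lambda+1}$ is definable from $E$ alone over some level below $\Theta$, whereas you first pass to the $<_\OD$-least counterexample (mirroring the paper's own proof of Theorem \ref{lemma:ultra_exact_char}) so that the single set to be reflected is definable from $\lambda$ and $E$ with no stray ordinal parameters, and only then reflect it. Your version is, if anything, the more careful one, since an embedding of $L_\gamma(V_{\lambda+1},E)$ fixing $E$ is only guaranteed to fix sets definable from parameters it fixes, and the minimization is precisely what eliminates the ordinal parameters that a general $\OD$ definition carries. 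One caution about your remaining gloss: choosing $\gamma$ to be a large \emph{good} ordinal does not by itself make ``the whole computation reflect'' to $L_\gamma$ --- goodness concerns definability of elements from parameters in $V_{\lambda+1}$, not any degree of elementarity in $L(V_{\lambda+1},E)$. What actually does the work is the construction underlying Laver's unboundedness proof: take a Levy-reflecting level $L_\beta(V_{\lambda+1},E)\prec_{\Sigma_n}L(V_{\lambda+1},E)$ (where $\beta$ may well exceed $\Theta$), form the definable hull of $V_{\lambda+1}\cup\{V_{\lambda+1},E\}$ inside it, and transitively collapse; condensation identifies the collapse with some $L_{\bar\gamma}(V_{\lambda+1},E)$, the surjection from $V_{\lambda+1}$ onto the hull forces $\bar\gamma<\Theta$, the collapse fixes $A$ pointwise, and the collapsed level inherits your parameter-free (modulo $\lambda$, $E$) definition of $A$. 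With that substitution your argument is complete; this is the same point the paper compresses into the phrase ``by condensation.''
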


Since Lemma \ref{lemma_good_ord} holds in $\mathrm{ZF} + \mathrm{DC}_\lambda$, Lemmata \ref{lemma_good_ord} and \ref{CorollaryInternalI0ImpliesUltraexacting} yield the following:

\begin{corollary}
 In $L(V_{\lambda +1})$, $\lambda$ is an ultraexacting cardinal if and only if internal \({\rm I}0\) holds.   
\end{corollary}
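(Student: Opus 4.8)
The plan is to obtain the two directions of the equivalence by relativizing the preceding lemmas to the inner model $W := L(V_{\lambda+1})$, taking $E = \varnothing$ throughout and reading "ultraexacting" inside $W$ in the sense of Definition \ref{DefUltraexactingZF}, which is the correct notion once Choice is allowed to fail. Two preliminary observations make this work. First, $W$ computes $V_{\lambda+1}$ correctly, so $L(V_{\lambda+1})^W = W$, and hence the assertion "internal ${\rm I}0$ holds at $\lambda$" means the same thing whether evaluated in $W$ or in $V$; this is precisely the absoluteness recorded in \S\ref{SectInternalI0}. Second, $W \models \mathrm{ZF} + \mathrm{DC}_\lambda$: the $\mathrm{ZF}$ part is by construction, and $\mathrm{DC}_\lambda$ holds because any $\lambda$-sequence of elements of $V_{\lambda+1}$ is coded by a single subset of $V_\lambda$, i.e. by an element of $V_{\lambda+1} \subseteq W$, so a dependent-choice branch produced using Choice in $V$ already lies in $W$ (after reducing an arbitrary ${<}\lambda$-closed tree to one on $V_{\lambda+1}$ by a Skolem-hull argument). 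Consequently Lemma \ref{lemma_good_ord}, and with it Corollary \ref{CorollaryUltraexactingImpliesInternalI0}, may be applied with $W$ in the role of the ambient universe.

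For the forward direction, suppose $\lambda$ is ultraexacting in $W$. Applying Corollary \ref{CorollaryUltraexactingImpliesInternalI0} inside $W$ with $E = \varnothing$ yields that internal ${\rm I}0$ holds at $\lambda$ in $W$; unwinding, this is Lemma \ref{lemma_good_ord} producing for each $\varnothing$-good $\alpha$ an embedding $i : L_\alpha(V_{\lambda+1}) \to L_\alpha(V_{\lambda+1})$ with $\lambda$ the supremum of its critical sequence, together with the fact (Laver \cite{La:Refl}) that the good ordinals are unbounded in $\Theta^{L(V_{\lambda+1})}$ and the minimization argument of \S\ref{SectInternalI0} upgrading this to embeddings for every $\alpha < \Theta^{L(V_{\lambda+1})}$. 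By the absoluteness noted above, internal ${\rm I}0$ then holds in $V$ as well.

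For the converse, suppose internal ${\rm I}0$ holds at $\lambda$. This is exactly the hypothesis of Lemma \ref{CorollaryInternalI0ImpliesUltraexacting} with $E = \varnothing$, whose proof first establishes, before passing to any forcing extension, that $\lambda$ is ultraexacting in $L(V_{\lambda+1}) = W$ itself: every $A \subseteq V_{\lambda+1}$ that is $\OD$ in $W$ is, by condensation, definable over some $L_\gamma(V_{\lambda+1})$ with $\gamma < \Theta^{L(V_{\lambda+1})}$, and restricting the internal-${\rm I}0$ embedding of $L_\gamma(V_{\lambda+1})$ to $(V_{\lambda+1}, A)$ gives the required self-embedding. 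Thus $\lambda$ is ultraexacting in $W$, completing the equivalence.

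I expect the only genuinely delicate point to be the verification that $W \models \mathrm{DC}_\lambda$, since this is what licenses running the $\mathrm{ZF} + \mathrm{DC}_\lambda$ Lemma \ref{lemma_good_ord} inside $W$; everything else is bookkeeping, namely matching the internal and external meanings of "ultraexacting" (via Definition \ref{DefUltraexactingZF}) and of "internal ${\rm I}0$" (via the absoluteness of \S\ref{SectInternalI0}). One could even sidestep $\mathrm{DC}_\lambda$ in the forward direction by invoking Definition \ref{DefUltraexactingZF} directly: ultraexactness in $W$ hands over the embeddings $(V_{\lambda+1}, A^\varnothing_\alpha) \to (V_{\lambda+1}, A^\varnothing_\alpha)$ without passing through Theorem \ref{lemma:ultra_exact_char}, and these extend to $L_\alpha(V_{\lambda+1})$ exactly as in the proof of Lemma \ref{lemma_good_ord}.
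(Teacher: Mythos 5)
Your proposal is correct and follows essentially the same route as the paper: the paper obtains this corollary precisely by applying Lemma \ref{lemma_good_ord} inside $L(V_{\lambda+1})$ (noting that it holds under $\mathrm{ZF}+\mathrm{DC}_\lambda$) for the forward direction, and by invoking the first, pre-forcing part of the proof of Lemma \ref{CorollaryInternalI0ImpliesUltraexacting} (with $E=\varnothing$ and trivial forcing) for the converse. The supporting points you spell out---that $L(V_{\lambda+1})\models\mathrm{DC}_\lambda$ under $\mathrm{ZFC}$ in $V$, the absoluteness of internal ${\rm I}0$ between $L(V_{\lambda+1})$ and $V$, and the reading of ``ultraexacting'' via Definition \ref{DefUltraexactingZF}---are exactly the standard facts that the paper's one-line derivation implicitly relies on.
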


Lemma \ref{CorollaryInternalI0ImpliesUltraexacting} sharpens {\cite[Theorem 3.29]{ABL}}: if there is an ${\rm I}0$ embedding at $\lambda$, then after forcing with ${\rm Add}(\lambda^+,1)$ over $V$, in $L(V_{\lambda +1})[G]$  the cardinal $\lambda$ is ultraexacting and $\ZFC$ holds. Moreover, if $\lambda$ is ultraexacting, then, by Corollary \ref{CorollaryUltraexactingImpliesInternalI0}  internal ${\rm I}0$ holds at $\lambda$, and \cite[Theorem 2.5]{WoBrink} shows that in some generic extension of an inner model of $L(V_{\lambda +1})$, $\ZFC$ holds and there is an ${\rm I}0$ embedding at a cardinal greater than $\lambda$. Thus, we have the following:

\begin{theorem}\label{TheoremEquiconsistencyI0}
The following two theories are equiconsistent:
\begin{enumerate}
    \item     \textnormal{ZFC +} There exists an ultraexacting  cardinal.
    \item  \textnormal{ZFC +} \({\rm I}0\) holds.  
    \end{enumerate}
\end{theorem}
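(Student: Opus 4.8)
The plan is to derive both consistency implications from results already obtained in this section, so that the theorem reduces to combining them.

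For the direction $\mathrm{Con}(2)\Rightarrow\mathrm{Con}(1)$, I would begin in a model of $\ZFC$ in which ${\rm I}0$ holds at some $\lambda$, witnessed by an embedding $j:L(V_{\lambda+1})\to L(V_{\lambda+1})$. The first step is to observe that $j$ witnesses internal ${\rm I}0$ at $\lambda$; this is the easy direction of Woodin's analysis in \cite{WoBrink}, and it also follows from the absoluteness of internal ${\rm I}0$ between $L(V_{\lambda+1})$ and $V$ recorded above, together with the minimization argument over good ordinals. The second step is to apply Lemma \ref{CorollaryInternalI0ImpliesUltraexacting} with the forcing ${\rm Add}(\lambda^+,1)$: this notion is ordinal definable and homogeneous and, being ${\le}\lambda$-closed, adds no new subset of $V_\lambda$ and hence leaves $V_{\lambda+1}$ unchanged, so the lemma applies and makes $\lambda$ ultraexacting in $L(V_{\lambda+1})[G]$; moreover, by \cite[Theorem 3.29]{ABL}, $\ZFC$ holds in $L(V_{\lambda+1})[G]$. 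This is the desired model of $\ZFC$ with an ultraexacting cardinal, and is precisely the sharpening of \cite[Theorem 3.29]{ABL} recorded above.

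For the direction $\mathrm{Con}(1)\Rightarrow\mathrm{Con}(2)$, I would start in a model of $\ZFC$ containing an ultraexacting cardinal $\lambda$. Since $\ZFC$ entails $\mathrm{DC}_\lambda$, Corollary \ref{CorollaryUltraexactingImpliesInternalI0} applies (with $E=\varnothing$) and shows that internal ${\rm I}0$ holds at $\lambda$, both in $V$ and in $L(V_{\lambda+1})$. The remaining task is to manufacture a genuine ${\rm I}0$ embedding in a model of choice out of these merely internal embeddings. For this I would appeal to Woodin's \cite[Theorem 2.5]{WoBrink}, which produces, in a suitable generic extension of an inner model of $L(V_{\lambda+1})$, a model of $\ZFC$ carrying an ${\rm I}0$ embedding at a cardinal above $\lambda$. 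This gives $\mathrm{Con}(\ZFC+{\rm I}0)$ and completes the equiconsistency.

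The forcing bookkeeping in the first implication is routine. I expect the real weight of the argument to lie in the second implication, in the passage from internal embeddings---living in $L(V_{\lambda+1})$, a model that provably cannot satisfy ${\rm I}0$ and need not satisfy $\ZFC$---to an honest ${\rm I}0$ embedding over a model of choice. This externalization is exactly the content of Woodin's conservativity theorem \cite{WoBrink}, which I expect to be the main engine, and the principal obstacle, of the proof.
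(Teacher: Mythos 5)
Your proposal is correct and follows the paper's own argument essentially verbatim: the direction from ${\rm I}0$ uses internal ${\rm I}0$ plus Lemma \ref{CorollaryInternalI0ImpliesUltraexacting} applied to ${\rm Add}(\lambda^+,1)$ over $L(V_{\lambda+1})$ (sharpening \cite[Theorem 3.29]{ABL}), and the converse uses Corollary \ref{CorollaryUltraexactingImpliesInternalI0} followed by Woodin's \cite[Theorem 2.5]{WoBrink}. The extra details you supply (closure of the forcing, absoluteness of internal ${\rm I}0$) are accurate and consistent with the paper's treatment.
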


Let us note that the existence of an ultraexacting cardinal does not suffice to prove  ${\rm I}0$ outright, 
since, as mentioned above, if  $\lambda$ is the least such that ${\rm I}0$ holds at $\lambda$, then after forcing with ${\rm Add}(\lambda^+,1)$ over $V$, $\lambda$ is ultraexacting in $L(V_{\lambda +1})[G]$ by \cite[Theorem 3.29]{ABL}, yet \({\rm I}0\) fails there. To see this, first note that by the minimality of \(\lambda\), \({\rm I}0\) does not hold for any \(\gamma < \lambda\), since this is absolute between \(V\) and \(L(V_{\lambda+1})[G]\); and obviously \({\rm I}0\) does not hold for any \(\gamma> \lambda\); finally \({\rm I}0\) fails at $\lambda$, since $L(V_{\lambda +1})[G]$ is a set forcing extension of \(\text{HOD}^{L(V_{\lambda+1})}\) by Vopenka's theorem \cite[Theorem 15.46]{Jech} and therefore cannot contain an elementary embedding from  \(\text{HOD}^{L(V_{\lambda+1})}\) to itself by a result of Hamkins--Kirmayer--Perlmutter \cite[Corollary 9]{HamkinsKirmayerPerlmutter}.

\subsection{$V_{\lambda+1}$-premice}
The statement of Theorem \ref{TheoremEquiconsistencyGeneralized} below involves the notion of a Mitchell-Steel $V_{\lambda+1}$-premouse. We do not assume familiarity with inner model theory and indeed essentially only use it to derive the conclusion of Lemma \ref{LemmaMisOD} below. Nonetheless, we summarize the relevant definitions.

Let $X$ be a set. A Mitchell-Steel $X$-premouse is a structure of the form $J_\alpha(X)[E]$ (i.e., a level of the Jensen hierarchy constructed relative to $X$ and $E$) where $E$ is a predicate for a \textit{fine extender sequence} over $V_{\lambda+1}$, in the sense of Steel \cite[Definition 2.6]{St08}.

Given a Mitchell-Steel premouse $M$, one can define \textit{iteration trees} $\mathcal{T}$ over $M$. These are trees of iterated ultrapowers of very particular kinds. A
premouse is \textit{countably iterable} if all its countable elementary substructures are $(\omega_1+1)$-iterable. The reader who is not familiar with this notion might simply elect to take Lemma \ref{LemmaMisOD} and the first couple of lines in the proof of Theorem \ref{TheoremEquiconsistencyGeneralized} on faith; we refer the reader to Steel \cite{St10} for background. For definiteness, let us specify that ``premouse'' and other inner model theoretic notions should be taken as defined in \cite{St10}, although what follows is not too sensitive to the precise definitions used. The relativized notion of a $V_{\lambda+1}$-premouse is defined as in \cite{St08}.
We also refer to ``soundness'' (i.e., ``$\omega$-soundness'') of premice in the sense of \cite{St10}. This is a condition on the relation between the structure and its partial Skolem hulls. We will not make use of the definition directly, but only indirectly via comparison arguments (see e.g., \cite[Corollary 10]{St10}).

\begin{lemma}\label{LemmaMisOD}
Suppose that $M$ is a countably iterable, sound Mitchell-Steel $V_{\lambda+1}$-premouse satisfying $\varphi$ but none of whose proper initial segments satisfy $\varphi$. Then, $M$ is definable from $\lambda$ and there is a surjection from $V_{\lambda+1}$ to $M$ definable over $M$.
\end{lemma}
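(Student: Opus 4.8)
The plan is to obtain the two assertions from the two workhorses of the Mitchell--Steel theory: the Comparison Lemma, which will yield definability from $\lambda$, and the Condensation Lemma together with the minimality hypothesis, which will yield the hull structure and hence the surjection. First I would argue that $M$ is the \emph{unique} countably iterable, sound $V_{\lambda+1}$-premouse $N$ with $N\models\varphi$ and no proper initial segment of $N$ satisfying $\varphi$. Given two candidates $M_0,M_1$, I would coiterate them: the Comparison Lemma produces iteration trees $\mathcal T_0,\mathcal T_1$ on $M_0,M_1$ with last models $N_0,N_1$ such that, after possibly swapping indices, $N_0\trianglelefteq N_1$ and the main branch of $\mathcal T_0$ does not drop, so the iteration map $M_0\to N_0$ is elementary and $N_0\models\varphi$. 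If $N_0\triangleleft N_1$ were proper, then $N_1$ would have a proper initial segment satisfying $\varphi$, and reflecting this back to $M_1$ along its main branch would contradict the minimality of $M_1$; hence $N_0=N_1$, and then soundness with the Dodd--Jensen Lemma forces both coiterations to be trivial, giving $M_0=N_0=N_1=M_1$. With uniqueness in hand, $M$ is definable from $\lambda$: it is the unique $x$ that is a countably iterable, sound $V_{\lambda+1}$-premouse satisfying $\varphi$ none of whose proper initial segments satisfy $\varphi$, and every clause here is expressible from the parameter $V_{\lambda+1}$ (itself definable from $\lambda$), with countable iterability rendered as the existence of winning iteration strategies for all countable elementary substructures.

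Next I would show that $M=\mathrm{Hull}^M(V_{\lambda+1})$, i.e.\ that every element of $M$ is definable over $M$ from parameters in $V_{\lambda+1}$. Setting $H=\mathrm{Hull}^M(V_{\lambda+1})\prec M$, let $\pi\colon H\to\bar H$ be the transitive collapse. Since $V_{\lambda+1}\subseteq H$ is transitive, $\pi$ fixes $V_{\lambda+1}$ pointwise and $\bar H$ is again a $V_{\lambda+1}$-premouse, so the Condensation Lemma gives $\bar H\trianglelefteq M$. As $H\prec M\models\varphi$, also $\bar H\models\varphi$, and the minimality hypothesis forbids $\bar H$ from being a proper initial segment; hence $\bar H=M$. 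Comparing ordinal heights, $\mathrm{ot}(\mathrm{Ord}\cap H)=\mathrm{Ord}^{\bar H}=\mathrm{Ord}^M$, so $\mathrm{Ord}\cap H=\mathrm{Ord}^M$ and $\pi$ fixes every ordinal of $M$. Since $\pi$ fixes $V_{\lambda+1}$ pointwise and all ordinals, and a $V_{\lambda+1}$-premouse is generated from $V_{\lambda+1}$ together with its ordinals, I conclude $\pi=\mathrm{id}$ and $H=M$.

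From $M=\mathrm{Hull}^M(V_{\lambda+1})$ the surjection is immediate. Using that $V_{\lambda+1}=\mathcal P(V_\lambda)$ carries a definable coding of pairs $(\ulcorner\psi\urcorner,\vec a)$ consisting of a formula $\psi$ of $\mathcal L_{\in,\dot E}$ and a finite tuple $\vec a$ from $V_{\lambda+1}$ (a finite sequence of subsets of $V_\lambda$ being itself coded by a single subset of $V_\lambda$), I would define $f\colon V_{\lambda+1}\to M$ by letting $f(x)$ be the unique $y\in M$ with $M\models\psi(y,\vec a)$ whenever $x$ codes such a pair defining a unique element, and $f(x)=\varnothing$ otherwise. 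As $M=\mathrm{Hull}^M(V_{\lambda+1})$, the map $f$ is surjective, and it is definable over $M$ from the satisfaction relation of $M$, exactly as in the analysis of ``good'' ordinals for $L_\alpha(V_{\lambda+1})$ recalled above.

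I expect the genuine difficulty to lie in the comparison step of the first paragraph: one must verify, using soundness and the Dodd--Jensen Lemma, that the coiteration of two $\varphi$-minimal premice is trivial, ruling out the proper-initial-segment outcome and drops on the main branches by means of the minimality of $\varphi$. The condensation step is the other place where the fine structure is used essentially, through the behaviour of hulls containing $V_{\lambda+1}$; both are routine granting the inner model theory that, as indicated before the statement, we take on faith (see \cite{St10}).
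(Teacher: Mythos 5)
Your overall decomposition (uniqueness via comparison $\Rightarrow$ definability from $\lambda$; a hull argument $\Rightarrow$ the definable surjection) matches the paper's, but the central step of your uniqueness argument has a genuine gap: you coiterate $M_0$ and $M_1$ themselves, and the Comparison Lemma is not available for them. The hypothesis is only \emph{countable iterability} --- every countable elementary substructure is $(\omega_1+1)$-iterable --- which provides no iteration strategy whatsoever for $M_0$ and $M_1$, structures of size at least $2^\lambda$; the Dodd--Jensen Lemma you invoke likewise presupposes iterability of the structures being iterated. This mismatch is exactly what the paper's proof is built to avoid: it takes a countable elementary substructure of a large $V_\theta$ containing both candidates $M$ and $M'$, collapses it via $\pi$, and compares the collapsed premice $N_0 = \pi^{-1}(M)$ and $N_1 = \pi^{-1}(M')$ over $A = \pi^{-1}(V_{\lambda+1})$; these \emph{are} $(\omega_1+1)$-iterable, precisely by the countable-iterability hypothesis, so comparison applies and yields that one is a proper initial segment of the other, contradicting (by elementarity of the collapse) the $\varphi$-minimality of $M$ and $M'$. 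Without this reflection to the countable level, your comparison step is unjustified, and I see no way to run it directly on $M_0$ and $M_1$.

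There is also a flaw in your hull argument, though a repairable one. From $\bar H = M$ you infer $\mathrm{Ord}\cap H = \mathrm{Ord}^M$ because the order types agree; this is a non sequitur (a proper subset of an ordinal can have the same order type as that ordinal), and in fact $H = M$ need not follow from $\bar H = M$: in general, the range of a nontrivial self-embedding of a structure is a proper elementary substructure whose transitive collapse is the whole structure. But you do not need $H = M$. Once $\bar H = M$ is established, the collapse $\pi \colon H \to \bar H$ is an isomorphism fixing $V_{\lambda+1}$ pointwise; since every element of $H = \mathrm{Hull}^M(V_{\lambda+1})$ is definable over $M$, hence (by $H \prec M$) over $H$, from parameters in $V_{\lambda+1}$, transferring along $\pi$ shows every element of $\bar H = M$ is definable over $M$ from parameters in $V_{\lambda+1}$, and this pointwise definability gives your surjection $f$ exactly as you construct it --- this is the paper's ``Skolem hull argument as in the case of $L$.'' Note, finally, that your appeal to Condensation faces the same iterability caveat as above (condensation theorems assume iterability of the relevant premouse), so this half of the argument should also be routed through soundness or through countable hulls rather than applied verbatim to structures of size $2^\lambda$.
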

\begin{proof}
This is a routine argument, but we sketch it for the reader's convenience. First, that there is a surjection from $V_{\lambda+1}$ to $M$ definable over $M$ follows from a Skolem hull argument as in the case of $L$. To see that $M$ is ordinal definable, we show that it is unique. Suppose towards a contradiction that $M$ and $M'$ are two different countably iterable, sound Mitchell-Steel premice satisfying $\varphi$ and none of whose proper initial segments satisfy $\varphi$. Standard arguments now lead to a contradiction: let $H$ be the transitive collapse of a countable elementary substructure of some large $V_\theta$ containing $M$ and $M'$ and let $\pi:H \to V_\theta$ be the collapse embedding. Let $A = \pi^{-1}(V_{\lambda+1})$, $N_0 = \pi^{-1}(M)$, and $N_1 = \pi^{-1}(M')$. Then, $N_0$ and $N_1$ are $A$-premice, $N_0 \neq N_1$, and $N_0$ and $N_1$ are $(\omega_1+1)$-iterable. Moreover, $N_0$ and $N_1$ are sound and project to $A$.
By the comparison theorem for Mitchell-Steel premice (see Steel \cite[\S 3.2]{St10}), it follows that one of $N_0$ or $N_1$ is a proper initial segment of the other, contradicting the fact that neither has a proper initial segment satisfying $\varphi$. This proves that $M' = M$.
\end{proof}

\subsection{Ultraexacting cardinals in the presence of  other large cardinals}\label{SectUltraexactGen}
The goal of this section is to extend the equiconsistency proof of Theorem \ref{TheoremEquiconsistencyI0} to describe the strength of ultraexacting cardinals in the presence of other large cardinals. 
We shall need the following version of Woodin's  \cite[Theorem 2.3]{WoBrink} for models of the form $L(V_{\lambda+1},M)$. 

\begin{lemma}\label{LemmaBrink}
Suppose that $M\subseteq V_{\lambda +1}$ and internal I0 relative to $M$ holds in $L(V_{\lambda+1}, M)$ at $\lambda$.
Then there is an elementary embedding \[j: (V_{\lambda+1}, M) \to (V_{\lambda+1},M)\] 
such that, letting $(N_\omega, \bar M, j_\omega)$ be the $\omega$th iterate of $j$, $j_\omega$ extends to an elementary embedding 
\[k: L(N_\omega, \bar M) \to L(N_\omega, \bar M)\]
such that $L(N_\omega, \bar M)[k] \subseteq L(V_{\lambda+1}, M)$ and $\mathcal{P}(N_\omega) \cap L(N_\omega, \bar M)[k] = \mathcal{P}(N_\omega) \cap L(N_\omega, \bar M)$.
\end{lemma}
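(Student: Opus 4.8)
The plan is to follow the proof of Woodin's Theorem~2.3 in \cite{WoBrink} essentially verbatim, carrying the predicate $M$ along as a silent passenger; the case $M=\varnothing$ is exactly that theorem, so the only genuinely new content is bookkeeping. First I would unpack the hypothesis. By the definition of internal ${\rm I}0$ relative to $M$ together with the unboundedness of the $M$-good ordinals in $\Theta:=\Theta^{L(V_{\lambda+1},M)}$ (Laver's argument \cite[Lemma~1]{La:Refl}), there are cofinally many $M$-good $\alpha<\Theta$ carrying elementary embeddings $i_\alpha:L_\alpha(V_{\lambda+1},M)\to L_\alpha(V_{\lambda+1},M)$ with $i_\alpha(M)=M$, critical point below $\lambda$, and critical sequence cofinal in $\lambda$; moreover, each $i_\alpha$ lies in $L(V_{\lambda+1},M)$ and is recoverable from its restriction to $V_\lambda$, exactly as in the proof of Lemma~\ref{lemma_good_ord}. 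Fixing one such witness and restricting, I obtain the desired $j:(V_{\lambda+1},M)\to(V_{\lambda+1},M)$ with $j(M)=M$.

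Next I would iterate. The key move of Woodin's argument is to iterate $j$ through the ordinals and pass to the direct limit at stage $\omega$, producing the triple $(N_\omega,\bar M,j_\omega)$; in parallel one iterates each level embedding $i_\alpha$, whose $\omega$th image $(i_\alpha)_\omega$ acts on a level $L_{\alpha_\omega}(N_\omega,\bar M)$ of the limit. Since every embedding in sight fixes $M$, the predicate is preserved along the iteration, $j_{n,n+1}(M_n)=M_{n+1}$, so $\bar M$ is well defined in the limit and $j_\omega(\bar M)=\bar M$. Well-foundedness of the $\omega$th iterate is inherited from the fact that the iterates are genuine iterated images of the structures $L_\alpha(V_{\lambda+1},M)$ computed in $V$, so that the presence of $M$ introduces no new feature of the iteration.

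The heart of the matter is to extend $j_\omega$ to a total elementary embedding $k:L(N_\omega,\bar M)\to L(N_\omega,\bar M)$ with $j_\omega=k\restriction N_\omega$, and to control the resulting extension. Here I would reproduce Woodin's analysis: the critical sequence of the $\omega$-iteration furnishes enough indiscernible-like structure that the level embeddings $(i_\alpha)_\omega$ on the $L_{\alpha_\omega}(N_\omega,\bar M)$ can be assembled into a single total embedding $k$, with $k(\bar M)=\bar M$ by preservation. Because the whole construction — the family $(i_\alpha)$, the iteration, and $k$ — is carried out inside $L(V_{\lambda+1},M)$, one gets $L(N_\omega,\bar M)[k]\subseteq L(V_{\lambda+1},M)$ for free. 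For the final clause, I would use that $k$ is captured level by level, i.e.\ $k\restriction(N_\omega)_\xi\in L(N_\omega,\bar M)$ for every $\xi<\lambda_\omega$, where $\lambda_\omega$ is the supremum of the critical sequence of $j_\omega$; this makes the adjunction of $k$ suitably distributive below $N_\omega$ and yields $\mathcal P(N_\omega)\cap L(N_\omega,\bar M)[k]=\mathcal P(N_\omega)\cap L(N_\omega,\bar M)$.

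The main obstacle is precisely this last step — manufacturing the total embedding $k$ from the $\omega$-iterate while simultaneously verifying the no-new-subsets clause — which is the technical core of Woodin's Theorem~2.3 and the reason the detailed verification is deferred to the appendix. Relative to the case $M=\varnothing$, the only new point is to check that $M$ is preserved at every stage (it is, since all witnessing embeddings fix $M$) and that the presence of $\bar M$ neither obstructs the coherence that produces $k$ nor the level-by-level capturing that gives distributivity; both checks are routine once the embeddings are arranged to fix $M$.
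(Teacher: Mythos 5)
Your proposal takes exactly the approach the paper does: the paper's entire proof of Lemma~\ref{LemmaBrink} is a one-sentence citation stating that it is proved by the same argument as Woodin's Theorem~2.3 in \cite{WoBrink} (the case $M = \varnothing$), with the predicate $M$ carried along. Your sketch simply fleshes out that relativization — checking that all witnessing embeddings fix $M$ and deferring the technical core (constructing $k$ and the no-new-subsets clause) to Woodin's argument — so it is correct and matches the paper's proof in substance.
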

\begin{proof}
This is proved by the same argument as Theorem 2.3 in Woodin \cite{WoBrink} (which is stated in the particular case where $M = \varnothing$). 
\end{proof}

We now state our equiconsistency result:

\begin{theorem}\label{TheoremEquiconsistencyGeneralized}
Let $\varphi$ be a formula in the language of set theory. Then, the following two theories are equiconsistent, modulo $\ZFC$:
\begin{enumerate}
    \item \label{TheoremEquiconsistencyGeneralized1} There is an ultraexacting cardinal $\lambda$ and a countably iterable Mitchell-Steel $V_{\lambda+1}$-premouse $M$ satisfying $\varphi$.
    \item\label{TheoremEquiconsistencyGeneralized2} There is a countably iterable Mitchell-Steel $V_{\lambda+1}$-premouse $M$ satisfying $\varphi$ and an elementary embedding \[j:L(V_{\lambda+1},M)\to L(V_{\lambda+1},M) \] 
    with critical point below $\lambda$.
\end{enumerate}
\end{theorem}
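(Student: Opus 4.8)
The plan is to prove the equiconsistency by establishing both directions through internal I0 relative to $M$, mirroring the structure of the unadorned case (Theorem \ref{TheoremEquiconsistencyI0}) but now carrying the premouse $M$ as an auxiliary predicate throughout. The key conceptual point, which I expect to drive the whole argument, is Lemma \ref{LemmaMisOD}: a minimal countably iterable sound $V_{\lambda+1}$-premouse $M$ satisfying $\varphi$ is ordinal definable from $\lambda$, so that any set $A \subseteq V_{\lambda+1}$ that is $\OD$ relative to $M$ is in fact genuinely $\OD$, and conversely embeddings fixing the genuinely $\OD$ predicate can be made to interact correctly with $M$.

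For the direction \ref{TheoremEquiconsistencyGeneralized2} $\Rightarrow$ \ref{TheoremEquiconsistencyGeneralized1}, I would start from a model of theory \ref{TheoremEquiconsistencyGeneralized2} and first observe that the elementary embedding $j : L(V_{\lambda+1},M) \to L(V_{\lambda+1},M)$ with critical point below $\lambda$ witnesses internal I0 relative to $M$: restricting $j$ to good initial segments $L_\alpha(V_{\lambda+1},M)$ for arbitrarily large good $\alpha < \Theta^{L(V_{\lambda+1},M)}$ gives embeddings fixing $M$, using the fact (from Laver's argument, as invoked before Lemma \ref{lemma1}) that the good ordinals are unbounded. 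I would then apply Lemma \ref{CorollaryInternalI0ImpliesUltraexacting} with $E = M$ to pass to a suitable ordinal-definable homogeneous forcing extension of $L(V_{\lambda+1},M)$ that does not change $V_{\lambda+1}$, restores $\ZFC$ externally, and in which $\lambda$ is ultraexacting. Since the forcing does not move $V_{\lambda+1}$, the premouse $M$ and its iterability are preserved, so theory \ref{TheoremEquiconsistencyGeneralized1} holds in the extension.

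For the converse \ref{TheoremEquiconsistencyGeneralized1} $\Rightarrow$ \ref{TheoremEquiconsistencyGeneralized2}, I would work in a model where $\lambda$ is ultraexacting and $M$ is a countably iterable $V_{\lambda+1}$-premouse satisfying $\varphi$; by passing to a minimal such $M$ (initial segment satisfying $\varphi$) and applying Lemma \ref{LemmaMisOD}, $M$ becomes ordinal definable from $\lambda$. Then Corollary \ref{CorollaryUltraexactingImpliesInternalI0}, applied with $E = M$, yields that internal I0 relative to $M$ holds at $\lambda$ in $L(V_{\lambda+1},M)$. This is exactly the hypothesis of Lemma \ref{LemmaBrink}, which produces an embedding $j : (V_{\lambda+1},M) \to (V_{\lambda+1},M)$ whose $\omega$th iterate $j_\omega$ extends to an elementary embedding $k : L(N_\omega,\bar M) \to L(N_\omega,\bar M)$ with the stated agreement and containment properties. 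The main obstacle I anticipate is the final bookkeeping: I must argue that this $k$, together with the premouse structure transported to $N_\omega$, can be reorganized into a genuine model of theory \ref{TheoremEquiconsistencyGeneralized2}, i.e.\ that $L(N_\omega,\bar M)$ is (after the appropriate Vopěnka-style or Löwenheim–Skolem collapse that restores $\ZFC$) a model containing a countably iterable $V_{\lambda+1}$-premouse satisfying $\varphi$ together with a self-embedding of the corresponding $L(V_{\lambda+1},M)$ with critical point below $\lambda$.

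The delicate step throughout is ensuring that iterability of the premouse is genuinely preserved under the iteration producing $N_\omega$ and under the forcing in the other direction; here I would lean on the fact that $N_\omega$ is itself an iterate of a countably iterable premouse, so countable iterability is inherited, and on the agreement $\mathcal{P}(N_\omega) \cap L(N_\omega,\bar M)[k] = \mathcal{P}(N_\omega) \cap L(N_\omega,\bar M)$ supplied by Lemma \ref{LemmaBrink} to guarantee that $k$ lives in the right model. The equiconsistency then follows by comparing consistency strengths in the two directions, exactly as in the passage from Corollaries \ref{CorollaryUltraexactingImpliesInternalI0} and \ref{CorollaryInternalI0ImpliesUltraexacting} to Theorem \ref{TheoremEquiconsistencyI0}, with the premouse $M$ carried along as an $\OD$ parameter at each stage.
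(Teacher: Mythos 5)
Your overall strategy coincides with the paper's proof: both directions route through internal I0 relative to $M$, with Lemma \ref{LemmaMisOD} making the minimal sound $M$ ordinal definable from $\lambda$ (hence fixed by the relevant embeddings), Lemma \ref{CorollaryInternalI0ImpliesUltraexacting} handling the passage from \eqref{TheoremEquiconsistencyGeneralized2} to \eqref{TheoremEquiconsistencyGeneralized1}, and Lemma \ref{LemmaBrink} handling the converse. Your account of the direction \eqref{TheoremEquiconsistencyGeneralized2} $\Rightarrow$ \eqref{TheoremEquiconsistencyGeneralized1} is essentially the paper's argument: one forces with $\mathrm{Add}(\lambda^+,1)$ over $L(V_{\lambda+1},M)$, notes that $V_{\lambda+1}$ is unchanged so $M$ stays countably iterable, and obtains a $\ZFC$ model of theory \eqref{TheoremEquiconsistencyGeneralized1}.

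There is, however, a genuine gap at exactly the point you flag as ``the main obstacle'': how to turn $k : L(N_\omega,\bar M)\to L(N_\omega,\bar M)$ into a $\ZFC$ model of theory \eqref{TheoremEquiconsistencyGeneralized2}. Neither of your suggested mechanisms works: a L\"owenheim--Skolem collapse cannot restore the Axiom of Choice, and a Vop\v{e}nka-style passage to a generic extension of $\HOD$ gives no control over whether $k$ and $\bar M$ survive into the resulting model. The paper's solution is the same Cohen-forcing trick used in the other direction, applied one level up: by elementarity of $j_\omega$, the model $L(N_\omega,\bar M)[k]$ satisfies $\mathrm{ZF} + \lambda_\omega\text{-}\mathrm{DC}$, where $\lambda_\omega = j_\omega(\lambda)$, so forcing with $\mathrm{Add}(\lambda_\omega^+,1)$ over $L(N_\omega,\bar M)[k]$ yields a model $W$ of $\ZFC$; since this forcing does not change $V_{\lambda_\omega+1}$, in $W$ the premouse $\bar M$ remains countably iterable and $k$ remains an elementary embedding of $L(V_{\lambda_\omega+1},\bar M)$ into itself with critical point below $\lambda_\omega$, so \eqref{TheoremEquiconsistencyGeneralized2} holds in $W$. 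Here the agreement $\mathcal{P}(N_\omega)\cap L(N_\omega,\bar M)[k] = \mathcal{P}(N_\omega)\cap L(N_\omega,\bar M)$ from Lemma \ref{LemmaBrink} is what guarantees that $V_{\lambda_\omega+1}$ is computed correctly in the final model. A smaller inaccuracy: your claim that countable iterability of $\bar M$ is inherited because ``$N_\omega$ is itself an iterate of a countably iterable premouse'' is off the mark, since $\bar M = j_\omega(M)$ is the image of $M$ under the limit embedding, not a premouse-theoretic iterate; the paper instead argues that $N_\omega$ contains all sets of rank below the critical point of $j$ and is therefore correct about the countable iterability of $\bar M$, which combined with elementarity of $j_\omega$ gives the required statement.
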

\proof
First, assume that \eqref{TheoremEquiconsistencyGeneralized2} holds and let $\Theta = \Theta_{V_{\lambda+1}}^{L(V_{\lambda+1},M)}$. By replacing $M$ with an initial segment if necessary, we may assume that $M$ has no proper initial segment satisfying $\varphi$. Replacing $M$ with its core if necessary, we may assume that $M$ is sound.
By Lemma \ref{LemmaMisOD}, we may thus replace $M$ by a code in $V_{\lambda+2}$ definable from $\lambda$; abusing notation, we denote this code by $M$ too.  By hypothesis, there is an elementary embedding 
\[j:L(V_{\lambda+1},M)\to L(V_{\lambda+1},M) \] 
with critical point below $\lambda$. Since $M$ is definable from $\lambda$ only we must have $j(M) = M$, so internal I0 relative to $M$ holds in $V$ and thus also in $L(V_{\lambda+1}, M)$, as this principle is absolute (see the comment at the beginning of \S \ref{SectInternalI0}). 

It follows from Lemma \ref{CorollaryInternalI0ImpliesUltraexacting} that if $G\subseteq \text{Add}(\lambda^+,1)$ is $V$-generic, then $\lambda$ is ultraexacting in $L(V_{\lambda+1},M)[G]$. Since  $\text{Add}(\lambda^+,1)$ does not change $V_{\lambda+1}$ and in particular it does not add any new countable elementary substructures of $M$ or any new iteration trees of length $\leq \omega_1$, $M$ remains countably iterable in $L(V_{\lambda+1},M)[G]$ and thus $L(V_{\lambda+1},M)[G]$ is a model of $\ZFC$ satisfying \eqref{TheoremEquiconsistencyGeneralized1}.\\

We now suppose that \eqref{TheoremEquiconsistencyGeneralized1} holds. As before, we may assume that $M \in V_{\lambda+2}$ and that $M$ is definable from the parameter $\lambda$. By Lemma \ref{lemma_good_ord}, internal I0 relative to $M$ holds in $L(V_{\lambda+1}, M)$.

Let $k$ be as in Lemma \ref{LemmaBrink} and let $\lambda_\omega = j_\omega(\lambda)$. Thus, the following hold in $L(N_\omega, \bar M)[k]$:
\begin{enumerate}
    \item ZF + $j_{\omega}(\lambda)$-DC,
    \item $k: L(N_\omega, \bar M) \to L(N_\omega, \bar M)$ is an elementary embedding,
    \item $\bar M$ is a Mitchell-Steel $(V_{\lambda_\omega+1})^{L(N_\omega, \bar M)[k]}$-premouse satisfying $\varphi$ and $\bar M$ is countably iterable.
\end{enumerate}
The first item follows from the elementarity of $j_\omega$; the second, from the choice of $k$; the third, from the elementarity of $j_\omega$ together with the fact that $N_\omega$ contains all sets of rank below the critical point of $j$ and in particular is correct about the countable iterability of $\bar M$. Let $G\subseteq \text{Add}(\lambda_\omega^+,1)$ be $L(N_\omega, \bar M)[k]$-generic and consider the model 
\[W = L(N_\omega, \bar M)[k][G].\]
Then, $W \models\ZFC$. Working in $W$, the forcing does not change $V_{\lambda_\omega+1}$, and thus $\bar M$ remains a countably iterable $(V_{\lambda_\omega+1})^{W}$-premouse satisfying $\varphi$ and $W$ satisfies that 
\[k: L(V_{\lambda_\omega+1},\bar M) \to L(V_{\lambda_\omega+1},\bar M)\]
is an elementary embedding with critical point $<\lambda_\omega$, so \eqref{TheoremEquiconsistencyGeneralized2} holds in $W$. This completes the proof of Theorem \ref{TheoremEquiconsistencyGeneralized}. 
\endproof

We mention as corollaries some particular cases of Theorem \ref{TheoremEquiconsistencyGeneralized} which might be of interest. First, the case where $\varphi$ is $0 = 0$ is Theorem \ref{TheoremEquiconsistencyI0}.
The second one improves \cite[Theorem D]{ABL}:
\begin{corollary}
The following are equiconsistent:
\begin{enumerate}
    \item There is an ultraexacting cardinal $\lambda$ such that $V_{\lambda+1}^\#$ exists; and 
    \item \textup{I0}$^\#$, i.e., there is an elementary embedding \[j:L(V_{\lambda+1},V_{\lambda+1}^\#)\to L(V_{\lambda+1},V_{\lambda+1}^\#)\] 
with critical point below $\lambda$.
\end{enumerate}
\end{corollary}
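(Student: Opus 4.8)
The plan is to derive this corollary as a direct instance of Theorem \ref{TheoremEquiconsistencyGeneralized}, by choosing the formula $\varphi$ so that the relevant $V_{\lambda+1}$-premouse encodes precisely the sharp $V_{\lambda+1}^\#$. First I would recall that the existence of $V_{\lambda+1}^\#$ is equivalent to the existence of a nontrivial elementary embedding $L(V_{\lambda+1}) \to L(V_{\lambda+1})$ with critical point above $\lambda$, and that $V_{\lambda+1}^\#$ can be coded as a subset of $V_{\lambda+1}$ (as already used in the proof of the earlier corollary on I0 via remarkable characters). The key point is that $V_{\lambda+1}^\#$ is itself the minimal active Mitchell-Steel $V_{\lambda+1}$-premouse in an appropriate sense: its fine extender sequence carries a single top measure witnessing the existence of the embedding. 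Thus I would fix $\varphi$ to be a first-order sentence in the language of premice asserting, roughly, ``the extender sequence is nonempty'' (equivalently, ``I am active''), so that the least premouse $M$ satisfying $\varphi$ with no proper initial segment satisfying $\varphi$ is exactly (a code for) $V_{\lambda+1}^\#$.

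With this choice of $\varphi$, I would verify the two directions. For the implication from statement (1) of the corollary to statement (1) of Theorem \ref{TheoremEquiconsistencyGeneralized}: if $\lambda$ is ultraexacting and $V_{\lambda+1}^\#$ exists, then taking $M$ to be the minimal active premouse gives a countably iterable Mitchell-Steel $V_{\lambda+1}$-premouse satisfying $\varphi$, so Theorem \ref{TheoremEquiconsistencyGeneralized}(1) holds with this $M$. For the converse, Theorem \ref{TheoremEquiconsistencyGeneralized}(2) with this $\varphi$ gives a countably iterable premouse $M$ satisfying $\varphi$ together with an elementary embedding $j: L(V_{\lambda+1},M) \to L(V_{\lambda+1},M)$ with critical point below $\lambda$. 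Since $M$ (after replacing it by its minimal $\varphi$-satisfying initial segment and taking its core, as in the proof of Theorem \ref{TheoremEquiconsistencyGeneralized}) is definable from $\lambda$, it is the canonical minimal active premouse, hence codes $V_{\lambda+1}^\#$; thus the embedding $j$ is exactly an I0$^\#$ embedding. By Theorem \ref{TheoremEquiconsistencyGeneralized}, these two theories are equiconsistent, which is the claim.

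The main obstacle I anticipate is identifying $V_{\lambda+1}^\#$ with the minimal Mitchell-Steel $V_{\lambda+1}$-premouse satisfying the chosen $\varphi$, and in particular checking that the countable iterability required in Theorem \ref{TheoremEquiconsistencyGeneralized} matches the iterability built into the standard definition of $V_{\lambda+1}^\#$. This requires care: one must ensure that the extender sequence coding $V_{\lambda+1}^\#$ genuinely satisfies the Mitchell-Steel fine extender conditions over $V_{\lambda+1}$ and that the ``minimal active'' characterization, via the comparison and uniqueness argument already invoked in Lemma \ref{LemmaMisOD}, singles out the right object. Once this identification is established, the equiconsistency is immediate from Theorem \ref{TheoremEquiconsistencyGeneralized}, and indeed the statement that I0$^\#$ is the embedding-theoretic reformulation follows since $L(V_{\lambda+1},M) = L(V_{\lambda+1},V_{\lambda+1}^\#)$ once $M$ codes $V_{\lambda+1}^\#$. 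A secondary point worth recording is that, because $M$ is definable from $\lambda$ alone, the embedding $j$ automatically satisfies $j(M)=M$, so no separate argument is needed to see that $j$ respects the sharp, exactly as in the main proof.
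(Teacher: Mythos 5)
Your proposal is correct and takes essentially the same approach as the paper: the paper states this corollary without further proof, as the instance of Theorem~\ref{TheoremEquiconsistencyGeneralized} in which $\varphi$ asserts activeness, so that the minimal sound countably iterable $V_{\lambda+1}$-premouse satisfying $\varphi$ is precisely (a code for) $V_{\lambda+1}^\#$. Your two-directional verification, including the point that the minimalized, cored $M$ is definable from $\lambda$ and hence fixed by $j$, is exactly the intended argument.
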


The third corollary we mention is an equiconsistency result which gauges the strength of Woodin cardinals above an ultraexacting cardinal: 
\begin{corollary}\label{CorollaryWoodinEquiconsistency}
The following schemata are equiconsistent as $n$ ranges over elements of $\mathbb{N}$:
\begin{enumerate}
\item $\lambda$ is ultraexacting and there are $n$ Woodin cardinals greater than $\lambda$; and
\item there is a transitive  model $M$  and an elementary embedding $j: M \to M$ such that $V_{\lambda+1} \in M$, $\crit j<\lambda$, and there are $n$ Woodin cardinals above $\lambda$ in $M$.
\end{enumerate}
\end{corollary}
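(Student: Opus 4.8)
The plan is to apply Theorem~\ref{TheoremEquiconsistencyGeneralized} to the formula $\varphi_n$ asserting ``there are $n$ Woodin cardinals above $\lambda$'', where inside a $V_{\lambda+1}$-premouse the ordinal $\lambda$ is recovered as the rank of the generator. For this $\varphi_n$, Theorem~\ref{TheoremEquiconsistencyGeneralized} already yields the equiconsistency of (i) ``$\lambda$ ultraexacting together with a countably iterable $V_{\lambda+1}$-premouse $M$ satisfying $\varphi_n$'' and (ii) ``such an $M$ together with an elementary $j\colon L(V_{\lambda+1},M)\to L(V_{\lambda+1},M)$ with $\crit{j}<\lambda$''. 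It therefore suffices to establish two bridging equiconsistencies: that the first statement of the corollary is equiconsistent with (i), and that the second statement of the corollary is equiconsistent with (ii). Both bridges rest on the standard inner-model-theoretic equivalence, relativized to the $V_{\lambda+1}$-framework of \cite{St08}, between the existence of $n$ Woodin cardinals and the existence of a countably iterable premouse carrying $n$ Woodin cardinals.

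For the first bridge, suppose the first statement of the corollary holds, so $\lambda$ is ultraexacting and there are $n$ Woodins above $\lambda$ in $V$. Running the relativized $K^c$-construction over $V_{\lambda+1}$ produces a countably iterable $V_{\lambda+1}$-premouse $M$ with $n$ Woodins above $\lambda$; this happens in $V$ and disturbs neither $V_{\lambda+1}$ nor the ambient universe, so $\lambda$ stays ultraexacting and (i) holds. Conversely, given a model $W$ of (i), the premouse $M$ is a transitive structure with $n$ Woodins above $\lambda$, and I claim it also witnesses ``$\lambda$ ultraexacting'' in the sense of Definition~\ref{DefUltraexactingZF}: any $A\subseteq V_{\lambda+1}$ ordinal definable in $M$ is ordinal definable in $W$, since $M$ is definable from $\lambda$, so ultraexactingness in $W$ supplies an elementary $i\colon(V_{\lambda+1},A)\to(V_{\lambda+1},A)$ with $\crit{i}<\lambda$; as $i(\lambda)=\lambda$, the restriction $i\restriction V_\lambda$ has graph contained in $V_\lambda$, hence lies in $V_{\lambda+1}\subseteq M$, and $M$ reconstructs $i$ from $i\restriction V_\lambda$ exactly as in the proof of Theorem~\ref{lemma:ultra_exact_char}. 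Replacing $M$ by a transitive $\ZFC$-model extracted from it by standard means, retaining $V_{\lambda+1}$ and the $n$ Woodins, yields a model of the first statement of the corollary.

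For the second bridge, assume (ii). Since $j$ maps $L(V_{\lambda+1},M)$ to itself it fixes the generator, so $j(\lambda)=\lambda$, and because $M$ is definable from $\lambda$ we get $j(M)=M$; thus $j\restriction M\colon M\to M$ is elementary, $V_{\lambda+1}\in M$, $\crit{j}<\lambda$, and $M$ has $n$ Woodins above $\lambda$, which is the second statement of the corollary with transitive model $M$. Conversely, let $N$ be transitive with $j\colon N\to N$, $\crit{j}<\lambda$, $V_{\lambda+1}\in N$ and $n$ Woodins above $\lambda$ in $N$; we may assume $\lambda$ is the supremum of the critical sequence of $j$, so that $j(\lambda)=\lambda$ (and note $V_{\lambda+1}^N=V_{\lambda+1}$ since $V_{\lambda+1}\in N$). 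Inside $N$ the $K^c$-construction over $V_{\lambda+1}$ produces a countably iterable premouse $M$ with $n$ Woodins above $\lambda$, definable from $\lambda$; hence $j(M)=M$ and $j$ restricts to an elementary $j'\colon L(V_{\lambda+1},M)\to L(V_{\lambda+1},M)$ with $\crit{j'}<\lambda$. Finally $j'\restriction V_\lambda=j\restriction V_\lambda$ has graph in $V_\lambda$, so it belongs to $V_{\lambda+1}\subseteq N$, and $N$ recovers $j'$ from it as in Lemma~\ref{lemma_good_ord}; hence $N$ itself models (ii).

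The main obstacle is purely inner-model-theoretic: checking that the $K^c$-construction relativized to $V_{\lambda+1}$ reaches a countably iterable $V_{\lambda+1}$-premouse with $n$ Woodin cardinals, and that a countably iterable premouse satisfying $\varphi_n$ can be converted into a genuine transitive $\ZFC$-model with $n$ Woodin cardinals without losing ultraexactingness. These are the relativized versions of standard facts for finitely many Woodin cardinals and lie within current inner model theory; all the manipulations of embeddings around them---the fixing of $\lambda$ and the recovery of $i$, $j$ and $j'$ from their restrictions to $V_\lambda$---are exactly those already carried out in the proofs of Theorem~\ref{lemma:ultra_exact_char} and Lemma~\ref{lemma_good_ord}.
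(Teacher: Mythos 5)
Your proposal has the same skeleton as the paper's proof: reduce to Theorem \ref{TheoremEquiconsistencyGeneralized} for a Woodin-cardinal formula, then pass between the corollary's statements and the theorem's two statements using backgrounded constructions, definability of the minimal premouse from $\lambda$ (Lemma \ref{LemmaMisOD}), and restriction of embeddings. Your Bridge 2 is, in substance, exactly what the paper does: one direction restricts the embedding on $L(V_{\lambda+1},M)$ (the paper restricts to $M_n(V_{\lambda+1})$, you to the premouse itself), and the other runs the Mitchell--Steel construction inside the given transitive model and uses definability from $\lambda$ to see that the embedding fixes the resulting premouse. The organizational difference is your choice $\varphi_n=$ ``there are $n$ Woodin cardinals,'' where the paper uses $M_n^\#(V_{\lambda+1})$ and accepts an $(n+1)$-to-$n$ offset (one needs an extra Woodin cardinal to get a countably iterable $M_n^\#$); the offset is harmless since the corollary concerns schemata. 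Two slips are minor: ``we may assume $\lambda$ is the supremum of the critical sequence'' is not literally a WLOG for statement (2) as written, though the paper tacitly reads (2) this way as well (it needs $j(\lambda)=\lambda$ to conclude $j(M_n^\#)=M_n^\#$); and $N$ does not ``recover'' $j'$ from $j\restriction V_\lambda$ --- that recovery works only at good ordinals, i.e., under the hypothesis of Lemma \ref{lemma_good_ord}, since elements of $L(V_{\lambda+1},M)$ in general require ordinal parameters --- but this is inessential, because the pair consisting of $N$ and $j\restriction L(V_{\lambda+1},M)^N$ already witnesses the consistency of (ii) when the embedding is treated, as it must be, as a class predicate.

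The genuine gap is the converse half of your Bridge 1. You claim that from a model $W$ of ``$\lambda$ ultraexacting plus a countably iterable premouse $M$ with $n$ Woodin cardinals'' one can, ``by standard means,'' extract a transitive $\ZFC$-model containing $V_{\lambda+1}$ in which $\lambda$ is still ultraexacting and $M$'s Woodin cardinals are genuine Woodin cardinals. No extraction by rank-initial segments, inner models, or definable submodels of $M$ can work: any transitive $\ZFC$-model containing $V_{\lambda+1}$ must contain a wellordering of $V_{\lambda+1}$; every element of $M$ is ordinal definable in $W$ from parameters in $V_{\lambda+1}$ (since $M$ is definable from $\lambda$ and is generated over $V_{\lambda+1}\cup\{V_{\lambda+1}\}$); and ultraexactingness of $\lambda$ in $W$ rules out any such wellordering, by the Kunen-style argument behind the fact that these cardinals refute $V=\HOD$ --- an ultraexact embedding would map one such wellordering of $V_{\lambda+1}$ elementarily to another, which is impossible (cf.\ \cite[2.10]{ABL} and the remark following Proposition \ref{ultraexacting+forcing}). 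So the passage to a $\ZFC$-model is forced to go through adjoining a generic wellordering of $V_{\lambda+1}$ over a choiceless model, as in Lemma \ref{CorollaryInternalI0ImpliesUltraexacting} and the proof of Theorem \ref{TheoremEquiconsistencyGeneralized}, and you then owe two preservation arguments: that ultraexactingness survives (homogeneous ordinal definable forcing) and, much more delicately, that the $n$ Woodin cardinals survive --- a Levy--Solovay-type argument for a forcing that is not wellorderable in the ground model. None of this appears in your write-up, and it is precisely the step the paper's proof is organized to avoid: the paper's converse direction terminates at statement (1) of Theorem \ref{TheoremEquiconsistencyGeneralized} and lets the schema formulation, with its offsets, absorb the remaining bookkeeping, never re-creating actual Woodin cardinals alongside an ultraexacting cardinal. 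Your cleaner level-by-level decomposition commits you to exactly the bridge that is hardest, so as written the proposal does not close.
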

\begin{proof}
By Theorem \ref{TheoremEquiconsistencyGeneralized}, the consistency of an ultraexacting cardinal $\lambda$ below $n+1$ Woodin cardinals implies the consistency of an elementary embedding 
\[ j:L(V_{\lambda+1},M_{n}^\#(V_{\lambda+1}))\to L(V_{\lambda+1},M_{n}^\#(V_{\lambda+1}))\] 
with critical point below $\lambda$. By restricting $j$, we obtain an elementary embedding
\[k: M_{n}(V_{\lambda+1}) \to M_{n}(V_{\lambda+1})\]
as desired. 

Conversely, suppose that there is a transitive model $M$ and an elementary embedding $j: M \to M$ such that $V_{\lambda+1} \in M$, $\crit j<\lambda$, and there are $n+1$ Woodin cardinals above $\lambda$ in $M$. Let $\bar M$ be the result of carrying out the Mitchell-Steel \cite{MiSt94} construction of $L[E]$ within $M$ and relativized to $V_{\lambda+1}$. By the main theorem of Steel \cite{St93}, $\bar M$ is a countably iterable $V_{\lambda+1}$-premouse with $n+1$ Woodin cardinals above $\lambda$. Moreover, a comparison argument shows that $M_n^\#(V_{\lambda+1})$ is an initial segment of $\bar M$. By Lemma \ref{LemmaMisOD}, $M_n^\#(V_{\lambda+1})$ is definable from $\lambda$, so $j(M_n^\#(V_{\lambda+1})) = M_n^\#(V_{\lambda+1})$; thus $j$ restricts to an elementary embedding 
\[ k:L(V_{\lambda+1},M_{n}^\#(V_{\lambda+1}))\to L(V_{\lambda+1},M_{n}^\#(V_{\lambda+1})),\] 
from which the desired equiconsistency follows now by an application of Theorem \ref{TheoremEquiconsistencyGeneralized}.
\end{proof}

We remark that ``Woodin cardinals'' in the statement of Lemma \ref{CorollaryWoodinEquiconsistency} could be replaced by measurable cardinals, or by any large cardinal for which an inner model theory has been developed.


\section{The strength of exacting cardinals}\label{SectExacting}

We now investigate the consistency strength of exacting cardinals. We shall see that it lies strictly between the principles I3 and I2. Recall that I3 asserts the existence of a nontrivial elementary embedding $j:V_\lambda \to V_\lambda$ with $\lambda$ a limit ordinal. I2 asserts that for some $\lambda$ there exists an elementary embedding $j:V \to M$ with $M$ transitive, $V_\lambda\subseteq M$, $j\restriction \lambda \ne {\rm Id}$, and $j(\lambda)=\lambda$.

Let  $\map{j}{V_\lambda}{V_\lambda}$ be an I3-embedding with critical sequence $\seq{\kappa_m}{m<\omega}$. We then define 
\begin{align*}
j^+: V_{\lambda+1} &\to V_{\lambda+1}\\
A &\mapsto 
\bigcup_{\alpha<\lambda}j(A\cap V_\alpha).    
\end{align*}
It is well-known that this map is $\Sigma_0$-elementary (see \cite[Lemma 3.2]{MR3902806}). 
Then there exists a unique commuting system $$\seq{\map{j_{m,n}}{V_\lambda}{V_\lambda}}{m\leq n<\omega}$$ of elementary embeddings with    $j_{0,1}=j$, $j_{n,n}=\id_{V_\lambda}$  and $$j_{n+1,n+2} ~ = ~ j^+(j_{n,n+1}) ~ = ~ j_{n,n+1}^+(j_{n,n+1})$$ for all $n<\omega$ (see \cite[Lemmas 3.4 \& 3.5]{MR3902806}). 
 Moreover, if $n<\omega$, then $j_{n,n+1}$ is an I3-embedding with critical sequence $\seq{\kappa_{m+n}}{m<\omega}$. 
 In particular, we have $j_{n,n+k}(\kappa_{m+n})=\kappa_{m+n+k}$ for all $k,m,n<\omega$.  
 We now let $$\langle M^j_\omega, ~ \seq{\map{j_{n,\omega}}{V_\lambda}{M^j_\omega}}{n<\omega}\rangle$$ denote the direct limit of the above system and we let $W^j_\omega$ denote the well-founded part   of this model. In the following, we always identify $W^j_\omega$ with its transitive collapse. Easy computations now show that $V_\lambda\cup\{\lambda\}\subseteq W^j_\omega$ and $j_{0,\omega}(\kappa_0)=\lambda$.

 \begin{proposition}\label{proposition:FixedSets}
  Let $\map{j}{V_\lambda}{V_\lambda}$ be an $\mathrm{I3}$-embedding  with critical sequence $\seq{\kappa_m}{m<\omega}$. 
  \begin{enumerate}
   \item\label{item:FixedSet1} If $A\in V_{\lambda+1}$ is such that  $j^+(A)=A$,  then $A=j_{m,\omega}(A\cap V_{\kappa_m})\in W^j_\omega$, for every $m<\omega$. 
   
   \item\label{item:FixedSet2} If $A\in V_{\kappa_0+1}$, then $j_{0,\omega}(A)\in V_{\lambda+1}\cap W^j_\omega$ and $j^+(j_{0,\omega}(A))=j_{0,\omega}(A)$. 
   \end{enumerate}
 \end{proposition}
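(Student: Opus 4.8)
The plan is to analyze the elements $j_{m,\omega}(A\cap V_{\kappa_m})$ of the direct limit by computing their transitive collapses and matching these against the levelwise behaviour of $j^+$. I will rely on two auxiliary computations. First, a \emph{levelwise formula for $j^+$}: for any $D\in V_{\lambda+1}$ and any $k<\omega$,
\[ j^+(D)\cap V_{\kappa_{k+1}} ~=~ j(D\cap V_{\kappa_k}), \]
which follows from the definition $j^+(D)=\bigcup_{\alpha<\lambda}j(D\cap V_\alpha)$, the cofinality of $\langle\kappa_k\rangle$ in $\lambda$, monotonicity of $j$, and the elementarity identity $j(X)\cap V_{j(\kappa_k)}=j(X\cap V_{\kappa_k})$ together with $j(\kappa_k)=\kappa_{k+1}$. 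Second, a \emph{collapse computation}: for $S=A\cap V_{\kappa_m}\in V_{\kappa_m+1}$, the element $j_{m,\omega}(S)$ lies in the well-founded part $W^j_\omega$ and its transitive collapse is $\bigcup_{k\geq m}j_{m,k}(S)\subseteq V_\lambda$, hence an element of $V_{\lambda+1}$. This is because $z\in j_{m,\omega}(S)$ exactly when $z$ is represented by a pair $(k,w)$ with $w\in j_{m,k}(S)\subseteq V_{\kappa_k}$ (using elementarity of the limit maps and $j_{m,k}(\kappa_m)=\kappa_k$), and each such representative collapses to $w$ because $j_{k,l}\restriction V_{\kappa_k}=\id$ for $l\geq k$.

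The genuine content, and the step I expect to be the main obstacle, is a \emph{shift identity} for the system: for every $x\in V_\lambda$ and $k<\omega$,
\[ j\big(j_{0,k}(x)\big) ~=~ j_{0,k+1}(x). \]
I will obtain this from the coherence of the commuting system recorded in \cite[Lemmas 3.4 \& 3.5]{MR3902806}, namely $j^+(j_{0,k})=j_{1,k+1}$, combined with the general relation $j^+(f)(j(x))=j(f(x))$ (valid for any $f\colon V_\lambda\to V_\lambda$ and $x\in V_\lambda$) and the factorization $j_{0,k+1}=j_{1,k+1}\circ j_{0,1}$. Everything else is bookkeeping with commutativity $j_{m,k}\circ j_{\ell,m}=j_{\ell,k}$ and with the fact that a transition map with critical point $\kappa_m$ fixes $V_{\kappa_m}$ pointwise.

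For \eqref{item:FixedSet1}, assume $j^+(A)=A$. Applying the levelwise formula with $D=A$ gives $A\cap V_{\kappa_{k+1}}=j(A\cap V_{\kappa_k})$ for all $k$; feeding this into an induction on $k$ and invoking the shift identity yields $A\cap V_{\kappa_k}=j_{0,k}(A\cap V_{\kappa_0})$. By the collapse computation with $m=0$, the set $A=\bigcup_k j_{0,k}(A\cap V_{\kappa_0})$ is precisely the collapse of $j_{0,\omega}(A\cap V_{\kappa_0})$, so $A=j_{0,\omega}(A\cap V_{\kappa_0})\in W^j_\omega$. For a general $m$, commutativity gives $A\cap V_{\kappa_k}=j_{m,k}(j_{0,m}(A\cap V_{\kappa_0}))=j_{m,k}(A\cap V_{\kappa_m})$ for $k\geq m$, and the collapse computation then identifies $A$ with $j_{m,\omega}(A\cap V_{\kappa_m})$.

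For \eqref{item:FixedSet2}, let $A\in V_{\kappa_0+1}$ and put $B=\bigcup_k j_{0,k}(A)$. The collapse computation with $m=0$ shows $j_{0,\omega}(A)\in W^j_\omega$ with collapse $B\subseteq V_\lambda$, so $j_{0,\omega}(A)\in V_{\lambda+1}\cap W^j_\omega$; moreover $B\cap V_{\kappa_k}=j_{0,k}(A)$, using that $j_{k,l}$ fixes $V_{\kappa_k}$ pointwise together with monotonicity of the sets $j_{0,l}(A)$. Finally, combining the levelwise formula $j^+(B)\cap V_{\kappa_{k+1}}=j(B\cap V_{\kappa_k})=j(j_{0,k}(A))$ with the shift identity $j(j_{0,k}(A))=j_{0,k+1}(A)=B\cap V_{\kappa_{k+1}}$ shows $j^+(B)\cap V_{\kappa_{k+1}}=B\cap V_{\kappa_{k+1}}$ for all $k$, whence $j^+(B)=B$, as required.
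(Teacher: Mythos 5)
Your proposal is correct and follows essentially the same route as the paper: both arguments reduce to the slicewise identities $A\cap V_{\kappa_n}=j_{m,n}(A\cap V_{\kappa_m})$ (for part \eqref{item:FixedSet1}) and $j_{0,\omega}(A)\cap V_{\kappa_n}=j_{0,n}(A)$ together with $j(j_{0,n}(A))=j_{0,n+1}(A)$ (for part \eqref{item:FixedSet2}), all obtained from the coherence $j_{n+1,n+2}=j^+(j_{n,n+1})$ and the relation $j^+(f)(j(x))=j(f(x))$, and then identify the relevant direct-limit points inside $W^j_\omega$ using $j_{n,\omega}\restriction V_{\kappa_n}=\id_{V_{\kappa_n}}$. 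The only difference is organizational: the paper handles part \eqref{item:FixedSet1} by propagating the invariance $j_{m,n}^+(A)=A$ through \cite[Lemma 3.4]{MR3902806}, whereas you route both parts through your ``shift identity,'' which is precisely the induction the paper carries out in its proof of part \eqref{item:FixedSet2}.
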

 
 \begin{proof}
  \eqref{item:FixedSet1} Fix $A\in V_{\lambda+1}$ with $j^+(A)=A$. Since {\cite[Lemma 3.4]{MR3902806}} ensures that $$j_{n+1,n+2}^+(A) ~ = ~ (j^+(j_{n,n+1}))^+(A) ~ = ~ j^+(j_{n,n+1}^+(A))$$ holds for all $n<\omega$, an easy induction shows that $j_{n,n+1}^+(A)=A$ holds for all $n<\omega$. 
  Another application of {\cite[Lemmas 3.4]{MR3902806}} then shows that $j_{m,n}^+(A)=A$ holds for all $m\leq n<\omega$. 
  This directly implies that $j_{m,n}(A\cap V_{\kappa_m})=A\cap V_{\kappa_n}$ holds for all $m\leq n<\omega$. 
  Since we have $j_{n,\omega}(\kappa_n)=\lambda\in W^j_\omega$ and $j_{n,\omega}\restriction V_{\kappa_n}=\id_{V_{\kappa_n}}$ for all $n<\omega$, it now follows that $j_{m,\omega}(A\cap V_{\kappa_m})=A\in W^j_\omega$ for all $m<\omega$.

  \eqref{item:FixedSet2} Fix  $A\in V_{\kappa_0+1}$. 
   Then $j(j_{0,0}(A))=j(A)=j_{0,1}(A)$ and, if $j(j_{0,n}(A))=j_{0,n+1}(A)$ holds for some $n<\omega$, then $$j(j_{0,n+1}(A))  =  j^+(j_{n,n+1})(j(j_{0,n}(A)))  =  j_{n+1,n+2}(j_{0,n+1}(A))  =  j_{0,n+2}(A).$$ This shows that $j(j_{0,n}(A))=j_{0,n+1}(A)$ holds for all $n<\omega$. 
  Additionally, if $n<\omega$, then  the fact that $j_{n,\omega}\restriction V_{\kappa_n}=\id_{\kappa_n}$ ensures that $j_{0,\omega}(A)\cap V_{\kappa_n}=j_{0,n}(A)$.  In combination, this shows that $$j(j_{0,\omega}(A)\cap V_{\kappa_n}) ~ = ~ j_{0,n+1}(A) ~ = ~ j_{0,\omega}(A)\cap V_{\kappa_{n+1}}$$ holds for all $n<\omega$ and we can conclude that $j^+(j_{0,\omega}(A))=j_{0,\omega}(A)$.  
 \end{proof}

\subsection{$\mathrm{I3}_{\rm{wf}(n)}$-embeddings}
 Following  \cite[Section 3]{MR3902806}, we say that an I3-embedding $\map{j}{V_\lambda}{V_\lambda}$   is an $\mathrm{I3}_1$-embedding if it is $\omega$-iterable, i.e., if $M^j_\omega=W^j_\omega$. Note that if $\map{j}{V_\lambda}{V_\lambda}$ is an $\mathrm{I3}_1$-embedding, then there exists a limit ordinal $\lambda^\prime<\lambda$ and an I3-embedding $\map{i}{V_{\lambda^\prime}}{V_{\lambda^\prime}}$ (see \cite[Theorem 4.1]{MR3902806}). 
In the following we will obtain exacting cardinals from the following assumption.

\begin{definition}
  Given $n<\omega$,  an $\mathrm{I3}$-embedding $\map{j}{V_\lambda}{V_\lambda}$ with critical sequence $\seq{\kappa_m}{m<\omega}$ is an {$\mathrm{I3}_{\rm{wf}(n)}$-embedding} if $j_{n,\omega}[\kappa_n^+]\subseteq W^j_\omega$. 
\end{definition}

According to the following proposition, the existence of an {$\mathrm{I3}_{\rm{wf}(n)}$-embedding} is strictly weaker than an {$\mathrm{I3}_{1}$-embedding}. Below, given a transitive set $M$, we say that a map $i: M \to M$ is a partial elementary embedding if $i$ is an embedding with domain $D\subset M$ and $i:D \to D'$ is elementary, where $D' = \bigcup i[D]$ is the codomain.
 
 \begin{proposition}
  \begin{enumerate}
   \item\label{item:HierarchyWF1}  If $m<n<\omega$ and $\map{j}{V_\lambda}{V_\lambda}$ is an $\mathrm{I3}_{\rm{wf}(n)}$-embedding, then  there exists a limit ordinal $\lambda^\prime<\lambda$ and an $\mathrm{I3}_{\rm{wf}(m)}$-embedding $\map{i}{V_{\lambda^\prime}}{V_{\lambda^\prime}}$. 

   \item\label{item:HierarchyWF2} If $\map{j}{V_\lambda}{V_\lambda}$ is an $\mathrm{I3}_1$-embedding, then for every $n<\omega$ there exists a limit ordinal $\lambda^\prime<\lambda$ and an $\mathrm{I3}_{\rm{wf}(n)}$-embedding $\map{i}{V_{\lambda^\prime}}{V_{\lambda^\prime}}$. 
  \end{enumerate}
 \end{proposition}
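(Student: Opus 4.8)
The plan is to reduce both parts to a single ``one-index'' reflection step together with a monotonicity observation, and then to carry out that step by adapting the reflection behind \cite[Theorem 4.1]{MR3902806} while additionally tracking well-foundedness. First I would record the monotonicity of the $\mathrm{wf}$-index for a \emph{fixed} embedding: if $\map{j}{V_\lambda}{V_\lambda}$ is an $\mathrm{I3}$-embedding and $m'\le n$, then being $\mathrm{I3}_{\rm{wf}(n)}$ implies being $\mathrm{I3}_{\rm{wf}(m')}$ at the \emph{same} $\lambda$. This is immediate from $j_{m',\omega}=j_{n,\omega}\circ j_{m',n}$ together with $j_{m',n}[\kappa_{m'}^+]\subseteq\kappa_n^+$ (the map $j_{m',n}$ is elementary and sends the successor cardinal of $\kappa_{m'}$ to that of $\kappa_n$), which gives $j_{m',\omega}[\kappa_{m'}^+]\subseteq j_{n,\omega}[\kappa_n^+]\subseteq W^j_\omega$. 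Granting this, \eqref{item:HierarchyWF2} follows from \eqref{item:HierarchyWF1}: if $j$ is $\mathrm{I3}_1$ then $M^j_\omega=W^j_\omega$, so $j$ is $\mathrm{I3}_{\rm{wf}(n+1)}$ at $\lambda$ for every $n$, and \eqref{item:HierarchyWF1} applied to the pair $(n,n+1)$ produces an $\mathrm{I3}_{\rm{wf}(n)}$-embedding at some $\lambda'<\lambda$. Similarly, monotonicity reduces \eqref{item:HierarchyWF1} to the single step: \emph{if $j$ is $\mathrm{I3}_{\rm{wf}(m+1)}$ at $\lambda$, then some $\lambda'<\lambda$ carries an $\mathrm{I3}_{\rm{wf}(m)}$-embedding}, because $m<n$ gives $m+1\le n$ and hence $\mathrm{I3}_{\rm{wf}(m+1)}$ at $\lambda$.

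For the core step I would work inside the transitive model $W=W^j_\omega$. The hypothesis $\mathrm{wf}(m+1)$ (which also yields $\mathrm{wf}(0)$) makes the iterate $j_{m+1,\omega}$ \emph{honest} on $V_{\kappa_{m+1}+1}$: since $|V_{\kappa_{m+1}}|=\kappa_{m+1}$, every well-ordering of $V_{\kappa_{m+1}}$ has length $<\kappa_{m+1}^+$, so $j_{m+1,\omega}[\kappa_{m+1}^+]\subseteq W$ forces $H:=j_{m+1,\omega}\restriction V_{\kappa_{m+1}+1}$ to be an elementary isomorphism of $V_{\kappa_{m+1}+1}$ onto $(V_{\lambda+1})^{W}$ with $H(\kappa_{m+1})=\lambda$ and $H\restriction V_{\kappa_{m+1}}=\mathrm{id}$. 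A key preliminary reduction is that it suffices to produce the desired embedding \emph{inside} $W$: for any limit $\lambda'<\lambda$ one has $V_{\lambda'+1}\subseteq V_\lambda\subseteq W$, so $\mathcal{P}(V_{\lambda'})^{W}=\mathcal{P}(V_{\lambda'})$ and $W$ computes both $\mathrm{I3}$-ness and the iteration $M^i_\omega$ of an embedding $i$ at level $\lambda'$ correctly; hence ``$\lambda'$ carries an $\mathrm{I3}_{\rm{wf}(m)}$-embedding'' is absolute between $W$ and $V$. To manufacture such an embedding in $W$ I would reflect $j$ exactly as in \cite[Theorem 4.1]{MR3902806}: Proposition \ref{proposition:FixedSets} shows that $W$ carries the $\mathrm{I3}$-embedding $j_{0,\omega}^+(j)=\bigcup_{p}j_{0,\omega}(j\restriction V_{\kappa_p})$ at the level $\Lambda=j_{0,\omega}^+(\lambda)$, with $\crit{j_{0,\omega}^+(j)}=\lambda$, and the partial elementary embeddings of $W$ arising from comparing the cofinal cocones $\{j_{p,\omega}\}_{p<\omega}$ (in the sense of the partial embeddings defined above) transport this datum across $H$ down to an $\mathrm{I3}$-embedding $i$ at a level $\lambda'<\lambda$.

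The main obstacle, and the point that genuinely refines \cite[Theorem 4.1]{MR3902806}, is the well-foundedness bookkeeping: one must verify not merely that $i$ is $\mathrm{I3}$ but that $i$ is $\mathrm{I3}_{\rm{wf}(m)}$, i.e.\ that $i_{m,\omega}$ maps the successor of the $m$-th element of its critical sequence into $W^i_\omega$. Under the correspondence $H$ the $m$-th iterate of $i$ matches a segment of the $(m+1)$-st iterate of $j$, so the well-foundedness assumed of $j$ at stage $m+1$ is precisely what certifies $i$ at stage $m$; I expect the careful identification of these iterates (and the verification that the transport via the partial embeddings preserves exactly one further level of well-foundedness, no more) to be the technical heart of the argument. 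This is also the structural reason that a single descent \emph{consumes one index}, so that the hypothesis $m<n$ cannot be relaxed to $m\le n$. Finally, it is worth stressing that reflection is unavoidable here: as $\lambda$ has cofinality $\omega$ and its critical sequence $\seq{\kappa_p}{p<\omega}$ is cofinal, there is no limit $\lambda'<\lambda$ closed under $j$, so $j$ admits no restriction to a smaller rank-initial segment and the passage to $\lambda'<\lambda$ must be obtained by the above reflection rather than by restricting $j$.
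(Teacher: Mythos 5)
Your reduction of part \eqref{item:HierarchyWF2} to part \eqref{item:HierarchyWF1}, and of part \eqref{item:HierarchyWF1} to the single step ``$\mathrm{I3}_{\rm{wf}(m+1)}$ at $\lambda$ yields $\mathrm{I3}_{\rm{wf}(m)}$ below $\lambda$'', is correct: the monotonicity observation $j_{m',\omega}[\kappa_{m'}^+]=j_{n,\omega}[j_{m',n}[\kappa_{m'}^+]]\subseteq j_{n,\omega}[\kappa_n^+]\subseteq W^j_\omega$ is valid, and it subsumes the paper's treatment of \eqref{item:HierarchyWF2} (which just notes that an $\mathrm{I3}_1$-embedding is $\mathrm{I3}_{\rm{wf}(n)}$ for all $n$). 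The preliminary reduction to producing the embedding inside $W^j_\omega$ is also salvageable, since all candidate embeddings at levels $\lambda'<\lambda$, their direct limits, and all potential witnesses to ill-foundedness of those limits live in $V_\lambda\subseteq W^j_\omega$. The problem is the core step itself, which is where the entire content of the proposition lies, and there your sketch both contains a false claim and leaves the decisive difficulty unresolved. First, $H=j_{m+1,\omega}\restriction V_{\kappa_{m+1}+1}$ is \emph{not} onto $(V_{\lambda+1})^{W^j_\omega}$: since $H$ is the identity on $V_{\kappa_{m+1}}$ and sends $\kappa_{m+1}$ to $\lambda$, no ordinal in $[\kappa_{m+1},\lambda)$ is in its range, so for instance $\{\kappa_{m+1}\}\in V_\lambda\subseteq (V_{\lambda+1})^{W^j_\omega}$ is not a value of $H$. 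Second, and more seriously, the ``transport across $H$ of the cocone data'' is never made precise, and the natural way to implement it is obstructed: the tower $j_{0,\omega}(j\restriction V_{\kappa_p})$ lives at the $M^j_\omega$-levels $j_{0,\omega}(\kappa_p)$, which lie far above $\lambda$ and are not controlled by the hypothesis $\mathrm{wf}(m+1)$ (note $\kappa_1>\kappa_0^+$, so even $j_{0,\omega}(\kappa_1)$ need not lie in $W^j_\omega$). To bring this data down below $\lambda$ one would want $M^j_\omega$ (or $W^j_\omega$) to find an \emph{internal} branch through the tree of partial embeddings and then pull it back by elementarity of $j_{0,\omega}$; but $M^j_\omega$ is ill-founded under the present hypotheses, so a tree can carry an external branch (coming from $j$ itself) while $M^j_\omega$ believes it is ranked, its ``rank function'' taking values in the ill-founded part. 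This is exactly the failure mode that distinguishes $\mathrm{I3}_{\rm{wf}(n)}$ from $\mathrm{I3}_1$, and your proposal, by your own admission (``I expect \dots to be the technical heart of the argument''), does not address it.

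The paper's proof is organized precisely to defeat this obstruction, and runs in the opposite direction from yours. Assuming toward a contradiction that no $\mathrm{I3}_{\rm{wf}(m)}$-embedding exists below $\lambda$, one builds \emph{in $V$} a tree $T$ of size at most $\kappa_n$ whose nodes are pairs $\langle i,r\rangle$, where $i$ is a finite-stage partial elementary embedding of $V_{\kappa_n}$ into itself and $r$ is an order-preserving map into $\gamma=\sup(j_{m,n}[\kappa_m^+])<\kappa_n^+$ certifying the $\mathrm{wf}(m)$-condition along the approximation; any infinite branch of $T$ would produce an $\mathrm{I3}_{\rm{wf}(m)}$-embedding below $\lambda$, so $T$ is well-founded and admits a rank function $\pi:T\to\rho$ with $\rho<\kappa_n^+$. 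One then applies $j_{n,\omega}$ to $\gamma,\rho,\pi,T$: the hypothesis $\mathrm{wf}(n)$ (here is the only place it is used, and here is where $m<n$ is needed, to get $\gamma<\kappa_n^+$) guarantees that these images lie in $W^j_\omega$, so $T_*=j_{n,\omega}(T)$ carries a genuine ordinal-valued rank function and is \emph{truly} well-founded. Finally, $j$ itself supplies an infinite branch $\langle (j\restriction V_{\kappa_k+1},r_k)\rangle_{k>m}$ through $T_*$, where $r_k$ is defined using $j_{k-1,\omega}$ and $\gamma_*=j_{n,\omega}(\gamma)$; contradiction. In short, the well-foundedness hypothesis is spent on pushing a \emph{rank function for a tree of approximations} into the well-founded part, not on transporting an embedding through the direct limit, and without some mechanism of this kind your ``bookkeeping'' step cannot be carried out.
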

 
 \begin{proof}
   \eqref{item:HierarchyWF1} Fix natural numbers $m<n$ and  let $\map{j}{V_\lambda}{V_\lambda}$ be an $\mathrm{I3}_{\rm{wf}(n)}$-embedding with critical sequence $\seq{\kappa_\ell}{\ell<\omega}$. Assume, towards a contradiction, that there is no $\mathrm{I3}_{\rm{wf}(m)}$-embedding $\map{j}{V_{\lambda^\prime}}{V_{\lambda^\prime}}$ with $\lambda^\prime<\lambda$. 
   Set $\gamma=\sup(j_{m,n}[\kappa_m^+])$. 
   Since $m<n$, we have $\gamma<\kappa_n^+$. 
We define a tree $T$ whose nodes are pairs $\langle i, r\rangle$ where $\pmap{i}{V_{\kappa_n}}{V_{\kappa_n}}{part}$ is a partial elementary embedding and $r$ is an order-preserving mapping on ordinals such that there are:
   \begin{itemize}
       \item a  natural number $k> m$, 

       \item a strictly increasing sequence $\seq{\mu_\ell}{\ell\leq k+1}$ of cardinals less than $\kappa_n$, 

       \item a sequence $\seq{\map{i_\ell}{V_{\mu_k}}{V_{\mu_{k+1}}}}{\ell<k}$ of elementary embeddings, 
       
       \item a sequence $\seq{D_\ell\subseteq \mu_\ell^+}{m\leq\ell<k}$, and 

   \end{itemize}
   such that the following statements hold: 
   \begin{enumerate}
       \item\label{item:TreeDef1} $\dom{i}=V_{\mu_k +1}$, $i\restriction\mu_0=\id_{\mu_0}$ and $i(\mu_\ell)=\mu_{\ell+1}$ for all $\ell\leq k$. 

       \item\label{item:TreeDef2} $i_0=i\restriction V_{\mu_k}$ and $i_{\ell+1}=i(i_\ell\restriction V_{\mu_{k-1}})$ for all $\ell\leq k$. 

       \item\label{item:TreeDef3} $D_m=\mu_m^+$ and $D_{\ell+1}=\Set{\xi<\mu_{\ell+1}}{\exists \zeta\in D_\ell ~ \xi\leq i_\ell(\zeta)}$ for all $m\leq\ell<k-1$. 

       \item\label{item:TreeDef4} $r : D_{k-1} \to \gamma$ is an order-preserving function such that $r\upharpoonright \mu_m^+ = \id_{\mu_m^+}$ and $r(\zeta) = r(i_\ell(\zeta))$ whenever $\zeta \in D_\ell$ and $\ell < k-1$.
   \end{enumerate}
   
The ordering on $T$ is the natural one: $\langle i, r\rangle < \langle\hat\imath,\hat r\rangle$ whenever $\langle i, r\rangle, \langle\hat\imath,\hat r\rangle \in T$, $\hat\imath$ extends $i$, and $\hat r$ extends $r$. Thus, $T$ is a tree of height at most $\omega$. 

\begin{claim*}
Suppose $i\in T$. Then, the number $k$ and the sequences of cardinals $\mu_\ell$, embeddings $i_\ell$, and the sets $D_\ell$ are uniquely determined by $i$.
\end{claim*}
\begin{proof}[Proof of the Claim]
Observe first that the corresponding sequence of cardinals $\mu_l$ is uniquely determined as the (finite) critical sequence of $i$. The embeddings $i_\ell$ are also obtained from $i$ and the critical sequence by definition of $T$. This uniquely determines each $D_\ell$ as well.
\end{proof}
It follows from the claim that if
$\hat\imath$ is an extension of $i$ in $T$, then $\hat\imath$ must have a strictly longer critical sequence. Moreover, the definition of $T$ imposes  agreement on the embeddings $i_\ell$ and sets $D_\ell$.

\begin{claim*}
The tree $T$ is well-founded. 
\end{claim*}

\begin{proof}[Proof of the Claim]
Assume, towards a contradiction, that there is a branch $B$ of order-type $\omega$ through $T$. 
The union of the first and second components of $B$ yields an embedding $i$ and an order-preserving mapping $r$.
By the comment immediately before the claim, there is a cardinal $\lambda^\prime<\kappa_n$ with the property that $\map{i}{V_{\lambda^\prime}}{V_{\lambda^\prime}}$ is an I3-embedding. Let $\seq{\mu_\ell}{\ell<\omega}$ denote the critical sequence of $i$. 
Let us denote by $D$ the set of all elements of $M^i_\omega$, say $i_{\ell,\omega}(\xi)$ with  $m\leq\ell<\omega$ and $\xi<\lambda^\prime$, for which there is $\zeta<\mu_m^+$ with $\xi\leq i_{m,\ell}(\zeta)$. Observe that this is precisely the union of the sets $D_\ell$ determined by $B$. 

Similarly, $\map{r}{D}{\gamma}$ has the property that $$(r\circ i_{\ell,\omega})(\xi_0) ~ < (r\circ i_{\ell,\omega})(\xi_1)$$ holds for all $m\leq\ell<\omega$, $\zeta<\mu_\ell^+$ and $\xi_0<\xi_1<i_{m,\ell}(\zeta)$. 
    But, this directly implies that $i_{m,\omega}[\mu_m^+]\subseteq W^i_\omega$ and hence $i$ is an $\mathrm{I3}_{\rm{wf}(m)}$-embedding, contradicting our initial assumption. 
   \end{proof}

Since $T$ has cardinality at most $\kappa_n$, the above claim shows that there is an ordinal $\rho<\kappa_n^+$ and an order-reversing (ranking) function, say $\map{\pi}{T}{\rho}$. 
Now, set $\gamma_*=j_{n,\omega}(\gamma)$, $\rho_*=j_{n,\omega}(\rho)$, $\pi_*=j_{n,\omega}(\pi)$ and $T_*=j_{n,\omega}(T)$. Our setup then ensures that $\gamma_*$, $\rho_*$, $\pi_*$ and $T_*$ are all elements of $W^j_\omega$ and this directly implies that $T_*$ is a well-founded tree.

\begin{claim*}
If $k>m$ is a natural number, then $(j\upharpoonright V_{\kappa_k+1}, r_k)$ is an element of $T_*$ for some $r_k$. 
\end{claim*}
\begin{proof}[Proof of the Claim]
We need to define $r_k$ and show that $(j\upharpoonright V_{\kappa_k+1}, r_k) \in T_*$. This will be witnessed by:
\begin{itemize}
\item the natural number $k$, 
\item the sequence $\seq{\kappa_\ell}{\ell\leq k+1}$, 
\item the sequence $\seq{(j_{\ell,\ell+1}\restriction V_{\kappa_k}) : V_{\kappa_k} \to V_{\kappa_{k+1}}}{\ell< k}$, and 
\item the sequence $\seq{D_\ell}{m\leq\ell<k}$, where $D_m = \kappa_m^+$ and, for $m < \ell < k$, we have $D_\ell = \{\xi: \exists \zeta < \kappa_m^+\, \xi\leq j_{m,\ell}(\zeta)\} = \bigcup j_{m,\ell}[\kappa_m^+]$. 
\end{itemize}
The function $r_k : D_{k-1} \to \gamma_*$ from the statement of the claim is defined inductively, assuming $r_{k-1}$ has been defined the same way and that the pair $(j\upharpoonright V_{\kappa_{k-1}+1}, r_{k-1})$ has been shown to belong to $T_*$. First, we set $r_k(\xi) = \xi$ for $\xi<\kappa_m^+$. If $\xi \in D_\ell$ for $\ell<k-1$, then we set $r_k(\xi) = r_{k-1}(\xi)$; otherwise if $\xi \in D_{k-1} \setminus D_{k-2}$ (and $m < k-1$), we define
$r_k(\xi) = j_{k-1,\omega}(\xi) < \gamma_*$.
Note that $j_{\ell,\omega}[D_\ell]\subseteq\gamma_*$holds for all $m\leq\ell<\omega$. Since $\gamma_* \in W^j_\omega$, it follows that $r_k$ is order-preserving.

    %
     %
%

Let us check the
$(j\upharpoonright V_{\kappa_k+1}, r_k)$ satisfies all the clauses defining $T_*$ in $M^j_\omega$. 
For \eqref{item:TreeDef1}, we directly see that $j\upharpoonright V_{\kappa_k+1}$ has domain $V_{\kappa_k+1}$, critical point $\kappa_0$, and critical sequence $\langle \kappa_\ell: \ell \leq k\rangle$. Clause \eqref{item:TreeDef2} follows from the fact that $j_{\ell+1,\ell+2} \upharpoonright V_{\kappa_k} = j(j_{\ell,\ell+1}\upharpoonright V_{\kappa_{k-1}})$ for each $\ell < k$.
Clause
\eqref{item:TreeDef3} holds by definition. Finally, for \eqref{item:TreeDef4}, let $\xi \in D_\ell$ and suppose that $\ell$ is least such. If $\ell < k-2$, then the fact that $r_k(\xi) = r_k(j_{\ell,\ell+1}(\xi))$ follows from the inductive construction of $r_k$ and the inductive assumption that $(j\upharpoonright V_{\kappa_{k-1}+1}, r_{k-1}) \in T_*$.
Otherwise, if $\ell = k-2$, then $j_{k-2,k-1}(\xi) \in D_{k-1}\setminus D_{k-2}$, and so according to the definition of $r_k$  we have 
\begin{align*}
r_k(j_{k-2,k-1}(\xi)) = j_{k-1,\omega}(j_{k-2,k-1}(\xi)) = j_{k-2,\omega}(\xi) = r_{k-1}(\xi) = r_k(\xi).
\end{align*}
We had already checked that $r_k$ is order-preserving. This shows that $(j\upharpoonright V_{\kappa_k+1}, r_k) \in T_*$, as desired.
     %
\end{proof}
   
The above claim now directly yields a contradiction, because $T_*$ is well-founded.

Part \eqref{item:HierarchyWF2} of the proposition follows directly from part \eqref{item:HierarchyWF1},as every $\mathrm{I3}_1$-embed\-ding is an $\mathrm{I3}_{\rm{wf}(n)}$-embedding for all natural numbers $n$. 
  \end{proof}

\subsection{Exacting cardinals and Prikry forcing}
 Given a normal ultrafilter $U$ on a cardinal $\kappa$, we let $\PPP_U$ denote the corresponding Prikry forcing (see \cite[Section 18]{Kan:THI}).

 \begin{theorem}\label{theorem:PrikryExacting}
  If $\map{j}{V_\lambda}{V_\lambda}$  is an $\mathrm{I3}_{\rm{wf}(0)}$-embedding with critical sequence $\vec{\kappa}=\seq{\kappa_m}{m<\omega}$, $$U_0 ~ = ~ \Set{A\subseteq\kappa_0}{\kappa_0\in j(A)}$$ is the normal ultrafilter on $\kappa_0$ induced by $j$ and $G$ is $\PPP_{U_0}$-generic over $V$, then $\kappa_0$ is an exacting cardinal in $V[G]$. 
 \end{theorem}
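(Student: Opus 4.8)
The plan is to transfer the embedding $j$ downward to an exact embedding at $\kappa_0$ in $V[G]$, using the Prikry-generic sequence as the critical sequence of the new embedding. First I would record the standard features of $\PPP_{U_0}$ (see \cite[Section 18]{Kan:THI}): it adds no bounded subsets of $\kappa_0$, so $V_{\kappa_0}^{V[G]}=V_{\kappa_0}$ and $\kappa_0$ remains a limit cardinal; it preserves cardinals and cofinalities; and it adds a sequence $\vec c=\seq{c_n}{n<\omega}$ cofinal in $\kappa_0$ with the Prikry property that every $A\in U_0$ contains a tail of $\vec c$. In particular $\mathrm{cf}^{V[G]}(\kappa_0)=\omega$, so in $V[G]$ the cardinal $\kappa_0$ is a singular limit cardinal equipped with a distinguished cofinal $\omega$-sequence, mirroring the way $\lambda=\sup_n\kappa_n$ sits above the critical sequence of $j$ in $V$. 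By Lemma \ref{lemma:Equivalentexact}, applied inside $V[G]$, it suffices to exhibit, for the least $\zeta\in C^{(2)}$ of $V[G]$ above $\kappa_0$, a substructure $X\preceq V_\zeta^{V[G]}$ with $V_{\kappa_0}\cup\{\kappa_0\}\subseteq X$ and an elementary $i:X\to V_\zeta^{V[G]}$ with $i(\kappa_0)=\kappa_0$ and $i\restriction\kappa_0\neq\id$.

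The heart of the argument is the construction of $i$, which I would obtain as the direct-limit transfer of $j$ under the correspondence matching the critical sequence $\seq{\kappa_n}{n<\omega}$ of $j$ with the Prikry sequence $\vec c$. Just as $j^+$ fixes $\lambda$ while $j$ shifts the $\kappa_n$ upward, so $i$ should fix $\kappa_0$ while shifting the generic points, $i(c_n)=c_{n+1}$, so that $\crit i=c_0<\kappa_0$ and hence $i\restriction\kappa_0\neq\id$, with $i(\kappa_0)=\sup_n c_{n+1}=\kappa_0$. The translation device is the measure $U_0=\Set{A\subseteq\kappa_0}{\kappa_0\in j(A)}$ together with the Prikry property: membership facts ``$\kappa_0\in j(A)$'' about $j$ at $\kappa_0$ become facts ``a tail of $\vec c$ lies in $A$'' in $V[G]$, so the commuting system $j_{m,n}$ read along $\vec c$ rather than along $\seq{\kappa_n}{}$ furnishes the local approximations to $i$. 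I would verify elementarity of $i$ and the inclusion $V_{\kappa_0}\cup\{\kappa_0\}\subseteq X$ by a density argument over $\PPP_{U_0}$, using that $\zeta\in C^{(2)}$ makes $\kappa_0$ and the relevant parameters definable in $V_\zeta^{V[G]}$, hence members of $X$ fixed by $i$.

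The decisive use of the hypothesis that $j$ is an $\mathrm{I3}_{\rm{wf}(0)}$-embedding, rather than a bare I3-embedding, enters in guaranteeing that the target of $i$ is well-founded, i.e.\ that $i$ genuinely lands in $V_\zeta^{V[G]}$ and not in an ill-founded limit. The condition $j_{0,\omega}[\kappa_0^+]\subseteq W^j_\omega$ asserts precisely that the direct limit is well-founded through the image of $\kappa_0^+$, which is exactly the level at which subsets of $\kappa_0$ (hence the action of $i$ on codes for $\POT{\kappa_0}$ and on $V_{\kappa_0+1}\cap X$) are decoded; Proposition \ref{proposition:FixedSets} controls the $j^+$-fixed sets that realize this decoding. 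I expect this well-foundedness verification to be the main obstacle. Concretely, in the spirit of the tree-ranking arguments used to prove the Hierarchy Proposition, I would encode the possible coherent approximations to $i$ along $\vec c$ as a tree, and use the ranking function provided by $j_{0,\omega}[\kappa_0^+]\subseteq W^j_\omega$ to show that the branch determined by the generic sequence yields a well-founded, and hence transitive, target. Without wf$(0)$ the transferred embedding could have an ill-founded image and fail to witness exactingness, which is consistent with the fact (to be proved separately) that a mere I3-embedding does not suffice.

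Finally, once such an $i:X\to V_\zeta^{V[G]}$ is produced with $i(\kappa_0)=\kappa_0$, $V_{\kappa_0}\cup\{\kappa_0\}\subseteq X\preceq V_\zeta^{V[G]}$, and $\crit i<\kappa_0$, Lemma \ref{lemma:Equivalentexact} promotes this single witness at the least $\zeta\in C^{(2)}$ to full exactingness, yielding $V[G]\models$ ``$\kappa_0$ is exacting'' as required.
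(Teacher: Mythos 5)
There is a genuine gap at the heart of your construction: the ``direct-limit transfer of $j$ under the correspondence $\seq{\kappa_n}{n<\omega}\leftrightarrow\vec c$'' is never actually defined, and it cannot be. The embedding $j$ and the entire commuting system $j_{m,n}$ have critical points $\geq\kappa_0$ and are the identity on $V_{\kappa_0}$, so they carry no information about elementary embeddings between ranks below $\kappa_0$; the Mathias-type translation ``$\kappa_0\in j(A)$ iff a tail of $\vec c$ lies in $A$'' concerns subsets of $\kappa_0$ and produces no maps moving ordinals below $\kappa_0$. Worse, the objects your $i$ would be assembled from cannot be supplied by genericity: if $i:X\to V_\zeta^{V[G]}$ were an exact embedding at $\kappa_0$ with $i(c_n)=c_{n+1}$, then $i\restriction V_{\kappa_0}$ would be a nontrivial elementary embedding of $V_{\kappa_0}$ into itself, and each restriction $i\restriction V_{c_n}\colon V_{c_n}\to V_{c_{n+1}}$ is a bounded-rank object; since $\PPP_{U_0}$ adds no bounded subsets of $\kappa_0$, every such restriction must already lie in $V$. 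So your plan requires $V$ to contain coherent towers of elementary embeddings between ranks below $\kappa_0$ with critical point $c_0$ --- an I3-like structure below $\kappa_0$ --- and nothing in your argument produces these; manufacturing them is essentially the entire content of the theorem. A ``density argument over $\PPP_{U_0}$'' cannot close this gap, because density arguments only select among objects already present in $V$. Your use of the $\mathrm{I3}_{\rm{wf}(0)}$ hypothesis is also misplaced: in your setup the target of $i$ is $V_\zeta^{V[G]}$, which is trivially well-founded, so well-foundedness of the target is not the issue --- existence of $i$ is.

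For comparison, the paper's proof is a proof by contradiction rather than a direct construction, and the Prikry/critical-sequence analogy you noticed is exploited one level up, at $\lambda$, not at $\kappa_0$. Assuming $\kappa_0$ is not exacting in $V[G]$, weak homogeneity makes this a statement forced by every condition; one takes a hull $X\prec V_\alpha$ of size $\kappa_0$ with $V_{\kappa_0}\cup\{U_0\}\subseteq X$, collapses it to a transitive $N_0$ coded by a relation on $\kappa_0$, and applies $j_{0,\omega}$. The hypothesis $j_{0,\omega}[\kappa_0^+]\subseteq W^j_\omega$ is used precisely here: since $N_0\cap\Ord<\kappa_0^+$, the image $N=j_{0,\omega}(N_0)$ lies in the well-founded part, so $N$ is a genuine transitive model with $V_\lambda\cup\{\lambda,\bar U\}\subseteq N$. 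By the Mathias criterion, the critical sequence $\vec\kappa$ itself is $\PPP_{\bar U}^N$-generic over $N$; the self-embedding $i$ of $N$ induced by $j$ (via Proposition \ref{proposition:FixedSets} and the coding) lifts to $i_*:N[G]\to N[G]$ because $i[G]$ is contained in the generic given by the shifted sequence; and elementarity transfers the forced failure to ``$\lambda$ is not exacting in $N[G]$.'' The ZF characterization of exactingness (Theorem \ref{lemma:exact_char}) together with a tree-of-partial-embeddings absoluteness argument, with the branch supplied by $i_*$, then yields the contradiction. It is this last step that manufactures, inside $N[G]$, the embeddings between bounded ranks which your direct approach simply presupposes.
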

 
 \begin{proof}
   Assume, towards a contradiction, that $\kappa_0$ is not exacting in $V[G]$. Then the weak homogeneity of $\PPP_{U_0}$ ensures that every condition in this partial order forces that $\lambda$ is not exacting. 
    Fix $\alpha>\lambda$ with the property that $V_\alpha$ is sufficiently elementary in $V$ and pick an elementary substructure $X$ of $V_\alpha$ of cardinality $\kappa_0$ with $V_{\kappa_0}\cup\{U_0\}\subseteq X$.  
   Let $\map{\pi}{X}{N_0}$ denote the corresponding transitive collapse. Then $V_{\kappa_0}\cup\{\kappa_0\}\subseteq N_0$. 
   Next, set $\bar{U}_0=\pi(U_0)=N_0\cap U_0\in N_0$. 
   Fix a bijection $\map{b_0}{\kappa_0}{N_0}$ with $b_0(0)=\kappa_0$, $b_0(1)=\bar{U}_0$ and $b_0(\omega\cdot\beta)=\beta$ for all $\beta<\kappa_0$. 
   Finally, let  $E_0$ be the unique well-founded and extensional relation on $\kappa_0$ with the property that $N_0$ is the transitive collapse of $\langle\kappa_0,E_0\rangle$.

  Now, set $E=j_{0,\omega}(E_0)$, $N=j_{0,\omega}(N_0)$, $U=j_{0,\omega}(U_0)$, $\bar{U}=j_{0,\omega}(\bar{U}_0)$ and $b=j_{0,\omega}(b_0)$. 
  Then $E\in V_{\lambda+1}\cap W^j_\omega$ is a binary relation on $\lambda$. Moreover, since $N_0\cap\Ord\in\kappa_0^+$, it follows that 
$$(N\cap\Ord)^{M^j_\omega} ~ = ~ j_{0,\omega}(N_0\cap\Ord) ~ \in ~ j_{0,\omega}[\kappa_0^+] ~ \subseteq ~ W^j_\omega.$$ 
  Since $M^j_\omega$ is a model of $\ZFC$, we now know that $N\in W^j_\omega$ is a transitive set with $V_\lambda\cup\{\bar{U},\lambda\}\subseteq N$ and $\map{b}{\langle\lambda,E\rangle}{\langle N,\in\rangle}$ is an isomorphism with $b(0)=\lambda$, $b(1)=\bar{U}$ and $b(\omega\cdot\beta)=\beta$ for all $\beta<\lambda$. 
  It follows that $E$ is a well-founded and extensional  relation on $\lambda$ and $N$ is the transitive collapse of $\langle\lambda,E\rangle$. 
 In addition, Proposition \ref{proposition:FixedSets} shows that $j^+(E)=E$ holds and therefore we can apply {\cite[Lemma 3.3]{MR3902806}} to conclude that $j\restriction\lambda$ is an elementary embedding of $\langle\lambda,E\rangle$ into itself. 
 It follows that $$\map{i=b\circ j\circ b^{{-}1}}{N}{N}$$ is an elementary embedding with $i(\lambda)=\lambda$, $i(\bar{U})=\bar{U}$ and $i\restriction\lambda=j\restriction\lambda$. 
 \[
 \begin{tikzcd}
{(N, \in)} \arrow[rr, "i"] \arrow[dd, "b^{-1}"] &  & {(N, \in)}                  \\
&  &                        \\
{\langle \lambda, E\rangle} \arrow[rr, "j"]     &  & {\langle \lambda, E\rangle} \arrow[uu, "b"']
\end{tikzcd}
 \]
Next, notice that our setup ensures that $$U ~ = ~ \Set{A\in\POT{\lambda}\cap W^j_\omega}{\exists m<\omega ~ \forall n \in[m,\omega) ~ \kappa_n\in A} ~ \in ~ W^j_\lambda$$ and $$\bar{U} ~ = ~ N\cap U ~ = ~ \Set{A\in\POT{\lambda}^N}{\exists m<\omega ~ \forall n \in [m,\omega) ~ \kappa_n\in A} ~ \in ~ N.$$
Elementarity then ensures that $\bar{U}$  is a normal ultrafilter on $\lambda$ in $N$. Let $\PPP_{\bar{U}}^N$ denote  Prikry forcing with $\bar{U}$ in $N$. The fact that $i(\bar{U})=\bar{U}$ holds then ensures that $i(\PPP_{\bar{U}}^N)=\PPP_{\bar{U}}^N$ holds. 
 Let $G$ denote the filter on $\PPP_{\bar{U}}^N$ induced by $\vec{\kappa}$, {i.e.,} the filter $G$ consists of all conditions $\langle s,A\rangle$ in $\PPP_{\bar{U}}^N$ with the property that $s(m)=\kappa_m$ for all $m<\length{s}$ and $\kappa_m\in A$ for all $\length{s}\leq m<\omega$. 
 The above equalities now allow us to use the \emph{Mathias criterion} for Prikry forcing (see \cite[Theorem 18.7]{Kan:THI}) to conclude that $G$ is  $\PPP_{\bar{U}}^N$-generic over $N$ with $\vec{\kappa}\in N[G]$. 
 Next, let $H$ denote the filter on $\PPP_{\bar{U}}^N$ induced by the sequence $\seq{\kappa_{m+1}}{m<\omega}$, {i.e.,} the filter $H$ consists of all conditions $\langle s,A\rangle$ in $\PPP_{\bar{U}}^N$ with the property that $s(m)=\kappa_{m+1}$ for all $m<\length{s}$ and $\kappa_{m+1}\in A$ for all $\length{s}\leq m<\omega$. 
 It then follows that $H$ is also $\PPP_{\bar{U}}^N$-generic over $N$ with $N[G]=N[H]$.  
Since $i[G]\subseteq H$ holds, standard arguments allow us to find an elementary embedding 
\begin{align*}
i_*: N[G] &\to N[G]\\
\tau^G &\mapsto i(\tau)^H
\end{align*}
extending $i$ to $N[G]$.

By our initial assumption that $\kappa_0$ is not exacting after Prikry forcing with $U_0$, elementarity now implies that $\lambda$ is not an exacting cardinal in $N[G]$. 
An application of Theorem \ref{lemma:exact_char} (see also Remark \ref{remark2})  now shows that, in $N[G]$, there is a non-empty subset $A$ of $V_{\lambda+1}$ that is definable by a formula with parameter $\lambda$ and has the property that for all $x,y\in A$, there is no non-trivial elementary embedding of $\langle V_\lambda,\in,x\rangle$ into $\langle V_\lambda,\in,y\rangle$. Pick $x\in A$ and set $y=i_*(x)$. 
Since $i_*(\lambda)=\lambda$ and $A$ is definable from $\lambda$, we have $i_*(A)=A$ and hence $y\in A$. 

Define $T$ to be the set of all non-trivial partial elementary embeddings $p$ of $\langle V_\lambda,\in,x\rangle$ into $\langle V_\lambda,\in,y\rangle$ with $\dom{p}=V_{\kappa_m}$ and $\ran{p}\subseteq V_{\kappa_{m+1}}$ for some $0<m<\omega$.
The fact that the sequence $\vec{\kappa}$ is an element of $N[G]$ then implies that the set $T$ is also an element of $N[G]$. 
Moreover, if we order the elements of $T$ by inclusion, then we obtain a tree of height at most $\omega$. 
It is now easy to see that for all $0<m<\omega$, the map $i_*\restriction V_{\kappa_m}$ is an element of the $(m-1)$-th level of $T$. This shows that the tree $T$ has height $\omega$ and it contains a cofinal branch in $V$. 
Since a sufficiently strong fragment of $\ZFC$ holds in $N[G]$, we now know that there is a cofinal branch $B$ through $T$ in $N[G]$. But, this implies that $\bigcup B$ is a non-trivial elementary embedding of $\langle V_\lambda,\in,x\rangle$ into $\langle V_\lambda,\in,y\rangle$ in $N[G]$, which is a contradiction.  This proves the theorem.
 \end{proof}

 \begin{corollary}
  The existence of an $\mathrm{I3}_{\rm{wf}(0)}$-embedding implies the existence of a transitive model of $\ZFC$ together with Vop\v{e}nka's Principle and the existence of an exacting cardinal. 
 \end{corollary}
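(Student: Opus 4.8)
The plan is to take the transitive model to be $V_\lambda[G]$, where $\map{j}{V_\lambda}{V_\lambda}$ is the given $\mathrm{I3}_{\rm{wf}(0)}$-embedding with critical sequence $\seq{\kappa_m}{m<\omega}$, $U_0$ is the normal ultrafilter on $\kappa_0$ induced by $j$, and $G$ is $\PPP_{U_0}$-generic over $V$. By Theorem \ref{theorem:PrikryExacting}, $\kappa_0$ is exacting in $V[G]$; and since $\PPP_{U_0}\in V_{\kappa_0+\omega}\subseteq V_\lambda$ and $G$ is also generic over $V_\lambda$, we have $V_\lambda[G]=(V_\lambda)^{V[G]}$. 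First I would record that $V_\lambda\models\ZFC$. For each $m$ the map $j_{m,m+1}$ is an I3-embedding whose critical sequence $\seq{\kappa_{m+i}}{i<\omega}$ is cofinal in $\lambda$; consequently it moves every ordinal in $[\kappa_m,\lambda)$ strictly upward (on each interval $[\kappa_{m+i},\kappa_{m+i+1})$ its image lies above $\kappa_{m+i+1}$), so its fixed points are exactly the members of $V_{\kappa_m}$. Hence any class function definable over $V_\lambda$ from a parameter in $V_{\kappa_m}$ is itself fixed by $j_{m,m+1}$ and sends each element of $V_{\kappa_m}$ to a fixed point, i.e. back into $V_{\kappa_m}$; since every set and every parameter lies in some $V_{\kappa_m}$, every instance of Replacement holds. (This is the same fact underlying the assertion $M^j_\omega\models\ZFC$ in the proof of Theorem \ref{theorem:PrikryExacting}.) Therefore $V_\lambda[G]$, being a set-forcing extension of a model of $\ZFC$, is itself a transitive model of $\ZFC$.

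Next I would verify Vop\v{e}nka's Principle in $V_\lambda[G]$ using the embeddings $j_{m,m+1}$ directly. Let $\mathcal C$ be a proper class, in $V_\lambda[G]$, of structures of a fixed signature, defined over $V_\lambda[G]$ from a parameter $p$. Choose $m\geq 1$ large enough that $p$ and the signature both have names of rank below $\kappa_m$. Since $\crit{j_{m,m+1}}=\kappa_m$ exceeds the rank of $\PPP_{U_0}$, the embedding $j_{m,m+1}$ fixes $\PPP_{U_0}$ and $G$, and so lifts to an elementary embedding $\hat\jmath:V_\lambda[G]\to V_\lambda[G]$ with $\crit{\hat\jmath}=\kappa_m$ and $\hat\jmath(p)=p$. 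Picking any $A\in\mathcal C$ of rank $\geq\kappa_m$ (such $A$ exists because $\mathcal C$ is unbounded in $V_\lambda[G]$), the restriction $\hat\jmath\restriction A$ is a nontrivial elementary embedding of $A$ into $\hat\jmath(A)$, and $\hat\jmath(A)\in\mathcal C$ by elementarity since $\hat\jmath$ fixes the defining parameter $p$. Thus $\mathcal C$ contains two distinct members admitting an elementary embedding, so $V_\lambda[G]\models\mathrm{VP}$.

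It remains to see that $\kappa_0$ is exacting in $V_\lambda[G]$, for which I would invoke the characterization of exactness by embeddings of ordinal-definable predicates (Theorem \ref{lemma:exact_char} and Definition \ref{def:exacting-zf}). Every $A\subseteq V_{\kappa_0+1}$ that is ordinal definable in $V_\lambda[G]$ is also ordinal definable in $V[G]$, because $V_\lambda[G]=(V_\lambda)^{V[G]}$ is definable in $V[G]$ from $\lambda$; moreover any witnessing embedding $(V_{\kappa_0},x)\to(V_{\kappa_0},y)$ has rank below $\lambda$ and hence lies in $V_\lambda[G]$. Since $\kappa_0$ is exacting in $V[G]$, the required witnesses therefore exist already in $V_\lambda[G]$, so $\kappa_0$ is exacting there. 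Altogether $V_\lambda[G]$ is a transitive model of $\ZFC$ together with Vop\v{e}nka's Principle and an exacting cardinal. The main point to get right is the interplay of all three properties in the single model $V_\lambda[G]$: establishing the worldliness of $\lambda$ so that $\ZFC$ holds, checking that the lifted embeddings $j_{m,m+1}$ cover parameters of every rank below $\lambda$ so that the \emph{scheme} $\mathrm{VP}$ survives the forcing, and confirming that exactness genuinely descends from the proper-class extension $V[G]$ to the set model $V_\lambda[G]$.
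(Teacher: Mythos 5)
Your internal arguments---that $V_\lambda\models\ZFC$, that the embeddings $j_{m,m+1}$ lift to elementary embeddings of $V_\lambda[G]$ into itself and yield Vop\v{e}nka's Principle there, and that exactness of $\kappa_0$ descends from $V[G]$ to $V_\lambda[G]=V[G]_\lambda$---are essentially correct; indeed your lifting argument for Vop\v{e}nka's Principle is a self-contained alternative to the paper's citation of the preservation of VP under set forcing. But there is a genuine gap at the foundation of the proposal: the object $V_\lambda[G]$ that you offer as the transitive model does not exist in $V$. The corollary is a theorem of $\ZFC$ plus ``there is an $\mathrm{I3}_{\rm{wf}(0)}$-embedding,'' and its conclusion asserts the existence of a transitive model, so that model must be exhibited \emph{inside} $V$. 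Every dense subset of $\PPP_{U_0}$ is an element of $V_{\kappa_0+\omega}\subseteq V_\lambda$, so a filter is $\PPP_{U_0}$-generic over $V_\lambda$ if and only if it is generic over $V$; since Prikry forcing is atomless, no such filter exists in $V$. What your argument actually establishes is the forcing statement ``every condition of $\PPP_{U_0}$ forces that $V_\lambda[\dot{G}]$ is a transitive model of $\ZFC$ together with VP and an exacting cardinal,'' which is a statement about a hypothetical extension, not the existence of a model in $V$.

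This is precisely the issue that the second half of the paper's proof addresses and that your proposal has no counterpart of. The paper first uses the weak homogeneity of $\PPP_{U_0}$ to convert Theorem \ref{theorem:PrikryExacting} into the internal statement ``in $V_\lambda$, every condition in $\PPP_{U_0}$ forces $\kappa_0$ to be exacting''; it then takes a \emph{countable} elementary submodel $X\prec V_\lambda$ with $U_0\in X$, lets $\pi:X\to M$ be the transitive collapse, and picks $H$ to be $\pi(\PPP_{U_0})$-generic over $M$---such an $H$ genuinely exists in $V$ because $M$ is countable. The final model is $M[H]$: it is a transitive set in $V$, $\pi(\kappa_0)$ is exacting in it by elementarity and the forced statement, and VP holds in it because VP holds in $V_\lambda$, hence in $M$ by elementarity, hence in $M[H]$ by preservation of VP under set forcing. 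Alternatively, your argument could be repaired by an absoluteness detour: inside the hypothetical $V[G]$, a L\"owenheim--Skolem argument yields a countable transitive model of the relevant recursive theory, and ``there is a countable transitive model of $T$'' is a $\Sigma^1_2$ statement, hence absolute between $V$ and $V[G]$ by Shoenfield. Either way, some such step is indispensable; as written, your proof stops one move short of producing the required model.
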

 
\begin{proof}
  Let $\map{j}{V_\lambda}{V_\lambda}$ be an $\mathrm{I3}_{\rm{wf}(0)}$-embedding with critical sequence $\seq{\kappa_m}{m<\omega}$ and let $U=\Set{A\subseteq\kappa_0}{\kappa_0\in j(A)}$. 
  Then $\PPP_U\in V_\lambda$. Let $G$ be $\PPP_U$-generic over $V_\lambda$.  Since $G$ is also $\PPP_U$-generic over $V$, Theorem \ref{theorem:PrikryExacting} shows that $\kappa_0$ is an exacting cardinal in $V[G]$. Moreover, the fact that $V_\lambda[G]=V[G]_\lambda$ ensures that $\kappa_0$ is also an exacting cardinal in $V_\lambda[G]$. Using the weak homogeneity of $\PPP_U$, we can now conclude that, in $V_\lambda$, every condition in $\PPP_U$ forces $\kappa_0$ to be an exacting cardinal. 
  
  Now work in $V$ again and let $X$ be a countable elementary submodel of $V_\lambda$ with $U\in X$. Let $\map{\pi}{X}{M}$ denote the corresponding transitive collapse and let $H$ be $\pi(\PPP_U)$-generic over $M$. The above arguments then show that $\pi(\kappa_0)$ is an exacting cardinal in $M[H]$. Moreover, since Vop\v{e}nka's Principle holds in $V_\lambda$, it also holds in any set-generic forcing extension of $V_\lambda$ (\cite[Theorem 14]{BT:VP}), and so it  holds in $M[H]$. 
\end{proof}

\begin{corollary}
\label{coro5.6}
If $\ZFC$ is consistent with the existence of an $\mathrm{I2}$-embed\-ding, then $\ZFC$ is consistent together with the $\HOD$ Hypothesis and the existence of an extendible cardinal above an exacting cardinal. 
\end{corollary}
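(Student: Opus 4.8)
The plan is to build, from $\mathrm{Con}(\ZFC+\mathrm{I2})$, a single set model in which an exacting cardinal sits below an extendible cardinal and the HOD Hypothesis holds. Since $\mathrm{I2}$ lies below $\mathrm{I0}$ among the traditional large cardinals, it is consistent with $V=\HOD$ (as recalled in the introduction), so I would fix a ground model $V\models\ZFC+\mathrm{I2}+V=\HOD$ together with an $\mathrm{I2}$-embedding $j:V\to M$ with critical point $\kappa_0$ and $\lambda=\sup_{n<\omega}j^n(\kappa_0)$. Its restriction $i=j\restriction V_\lambda:V_\lambda\to V_\lambda$ is, by the standard analysis relating $\mathrm{I2}$ to iterable $\mathrm{I3}$-embeddings (cf.\ \cite{MR3902806,Kan:THI}), an $\omega$-iterable $\mathrm{I3}$-embedding, i.e.\ an $\mathrm{I3}_1$-embedding, and hence in particular an $\mathrm{I3}_{\mathrm{wf}(0)}$-embedding; write $\seq{\kappa_m}{m<\omega}$ for its critical sequence and $j_{m,n}$ for the associated iterates.

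Before forcing I would extract the needed features of $V_\lambda$. The $\kappa_m$ are $\Sigma_2$-correct cardinals and $C^{(2)}$ is closed under limits, so $\lambda\in C^{(2)}$; since $\HOD$ is $\Sigma_2$-definable and $V=\HOD$, this yields $V_\lambda\models V=\HOD$, and in particular $V_\lambda$ satisfies the HOD Hypothesis. As in the proof of the preceding corollary, $V_\lambda\models$ Vop\v{e}nka's Principle. Finally, for every $\eta$ with $\kappa_0+\eta<\lambda$ one may choose $n$ with $\kappa_0+\eta<\kappa_n$ and note that $j_{0,n}\restriction V_{\kappa_0+\eta}$ is elementary with critical point $\kappa_0$ and $j_{0,n}(\kappa_0)=\kappa_n>\kappa_0+\eta$; thus $\kappa_0$ is extendible in $V_\lambda$, and by elementarity of $i$ so is $\kappa_1=i(\kappa_0)>\kappa_0$.

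I would then apply Theorem \ref{theorem:PrikryExacting} to $i$: with $U_0=\Set{A\subseteq\kappa_0}{\kappa_0\in i(A)}$ and $G$ a $\PPP_{U_0}$-generic filter over $V$, the cardinal $\kappa_0$ is exacting in $V[G]$, and since $\PPP_{U_0}\in V_\lambda$ and $V_\lambda[G]=(V[G])_\lambda$, also in $V_\lambda[G]$. As $\PPP_{U_0}$ has size $\kappa_0<\kappa_1$, a L\'evy--Solovay-style argument keeps $\kappa_1$ extendible in $V_\lambda[G]$, and the set forcing preserves Vop\v{e}nka's Principle by \cite[Theorem 14]{BT:VP}. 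Hence $V_\lambda[G]$ is a model of $\ZFC+\mathrm{VP}$ with an exacting cardinal $\kappa_0$ strictly below the extendible cardinal $\kappa_1$.

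The remaining, and I expect hardest, task is the HOD Hypothesis in $V_\lambda[G]$; this is exactly where sitting \emph{below} the extendible is essential, in contrast with the refutation of the HOD Conjecture from an exacting cardinal \emph{above} a strongly compact in \cite{ABL}. Note that by \cite[2.10]{ABL} the exacting $\kappa_0$ is singular in $V_\lambda[G]$ yet regular in $\HOD^{V_\lambda[G]}$, so $\HOD$ genuinely recedes from $V$; crucially this discrepancy occurs at $\kappa_0<\kappa_1$, below the extendible governing the dichotomy. The strategy is as follows: $\PPP_{U_0}$ is weakly homogeneous and ordinal definable from $U_0$, and $U_0\in V_\lambda=\HOD^{V_\lambda}$ is ordinal definable there, so every subset of an ordinal that is ordinal definable in $V_\lambda[G]$ already lies in $V_\lambda$, giving $\HOD^{V_\lambda[G]}\subseteq V_\lambda$. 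Since $\PPP_{U_0}$ changes no cardinal or cofinality above $\kappa_0^+$, the cardinal structure of $V_\lambda[G]$ above $\kappa_0^+$ agrees with that of $V_\lambda=\HOD^{V_\lambda}$, which computes successors correctly. Applying Woodin's HOD Dichotomy Theorem \cite{WOEM1} to $\kappa_1$ inside $V_\lambda[G]$ (equivalently, invoking the preservation of the HOD Hypothesis under forcing small relative to an extendible), the pathological alternative---that every regular cardinal ${\geq}\,\kappa_1$ is $\omega$-strongly measurable in $\HOD^{V_\lambda[G]}$---is incompatible with $\HOD^{V_\lambda[G]}\subseteq V_\lambda$ together with this correctness, so the HOD Hypothesis holds in $V_\lambda[G]$. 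The delicate point to verify is precisely this last implication: although the exacting cardinal forces $V\neq\HOD$, the inner model $\HOD^{V_\lambda[G]}$ must be shown to remain close enough to $V_\lambda$ above $\kappa_1$ to fall on the favorable branch of the dichotomy. The model $V_\lambda[G]$ then witnesses the consistency of $\ZFC$ with an exacting cardinal below an extendible cardinal, together with the HOD Hypothesis (and Vop\v{e}nka's Principle).
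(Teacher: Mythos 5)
Your skeleton is the paper's: derive an iterable $\mathrm{I3}$-embedding $i=j\restriction V_\lambda$, force with Prikry forcing $\PPP_{U_0}$ at its critical point, apply Theorem \ref{theorem:PrikryExacting}, and take $V_\lambda[G]$ as the final model, with Vop\v{e}nka's Principle supplying the extendible above. But the two steps that carry the real content of the corollary fail as written. First, the ground model: you assume $\mathrm{Con}(\ZFC+\mathrm{I2}+V{=}\HOD)$ on the strength of an informal remark in the introduction. That is not a citable theorem, and the paper's proof is structured precisely to avoid it: it invokes \cite{MR3320593} and \cite{MR3902806} to get only the consistency of $\ZFC$ with an $\mathrm{I3}_1$-embedding together with parameter-free definable well-orderings of $H_{\kappa^+}$ for every inaccessible $\kappa$ --- note the degradation from $\mathrm{I2}$ to $\mathrm{I3}_1$. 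Second, even granting $V=\HOD$ in $V$, your transfer to $V_\lambda$ is wrong: the claim that the $\kappa_m$ are $\Sigma_2$-correct (hence $\lambda\in C^{(2)}$) is false in general. The existence of an (iterable) elementary embedding $k:V_{\lambda'}\to V_{\lambda'}$ is witnessed inside $V_{\lambda'+2}$, so ``there is such a $\lambda'$'' is a $\Sigma_2$ statement; if $\lambda$ is least admitting such an embedding, $\Sigma_2$-correctness of $\lambda$ would reflect an embedding below $\lambda$, a contradiction, and since $C^{(2)}$ is closed, not all $\kappa_m$ can lie in $C^{(2)}$ either. Nothing in your setup excludes this minimal case. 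This is exactly why the paper routes through \emph{locally} definable well-orderings of the $H_{\kappa^+}$'s: local definitions pass down to $V_\lambda$, whereas global $V=\HOD$ does not.

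The second gap is in your preservation of the HOD Hypothesis, where the inclusion you prove points the wrong way. Weak homogeneity plus ordinal definability of $\PPP_{U_0}$ gives the upper bound $\HOD^{V_\lambda[G]}\subseteq V_\lambda$, but an upper bound on $\HOD$ cannot refute the ``far'' branch of Woodin's dichotomy: that branch asserts that $\HOD^{V_\lambda[G]}$ is too small to split stationary sets, which is perfectly compatible with $\HOD^{V_\lambda[G]}\subseteq V_\lambda$ (it would hold, for instance, if $\HOD^{V_\lambda[G]}$ were very thin). What is actually needed is the lower bound $V_\lambda\subseteq\HOD^{V_\lambda[G]}$: then Solovay's partition of the cofinality-$\omega$ ordinals below a regular $\gamma$ into $\gamma$ many stationary sets, computed in $V_\lambda=\HOD^{V_\lambda}$, lies in $\HOD^{V_\lambda[G]}$ and its pieces remain stationary in $V_\lambda[G]$ because the forcing is small, which kills the pathological branch. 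Establishing that lower bound is the genuinely delicate point --- it amounts to showing that the ground model $V_\lambda$ is an ordinal-definable class of $V_\lambda[G]$, which homogeneity alone does not give --- and your proof never does it. The paper sidesteps all of this by citing \cite{MR3821636} for the fact that the HOD Hypothesis, like Vop\v{e}nka's Principle, is preserved by \emph{all} set-sized forcing. If you replace your dichotomy argument by that citation and repair the ground model as above, your argument becomes the paper's proof.
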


\begin{proof}
 By {\cite[Theorem 1.7]{MR3320593}} and {\cite[Section 3]{MR3902806}}, the consistency of $\ZFC$ with the existence of an $\mathrm{I2}$-embedding implies the consistency of $\ZFC$ with an $\mathrm{I3}_1$-embedding and the assumption that for every inaccessible cardinal $\kappa$, there exists a well-ordering of $H_{\kappa^+}$ that is definable in $H_{\kappa^+}$ by a formula without parameters. 
 Work in a model of this theory and fix an $\mathrm{I3}_1$-embedding $\map{j}{V_\lambda}{V_\lambda}$ with critical sequence $\seq{\kappa_m}{m<\omega}$. It then follows that $V_\lambda$ is a model of $\ZFC$ in which both $V=\HOD$ and Vop\v{e}nka's Principle hold. In particular, the $\HOD$ Hypothesis holds in $V_\lambda$. Set $U=\Set{A\subseteq\kappa_0}{\kappa_0\in j(A)}$ and let $G$ be $\PPP_U$-generic over $V$. Then $V_\lambda[G]$ is  a model of both the $\HOD$ Hypothesis and Vop\v{e}nka's Principle, because both principles are preserved by set-sized forcings (see {\cite[Corollary 8]{MR3821636}}). Finally, Theorem \ref{theorem:PrikryExacting} shows that, in $V_\lambda[G]$, there is an extendible cardinal above an exacting cardinal.  
\end{proof}

\subsection{Exacting cardinals and I3 embeddings}
In the remainder of this section, we consider lower bounds for the consistency strength of exacting cardinals.

\begin{proposition}
\label{exactinglowerbound}
 If $\lambda$ is an exacting cardinal and $\gamma<(\lambda^+)^{\HOD_{V_\lambda}}$, then there is an $\mathrm{I3}$-embedding $\map{j}{V_\lambda}{V_\lambda}$ with critical point $\kappa$ that satisfies $\gamma\in W^j_\omega\cap j_{0,\omega}[\kappa^+]$. 
\end{proposition}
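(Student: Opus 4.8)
The plan is to obtain the embedding by restricting an exact embedding that \emph{fixes} a wellordering of $\lambda$ of order type $\gamma$, and then to read off the conclusion from Proposition \ref{proposition:FixedSets}. Concretely, suppose I have an exact embedding $j\colon X\to V_\zeta$ at $\lambda$ (with $\zeta$ the least element of $C^{(2)}$ above $\max\{\lambda,\gamma\}$) together with a wellordering $R\in X\cap V_{\lambda+1}$ of order type $\gamma$ such that $j(R)=R$. Then $k:=j\restriction V_\lambda\colon V_\lambda\to V_\lambda$ is an $\mathrm{I3}$-embedding with some critical point $\kappa<\lambda$, and since $\sup k[\lambda]=\lambda$ the two actions agree on subsets of $V_\lambda$, so $k^+(R)=j(R)=R$. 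By Proposition \ref{proposition:FixedSets}\eqref{item:FixedSet1} applied to $k$ we get $R=j_{0,\omega}(R\cap V_\kappa)\in W^k_\omega$, where $R_0:=R\cap V_\kappa$ is a wellordering of $\kappa$ of some order type $\gamma_0<\kappa^+$, and by elementarity the order type of $R$ as computed in $M^k_\omega$ is $j_{0,\omega}(\gamma_0)$. Since $R$ is a \emph{genuine} wellordering of order type $\gamma$ lying inside the transitive set $W^k_\omega$, and $W^k_\omega$ is admissible (as $M^k_\omega\equiv V_\lambda$ models a large enough fragment of $\ZFC$, in particular $\mathrm{KP}$), the order type of $R$ is computed correctly and is an honest ordinal; hence $\gamma=j_{0,\omega}(\gamma_0)\in W^k_\omega\cap j_{0,\omega}[\kappa^+]$, which is exactly the desired conclusion.

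To produce the wellordering $R$ I would use the hypothesis together with \cite[2.10]{ABL}: since $\lambda$ is exacting it is regular in $\HOD_{V_\lambda}$, so $(\lambda^+)^{\HOD_{V_\lambda}}$ is a genuine cardinal there, and as $\gamma$ lies below it there is in $\HOD_{V_\lambda}$ a wellordering $R$ of (a subset of) $\lambda$ of order type $\gamma$. Being an element of $\HOD_{V_\lambda}$, $R$ is ordinal definable from a parameter $p\in V_\lambda$; taking the $\le_{\HOD_{V_\lambda}}$-least such wellordering, I may assume $R$ is definable from $p$, $\lambda$, and the single ordinal $\gamma$. Fixing $R$ then reduces to fixing these three parameters: by the proposition following Definition \ref{defn:exacting} I can require the critical point $\kappa$ of the exact embedding to exceed $\mathrm{rank}(p)$, which gives $j(p)=p$, while $j(\lambda)=\lambda$ holds by definition; and Lemma \ref{lemma:Equivalentexact} shows that once an exact embedding fixing $\gamma$ exists, there is one fixing $\gamma$ (and containing $R$ in its domain) for the $\zeta$ we want. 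With $p,\lambda,\gamma$ all fixed and in $X$, the definability of $R$ forces $R\in X$ and $j(R)=R$.

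The main obstacle is precisely the middle step just invoked: guaranteeing an exact embedding that genuinely fixes the ordinal $\gamma$. This cannot be taken for granted, because $j^+=k^+$ acts nontrivially on $\mathcal P(\lambda)$ and hence moves most wellorderings, so only countably many order types are fixed by ``$\mathrm{OD}$-from-$\lambda$'' considerations alone; the content of the proposition is that \emph{every} $\gamma$ below $(\lambda^+)^{\HOD_{V_\lambda}}$ is nonetheless captured by some exact embedding. This is where the bound $\gamma<(\lambda^+)^{\HOD_{V_\lambda}}$—as opposed to merely $\gamma<\lambda^+$—is essential: it confines $R$ to $\HOD_{V_\lambda}$, where, after fixing the $V_\lambda$-parameter $p$ by a high critical point, one is in a ``$p$-relative exacting'' situation and can extract a fixed wellordering of order type $\gamma$ by feeding the $\mathrm{OD}$-from-$p$ family of $\HOD_{\{p\}}$-wellorderings of $\lambda$ into Theorem \ref{lemma:exact_char} and isolating the $\le_{\HOD_{\{p\}}}$-least representative. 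I expect the careful verification that this fixed representative realizes the prescribed $\gamma$ (rather than some other order type produced by the embedding) to be the delicate part of the argument.

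By contrast, the wellfoundedness bookkeeping in the final step is routine but conceptually important: it is exactly the place where we dispense with the $\mathrm{I3}_{\mathrm{wf}(0)}$ hypothesis used in Theorem \ref{theorem:PrikryExacting}. There the relevant ordinal landed in $W^j_\omega$ because $j_{n,\omega}[\kappa_n^+]\subseteq W^j_\omega$ by fiat; here $R$ is only an arbitrary $\mathrm{I3}$-embedding's iterate, yet $j_{0,\omega}(\gamma_0)$ is wellfounded anyway, solely because $R$ is a genuine wellordering of $V$ whose transitive closure sits inside the transitive set $W^k_\omega$. One can also phrase this last point without admissibility, using that the ill-founded part of $M^k_\omega$ has no least element, so an $M^k_\omega$-ordinal all of whose predecessors lie in $W^k_\omega$ must itself lie in $W^k_\omega$.
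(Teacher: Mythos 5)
Your second half is correct and is essentially the paper's argument: given an exact embedding $i$ that fixes a wellordering $E$ of $\lambda$ of order type $\gamma$, one restricts to $j = i\restriction V_\lambda$, notes $j^+(E) = i(E) = E$, applies Proposition \ref{proposition:FixedSets}\eqref{item:FixedSet1} to get $E = j_{0,\omega}(E\cap V_\kappa) \in W^j_\omega$, and uses the genuine wellfoundedness of $E$ inside the transitive set $W^j_\omega$ to conclude $\gamma = j_{0,\omega}(\gamma_0) \in W^j_\omega\cap j_{0,\omega}[\kappa^+]$ (your second phrasing of this last step, via the absence of a least ill-founded element, is the right one; admissibility is not needed). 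The problem is the first half, and you have diagnosed it yourself: nothing in your proposal produces an embedding that fixes $\gamma$, and without $i(\gamma)=\gamma$ (hence $i(E)=E$) the whole mechanism collapses. Your fallback repair does not close the gap either: feeding a family of wellorderings into Theorem \ref{lemma:exact_char} only yields \emph{two} members $x,y$ of the family and an embedding $(V_\lambda,x)\to(V_\lambda,y)$, which gives $j^+(x)=y$ rather than a fixed point, so Proposition \ref{proposition:FixedSets}\eqref{item:FixedSet1} never applies; and isolating the $\leq_{\HOD_{\{p\}}}$-least representative of type $\gamma$ does not help, because the set picking it out is definable from $p$ \emph{and} $\gamma$, and the embedding has no reason to fix $\gamma$ — it may well send your representative to the least representative of type $i(\gamma)$.

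The missing idea is a minimization trick that converts ``the embedding fixes $\gamma$'' into a definability statement. Argue by contradiction: let $\gamma$ be the \emph{least} ordinal below $(\lambda^+)^{\HOD_{V_\lambda}}$ for which the conclusion fails. Then $\gamma$ is definable from the single parameter $\lambda$. Pick $z\in V_\lambda$ with $\gamma<(\lambda^+)^{\HOD_{\{z\}}}$ and let $E$ be the least element, in the canonical wellordering of $\HOD_{\{z\}}$, that is a wellordering of $\lambda$ of order type $\gamma$; then $E$ is definable from $\lambda$ and $z$. Now take any exact embedding $i : X\to V_\zeta$ with $i(z)=z$, where $V_\zeta$ is sufficiently elementary in $V$ (this is exactly the step you already justify for your parameter $p$, via the Proposition following Definition \ref{defn:exacting}). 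Since $\gamma$ and $E$ are definable in $V_\zeta$ from the fixed parameters $\lambda$ and $z$, they belong to $X$ and are fixed by $i$ automatically — no delicate verification is needed, because definability from fixed parameters does all the work. Your second half then shows $\gamma\in W^j_\omega\cap j_{0,\omega}[\kappa^+]$ for $j=i\restriction V_\lambda$, contradicting the choice of $\gamma$ as a counterexample. Note this is precisely where the hypothesis $\gamma<(\lambda^+)^{\HOD_{V_\lambda}}$ enters, as you suspected: it guarantees a wellordering of type $\gamma$ that is ordinal definable from a parameter in $V_\lambda$, so that fixing that parameter, together with the minimality of $\gamma$, fixes the wellordering itself.
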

 
\begin{proof}
 Assume, towards a contradiction, that the above implication fails for some exacting cardinal $\lambda$ and let $\gamma<(\lambda^+)^{\HOD_{V_\lambda}}$ be the minimal counterexample. Then there is $z\in V_\lambda$ with $\gamma<(\lambda^+)^{\HOD_{\{z\}}}$. Let $E$ be the minimal element in the canonical well-ordering of $\HOD_{\{z\}}$ that is a well-ordering of $\lambda$ of order-type $\gamma$. 
 Then both $\gamma$ and $E$ can be defined by formulas with parameters $\lambda$ and $z$. Pick $\zeta>\lambda$ such that $V_\zeta$ is sufficiently elementary in $V$. By our assumption, there is an elementary submodel $X$ of $V_\zeta$ with $V_\lambda\cup\{\lambda\}\subseteq X$  and an elementary embedding $\map{i}{X}{V_\zeta}$ with $i\restriction\lambda\neq\id_\lambda$, $i(\lambda)=\lambda$   and $i(z)=z$. It then follows that both $\gamma$ and $E$ are elements of $X$ with $i(\gamma)=\gamma$ and $j(E)=E$. 
 
  Now, define $\map{j=i\restriction V_\lambda}{V_\lambda}{V_\lambda}$. Then $j$ is an $\mathrm{I3}$-embedding and elementarity ensures that $$j^+\restriction(V_{\lambda+1}\cap X) ~ = ~ i\restriction(V_{\lambda+1}\cap X).$$ In particular, we have $j^+(E)=E$. Set $\kappa=\crit{j}$ and $E_0=E\cap V_\kappa$.  Then Proposition \ref{proposition:FixedSets} shows that $E=j_{0,\omega}(E_0)\in W^j_\omega$. 
  The elementarity of $j_{0,\omega}$ then implies that $E_0$ is a well-ordering of $\kappa$. Let $\gamma_0$ denote the order-type of this well-order.  
  By elementarity, in $M^j_\omega$ the set $E$ is a well-ordering of $\lambda$ of order-type $j_{0,\omega}(\gamma_0)$.   Since $\lambda$ and $E$ are elements of $W^j_\omega$ , we can now conclude that $j_{0,\omega}(\gamma_0)\in W^j_\omega$ and $j_{0,\omega}(\gamma_0)$ is also the order-type of $E$ in $V$. This shows that $$\gamma ~ = ~ j_{0,\omega}(\gamma_0) ~ \in ~  W^j_\omega\cap j_{0,\omega}[\kappa^+]$$ contradicting our initial assumption. 
\end{proof}

We will next show that the conclusion of Proposition \ref{exactinglowerbound} implies that there are many $\mathrm{I3}$-embeddings below $\lambda$. To this purpose, we shall first prove the following lemma, which we will apply to derive fragments of the Principle of Dependent Choice in models of the form $L(V_\lambda)$:

\begin{lemma}\label{lemma:FragmentsDC}
    Let $\lambda$ be a strong limit cardinal, let $M$ be an inner model of ZF with $V_\lambda\subseteq M$ and let $T\in M$ be a tree of height $\omega$ whose underlying set is a subset of $V_\lambda$. If $T$ has an infinite  branch in $V$ and $\lambda$ is regular  in $M$, then  $T$ has an infinite branch in $M$. 
\end{lemma}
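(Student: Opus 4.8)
The plan is to exploit two facts that stand in for the choice principles unavailable in $M$: first, that any branch of bounded von Neumann rank is literally an element of $V_\lambda\subseteq M$; and second, that the regularity of $\lambda$ in $M$, together with the fact that $\lambda$ is a strong limit, forces such a bounded branch to exist. Throughout I work with the ambient $\mathrm{ZFC}$ of $V$ and use that the notion of (set-theoretic) well-foundedness — ``every nonempty subset has a $<_T$-minimal element'' — is absolute between the transitive models $M$ and $V$.

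First I would pass to a pruned tree. In $V$, where choice holds, a subtree of $T$ has an infinite branch if and only if the relation $>_T$ restricted to it is ill-founded. Hence the set $B=\{t\in T: T_{\ge t}\text{ has an infinite branch}\}$ coincides with $\{t: {>_T}\restriction T_{\ge t}\text{ is ill-founded}\}$, and by absoluteness of well-foundedness this is computed identically in $M$ and in $V$; so $B\in M$. It is nonempty, and every node of $B$ has an immediate successor lying in $B$, i.e.\ $B$ is leafless — and, being absolute statements about the fixed objects $B,T,{<_T}\in M$, these hold in $M$ as well. Fix a level-$0$ node $t_0\in B$.

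The key construction is to pass to the \emph{minimal-rank subtree} $B^*\in M$: keep $t_0$, and recursively, for each $t\in B^*$, keep all immediate successors $s\in B$ of $t$ of least rank $\ell(t)=\min\{\operatorname{rk}(s'): s'\in B,\ s'\text{ an immediate successor of }t\}$. Then $B^*\subseteq B$ is leafless, lies in $M$, and every node $t$ of $B^*$ has all of its $B^*$-successors of the single rank $\ell(t)<\lambda$. Now set $\beta_0=\operatorname{rk}(t_0)+1$ and $\beta_{n+1}=\sup\{\ell(t)+1: t\in B^*,\ t\text{ on level }n\text{ above }t_0\}$; an easy induction shows that the level-$n$ part of $B^*$ is contained in $V_{\beta_n}$. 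The argument now splits according to the behavior of $\langle\beta_n\rangle$, and this is exactly where the hypotheses are used. If some $\beta_{n+1}=\lambda$, then $t\mapsto\ell(t)$ maps the level-$n$ part of $B^*$ — a subset of $V_{\beta_n}$ with $\beta_n<\lambda$ — cofinally into $\lambda$; as $\lambda$ is regular in $M$, this image has order type $\lambda$, producing a surjection of $V_{\beta_n}$ onto $\lambda$ inside $M$. But $\lambda$ is a strong limit, so $|V_{\beta_n}|<\lambda$ in $V$, and since $M\subseteq V$ no such surjection can exist, a contradiction. Hence all $\beta_n<\lambda$; put $\mu=\sup_n\beta_n$. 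If $\mu=\lambda$, then $\langle\beta_n\rangle$ is an $\omega$-sequence cofinal in $\lambda$ lying in $M$, contradicting the regularity (and uncountability) of $\lambda$ in $M$. Therefore $\mu<\lambda$, so $B^*\subseteq V_\mu$ is a leafless tree of bounded rank; in $V$ it has an infinite branch $b$, and $b$, being an $\omega$-sequence of sets of rank $<\mu$, has rank $<\lambda$, so $b\in V_\lambda\subseteq M$. As $b$ is a branch of $B^*\subseteq T$, it is the desired branch in $M$.

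The main obstacle is precisely the failure of choice in $M$: a leafless tree of height $\omega$ need not have a branch in a model of $\mathrm{ZF}$, so one cannot simply build the branch node by node. Overcoming this is the role of the minimal-rank subtree, which confines the successor ranks tightly enough that the only two ways the ranks could escape to $\lambda$ — jumping to $\lambda$ at a finite level, or climbing cofinally through the limit — are blocked by strong limitness and by regularity, respectively; once the ranks are bounded, the branch appears for free as an element of $V_\lambda$. I would also dispatch the degenerate case $\lambda=\omega$ separately, where $V_\omega$ is outright well-orderable in $M$ and a leftmost-branch argument (using the absolute predicate ``$t$ extends to a branch'') produces a branch directly.
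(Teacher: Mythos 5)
Your proof is correct and follows essentially the same strategy as the paper's: obtain a leafless subtree of $T$ lying in $M$, use the strong-limitness of $\lambda$ together with its regularity in $M$ to confine that subtree below some rank $\mu<\lambda$, and then observe that the branch supplied by choice in $V$ is literally an element of $V_\lambda\subseteq M$. The differences --- absoluteness of well-foundedness in place of the paper's iterated-derivative construction of the pruned subtree, the minimal-rank successor subtree in place of the paper's level-by-level rank truncation via its first claim, and your explicit treatment of the degenerate case $\lambda=\omega$ (which the paper's final step tacitly excludes) --- are implementation details of the same argument.
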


\begin{proof}
 We start by proving two claims. 

 \begin{claim*}
   If $\alpha<\lambda$ and $\map{f}{D}{\lambda}$ is a function in $M$ with $D\subseteq V_\alpha$, then $\ran{f}$ is bounded in $\lambda$. 
 \end{claim*}

 \begin{proof}[Proof of the Claim]
  Since $\lambda$ is a strong limit cardinal in $V$, it follows that there is a wellordering of $D$ of order-type less than $\lambda$ in $V_\lambda$. The fact that $\lambda$ is regular now yields the statement of the claim.   
 \end{proof}

 \begin{claim*}
  In $M$, there is a non-empty pruned (i.e., with no maximal elements) subtree  of $T$. 
 \end{claim*}

 \begin{proof}[Proof of the Claim]
  Given a tree $S$, we let $S^\prime$ denote the subtree of $S$ consisting of all non-maximal elements of $S$. Now, let $\seq{T_\alpha}{\alpha\in\Ord}$ denote the unique sequence with $T_0=T$, $T_{\alpha+1}=T_\alpha^\prime$ for all $\alpha\in \Ord$ and $T_\lambda=\bigcap_{\alpha<\lambda}T_\alpha$ for every limit ordinal $\lambda$. Then there exists an ordinal $\beta$ with $T_\beta=T_{\beta+1}$. 
  Since $T$ has an infinite branch in $V$, the elements of this branch are contained in $T_\alpha$ for every ordinal $\alpha$. Thus, $T_\beta$ is a non-empty pruned subtree of $T$ that is an element of $M$. 
 \end{proof}

  Now work in $M$ and fix a non-empty pruned subtree $S$ of $T$. Let $S(n)$ be the $n$-th level of $S$. By our first claim, for all $\alpha<\lambda$ and all $n<\omega$, there exists $\alpha<\beta<\lambda$ with the property that for all $s\in S(n)\cap V_\alpha$, there exists $t\in S(n+1)\cap V_\beta$ with $s<_S t$. We may now  recursively define a strictly increasing sequence $\seq{\alpha_n}{n<\omega}$ of ordinals below $\lambda$ such that $S(0)\cap V_{\alpha_0}\neq\emptyset$ and for every $n<\omega$ and every $s\in S(n)\cap V_{\alpha_n}$, there exists $t\in S(n+1)\cap V_{\alpha_{n+1}}$ with $s<_S t$. Define $U$ to be the subtree of $S$ with underlying set $\bigcup\Set{S(n)\cap V_{\alpha_n}}{n<\omega}$. 

  Set $\alpha=\sup_{n<\omega}\alpha_n$. Our assumption on $\lambda$ implies  that $\alpha<\lambda$. Since  $U$ is non-empty and pruned, this tree contains a cofinal branch $b$ in $V$. But note that  $b\in V_{\alpha+1}\subseteq V_\lambda\subseteq M$.  
\end{proof}

We shall now derive the existence of many $\mathrm{I3}$-embeddings from the  conclusion of Proposition \ref{exactinglowerbound}.

\begin{proposition}
\label{prop5.9}
 Let $\lambda$ be a cardinal with the property that for every $\gamma<(\lambda^+)^{L(V_\lambda)}$, there exist an $\mathrm{I3}$-embedding $\map{j}{V_\lambda}{V_\lambda}$ with critical point $\kappa$ and $\delta_0<(\kappa^+)^{L(V_\kappa)}$  such that  $\gamma<j_{0,\omega}(\delta_0)\in W^j_\omega$. 
 Then $\lambda$ is regular in $L(V_\lambda)$, and for every closed unbounded subset $C$ of $\lambda$ in $L(V_\lambda)$ there is an $\mathrm{I3}$-embedding $\map{j}{V_{\lambda^\prime}}{V_{\lambda^\prime}}$ with $\lambda^\prime<\lambda$ whose critical sequence consists of elements of $C$.  
\end{proposition}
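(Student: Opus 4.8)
The plan is to prove the two assertions separately, in each case feeding ordinals below $(\lambda^+)^{L(V_\lambda)}$ into the hypothesis and exploiting the well-foundedness of the resulting segment of the direct limit. Recall that if $\map{j}{V_\lambda}{V_\lambda}$ is an $\mathrm{I}3$-embedding with $\crit{j}=\kappa$, then $j_{0,\omega}(\kappa)=\lambda$, $V_\lambda\subseteq W^j_\omega$ with $(V_\lambda)^{M^j_\omega}=V_\lambda$, and $M^j_\omega$ is a model of $\ZFC$ elementarily equivalent to $V_\lambda$; moreover $\kappa$ is measurable in $V$, hence regular, and whenever $\beta=j_{0,\omega}(\delta_0)\in W^j_\omega$ as in the hypothesis, every ordinal $<\beta$ and every level $L_\eta(V_\lambda)$ with $\eta<\beta$ is computed correctly inside $M^j_\omega$. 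For the \emph{regularity of $\lambda$ in $L(V_\lambda)$}, suppose otherwise and let $\map{f}{\mu}{\lambda}$ be cofinal with $\mu<\lambda$ and $f\in L(V_\lambda)$. Since ``there is a cofinal map $\mu\to\lambda$'' is $\Sigma_1$ over the $L(V_\lambda)$-hierarchy in the parameters $\mu,\lambda,V_\lambda$, I reflect the witness into a $\Sigma_1$-stable level and assume $f\in L_\gamma(V_\lambda)$ with $\gamma<(\lambda^+)^{L(V_\lambda)}$. Applying the hypothesis to $\gamma$ gives $j$ with $\gamma<j_{0,\omega}(\delta_0)\in W^j_\omega$, so $f\in L_\gamma(V_\lambda)\subseteq W^j_\omega\subseteq M^j_\omega$. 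But ``$x$ is regular'' is first order, $V_\lambda\models$``$\kappa$ is regular'', and $j_{0,\omega}(\kappa)=\lambda$, so by elementarity $M^j_\omega\models$``$\lambda$ is regular'', i.e.\ $M^j_\omega$ contains no cofinal map $\mu\to\lambda$, contradicting $f\in M^j_\omega$.

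\emph{The reflection tree.} Fix a club $C\subseteq\lambda$ with $C\in L(V_\lambda)$. Let $T=T(C)$ be the tree, ordered by extension, whose nodes are the finite stages $\map{p}{V_{\mu_k}}{V_{\mu_{k+1}}}$ of a putative $\mathrm{I}3$-embedding with strictly increasing critical sequence contained in $C$: a node records $\mu_0<\dots<\mu_{k+1}$ in $C$ together with an elementary $p$ satisfying $\dom{p}=V_{\mu_k}$, $p\restriction\mu_0=\id$, $p(\mu_\ell)=\mu_{\ell+1}$, and the coherence condition that $p\restriction V_{\mu_{k-1}}$ equals the preceding stage, exactly as in the critical-sequence bookkeeping of the Claims proving \eqref{item:HierarchyWF1}. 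Then $T$ has height at most $\omega$, its underlying set is a subset of $V_\lambda$, it is definable over $L(V_\lambda)$ from $C$, and any branch of length $\omega$ yields an $\mathrm{I}3$-embedding $\map{i}{V_{\lambda'}}{V_{\lambda'}}$ with $\lambda'=\sup_k\mu_k$ and critical sequence $\seq{\mu_k}{k<\omega}\subseteq C$.

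\emph{Finding the branch.} The heart of the matter is to show that $T$ has an infinite branch in $V$. Assume otherwise; then $T$ is well-founded, hence well-founded in $L(V_\lambda)$ by absoluteness, and carries a ranking function $\rho\in L(V_\lambda)$. I first check $\gamma:=\sup\ran{\rho}<(\lambda^+)^{L(V_\lambda)}$: for each $\alpha<\lambda$ the set $T\cap V_\alpha$ admits, by $\ZFC$ in $V$, a well-ordering of order-type $<\lambda$ lying in $V_\lambda\subseteq L(V_\lambda)$, so $\rho\restriction(T\cap V_\alpha)$ takes fewer than $\lambda$ values and, since the Hartogs ordinal $(\lambda^+)^{L(V_\lambda)}$ is regular in $L(V_\lambda)$, has supremum $\gamma_\alpha<(\lambda^+)^{L(V_\lambda)}$; the sequence $\seq{\gamma_\alpha}{\alpha<\lambda}$ is definable in $L(V_\lambda)$ and, again by regularity, $\gamma=\sup_{\alpha<\lambda}\gamma_\alpha<(\lambda^+)^{L(V_\lambda)}$. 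Applying the hypothesis to an ordinal bounding both $\gamma$ and the level at which $C$ is defined yields $j$ with $\crit{j}=\kappa$ and $\gamma<j_{0,\omega}(\delta_0)\in W^j_\omega$, so that $T$, $\rho$, and $C$ all lie in $W^j_\omega$ and $T$ is well-founded there. Transporting the configuration by $j_{0,\omega}$, and then exhibiting suitable restrictions of $j$ as a cofinal branch through $j_{0,\omega}(T)$ against an order-preserving map of the branch-data into $j_{0,\omega}(\gamma)\in W^j_\omega$, in the manner of the second and third Claims establishing \eqref{item:HierarchyWF1}, contradicts the well-foundedness of $j_{0,\omega}(T)$ inside $W^j_\omega$. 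Hence $T$ has an infinite branch in $V$. Finally, Lemma \ref{lemma:FragmentsDC} applied with $M=L(V_\lambda)$ — legitimate since $\lambda$ is a strong limit, $V_\lambda\subseteq L(V_\lambda)$, and $\lambda$ is regular in $L(V_\lambda)$ by the first part — upgrades this to a branch $b\in L(V_\lambda)$. Reading off $\seq{\mu_k}{k<\omega}$ and $\map{i}{V_{\lambda'}}{V_{\lambda'}}$ from $b$, regularity of $\lambda$ in $L(V_\lambda)$ forces $\lambda'=\sup_k\mu_k<\lambda$, as no $\omega$-sequence cofinal in $\lambda$ belongs to $L(V_\lambda)$; thus $i$ is an $\mathrm{I}3$-embedding below $\lambda$ with critical sequence inside $C$.

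\emph{Main obstacle.} The delicate point is the reflection producing the branch of $T$ in $V$. Unlike in \eqref{item:HierarchyWF1}, the club $C$ is an essentially arbitrary member of $L(V_\lambda)$ and need not belong to the range of $j_{0,\omega}$, so one cannot simply pull $C$ back below $\kappa$; the work lies in aligning the critical sequence of the hypothesis-supplied embedding $j$ with $C$ so that the restrictions of $j$ genuinely climb $j_{0,\omega}(T)$. I expect this to be achieved by the order-preserving-rank device of \eqref{item:HierarchyWF1}: well-foundedness of $W^j_\omega$ beyond $\gamma$ turns the hypothetical ranking of $T$ into a rank of $j_{0,\omega}(T)$ in $W^j_\omega$, while $j$ supplies an incompatible cofinal branch, the $j^+$-fixed-point analysis of Proposition \ref{proposition:FixedSets} being what keeps the relevant objects inside $W^j_\omega$. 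A secondary subtlety, in the regularity argument, is the $\Sigma_1$-reflection placing the singularity witness below $(\lambda^+)^{L(V_\lambda)}$, which must be carried out without assuming that $V_\lambda$ is well-orderable in $L(V_\lambda)$.
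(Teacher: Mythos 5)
Your outline matches the paper in its outer layers (the tree $T$ of finite approximations, the transfer of a branch into $L(V_\lambda)$ via Lemma \ref{lemma:FragmentsDC}, and the final use of regularity to get $\lambda'<\lambda$), but the central step --- producing an infinite branch of $T$ in $V$, i.e.\ an embedding whose critical sequence actually lies in the given club $C$ --- is precisely the step you leave as an expectation, and the plan you sketch for it cannot work. The tree $T$ is defined from $C$, which is an essentially arbitrary element of $L(V_\lambda)$: there is no reason that $j^+(C)=C$, so neither $C$ nor $T$ is of the form $j_{0,\omega}(\,\cdot\,)$ (Proposition \ref{proposition:FixedSets}(1) only applies to $j^+$-fixed sets), and ``$j_{0,\omega}(T)$'' is simply not defined, since $\dom{j_{0,\omega}}=V_\lambda$ while $T\in V_{\lambda+1}$. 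More to the point, the restrictions $j\restriction V_{\kappa_m}$ are not nodes of $T$ (or of any transported copy of it), because the critical sequence $\seq{\kappa_m}{m<\omega}$ of the hypothesis-supplied embedding has no reason to meet $C$ at all. The order-preserving-rank device of the $\mathrm{I3}_{\rm{wf}(n)}$ proposition transfers well-foundedness through the ill-founded part of $M^j_\omega$ for trees that live inside some $V_{\kappa_n}$ and can therefore be moved by $j_{n,\omega}$; it does nothing to align the critical sequence of $j$ with an arbitrary club from $L(V_\lambda)$, which is the actual problem. (A secondary gap: your bound $\sup\ran{\rho}<(\lambda^+)^{L(V_\lambda)}$ invokes regularity of $(\lambda^+)^{L(V_\lambda)}$ in $L(V_\lambda)$ and implicitly a well-ordering of $T$ there; since $L(V_\lambda)$ need not satisfy AC, its successor cardinals need not be regular, and the rank of a well-founded relation on a subset of $V_\lambda$ is in general only bounded by $\Theta^{L(V_\lambda)}$.)

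The paper's proof needs no well-foundedness or ranking argument at all; its key idea is a diagonal-intersection device that solves exactly the alignment problem. Having fixed $\gamma$ with $C\in L_\gamma(V_\lambda)$ and obtained $j$ and $\delta_0$ from the hypothesis, one lets $D_0$ be the diagonal intersection of an enumeration in length $\kappa_0$ of \emph{all} clubs of $\kappa_0$ lying in $L_{\delta_0}(V_{\kappa_0})$, and sets $D=j_{0,\omega}(D_0)$. Since $D$ \emph{is} in the range of $j_{0,\omega}$, Proposition \ref{proposition:FixedSets}(2) gives $j^+(D)=D$, which forces every $\kappa_n$ into $D$ (one checks $D\cap\kappa_0=D_0$ is club in $\kappa_0$, so $\kappa_0\in D$, and then inductively $\kappa_{n+1}=j(\kappa_n)\in j^+(D)=D$); on the other hand, by elementarity $D$ is a diagonal intersection of all clubs of $\lambda$ in $L_\delta(V_\lambda)\ni C$, so some tail $D\cap[\kappa_m,\lambda)$ is contained in $C$. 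Thus $j_{m,m+1}$ is an $\mathrm{I3}$-embedding at $\lambda$ whose critical sequence lies in $C$, and its restrictions are literally nodes of $T$, giving the branch in $V$ outright. The same computation yields regularity for free: taking $C$ of order type $\cof{\lambda}^{L(V_\lambda)}$, the set $D$ has order type $\lambda$ by elementarity, hence so does $C$. Your alternative regularity argument (reflecting a cofinal map and reading ``$\lambda$ is regular'' off the elementarity of $j_{0,\omega}$) is reasonable and shares with the paper the analogous unproved step that the relevant witness appears in some $L_\gamma(V_\lambda)$ with $\gamma<(\lambda^+)^{L(V_\lambda)}$, but it does not supply the missing alignment mechanism, which is the heart of the proposition.
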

 
 \begin{proof}
  Fix a closed unbounded subset $C$ of $\lambda$ in $L(V_\lambda)$ of order-type $\cof{\lambda}^{L(V_\lambda)}$. Pick $\lambda<\gamma<(\lambda^+)^{L(V_\lambda)}$ with  $C\in L_\gamma(V_\lambda)$. 
  By our assumptions, we can find an $\mathrm{I3}$-embedding $\map{j}{V_\lambda}{V_\lambda}$ with critical sequence $\seq{\kappa_m}{m<\omega}$ and an ordinal $\delta_0<(\kappa_0^+)^{L(V_{\kappa_0})}$ with $\gamma<j_{0,\omega}(\delta_0)\in W^j_\omega$. 
Let $\vec{D}$ be an enumeration in length $\kappa_0$ of all closed unbounded subsets of $\kappa_0$ in $L_{\delta_0}(V_{\kappa_0})$,  and let $D_0$ be the diagonal intersection of $\vec{D}$. Set $\delta=j_{0,\omega}(\delta_0)$ and $D=j_{0,\omega}(D_0)$. We then know that $L_\delta(V_\lambda)\in W^j_\omega$. In particular, this implies that $\lambda$ is regular in $L_\delta(V_\lambda)$.
Moreover, elementarity ensures that $D$ is equal to a diagonal intersection of all closed unbounded subsets of $\lambda$ in $L_\delta(V_\lambda)$ and hence there exists $m<\omega$ with $D\cap[\kappa_m,\lambda)\subseteq C$. Since $D_0$ has order-type $\kappa_0$, it  follows, by elementarity, that $D$, and therefore also $C$, have order-type $\lambda$. Hence, $\lambda$ is a regular cardinal in $L(V_\lambda)$. Finally, note that we have $\kappa_n\in D$ for all $n<\omega$ and this implies that $\kappa_n\in C$ for all $m\leq n<\omega$. This shows that $\map{j_{m,m+1}}{V_\lambda}{V_\lambda}$ is an $\mathrm{I3}$-embedding whose critical sequence $\seq{\kappa_{n}}{m\leq n<\omega}$ consists of elements of $C$.

Now define $T$ to be the set of all partial elementary embeddings $\map{i}{V_\lambda}{V_\lambda}$ with the property that there exists a natural number $0<\ell<\omega$ and a strictly increasing sequence $\seq{\lambda_k}{k\leq\ell}$ of elements of $C$ with $\dom{i}=V_{\lambda_{\ell-1}}$, $\ran{i}\subseteq V_{\lambda_\ell}$, $i\restriction\lambda_0=\id_{\lambda_0}$,  $i(\lambda_k)=\lambda_{k+1}$ and $V_{\lambda_k}\prec V_\lambda$ for all $k<\ell$. 
  Then $T$ is an element of $L(V_\lambda)$ and, if we order the elements of $T$ by inclusion, then we turn $T$ into a tree of height at most  $\omega$. Given $0<\ell<\omega$, it is now easy to see that the sequence $\seq{\kappa_{m+k}}{k\leq\ell}$ witnesses that $j_{m,m+1}\restriction V_{\kappa_{\ell-1}}$ is an element of the $(\ell-1)$-th level of $T$. 
  This shows that $T$ has an infinite branch in $V$, and since, as we showed, $\lambda$ is regular in $L(V_\lambda)$, we may apply Lemma \ref{lemma:FragmentsDC} to conclude that $T$ has an infinite branch $B$ in $L(V_\lambda)$. 
   Set $i=\bigcup B$. Then the definition of $T$ yields an ordinal $\lambda^\prime\leq\lambda$ of countable cofinality  with the property that $i$ is a non-trivial elementary embedding from $V_{\lambda^\prime}$ into itself whose critical sequence consists of elements of $C$. Since $i$ is an element of $L(V_\lambda)$ and $\lambda$ is regular in $L(V_\lambda)$, we have that $\lambda^\prime<\lambda$. 
\end{proof}

Theorem \ref{theorem:PrikryExacting} and Proposition \ref{exactinglowerbound} yield now the following chain of implications:

\begin{theorem}
The consistency of each of the following theories implies the consistency of the next one, modulo {\rm ZFC}:
\begin{enumerate}
\item There exists an ${\mathrm I2}$-embedding.
\item There exists an ${\mathrm I3}_{\rm{wf}(0)}$-embedding.
\item There exists an exacting cardinal.
\item There is a cardinal $\lambda$ which is regular in $L(V_\lambda)$, and such that in $L(V_\lambda)$ the set of cardinals that are the critical point of an ${\mathrm I3}$-embedding is stationary. 
\item There exists an ${\mathrm I3}$-embedding.
\end{enumerate}
\end{theorem}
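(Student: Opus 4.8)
The plan is to establish the four consecutive consistency implications separately, combining the results of this section with the two external inputs already used in Corollary~\ref{coro5.6}. For $(1)\Rightarrow(2)$ I would reason only at the level of consistency: by \cite[Theorem 1.7]{MR3320593} together with \cite[Section 3]{MR3902806}, the consistency of $\ZFC+\mathrm{I2}$ yields the consistency of $\ZFC$ with an $\mathrm{I3}_1$-embedding $\map{j}{V_\lambda}{V_\lambda}$; since such a $j$ has $M^j_\omega=W^j_\omega$, the whole direct limit is well-founded, so trivially $j_{0,\omega}[\kappa_0^+]\subseteq W^j_\omega$ and $j$ is an $\mathrm{I3}_{\rm{wf}(0)}$-embedding. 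For $(2)\Rightarrow(3)$, Theorem~\ref{theorem:PrikryExacting} gives the implication at once: in a model carrying an $\mathrm{I3}_{\rm{wf}(0)}$-embedding $j$, let $U_0$ be the normal ultrafilter on $\kappa_0$ induced by $j$ and let $G$ be $\PPP_{U_0}$-generic; then $\kappa_0$ is exacting in $V[G]$.

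The heart of the argument is $(3)\Rightarrow(4)$, which I would obtain by showing that an exacting $\lambda$ meets the hypothesis of Proposition~\ref{prop5.9} and then quoting its conclusion. The task is to reconcile the bound $(\lambda^+)^{\HOD_{V_\lambda}}$ of Proposition~\ref{exactinglowerbound} with the $L$-internal bounds $(\lambda^+)^{L(V_\lambda)}$ and $(\kappa^+)^{L(V_\kappa)}$ of Proposition~\ref{prop5.9}. First I would check that $(\lambda^+)^{L(V_\lambda)}\le(\lambda^+)^{\HOD_{V_\lambda}}$: given $\gamma<(\lambda^+)^{L(V_\lambda)}$ there is a bijection $\map{f}{\lambda}{\gamma}$ in $L(V_\lambda)$; choosing a good ordinal $\alpha$ with $f\in L_\alpha(V_\lambda)$ (good ordinals are cofinal in $\Theta^{L(V_\lambda)}$ by the analogue for $L(V_\lambda)$ of the argument recalled in \S\ref{SectGood}), $f$ is definable over $L_\alpha(V_\lambda)$ from $V_\lambda$ and a parameter $p\in V_\lambda$. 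As $V_\lambda$ and $\alpha$ are ordinal definable and $f$ is a set of ordinals, it follows that $f\in\HOD_{\{p\}}\subseteq\HOD_{V_\lambda}$, whence $\gamma<(\lambda^+)^{\HOD_{V_\lambda}}$.

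Fixing $\gamma<(\lambda^+)^{L(V_\lambda)}$, Proposition~\ref{exactinglowerbound} then supplies an $\mathrm{I3}$-embedding $\map{j}{V_\lambda}{V_\lambda}$ with critical point $\kappa$ and $\delta<\kappa^+$ such that $\gamma=j_{0,\omega}(\delta)\in W^j_\omega$. To sharpen $\delta<\kappa^+$ to $\delta<(\kappa^+)^{L(V_\kappa)}$, I would use the elementarity of $j_{0,\omega}$ with $j_{0,\omega}(\kappa)=\lambda$, giving $j_{0,\omega}((\kappa^+)^{L(V_\kappa)})=((\lambda^+)^{L(V_\lambda)})^{M^j_\omega}$; since $V_\lambda^{M^j_\omega}=V_\lambda$ and the construction of $L(V_\lambda)$ is absolute below the well-founded ordinals of $M^j_\omega$, the well-founded ordinal $\gamma$ satisfies $\gamma<(\lambda^+)^{L(V_\lambda)}$ in $V$ if and only if $\gamma<((\lambda^+)^{L(V_\lambda)})^{M^j_\omega}$, and injectivity of $j_{0,\omega}$ then yields $\delta<(\kappa^+)^{L(V_\kappa)}$. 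Passing from $\delta$ to $\delta+1$ makes the inequality $\gamma<j_{0,\omega}(\delta)$ strict, so the hypothesis of Proposition~\ref{prop5.9} holds. Its conclusion gives that $\lambda$ is regular in $L(V_\lambda)$ and that every club $C\subseteq\lambda$ of $L(V_\lambda)$ contains the critical sequence of some $\mathrm{I3}$-embedding $\map{i}{V_{\lambda'}}{V_{\lambda'}}$ with $\lambda'<\lambda$; hence the set of critical points of such embeddings is stationary in $L(V_\lambda)$, which is exactly $(4)$.

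Finally, $(4)\Rightarrow(5)$ is immediate, since a stationary set is non-empty: there is an $\mathrm{I3}$-embedding $\map{i}{V_{\lambda'}}{V_{\lambda'}}$ with $\lambda'<\lambda$ in $L(V_\lambda)$, and as $V_{\lambda'}$ is computed correctly there, $i$ is a genuine $\mathrm{I3}$-embedding, witnessing $(5)$. I expect the only truly delicate point to be the absoluteness transfer in $(3)\Rightarrow(4)$: because the direct limit $M^j_\omega$ may be ill-founded, the identity $j_{0,\omega}((\kappa^+)^{L(V_\kappa)})=((\lambda^+)^{L(V_\lambda)})^{M^j_\omega}$ and the agreement of $L(V_\lambda)$ between $V$ and $M^j_\omega$ must be invoked with care, checking in particular that the witnessing bijection for $\gamma$ can be located inside the well-founded part $W^j_\omega$.
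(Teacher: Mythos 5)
Your decomposition coincides with the paper's: (1)$\Rightarrow$(2) via an $\mathrm{I3}_1$-embedding, (2)$\Rightarrow$(3) via Theorem \ref{theorem:PrikryExacting}, (3)$\Rightarrow$(4) via Propositions \ref{exactinglowerbound} and \ref{prop5.9}, and (4)$\Rightarrow$(5) trivially; your treatments of the first, second and last steps are correct. The problem lies in your bridge between Propositions \ref{exactinglowerbound} and \ref{prop5.9}.

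The genuine gap is the ``absoluteness'' biconditional you use to upgrade $\delta<\kappa^+$ to $\delta<(\kappa^+)^{L(V_\kappa)}$, namely that for $\gamma\in W^j_\omega$ one has $\gamma<(\lambda^+)^{L(V_\lambda)}$ in $V$ if and only if $\gamma<\bigl((\lambda^+)^{L(V_\lambda)}\bigr)^{M^j_\omega}$. The level-by-level absoluteness you invoke only gives $L_\alpha(V_\lambda)^{M^j_\omega}=L_\alpha(V_\lambda)$ for ordinals $\alpha$ in the well-founded part; but the statement $\gamma<(\lambda^+)^{L(V_\lambda)}$ quantifies over \emph{all} levels of the $L(V_\lambda)$-hierarchy, so it is not decided by the well-founded part, and the equivalence is a non sequitur. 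Concretely, the direction you need (from $V$ to $M^j_\omega$) requires a surjection from $\lambda$ onto $\gamma$ lying in $L(V_\lambda)^{M^j_\omega}$. Since $M^j_\omega$ is a direct limit of copies of $V_\lambda$, it has cardinality $\lambda$, and the true witnesses to the collapse of $\gamma$ may first appear at stages of the real $L(V_\lambda)$-hierarchy at or above the ordinal height of $W^j_\omega$; nothing in Proposition \ref{exactinglowerbound} controls that height beyond the single ordinal $\gamma$, so no witness need be visible to $M^j_\omega$. You flag exactly this as the delicate point, but the deferred check is precisely what is missing, and I do not see how to carry it out for the embedding that Proposition \ref{exactinglowerbound} produces.

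The repair is not an absoluteness transfer but an inspection of the proof of Proposition \ref{prop5.9}: the hypothesis $\delta_0<(\kappa_0^+)^{L(V_{\kappa_0})}$ is used only to ensure that the closed unbounded subsets of $\kappa_0$ lying in $L_{\delta_0}(V_{\kappa_0})$ can be enumerated in length $\kappa_0$ inside $V_\lambda$, and for this $\delta_0<\kappa_0^+$ suffices, since then $|L_{\delta_0}(V_{\kappa_0})|\leq\kappa_0$; every other step of that proof only uses $j_{0,\omega}(\delta_0)\in W^j_\omega$ and $\gamma<j_{0,\omega}(\delta_0)$. With the hypothesis read in this weaker form, Proposition \ref{exactinglowerbound} verifies it directly: given $\gamma<(\lambda^+)^{L(V_\lambda)}$, note that $\gamma+1<(\lambda^+)^{L(V_\lambda)}\leq(\lambda^+)^{\HOD_{V_\lambda}}$ (your proof of this inequality is fine, if more elaborate than necessary), and apply Proposition \ref{exactinglowerbound} to $\gamma+1$ to obtain $\map{j}{V_\lambda}{V_\lambda}$ with critical point $\kappa$ and $\delta<\kappa^+$ satisfying $\gamma<\gamma+1=j_{0,\omega}(\delta)\in W^j_\omega$. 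This is what the paper's one-line justification of (3)$\Rightarrow$(4) amounts to, and it bypasses the step where your argument breaks down.
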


\begin{proof}
That $(1)$ is strictly stronger than  $(2)$ follows from \cite[Section 3]{MR3902806}. 
Theorem \ref{theorem:PrikryExacting} shows that the consistency of $(2)$ implies that of $(3)$. That the consistency of $(3)$ implies that of $(4)$ is a consequence of Propositions \ref{exactinglowerbound} and \ref{prop5.9}, because the conclusion of \ref{exactinglowerbound} yields the assumption of \ref{prop5.9}, as $(\lambda^+)^{L(V_\lambda)} \leq (\lambda^+)^{{\mathrm HOD}_{V_\lambda}}$. Finally, $(4)$ is trivially strictly stronger than $(5)$, consistency-wise.
\end{proof}

\begin{figure}[h]
\begin{center}
\begin{tikzcd}
\text{I2} \arrow[r] & \text{I3$_1$} \arrow[r] & \text{I3$_{\rm{wf}(n+1)}$} \arrow[r] & \text{I3$_{\rm{wf}(0)}$} \arrow[r] & \text{Exacting} \arrow[r] & \text{I3}
\end{tikzcd}
\end{center}
    \caption{Large cardinals between I2 and I3, ordered by consistency strength. None of the arrows reverse.}
    \label{fig:enter-label}
\end{figure}

In particular, we obtain the following result which locates exacting cardinals within the hierarchy of traditional large cardinals:
\begin{corollary}
The consistency strength of an exacting cardinal is strictly between the existence of an ${\rm I2}$-embedding and an ${\mathrm I3}$-embedding.    
\end{corollary}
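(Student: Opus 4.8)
The plan is to read the corollary off the preceding theorem, which establishes a chain of consistency implications over $\ZFC$:
\[
\mathrm{I2}\text{-embedding} \;\Longrightarrow\; \mathrm{I3}_{\rm{wf}(0)}\text{-embedding} \;\Longrightarrow\; \text{exacting cardinal} \;\Longrightarrow\; (4) \;\Longrightarrow\; \mathrm{I3}\text{-embedding},
\]
where each arrow abbreviates ``consistency of the hypothesis on the left implies consistency of the hypothesis on the right,'' and $(4)$ denotes the fourth item of the theorem. Composing arrows gives at once $\mathrm{Con}(\ZFC+\mathrm{I2}) \Rightarrow \mathrm{Con}(\ZFC+\text{exacting})$ and $\mathrm{Con}(\ZFC+\text{exacting}) \Rightarrow \mathrm{Con}(\ZFC+\mathrm{I3})$, so the only remaining task is to check that neither implication reverses, i.e.\ that both endpoints are \emph{strictly} separated from exacting cardinals.

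For the separation from below, I would invoke the already-used fact from \cite[Section 3]{MR3902806} that an $\mathrm{I2}$-embedding is strictly stronger than an $\mathrm{I3}_{\rm{wf}(0)}$-embedding, so that $\mathrm{Con}(\ZFC+\mathrm{I3}_{\rm{wf}(0)})$ does not imply $\mathrm{Con}(\ZFC+\mathrm{I2})$. Were exacting cardinals to match $\mathrm{I2}$ in strength, i.e.\ were $\mathrm{Con}(\ZFC+\text{exacting})$ to imply $\mathrm{Con}(\ZFC+\mathrm{I2})$, then composing with the arrow $\mathrm{I3}_{\rm{wf}(0)} \Rightarrow \text{exacting}$ would yield $\mathrm{Con}(\ZFC+\mathrm{I3}_{\rm{wf}(0)}) \Rightarrow \mathrm{Con}(\ZFC+\mathrm{I2})$, contradicting that strictness. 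Hence exacting cardinals are strictly weaker than $\mathrm{I2}$.

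For the separation from above, I would verify that item $(4)$ is strictly stronger than the bare existence of an $\mathrm{I3}$-embedding, the quickest route being to observe that $(4)$ proves $\mathrm{Con}(\ZFC+\mathrm{I3})$. Indeed, given $\lambda$ regular in $L(V_\lambda)$ carrying there an (in particular unbounded) set of critical points of $\mathrm{I3}$-embeddings, fix one such embedding $\map{j}{V_{\lambda'}}{V_{\lambda'}}$ and then, by unboundedness, an $\mathrm{I3}$-critical point $\kappa>\lambda'$; since $\kappa$ is measurable we have $V_\kappa\models\ZFC$, and since $\lambda'<\kappa$ the embedding $j$ lies in $V_\kappa$, so $V_\kappa\models\ZFC+\mathrm{I3}$. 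By G\"odel's second incompleteness theorem, $\mathrm{Con}(\ZFC+\mathrm{I3})$ therefore cannot imply $\mathrm{Con}(\ZFC+(4))$, for otherwise $\ZFC+(4)$ would prove its own consistency. As the theorem supplies $\text{exacting}\Rightarrow(4)$, a hypothetical $\mathrm{Con}(\ZFC+\mathrm{I3}) \Rightarrow \mathrm{Con}(\ZFC+\text{exacting})$ would compose to $\mathrm{Con}(\ZFC+\mathrm{I3}) \Rightarrow \mathrm{Con}(\ZFC+(4))$, again a contradiction. Thus exacting cardinals are strictly stronger than $\mathrm{I3}$, completing the proof.

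All genuine mathematical content resides in the theorem and in the two strictness inputs it cites or calls trivial; the only delicate point here is the purely logical packaging, namely that a one-directional consistency implication at one link of the chain, combined with a hypothetical reversal at the exacting cardinal, collapses a known strict gap. The step I expect to require the most care to state cleanly is the strictness of $(4)$ over a single $\mathrm{I3}$-embedding, where one must confirm that $(4)$ genuinely yields a rank-initial segment modelling $\ZFC+\mathrm{I3}$, so that the G\"odel argument applies rather than merely noting that $(4)$ is the logically stronger assertion.
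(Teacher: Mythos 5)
Your proposal is correct and follows essentially the same route as the paper: the corollary is read off the preceding theorem's chain of consistency implications, with non-reversibility at the bottom coming from the cited strictness of I2 over $\mathrm{I3}_{\rm{wf}(0)}$ and at the top from the fact that statement (4) yields a rank-initial-segment model $V_\kappa\models\ZFC+\mathrm{I3}$, hence $\mathrm{Con}(\ZFC+\mathrm{I3})$, so G\"odel's second incompleteness theorem blocks any reversal. Your write-up merely makes explicit the two non-reversal arguments that the paper compresses into ``(1) is strictly stronger than (2)'' and ``(4) is trivially strictly stronger than (5), consistency-wise.''
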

 

\section{Structural reflection}\label{SectSR}
In this last section we will give characterizations of exacting and ultraexacting cardinals in terms of Structural Reflection, thus showing that these cardinals fit nicely in the general framework of large cardinals as principles of Structural Reflection as presented in \cite{Ba:SR}. While such characterizations were already given in \cite{ABL}, the ones presented here are arguably more natural. As explained in the introduction, they allow recasting exactingness as a two-cardinal variant of unfoldability.

\subsection{Ultraexacting structural reflection} \label{SectUXSR}
We shall prove that the existence of an ultraexacting cardinal is equivalent to a simpler form of the  principle of \emph{Ultraexact Structural Reflection} from \cite{ABL}.  

First, recall that for  a  limit ordinal $\lambda$ and a function $f:V_\lambda\to V_\lambda$, a \emph{square root of $f$} is a function $r: V_\lambda \to V_\lambda$ with $r^+(r)=f$, where $r^+:V_{\lambda +1}\to V_{\lambda+1}$ is defined by $r^+(x)=\bigcup \{ r(x\cap V_\alpha):\alpha <\lambda\}$.

Given  a first-order language $\mathcal{L}$ containing a distinguished unary predicate symbol $\dot{P}$, we say that an $\mathcal{L}$-structure $A$ has \emph{type} $\langle \mu ,\lambda\rangle$ if the universe of $A$ has rank $\lambda$ and $\dot{P}^A$ has rank $\mu$.

 \begin{definition}
       Given  a first-order language $\mathcal{L}$ containing a unary predicate symbol $\dot{P}$, and given a class $\Ce$ of $\mathcal{L}$-structures, 
       the \emph{Ultraexacting Structural Reflection principle for  $\Ce$ at a cardinal $\lambda$} $(\UXSR_\Ce (\lambda))$  asserts that there is a function $f:V_\lambda\to V_\lambda$ and a cardinal $\mu <\lambda$ with the property that  for every structure $B$ in $\Ce$ of type  $\langle \mu, \lambda \rangle$, there exists a structure $A$ in $\Ce$ of type $\langle \nu ,\lambda \rangle$, for some $\nu <\mu$,  and a square root $r$ of $f$ such that the restriction of $r$ to the universe of $A$ is  an elementary embedding of $A$ into $B$.  
 \end{definition}

 The naturalness of the $\UXSR$ principle is illustrated in the following two propositions.

 \begin{prop}
 \label{naturalUXSR}
 The following are equivalent for a cardinal $\lambda$:
 \begin{enumerate}
     \item $\UXSR_\Ce (\lambda)$ holds for the class $\Ce$ of $\mathcal{L}$-structures of the form $\langle V_\xi ,\in , \alpha\rangle$, where $\alpha <\xi$. 
     \item There exists an elementary embedding $j:V_\lambda \to V_\lambda$.
 \end{enumerate}
 \end{prop}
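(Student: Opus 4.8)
The plan is to observe that, for this particular class $\Ce$, the principle $\UXSR_\Ce(\lambda)$ unwinds to the bare assertion that $V_\lambda$ admits a nontrivial elementary self-embedding, with the square-root clause being automatically witnessable. The first step is to pin down the relevant structures. A member $\langle V_\xi,\in,\alpha\rangle$ of $\Ce$ has $\dot P^A=\alpha$, of rank $\alpha$, and universe of rank $\xi$; so it has type $\langle\mu,\lambda\rangle$ precisely when $\xi=\lambda$ and $\alpha=\mu$. Hence the \emph{only} structure in $\Ce$ of type $\langle\mu,\lambda\rangle$ is $B=\langle V_\lambda,\in,\mu\rangle$, and a structure of type $\langle\nu,\lambda\rangle$ with $\nu<\mu$ is $A=\langle V_\lambda,\in,\nu\rangle$. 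Since the universe of each such $A$ is all of $V_\lambda$ and a square root of $f$ is by definition a total map $r\colon V_\lambda\to V_\lambda$, the clause ``the restriction of $r$ to the universe of $A$ is an elementary embedding of $A$ into $B$'' simply says that $r\colon\langle V_\lambda,\in,\nu\rangle\to\langle V_\lambda,\in,\mu\rangle$ is elementary.

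For the direction $(1)\Rightarrow(2)$, I would apply $\UXSR_\Ce(\lambda)$ to obtain $f$, $\mu$, and the unique $B=\langle V_\lambda,\in,\mu\rangle$; this yields some $\nu<\mu$, the structure $A=\langle V_\lambda,\in,\nu\rangle$, and a square root $r$ of $f$ with $r\colon A\to B$ elementary. Restricting attention to $\in$-formulas, $r$ is an elementary embedding of $\langle V_\lambda,\in\rangle$ into itself. To see that it is nontrivial, I would note that $\nu$ is defined in $A$ by the $\mathcal L$-formula $\forall y\,(y\in x\leftrightarrow\dot P(y))$, since $\dot P^A=\nu$ is a transitive set of ordinals and so, by extensionality, $\nu$ is the unique such $x$; by elementarity $r(\nu)$ is the unique such element of $B$, namely $\mu$. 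Thus $r(\nu)=\mu>\nu$, so $r\neq\id_{V_\lambda}$ and $r$ witnesses $(2)$. Note that the square-root property of $r$ is not used at all in this direction.

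For the direction $(2)\Rightarrow(1)$, fix a nontrivial elementary $j\colon V_\lambda\to V_\lambda$; this is an $\mathrm{I3}$-embedding, so the commuting system $\langle j_{m,n}\rangle$ and the critical sequence $\langle\kappa_m\rangle$ from \S\ref{SectExacting} are available, with $j=j_{0,1}$ and $\sup_m\kappa_m=\lambda$. I would set $f=j_{1,2}$ and $\mu=\kappa_1$, both legitimate choices since $j_{1,2}\colon V_\lambda\to V_\lambda$ is an $\mathrm{I3}$-embedding and $\kappa_1=j(\kappa_0)$ is a cardinal below $\lambda$. For the unique $B=\langle V_\lambda,\in,\kappa_1\rangle$ of type $\langle\mu,\lambda\rangle$, take $A=\langle V_\lambda,\in,\kappa_0\rangle$ (of type $\langle\kappa_0,\lambda\rangle$ with $\kappa_0<\kappa_1$) and $r=j$. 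The defining relation $j_{1,2}=j^+(j_{0,1})=j_{0,1}^+(j_{0,1})$ gives $r^+(r)=j^+(j)=j_{1,2}=f$, so $r$ is a square root of $f$; and since $j(\kappa_0)=\kappa_1$, the elementarity of $j$ on $\langle V_\lambda,\in\rangle$ with the parameter $\kappa_0$ upgrades to elementarity of $j\colon\langle V_\lambda,\in,\kappa_0\rangle\to\langle V_\lambda,\in,\kappa_1\rangle$. This establishes $\UXSR_\Ce(\lambda)$.

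I expect no serious obstacle: the argument is essentially bookkeeping once the structures are identified. The one point requiring care is the square root in the forward direction, namely recognizing that the canonical embedding $j$ is already a square root of $f=j^+(j)=j_{1,2}$ inside the commuting system, so that the square-root clause of $\UXSR$ is met ``for free''; and, dually, observing that this clause is irrelevant to the reverse direction, which only extracts a self-embedding from the elementarity of $r$ together with $r(\nu)=\mu>\nu$.
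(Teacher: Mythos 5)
Your proposal is correct and follows essentially the same route as the paper: in the forward direction you extract the self-embedding from the unique structure $\langle V_\lambda,\in,\mu\rangle$ of type $\langle\mu,\lambda\rangle$ and use $r(\nu)=\mu>\nu$ for nontriviality, and in the converse you take $f=j^+(j)=j_{1,2}$ with $\mu$ on the critical sequence and $r=j$, exactly as in the paper (which writes $f=j^2$ for $j^+(j)$ and allows any $\kappa_m$ with $m\geq 1$ in place of your $\kappa_1$). The extra details you supply (definability of $\nu$ from $\dot P$ via extensionality, and the observation that the square-root clause is unused in the forward direction) are correct refinements of the paper's terser argument.
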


 \begin{proof}
 %
 Assume (1), and let $f:V_\lambda \to V_\lambda$ and $\mu <\lambda$ witness $\UXSR_\Ce(\lambda)$. Then $\langle V_\lambda , \in , \mu\rangle \in \Ce$ is of type $\langle \mu ,\lambda \rangle$. So there is some $\langle V_\lambda ,\in, \nu\rangle\in \Ce$ with $\nu <\mu$ and a square root $r$ of $f$ such that the restriction $r\restriction V_\lambda: V_\lambda \to V_\lambda$ is an elementary embedding sending $\nu$ to $\mu$, which yields $(2)$.

 Now assume $j:V_\lambda \to V_\lambda$ is an elementary embedding, and let us show $(1)$. Set $f=j^2:V_\lambda \to V_\lambda$ and let $\mu$ be any element of the critical sequence of $j$ greater than the critical point of $j$. We claim that $f$ and $\mu$ witness $\UXSR_{\Ce}(\lambda)$. Note that there is only one element of $\Ce$ of type $\langle \mu,\lambda\rangle$, namely  $\langle V_\lambda, \in , \mu \rangle$. Letting $\nu =j^{-1}(\mu)$, we have that $\nu <\mu$, and  $j$, which is a square root of $f$, is an elementary embedding from $\langle V_\lambda, \in , \nu \rangle$ to $\langle V_\lambda, \in , \mu \rangle$, which yields $(1)$.
 \end{proof}

\begin{prop}
  \label{propPi1}
 The following are equivalent for a cardinal $\lambda$:
 \begin{enumerate}
     \item $\UXSR_\Ce(\lambda)$ holds for all classes $\Ce$ of $\mathcal{L}$-structures that are  $\Delta_1$ definable (i.e., both $\Sigma_1$ and $\Pi_1$ definable) using $V_\lambda$ as a parameter.
     \item There exists an elementary embedding $j:V_{\lambda +1} \to V_{\lambda +1}$.
 \end{enumerate}    
 \end{prop}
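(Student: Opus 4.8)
The plan is to prove both implications, with the cleanest idea powering $(2)\Rightarrow(1)$.

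\medskip

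\noindent\emph{The implication $(2)\Rightarrow(1)$.} Suppose $\map{j}{V_{\lambda+1}}{V_{\lambda+1}}$ is elementary, write $k=j\restriction V_\lambda$ for the induced $\mathrm{I}3$-embedding, and let $\seq{\kappa_n}{n<\omega}$ be its critical sequence. I would take the witnessing function for $\UXSR_\Ce(\lambda)$ to be $f:=k$ itself and the witnessing cardinal to be $\mu:=\kappa_1$. Two facts are used throughout. First, $k$ is a square root of $j(k)$, since $k^+=j$ and hence $k^+(k)=j(k)$. Second, for every $Q\subseteq V_\lambda$ the map $k$ is an elementary embedding of $\langle V_\lambda,\in,Q\rangle$ into $\langle V_\lambda,\in,k^+(Q)\rangle$, because $j$ is elementary and $j(\langle V_\lambda,\in,Q\rangle)=\langle V_\lambda,\in,k^+(Q)\rangle$. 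Now fix a $\Delta_1(V_\lambda)$ class $\Ce$ and $B\in\Ce$ of type $\langle\kappa_1,\lambda\rangle$. The key move is to reflect an existential statement rather than to pull $B$ back (which need not lie in the range of $j$). Let $\Theta(Y,h)$ assert that there is $A\in\Ce$ of type $\langle\nu,\lambda\rangle$, with $\nu$ below the $\dot P$-rank of $Y$, together with a square root $r$ of $h$ whose restriction to the universe of $A$ is an elementary embedding of $A$ into $Y$. Then the pair $(B,k)$ witnesses $\Theta(j(B),j(k))$: indeed $B\in\Ce$ has type $\langle\kappa_1,\lambda\rangle$ with $\kappa_1<\kappa_2$ (the $\dot P$-rank of $j(B)$), $k$ is a square root of $j(k)$, and $k\restriction B$ embeds $B$ elementarily into $j(B)$ by the second fact. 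Since $\Theta(Y,h)$ is expressible over $V_{\lambda+1}$ with $Y,h$ as parameters, the elementarity of $j$ gives $V_{\lambda+1}\models\Theta(B,k)$, as $\Theta(j(B),j(k))$ is the image under $j$ of $\Theta(B,k)$. Unravelling, this is exactly the required instance of $\UXSR_\Ce(\lambda)$ for $B$, with $f=k$ and $\mu=\kappa_1$.

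The step needing care here is the justification that $\Theta$ may be evaluated inside $V_{\lambda+1}$ so that the elementarity of $j$ applies; this is precisely where $\Delta_1$-definability is essential. For structures lying in $V_{\lambda+1}$, membership in $\Ce$ must be absolute between $V_{\lambda+1}$ and $V$ and preserved by $j$: the $\Sigma_1$-definition supplies upward absoluteness and preservation of ``$B\in\Ce$'' under $j$, while the $\Pi_1$-definition supplies the downward direction, so that a $\Delta_1(V_\lambda)$ class is treated correctly. I would isolate this as a preliminary lemma. The remaining ingredients—``$r$ is a square root of $h$'', ``$r\restriction A$ is an elementary embedding of $A$ into $Y$'', and the type conditions—are bounded assertions about elements of $V_{\lambda+1}$ and cause no difficulty.

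\medskip

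\noindent\emph{The implication $(1)\Rightarrow(2)$.} Here I would apply $\UXSR_\Ce(\lambda)$ to the $\Delta_1(V_\lambda)$ class $\Ce$ of all one-predicate expansions $\langle V_\lambda,\in,P\rangle$ with $P\subseteq V_\lambda$, obtaining a function $f$ and a cardinal $\mu$. For $B=\langle V_\lambda,\in,P\rangle$ of type $\langle\mu,\lambda\rangle$ the principle returns $Q\subseteq V_\lambda$ of smaller rank and a square root $r$ of $f$ whose restriction to the (full) universe $V_\lambda$ is an elementary embedding of $\langle V_\lambda,\in,Q\rangle$ into $\langle V_\lambda,\in,P\rangle$; in particular $r$ is a total $\mathrm{I}3$-embedding of $V_\lambda$ with $r^+(r)=f$. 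The goal is to show that the canonical extension $\map{r^+}{V_{\lambda+1}}{V_{\lambda+1}}$ is fully elementary, which is exactly an $\mathrm{I}1$-embedding; recall that $r^+$ is automatically $\Sigma_0$-elementary by \cite[Lemma 3.2]{MR3902806}, so the content is to upgrade this to full elementarity.

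The main obstacle of the whole proposition is precisely this upgrade, from the first-order elementarity of $r$ on single-predicate structures (the $\mathrm{I}3$-level information) to full elementarity of $r^+$ on $V_{\lambda+1}$ (the $\mathrm{I}1$-level information), since the relevant $V_{\lambda+1}$-formulas are second-order over $\langle V_\lambda,\in,P\rangle$ and are not controlled by first-order elementarity alone. The one piece of second-order data available is the defining identity $r^+(r)=f$ of a square root. I would exploit this together with the freedom to let the reflected predicate $P$ range over \emph{all} subsets of $V_\lambda$: feeding into $\Ce$ predicates coding satisfaction data for $\langle V_\lambda,\in,-\rangle$ forces the reflecting square roots to cohere, and the identity $r^+(r)=f$ then transfers the preserved first-order information through a single application of $r^+$, yielding $\Sigma_1$-elementarity of $r^+$ and hence, with $\Sigma_0$-elementarity, full elementarity. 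If a direct coherence argument proves awkward, the embedding can instead be extracted as an infinite branch through a tree of partial elementary embeddings, exactly as in the proof of Theorem \ref{theorem:PrikryExacting}. Either way, this reverses the reflection used in $(2)\Rightarrow(1)$.
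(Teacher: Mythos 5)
Your proof of $(2)\Rightarrow(1)$ is essentially the paper's argument, and your bookkeeping there is right: the witnessing function is $f=j\restriction V_\lambda$, the key identity is that $j\restriction V_\lambda$ is a square root of $j(j\restriction V_\lambda)$, and one reflects the existential statement through the elementarity of $j$ rather than pulling $B$ back. One caveat: your preliminary lemma, as described ($\Sigma_1$ gives upward absoluteness, $\Pi_1$ gives downward), only yields the sandwich $\sigma^{V_{\lambda+1}}\subseteq\Ce\cap V_{\lambda+1}\subseteq\pi^{V_{\lambda+1}}$; the equivalence of the two definitions holds in $V$ but need not relativize to $V_{\lambda+1}$, so neither relativization is automatically correct. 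Genuine correctness of a $V_{\lambda+1}$-formalization of ``$A\in\Ce$'' requires an extra observation, e.g.\ that $\Sigma_1$ statements with parameters in $V_{\lambda+1}\cup\{V_\lambda\}$ reflect to $H_{\lambda^+}$, whose elements are coded by well-founded extensional relations lying in $V_{\lambda+1}$. Since the paper is equally terse at this point, I do not count this as the main problem.

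The genuine gap is in $(1)\Rightarrow(2)$, precisely at the step you yourself flag as the main obstacle: nothing in your proposal bridges it. Applying $\UXSR$ to the class of single-predicate structures $\langle V_\lambda,\in,P\rangle$ gives, for each $P$ of rank $\mu$ separately, \emph{some} square root $r_P$ of $f$ that is elementary from $\langle V_\lambda,\in,r_P^{-1}[P]\rangle$ into $\langle V_\lambda,\in,P\rangle$; different $P$ may be handled by different square roots, and no mechanism is offered that produces a single $r$ whose extension $r^+$ preserves satisfaction over $(V_{\lambda+1},\in)$. The proposed upgrade ``$\Sigma_1$-elementarity of $r^+$ plus $\Sigma_0$-elementarity gives full elementarity'' is not a valid principle: $\Sigma_1$-elementarity over $(V_{\lambda+1},\in)$ is essentially preservation of well-foundedness, i.e.\ the $\mathrm{I2}$-level condition, and the gap between $\mathrm{I2}$ and $\mathrm{I1}$ is a strict consistency-strength gap, so no purely logical upgrade of a given $\Sigma_1$-elementary self-map to a fully elementary one can exist. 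The fallback via a tree of partial embeddings as in Theorem \ref{theorem:PrikryExacting} also fails for $\mathrm{I1}$: that argument works because $V_\lambda=\bigcup_m V_{\kappa_m}$ has cofinality $\omega$, whereas $\bigcup_m V_{\kappa_m+1}=V_\lambda$, so a branch through such a tree never yields a map defined on all of $V_{\lambda+1}$.

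What the paper does instead---and this is the missing idea---is to apply $\UXSR$ to one carefully chosen $\Delta_1(V_\lambda)$ class: structures $\langle V_\lambda,\in,\dot f^A,\dot E^A,\dot P^A\rangle$ in which $\dot E^A$ is a well-founded extensional relation on $V_\lambda$ coding a \emph{transitive set} $X$ with $V_\lambda\cup\{\lambda,\dot f^A\}\subseteq X$ (transitivity and well-foundedness are $\Delta_1$ in the parameter $V_\lambda$; no elementarity in any $V_\zeta$, which would not be $\Delta_1$, is demanded). Reflection then yields a square root $r$ of $f$ elementary between two such coded structures, hence an elementary embedding $j=\pi^{-1}\circ r\circ\tau:X\to Y$ between the coded transitive sets themselves; the analogue of Claim \ref{claim4.6}---which crucially uses the function symbol $\dot f$ in the signature together with the square-root identity $r^+(r)=f$---forces $j\restriction V_\lambda=\dot f^A=r\in X$, so that $r^+\restriction(X\cap V_{\lambda+1})\in X$, and it is the elementarity of $j$ on the full models $X$ and $Y$ (not merely on predicates over $V_\lambda$) that produces the $\mathrm{I1}$ embedding. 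Both features---coding transitive models of size $\lambda$ into the structures, and using the $\dot f$-symbol to pin the reflected square root inside the coded model---are absent from your plan, and without them the passage from $\mathrm{I3}$-level data to $\mathrm{I1}$ does not go through.
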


 We defer the proof of the proposition above, since it will follow from more general arguments given in the proofs of the next two lemmata.

\medskip

Following \cite[Section 4]{ABL}, let now $\mathcal{L}^\ast$ denote the first-order language that extends the language of set theory by a  unary function symbol $\dot{f}$, 
  a binary relation symbol $\dot{E}$, and a unary predicate symbol $\dot{P}$.  
 Define $\mathcal{U}$ to be the class of $\mathcal{L}^\ast$-structures $A$ such that  there exists a  limit cardinal $\lambda$  such that the following hold: 
 \begin{itemize}
     \item  The reduct of $A$ to the language of set theory is equal to $\langle V_\lambda,\in\rangle$.  

      \item If $\zeta$ is the least cardinal in $C^{(2)}$ greater than $\lambda$, then there is an elementary submodel $X$ of $V_\zeta$ with $V_\lambda\cup\{\lambda,\dot{f}^A\}\subseteq X$ and a bijection $\tau : X\to {V_\lambda}$ with $\tau(\lambda)=\langle 0,0\rangle$, $\tau(x)=\langle 1,x\rangle$ for all $x\in V_\lambda$ and 
     $$x\in y ~ \Longleftrightarrow ~ \tau(x) ~ \dot{E}^A ~ \tau(y)$$
      for all $x,y\in X$.  
 \end{itemize}
 It is easy to check that  $\mathcal{U}$ is a $\Delta_{3}$ class (i.e., both $\Sigma_3$ and $\Pi_3$ definable, without parameters). 
 Then, similarly as in \cite[Lemma 4.7]{ABL}, we have the following:

  \begin{lemma}
 \label{sqrtimpliesultraexact}
   If  $\lambda \in C^{(1)}$ and $\UXSR_{\mathcal{U}}(\lambda)$ holds,  then $\lambda$ is   ultraexacting. 
 \end{lemma}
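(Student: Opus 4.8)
The plan is to feed the witnesses of $\UXSR_{\mathcal{U}}(\lambda)$ back into the definition of $\mathcal{U}$ so as to read off an ultraexact embedding at $\lambda$ directly. Fix a function $f:V_\lambda\to V_\lambda$ and a cardinal $\mu<\lambda$ witnessing $\UXSR_{\mathcal{U}}(\lambda)$, and let $\zeta$ be the least element of $C^{(2)}$ above $\lambda$. Since $\lambda\in C^{(1)}$, it is a strong limit with $|V_\lambda|=\lambda$, so the L\"owenheim-Skolem theorem lets me pick $X_B\prec V_\zeta$ of size $\lambda$ with $V_\lambda\cup\{\lambda,f\}\subseteq X_B$; adjoining $\lambda$-many ordinals from $[\lambda,\zeta)$ I arrange $|X_B\setminus V_\lambda|=\lambda$, which is exactly what is needed for a bijection $\tau_B:X_B\to V_\lambda$ with $\tau_B(\lambda)=\langle 0,0\rangle$ and $\tau_B(x)=\langle 1,x\rangle$ for $x\in V_\lambda$ to exist. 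Transporting $\in$ along $\tau_B$ to define $\dot E^B$, and setting $\dot f^B=f$, $\dot P^B=V_\mu$, with reduct $\langle V_\lambda,\in\rangle$, I obtain a structure $B\in\mathcal{U}$ of type $\langle\mu,\lambda\rangle$.

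Applying $\UXSR_{\mathcal{U}}(\lambda)$ to $B$ yields $A\in\mathcal{U}$ of type $\langle\nu,\lambda\rangle$ with $\nu<\mu$ and a square root $r$ of $f$ whose restriction to the universe of $A$ is an elementary embedding $r:A\to B$. As $A\in\mathcal{U}$ has type $\langle\nu,\lambda\rangle$, its universe is $V_\lambda$, so $r:V_\lambda\to V_\lambda$; I fix $X_A\prec V_\zeta$ and $\tau_A$ witnessing $A\in\mathcal{U}$, so that $V_\lambda\cup\{\lambda,\dot f^A\}\subseteq X_A$.

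The crucial step, which I expect to be the heart of the argument, is to identify $\dot f^A$ with $r$. Elementarity of $r$ on the function symbol $\dot f$ gives $r(\dot f^A(x))=\dot f^B(r(x))=f(r(x))$ for all $x\in V_\lambda$, while the square-root identity $r^+(r)=f$ together with the standard computation $r^+(r)(r(x))=r(r(x))$ gives $f(r(x))=r(r(x))$. The latter computation is the usual one from the $\mathrm{I}3$ calculus: $\langle x,r(x)\rangle$ lies in the graph $r$, hence in $r\cap V_\alpha$ for large $\alpha$, so $r(\langle x,r(x)\rangle)=\langle r(x),r(r(x))\rangle\in r^+(r)=f$ because $r$ preserves pairing. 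Since $r$ is injective, these combine to $\dot f^A=r$; in particular $r=\dot f^A\in X_A$. This is precisely the ingredient that secures the ``ultra'' clause.

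Finally I assemble the embedding. The coding isomorphisms make $e:=\tau_B^{-1}\circ r\circ\tau_A:\langle X_A,\in\rangle\to\langle X_B,\in\rangle$ an $\in$-elementary map, and composing with the elementary inclusion $X_B\prec V_\zeta$ produces an elementary $k:X_A\to V_\zeta$. Using that $r$ preserves pairing and fixes the definable elements $0,1$, one computes $k(\lambda)=\tau_B^{-1}(r(\langle 0,0\rangle))=\lambda$ and $k\restriction V_\lambda=r$, so $k\restriction V_\lambda=\dot f^A\in X_A$. For nontriviality, note that $\dot P^A,\dot P^B$ have rank below $\lambda$, hence are elements of $V_\lambda$ whose ranks are computed correctly inside $A$ and $B$; elementarity of $r$ then forces $r(\nu)=\mu>\nu$, so $k\restriction\lambda\neq\mathrm{Id}$. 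Thus $X_A$ and $k$ witness that $\lambda$ is ultraexacting, since the definition asks for an embedding into $V_\zeta$ for exactly this least $\zeta\in C^{(2)}$ above $\lambda$. The technical cardinality bookkeeping for $\tau_B$ and the rank-preservation check are routine; the only genuinely non-formal point is the identity $\dot f^A=r$.
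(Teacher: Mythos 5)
Your proof is correct and follows essentially the same route as the paper's: you construct the same structure $B\in\mathcal{U}$ from a coded Skolem hull of size $\lambda$, apply $\UXSR_{\mathcal{U}}(\lambda)$ to get $A$ and the square root $r$, establish the key identity $\dot f^A=r$, and assemble the ultraexact embedding $\tau_B^{-1}\circ r\circ\tau_A$ exactly as the paper does with $\pi^{-1}\circ r\circ\tau$, with nontriviality coming from $r(\nu)=\mu$. The only cosmetic difference is that you prove $\dot f^A=r$ pointwise, combining $r(\dot f^A(x))=f(r(x))=r(r(x))$ with injectivity of $r$, whereas the paper (Claim \ref{claim4.6}) runs the same two ingredients --- elementarity of $r$ on the symbol $\dot f$ and the square-root identity --- as a contradiction argument on the restrictions $\dot f^A\restriction V_{\lambda_m}$ along the critical sequence.
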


 \begin{proof}
  Let $f: V_\lambda \to V_\lambda$ and $\mu <\lambda$ witness that $\UXSR_{\mathcal{U}}(\lambda)$  holds. 
  Let $\zeta$ be the least cardinal in $C^{(2)}$ greater than $\lambda$,  let $Y$ be an elementary submodel of $V_\zeta$ of cardinality $\lambda$ with $V_\lambda\cup\{\lambda,f\}\subseteq Y$, and let $\pi :Y \to V_\lambda$ be a bijection with $\pi(\lambda)=\langle 0,0\rangle$,  $\pi(x)=\langle 1,x\rangle$ for all $x\in V_\lambda$. 
  Then there is an $\mathcal{L}^\ast$-structure $B$ extending $\langle V_\lambda,\in\rangle$ with  $\dot{f}^B=f$, $\dot{E}^B=\{\langle \pi(x),\pi(y)\rangle : x,y\in Y, ~ x\in y\}$ and $\dot{P}^B=\mu$. 
  It follows that $B$ is an element of $\mathcal{U}$ of type $\langle \mu , \lambda\rangle$. Hence, there is a structure $A$ in $\mathcal{U}$ of type $\langle \nu , \lambda \rangle$, with $\nu <\mu$, and a square root $r: V_\lambda\to V_\lambda$ of $f$ that is an elementary embedding of $A$ into $B$. Notice that $r$ is an I3-embedding with $r(\nu)=\mu$. Also, we have that $r(\langle m,x\rangle)=\langle m,r(x)\rangle$ holds for all $x\in V_\lambda$ and $m<\omega$. 
 Since $A\in \mathcal{U}$, let $X$ be an elementary submodel of $V_\zeta$  with $V_\lambda\cup\{\lambda,\dot{f}^A\}\subseteq X$,  and let $\tau : X \to V_\lambda$ be a bijection such that $\tau(\lambda)=\langle 0,0\rangle$, $\tau(x)=\langle 1,x\rangle$ for all $x\in V_\lambda$, and $$x\in y ~ \Longleftrightarrow ~ \tau(x) ~ \dot{E}^A ~ \tau(y)$$
 holds for all $x,y\in X$. Now define $$j ~ := ~ \pi^{{-}1}\circ r\circ \tau : X \to V_\zeta.$$

We claim that $j$ is an ultraexact embedding at $\lambda$. First note that $$j(\lambda)  =  (\pi^{{-}1}\circ r\circ \tau)(\lambda)  =  (\pi^{{-}1}\circ r)(\langle 0,0\rangle)  =  \pi^{{-}1}(\langle 0,0\rangle)  =  \lambda$$ and 
  $$
      j(x)  =  (\pi^{{-}1}\circ r\circ \tau)(x)  =  (\pi^{{-}1}\circ r)(\langle 1,x\rangle )  =  \pi^{{-}1}(\langle 1,r(x)\rangle)  =  r(x)
 $$ holds for all $x\in V_\lambda$.  Further, $j$ is an elementary embedding:  for every $a\in X$ and every formula $\varphi(x)$ in the language of set theory, 
  $$X\!\models \varphi (x) \quad \mbox{iff} \quad \langle V_\lambda , \dot{E}^A\rangle \!\models \varphi (\tau (a))\quad \mbox{iff} \quad \langle V_\lambda , \dot{E}^B\rangle \models \!\varphi (r(\tau(a)))\quad \mbox{iff}$$
  $$Y\!\models \varphi (\pi^{-1}(r(\tau(a))))\quad \mbox{iff} \quad V_\zeta \models \varphi(j(a)).$$

  \begin{claim}
  \label{claim4.6}
      $j\restriction V_\lambda=\dot{f}^A$. 
  \end{claim}

  \begin{proof}[Proof of the Claim]
    Assume, towards a contradiction, that the  claim fails and pick $m<\omega$ with   $j\restriction  V_{\lambda_m}\neq\dot{f}^A\restriction V_{\lambda_m}$, where $\langle \lambda_m:m<\omega\rangle$ is the critical sequence of $r$. Elementarity  then implies that $$r(r\restriction V_{\lambda_m}) ~ = ~ r(j\restriction  V_{\lambda_m}) ~ \neq ~ r(\dot{f}^A\restriction V_{\lambda_m}) ~ = ~ \dot{f}^B\restriction V_{\lambda_{m+1}} ~ = ~ f\restriction V_{\lambda_{m+1}}.$$
    But since $r$ is a square root of $f$, we also have that $$r(r\restriction V_{\lambda_m}) ~ = ~ r^+(r)\restriction V_{\lambda_{m+1}} ~ = ~ f\restriction V_{\lambda_{m+1}}$$ which yields  a contradiction.  
  \end{proof}

   This completes the proof of the lemma, because  $j\restriction V_\lambda = \dot{f}^A\in X$, thus  showing that $j$ is an ultraexact embedding at $\lambda$. 
 \end{proof}

 The converse holds for all ordinal definable classes of $\mathcal{L}$-structures, namely,
 
 \begin{lemma}
 \label{lemmaUESR}
  If $\lambda$ is an ultraexacting cardinal, then for every ordinal definable class $\Ce$ of $\mathcal{L}$-structures,  the principle  $\UXSR_\Ce(\lambda)$  holds.
 \end{lemma}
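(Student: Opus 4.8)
The plan is to produce the witnessing data $f$ and $\mu$ from a single ultraexact embedding and then verify the reflection property by a minimal-counterexample argument, in the style of Theorems \ref{lemma:ultra_exact_char} and \ref{lemma:exact_char}. First I would fix the least $\zeta\in C^{(2)}$ above $\lambda$ that is large enough that $\Ce\cap V_\zeta$ is definable over $V_\zeta$, and invoke ultraexactness to obtain an elementary $j_0\colon X_0\to V_\zeta$ with $V_\lambda\cup\{\lambda\}\subseteq X_0$, $j_0(\lambda)=\lambda$, $\crit{j_0}<\lambda$, and $k:=j_0\restriction V_\lambda\in X_0$. As in the proof of Theorem \ref{lemma:ultra_exact_char} one has $j_0\restriction V_{\lambda+1}=k^+$, so $k$ is an I3-embedding and $f:=k^+(k)=j_0(k)$ is its ``square,'' which makes $k$ a square root of $f$. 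I would then set $\mu:=\crit{f}=\kappa_1$, the second element of the critical sequence of $k$ (so $k(\kappa_0)=\mu$); these $f$ and $\mu$ are the proposed witnesses of $\UXSR_\Ce(\lambda)$.

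Next I would verify that every $B\in\Ce$ of type $\langle\mu,\lambda\rangle$ reflects. Aiming for a contradiction, let $S$ be the set of $B\in\Ce$ of type $\langle\mu,\lambda\rangle$ for which there is no square root $r$ of $f$ and no $A\in\Ce$ of type $\langle\nu,\lambda\rangle$ with $\nu<\mu$ such that $r\restriction|A|$ (the restriction of $r$ to the universe of $A$) is an elementary embedding of $A$ into $B$. Since $\Ce$ is ordinal definable and $\mu=\crit{f}$ is definable from $f$, the set $S\subseteq V_{\lambda+1}$ is ordinal definable from the single parameter $f$, as is the associated well-ordering $<_{\OD,f}$. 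If $S\neq\emptyset$, let $B_0$ be its $<_{\OD,f}$-least element; then $B_0$ is definable from $f$. Using Lemma \ref{lemma:Equivalentexact} with $x=f$, together with the relativized form of Theorem \ref{lemma:ultra_exact_char} exploited in Proposition \ref{ultraexacting+forcing}, I would obtain an ultraexact embedding $j\colon X\to V_\zeta$ fixing $f$ (hence $\mu$), fixing $S$, $<_{\OD,f}$, and $B_0$, preserving membership in $\Ce$, and with $B_0\in X$. Since $j\restriction V_{\lambda+1}=(j\restriction V_\lambda)^+$, the equality $j(B_0)=B_0$ exhibits $B_0$ as a fixed point of $(j\restriction V_\lambda)^+$, to which Proposition \ref{proposition:FixedSets}\eqref{item:FixedSet1} applies.

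It then remains to contradict $B_0\in S$ by exhibiting a square root of $f$ and a strictly smaller $A\in\Ce$ reflecting into $B_0$; this is the heart of the argument and runs parallel to the forward direction, Lemma \ref{sqrtimpliesultraexact}. Because $j$ fixes $f$ and $X\preceq V_\zeta$ contains a square root of $f$ (one exists in $V_\zeta$, namely $k$), elementarity provides square roots of $f$ inside $X$, and applying $j$ to such a root produces further square roots of $f$. The plan is to combine this with the base square root $k$ (whose critical point $\kappa_0$ lies below $\mu$ and satisfies $k(\kappa_0)=\mu$) and with the fixed-point structure of $B_0$ from Proposition \ref{proposition:FixedSets} to locate a square root $r$ of $f$ with $B_0$ in the range of $r^+$; setting $A:=(r^+)^{-1}(B_0)$ and using that $r$ preserves $\Ce$-membership while lowering the type from $\mu$ to $\nu:=\crit{r}<\mu$, one obtains $A\in\Ce$ of type $\langle\nu,\lambda\rangle$ with $r\restriction|A|$ elementary into $B_0$, contradicting $B_0\in S$. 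The main obstacle is precisely this last construction: reconciling the $B_0$-adapted embedding from Lemma \ref{lemma:Equivalentexact} (which fixes $\mu=\crit{f}$ and so does not by itself drop the type) with the demand that the reflecting map be a square root of the \emph{single} fixed $f$ (whose base root $k$ does drop the type but need not respect $B_0$). Overcoming this tension is where the full strength of ultraexactness --- the availability of a self-embedding for every ordinal definable predicate --- is used, and once it is settled the deferred Propositions \ref{naturalUXSR} and \ref{propPi1} follow by specializing $\Ce$ as indicated there.
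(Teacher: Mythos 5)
There is a genuine gap, and you in fact flag it yourself: the ``main obstacle'' you describe in the last paragraph is never overcome, and it cannot be overcome within your setup. Once you arrange an embedding $j$ that \emph{fixes} $f$, $\mu=\crit{f}$, $S$, and $B_0$, elementarity gives you nothing: pulling a statement back along $j$ only relates its truth at the parameters $j(f),j(\mu),j(B_0)$ to its truth at $f,\mu,B_0$, and these are the same parameters, so the pull-back is vacuous and no type-dropping embedding into $B_0$ can be extracted. Your sketched repair (finding a square root $r$ of $f$ with $B_0\in\ran{r^+}$ and setting $A=(r^+)^{-1}(B_0)$) is not justified by anything cited: there is no reason an arbitrary $B_0$ lies in the range of $r^+$ for any square root $r$ of $f$. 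Moreover, the step that produces this $j$ is itself unsupported: Lemma \ref{lemma:Equivalentexact} is an equivalence between formulations of ``there is an (ultra)exact embedding fixing $x$'' and does not assert existence for a given $x$; Theorem \ref{lemma:ultra_exact_char} applies only to \emph{ordinal definable} predicates, and your $f=k^+(k)$ is not $\OD$ (no I3-embedding of $V_\lambda$ is); and the relativization trick of Proposition \ref{ultraexacting+forcing} requires a parameter fixed by the original embedding, whereas $j_0$ does not fix $f$ (if $f\in X_0$ then $j_0(f)=f=j_0(k)$ would force $f=k$ by injectivity, which is false).

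The paper's proof avoids all of this by choosing the witnesses so that the auxiliary embedding does \emph{not} fix them. Apply Theorem \ref{lemma:ultra_exact_char} once, to the $\OD$ set $\Ce_{\lambda+1}=\Ce\cap V_{\lambda+1}$, to get a single elementary $j:(V_{\lambda+1},\Ce_{\lambda+1})\to(V_{\lambda+1},\Ce_{\lambda+1})$ with critical point $\kappa$, and set $f:=j\restriction V_\lambda$ and $\mu:=j(\kappa)$ (note: \emph{not} $\crit{f}$). Then for any $B\in\Ce_{\lambda+1}$ of type $\langle\mu,\lambda\rangle$, the map $j\restriction B:B\to j(B)$ is elementary, lies in $\Ce_{\lambda+1}$-preserving position, and is the restriction of $f$, which is a square root of $j(f)$; so the reflection statement holds \emph{at the parameters} $j(f),j(\mu),j(B)$, witnessed by $f,\mu,B$ themselves. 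Since $j$ moves $f$ and $\mu$, pulling this statement back along $j$ is non-vacuous and yields exactly the required $A\in\Ce$ of type $\langle\nu,\lambda\rangle$, $\nu<\mu$, together with a square root of $f$ embedding $A$ into $B$ --- with no minimal-counterexample argument, no second embedding, and no need to fix $f$. This is the same mechanism as in Proposition \ref{propPi1}, upgraded from $\Delta_1$ classes to $\OD$ classes precisely by putting $\Ce_{\lambda+1}$ into the structure on which $j$ is elementary; your proposal correctly imports the witnesses from Proposition \ref{propPi1} but misses that the whole argument must run through one $\Ce$-preserving embedding that moves them.
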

 
 \begin{proof}
 Let $\Ce$ be an  ordinal definable class of $\mathcal{L}$-structures, and let $\Ce_{\lambda +1}=\Ce \cap V_{\lambda +1}$· Thus, $\Ce_{\lambda +1}$ is an element of $V_{\lambda +2}$ that is ordinal definable with $\lambda$ as an additional parameter.

By Lemma \ref{lemma:ultra_exact_char}, let $j:(V_{\lambda +1}, \Ce_{\lambda +1})\to (V_{\lambda +1}, \Ce_{\lambda +1})$ be an elementary embedding with critical point, $\kappa$,  less than $\lambda$. Let $\mu =j(\kappa)$, and let $f:=j\restriction V_\lambda : V_\lambda \to V_\lambda$. 
We claim that $\UXSR_\Ce (\lambda)$ holds, witnessed by $f$ and $\mu$. 
 
So suppose $A \in \Ce_{\lambda +1}$ is a structure of type $\langle \mu, \lambda \rangle$. 
Then the elementarity of $j$ implies that $j(A)\in \Ce_{\lambda +1}$, and  the restriction map $j\restriction A:A\to j(A)$ is an elementary embedding. Notice that $j\restriction A$ is the restriction to $A$ of the function $j\restriction V_\lambda$, which is a square root of $j(f)$.
 
We may now pull back the previous statement by $j^{-1}$ and use  elementarity to conclude that there is a  structure $B$ in $\Ce_{\lambda +1}$ of type $\langle \kappa ,\lambda\rangle$  and there exists an elementary embedding $i:B\to A$ that is the restriction to $B$ of a function that is a square root of $f$.
\end{proof}

Lemmas \ref{sqrtimpliesultraexact} and \ref{lemmaUESR} now yield  the following characterization of ultraexact cardinals in terms of Ultraexacting  Structural Reflection.

 \begin{theorem}\label{TheoremUltraexactingSR}
  A cardinal $\lambda$ is ultraexacting if and only if the principle $\UXSR_\Ce(\lambda)$ holds for all ordinal definable classes $\Ce$ (equivalently, for the particular $\Delta_3$ class used in the proof of Lemma \ref{sqrtimpliesultraexact}) of $\mathcal{L}$-structures.  
 \end{theorem}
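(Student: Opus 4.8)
The plan is to present the theorem as a cycle of implications among three statements: (i) $\lambda$ is ultraexacting; (ii) $\UXSR_\Ce(\lambda)$ holds for every ordinal definable class $\Ce$ of $\mathcal{L}$-structures; and (iii) $\UXSR_{\mathcal{U}}(\lambda)$ holds for the particular $\Delta_3$ class $\mathcal{U}$. The implication (i)$\,\Rightarrow\,$(ii) is exactly Lemma \ref{lemmaUESR}, while (ii)$\,\Rightarrow\,$(iii) is immediate, since $\mathcal{U}$ is a $\Delta_3$ class and hence in particular ordinal definable. Thus the only substantive work lies in closing the cycle via (iii)$\,\Rightarrow\,$(i), for which I would appeal to Lemma \ref{sqrtimpliesultraexact}.

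The subtlety is that Lemma \ref{sqrtimpliesultraexact} carries the extra hypothesis $\lambda\in C^{(1)}$, so the heart of the argument is to show that $\UXSR_{\mathcal{U}}(\lambda)$ already forces $\lambda\in C^{(1)}$. To this end I would fix the witnessing $f:V_\lambda\to V_\lambda$ and $\mu<\lambda$, and first observe that $\mathcal{U}$ genuinely contains a structure $B$ of type $\langle\mu,\lambda\rangle$: by the L\"owenheim--Skolem theorem one obtains an elementary submodel $Y\preceq V_\zeta$ with $V_\lambda\cup\{\lambda,f\}\subseteq Y$ and $|Y|=|V_\lambda|$, and hence a bijection $\pi:Y\to V_\lambda$ with $\pi(\lambda)=\langle 0,0\rangle$ and $\pi(x)=\langle 1,x\rangle$ for $x\in V_\lambda$; this produces $B\in\mathcal{U}$ exactly as in the proof of Lemma \ref{sqrtimpliesultraexact}. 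The point to stress is that this construction uses only $|Y|=|V_\lambda|$ and not $\lambda=\beth_\lambda$. Applying $\UXSR_{\mathcal{U}}(\lambda)$ to $B$ then yields a square root $r$ of $f$ whose restriction to the universe $V_\lambda$ of the reflecting structure $A$ is elementary into $B$; as computed in the proof of Lemma \ref{sqrtimpliesultraexact}, $r$ is a nontrivial elementary embedding $r:V_\lambda\to V_\lambda$ with $r(\nu)=\mu$ for some $\nu<\mu$, i.e., an I3-embedding.

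Once the I3-embedding $r$ is in hand, I would invoke the standard fact that its critical sequence $\langle\kappa_n:n<\omega\rangle$ is cofinal in $\lambda$ and that every $\kappa_n$ is measurable: the critical point $\kappa_0$ carries the normal ultrafilter $\{A\subseteq\kappa_0:\kappa_0\in r(A)\}$, and measurability of the remaining $\kappa_n$ follows by elementarity. Hence $\lambda$ is a limit of measurable, and in particular of inaccessible, cardinals, so it is a strong limit cardinal; therefore $|V_\lambda|=\lambda$, that is $\lambda=\beth_\lambda$, which is precisely $\lambda\in C^{(1)}$. With $\lambda\in C^{(1)}$ verified, Lemma \ref{sqrtimpliesultraexact} applies and yields that $\lambda$ is ultraexacting, closing the cycle.

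I expect the main obstacle to be exactly this discharge of the hypothesis $\lambda\in C^{(1)}$: one must be careful that the construction of a witnessing $B\in\mathcal{U}$ does not itself presuppose $\lambda$ to be a $\beth$-fixed point, so that the I3-embedding, and with it the strong-limitness of $\lambda$, can be extracted \emph{before} rather than \emph{after} knowing $\lambda\in C^{(1)}$. Everything else reduces to citing Lemmata \ref{lemmaUESR} and \ref{sqrtimpliesultraexact} together with the observation that $\mathcal{U}$ is ordinal definable.
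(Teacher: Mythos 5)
Your proposal is correct, and its skeleton is exactly the paper's: the paper proves Theorem~\ref{TheoremUltraexactingSR} by simply citing Lemma~\ref{lemmaUESR} (your (i)$\Rightarrow$(ii)) and Lemma~\ref{sqrtimpliesultraexact} (your (iii)$\Rightarrow$(i)), with (ii)$\Rightarrow$(iii) being the trivial observation that $\mathcal{U}$ is ordinal definable. The genuine difference is that you address a point the paper's one-sentence proof silently passes over: Lemma~\ref{sqrtimpliesultraexact} carries the hypothesis $\lambda\in C^{(1)}$, which is not among the hypotheses of the theorem, so the bare citation leaves a (small but real) gap, at least for the parenthetical claim that $\UXSR_{\mathcal{U}}(\lambda)$ alone suffices. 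Your discharge of that hypothesis is sound and is the right fix: the witnessing structure $B\in\mathcal{U}$ of type $\langle\mu,\lambda\rangle$ can be built from a L\"owenheim--Skolem hull $Y\preceq V_\zeta$ of cardinality $|V_\lambda|$ rather than $\lambda$ (the existence of the required bijection $\pi:Y\to V_\lambda$ needs only $|Y\setminus(V_\lambda\cup\{\lambda\})|=|V_\lambda|$, which holds since $Y$ contains, e.g., $\{x,\lambda\}$ for every $x\in V_\lambda$); the reflected square root $r$ is then a nontrivial elementary embedding of $V_\lambda$ into itself with $r(\nu)=\mu$; Kunen's theorem makes its critical sequence cofinal in the cardinal $\lambda$; and measurability of the critical points makes $\lambda$ a strong limit, hence a $\beth$-fixed point, hence an element of $C^{(1)}$, after which the lemma applies verbatim. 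An equivalent packaging of the same observation: the only use of $\lambda\in C^{(1)}$ in the proof of Lemma~\ref{sqrtimpliesultraexact} is the existence of the hull $Y$ of size $\lambda$, so your relaxed construction in fact shows that the hypothesis can simply be deleted from that lemma. Either way, your write-up is more complete than the paper's own proof of this theorem.
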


Similar arguments as in the proofs of the two lemmata above yield a proof of Proposition \ref{propPi1}. Namely,

  \begin{proof}[Proof of Proposition \ref{propPi1}]
Let $\Ce$ be the class of $\mathcal{L}^\ast$ structures $A$ such that  there exists a  limit ordinal $\lambda$  such that the following hold: 
 \begin{itemize}
     \item  The reduct of $A$ to the language of set theory is equal to $\langle V_\lambda,\in\rangle$.  

      \item There is a transitive set $X$ with $V_\lambda\cup\{\lambda,\dot{f}^A\}\subseteq X$ and a bijection $\tau : X\to {V_\lambda}$ with $\tau(\lambda)=\langle 0,0\rangle$, $\tau(x)=\langle 1,x\rangle$ for all $x\in V_\lambda$ and 
     $$x\in y ~ \Longleftrightarrow ~ \tau(x) ~ \dot{E}^A ~ \tau(y)$$
      for all $x,y\in X$.  
 \end{itemize}
Thus, $\Ce$ is $\Delta_1$ definable with $V_\lambda$ as a parameter, since $A\in \Ce$ if and only if $M\models ``A\in \Ce"$, for every transitive model $M$ of a sufficiently-big finite fragment of ZFC that contains $V_\lambda \cup \{ \lambda\}$.

Assume $(1)$ and argue as in the proof of Lemma \ref{sqrtimpliesultraexact}, working with the class $\Ce$, instead of the class $\mathcal{U}$.
Right before Claim \ref{claim4.6}, we have the following:
$$X\!\models \varphi (x) \quad \mbox{iff} \quad \langle V_\lambda , \dot{E}^A\rangle \!\models \varphi (\tau (a))\quad \mbox{iff} \quad \langle V_\lambda , \dot{E}^B\rangle \models \!\varphi (r(\tau(a)))\quad \mbox{iff}$$
  $$Y\!\models \varphi (\pi^{-1}(r(\tau(a))))\quad \mbox{iff} \quad Y \models \varphi(j(a)).$$
 Since, by Claim \ref{claim4.6}, $j\restriction V_{\lambda}\in X$, we also have that $j\restriction (X\cap V_{\lambda +1})\in X$, because if $x\in X\cap V_{\lambda +1}$, then $j(x)=\bigcup \{j(x\cap V_{\lambda_m}):m<\omega\}$. This shows that in $X$ there exists an elementary embedding from $V_{\lambda +1}$ to itself. Hence, by the elementarity of $j$,  such an elementary embedding exists in $V$.


 For the converse, let $j : V_{\lambda+1}\to V_{\lambda+1}$ be an elementary embedding. Let $f:=j^2\restriction V_\lambda :V_\lambda \to V_\lambda$, and let $\mu$ be any cardinal in the critical sequence of $j$ greater than the critical point. We claim that $f$ and $\mu$ witness $\UXSR_{\Ce} (\lambda)$ for any class $\Ce$ of structures that is $\Delta_1$ definable with $V_\lambda$ as a parameter.
   
   So let   $\Ce$ be such a class and fix $A\in \Ce$ of type $\langle \mu ,\lambda \rangle$. Then  
   $$V_{\lambda +1}\models ``A\in \Ce"$$ by downward absoluteness for transitive classes. By elementarity, $$V_{\lambda +1}\models ``j(A)\in \Ce"$$ and  the restriction map $j\restriction A:A\to j(A)$ is an elementary embedding. Note that $j\restriction A$ is the restriction to $A$ of the function $j\restriction V_\lambda$, which is a square root of $j(f)$.
 
By pulling back the previous statement via $j^{-1}$  we have, by elementarity, that in $V_{\lambda +1}$ there is a  structure $B$ in $\Ce$ of type $\langle j^{-1}(\mu) ,\lambda\rangle$  together with an elementary embedding $i:B\to A$ that is the restriction to $B$ of a function that is a square root of $f$. Since $V_{\lambda +1}$ is correct about $B$ belonging to $\Ce$, this completes the proof.
 \end{proof}

\subsection{Exacting structural reflection}
We will next show that the following simpler form of Structural Reflection characterizes exacting cardinals.

\begin{definition}
       Given  a first-order language $\mathcal{L}$ containing a unary predicate symbol $\dot{P}$, and given a class $\Ce$ of $\mathcal{L}$-structures, 
       the \emph{Exacting Structural Reflection principle for  $\Ce$ at a cardinal $\lambda$} $(\XSR_\Ce (\lambda))$  asserts that there exists a cardinal $\mu <\lambda$ with the property that  for some structure $B$ in $\Ce$ of type  $\langle \mu, \lambda \rangle$, if there is any, there exists a structure $A$ in $\Ce$ of type $\langle \nu ,\lambda \rangle$, for some $\nu <\mu$,  and   an elementary embedding of $A$ into $B$.   
 \end{definition}

Observe  that if $\Ce$ is the class of structures of the form $\langle V_\xi ,\in ,\alpha \rangle$, where $\alpha <\xi$, then $\XSR_\Ce (\lambda)$ is equivalent to the existence of an elementary embedding $j:V_\lambda \to V_\lambda$ (see Proposition \ref{naturalUXSR}), hence  also equivalent to $\UXSR_\Ce (\lambda)$. 

 \medskip

Let now $\mathcal{L}^\ast$ denote the first-order language that extends the language of set theory by a  
  a binary relation symbol $\dot{E}$ and a unary predicate symbol $\dot{P}$.  
 Define $\mathcal{E}$ to be the class of $\mathcal{L}^\ast$-structures $A$ such that  there exists a  limit cardinal $\lambda$  such that the following hold: 
 \begin{itemize}
     \item  The reduct of $A$ to the language of set theory is equal to $\langle V_\lambda,\in\rangle$.  

      \item If $\zeta$ is the least cardinal in $C^{(2)}$ greater than $\lambda$, then there is an elementary submodel $X$ of $V_\zeta$ with $V_\lambda\cup\{\lambda\}\subseteq X$ and a bijection $\tau : X\to {V_\lambda}$ with $\tau(\lambda)=\langle 0,0\rangle$, $\tau(x)=\langle 1,x\rangle$ for all $x\in V_\lambda$ and 
     $$x\in y ~ \Longleftrightarrow ~ \tau(x) ~ \dot{E}^A ~ \tau(y)$$
      for all $x,y\in X$.  
 \end{itemize}
 It is easily seen that  $\mathcal{E}$ is a $\Delta_{3}$ class. 

 \begin{lemma}
 \label{srimpliesexact}
   If $\lambda \in C^{(1)}$ and  $\XSR_{\mathcal{E}}(\lambda)$ holds,  then $\lambda$ is  exacting. 
 \end{lemma}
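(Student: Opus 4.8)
The plan is to transcribe the proof of Lemma~\ref{sqrtimpliesultraexact}, taking advantage of the two simplifications that the weaker conclusion permits: the reflecting map will be an arbitrary elementary embedding rather than a square root, and there is no need to verify $j\restriction V_\lambda\in X$, so the delicate Claim~\ref{claim4.6} of that proof disappears entirely.

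First I would fix a cardinal $\mu<\lambda$ witnessing $\XSR_{\mathcal{E}}(\lambda)$ and let $\zeta$ be the least cardinal in $C^{(2)}$ above $\lambda$. Since the principle only reflects a structure of type $\langle\mu,\lambda\rangle$ ``if there is any'', I must first check that such structures exist. Using $\lambda\in C^{(1)}$, which gives $|V_\lambda|=\lambda$, the L\"owenheim--Skolem theorem produces an elementary submodel $Y_0\preceq V_\zeta$ of cardinality $\lambda$ with $V_\lambda\cup\{\lambda\}\subseteq Y_0$ and a bijection $\pi_0:Y_0\to V_\lambda$ with $\pi_0(\lambda)=\langle 0,0\rangle$ and $\pi_0(x)=\langle 1,x\rangle$ for $x\in V_\lambda$; coding the membership relation of $Y_0$ via $\pi_0$ into $\dot{E}$ and setting $\dot{P}=\mu$ on the reduct $\langle V_\lambda,\in\rangle$ produces a structure in $\mathcal{E}$ of type $\langle\mu,\lambda\rangle$. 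Hence the reflection fires.

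It yields structures $A,B\in\mathcal{E}$ of types $\langle\nu,\lambda\rangle$ and $\langle\mu,\lambda\rangle$ with $\nu<\mu$, together with an elementary embedding $r:A\to B$. By the definition of $\mathcal{E}$, each of $A$ and $B$ is accompanied by an elementary submodel of $V_\zeta$ containing $V_\lambda\cup\{\lambda\}$ and a coding bijection of the prescribed form; write $X,\tau$ for those attached to $A$ and $Y,\pi$ for those attached to $B$. The restriction of $r$ to the reduct is an elementary embedding $r:\langle V_\lambda,\in\rangle\to\langle V_\lambda,\in\rangle$, and it cannot be the identity: as $r$ preserves the predicate $\dot{P}$, the identity would force $\dot{P}^A=\dot{P}^B$ and hence $\nu=\mu$, contradicting $\nu<\mu$. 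A non-trivial elementary self-embedding of $\langle V_\lambda,\in\rangle$ moves some ordinal below $\lambda$, so $r\restriction\lambda\neq\mathrm{Id}_\lambda$ and $r$ is an $\mathrm{I3}$-embedding.

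Finally I would set $j:=\pi^{-1}\circ r\circ\tau:X\to V_\zeta$ and verify that it is an exact embedding at $\lambda$. Because $\crit{r}$ exceeds the ranks of $\langle 0,0\rangle$ and $1$, one computes $j(\lambda)=\pi^{-1}(r(\langle 0,0\rangle))=\lambda$ and, for $x\in V_\lambda$, $j(x)=\pi^{-1}(r(\langle 1,x\rangle))=\pi^{-1}(\langle 1,r(x)\rangle)=r(x)$; thus $j\restriction V_\lambda=r$, whence $j\restriction\lambda\neq\mathrm{Id}_\lambda$. The elementarity of $j$ follows from the chain
\begin{align*}
X\models\varphi(a) &\iff \langle V_\lambda,\dot{E}^A\rangle\models\varphi(\tau(a)) \iff \langle V_\lambda,\dot{E}^B\rangle\models\varphi(r(\tau(a)))\\
&\iff Y\models\varphi(\pi^{-1}(r(\tau(a)))) \iff V_\zeta\models\varphi(j(a)),
\end{align*}
which uses that $\tau$ and $\pi$ are isomorphisms onto $\langle V_\lambda,\dot{E}^A\rangle$ and $\langle V_\lambda,\dot{E}^B\rangle$ respectively, that $r$ is $\dot{E}$-elementary, and that $Y\preceq V_\zeta$. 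Then $X$ and $j$ witness that $\lambda$ is exacting. The main obstacle is organizational rather than conceptual: one must ensure the reflection is not vacuous by building the explicit witness above, and must extract the non-triviality of $r$ from the inequality $\nu<\mu$; the remainder is a direct streamlining of the argument for Lemma~\ref{sqrtimpliesultraexact}.
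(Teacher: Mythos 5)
Your proposal is correct and takes essentially the same route as the paper's proof: construct a structure of type $\langle\mu,\lambda\rangle$ in $\mathcal{E}$ via L\"owenheim--Skolem to see that the reflection is non-vacuous, let $\XSR_{\mathcal{E}}(\lambda)$ produce $A$, $B$ and $r:A\to B$, and compose $\pi^{-1}\circ r\circ\tau$ to obtain the exact embedding $j:X\to V_\zeta$. You additionally spell out two points the paper leaves as ``easily checked'' --- that the witnesses $Y,\pi$ for $B$ can be taken from $B$'s membership in $\mathcal{E}$ (so it is harmless that the principle yields reflection only for \emph{some} $B$), and that $\nu<\mu$ forces $r$, hence $j\restriction\lambda$, to be non-trivial --- which is a faithful filling-in of detail rather than a different argument.
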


 \begin{proof}
   Let $\mu <\lambda$  witness $\XSR_{\mathcal{E}}(\lambda)$. Let $\zeta$ be the least cardinal in $C^{(2)}$ greater than $\lambda$,  let $Y$ be an elementary submodel of $V_\zeta$ of cardinality $\lambda$ with $V_\lambda\cup\{\lambda\}\subseteq Y$, and let $\pi :Y \to V_\lambda$ be a bijection with $\pi(\lambda)=\langle 0,0\rangle$,  $\pi(x)=\langle 1,x\rangle$ for all $x\in V_\lambda$. 
  Then there is an $\mathcal{L}^\ast$-structure $B$ extending $\langle V_\lambda,\in\rangle$ with   $\dot{E}^B=\{\langle \pi(x),\pi(y)\rangle : x,y\in Y, ~ x\in y\}$ and $\dot{P}^B=\mu$. 
  It follows that $B$ is an element of $\mathcal{E}$ of type $\langle \mu , \lambda\rangle$.
  By $\XSR_{\mathcal{E}}(\lambda)$  there is a structure $A$ in $\mathcal{E}$ of type $\langle \nu , \lambda \rangle$, with $\nu <\mu$, and an elementary embedding of $A$ into $B$. 
 Since $A\in \mathcal{E}$, let $X$ be an elementary submodel of $V_\zeta$  with $V_\lambda\cup\{\lambda\}\subseteq X$,  and let $\tau : X \to V_\lambda$ be a bijection such that $\tau(\lambda)=\langle 0,0\rangle$, $\tau(x)=\langle 1,x\rangle$ for all $x\in V_\lambda$, and $$x\in y ~ \Longleftrightarrow ~ \tau(x) ~ \dot{E}^A ~ \tau(y)$$
 holds for all $x,y\in X$. Now letting $$i ~ := ~ \pi^{{-}1}\circ j\circ \tau : X \to V_\zeta.$$
we can easily check that $i$ is an exact embedding at $\lambda$. 
 \end{proof}

 The converse holds for all ordinal definable classes of  structures in a language containing a unary predicate symbol. Namely,
 
 \begin{lemma}
 \label{lemmaESR}
  Let $\mathcal{L}$ be a first-order language  containing a unary predicate symbol. If $\lambda$ is an exacting cardinal, then for every $n\geq 2$ there is a cardinal $\mu <\lambda$ such that for every  $\Sigma_n$-definable, with $\lambda$ as a parameter, class $\Ce$ of $\mathcal{L}$-structures,   the principle  $\XSR_\Ce(\lambda)$  holds, witnessed by $\mu$.
 \end{lemma}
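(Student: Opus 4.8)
The plan is to produce a \emph{single} exact embedding and read the uniform witness $\mu$ directly off its critical point, exploiting that $\XSR$—in contrast to $\UXSR$—only requires a reflecting pair for \emph{some} structure of type $\langle\mu,\lambda\rangle$. Fix $n\geq 2$. Since $\lambda$ is exacting, I would pick $\zeta\in C^{(n+1)}$ above $\lambda$ and, using that an exact embedding into $V_\zeta$ exists for every such $\zeta$ (Lemma \ref{lemma:Equivalentexact}), fix $X\preceq V_\zeta$ with $V_\lambda\cup\{\lambda\}\subseteq X$ and an exact embedding $i:X\to V_\zeta$ with $i(\lambda)=\lambda$ and $\crit{i}=\kappa<\lambda$. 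I would then set $\mu=i(\kappa)$; since $\kappa\in V_\lambda\subseteq X$ and $i(\lambda)=\lambda$, we have $\mu<\lambda$. The point is that $\mu$ is manufactured from $i$ alone and makes no reference to any class, so it is a candidate uniform witness for all $\Sigma_n$-definable $\Ce$ at once.

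Next, fix a class $\Ce$ of $\mathcal{L}$-structures that is $\Sigma_n$-definable from the parameter $\lambda$. If $\Ce$ has no structure of type $\langle\mu,\lambda\rangle$ then $\XSR_\Ce(\lambda)$ is vacuous, so assume it does. Writing $\psi(a)$ for the assertion ``there is a structure in $\Ce$ of type $\langle a,\lambda\rangle$'', the hypothesis is $\psi(\mu)$; this is a $\Sigma_n$ statement in the parameters $\mu,\lambda$, so by $\zeta\in C^{(n+1)}\subseteq C^{(n)}$ it reflects to $V_\zeta\models\psi(\mu)=\psi(i(\kappa))$. Pulling this back along the elementary map $i:X\to V_\zeta$ (which fixes $\lambda$) gives $X\models\psi(\kappa)$, and hence a structure $A\in X$ that $X$ takes to belong to $\Ce$ and to have type $\langle\kappa,\lambda\rangle$. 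As $X\preceq V_\zeta$ and $V_\zeta$ is $\Sigma_n$-correct, $A$ genuinely lies in $\Ce$ with type $\langle\kappa,\lambda\rangle$; applying $i$ and reflecting once more, $i(A)\in\Ce$ has type $\langle i(\kappa),\lambda\rangle=\langle\mu,\lambda\rangle$.

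It then remains to check that $i$ restricts to an elementary embedding of $\mathcal{L}$-structures $A\to i(A)$. Since the universe $U_A$ of $A$ has rank $\lambda$, all its elements lie in $V_\lambda\subseteq X$, so $i\restriction U_A$ is defined and maps $U_A$ into the universe of $i(A)$. For each $\mathcal{L}$-formula $\theta$ and tuple $\vec a\in U_A$, the relation $A\models\theta(\vec a)$ is absolute between $X$, $V_\zeta$ and $V$, and elementarity of $i$ turns it into $i(A)\models\theta(i(\vec a))$; thus $i\restriction U_A:A\to i(A)$ is elementary. Setting $B=i(A)$ and $\nu=\kappa<\mu$, the structures $A,B$ and the embedding $i\restriction U_A$ witness $\XSR_\Ce(\lambda)$ with the single cardinal $\mu$.

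The main obstacle is exactly the uniformity of $\mu$: one needs a single embedding that simultaneously reflects membership in \emph{every} $\Sigma_n$-definable class. This is what is secured by choosing $\zeta\in C^{(n+1)}$—so that $\Sigma_n$-truth with parameter $\lambda$ is computed correctly in $V_\zeta$ and therefore in $X$—together with $i(\lambda)=\lambda$. The relaxation from ``for every $B$'' (as in $\UXSR$) to ``for some $B$'' is precisely what lets the preimage $A$ be produced directly from the existence hypothesis by a pull-back, dispensing with the square-root bookkeeping needed in Lemma \ref{lemmaUESR}.
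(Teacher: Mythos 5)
Your proof is correct and follows essentially the same route as the paper's: take a single exact embedding $j:X\to V_\zeta$ with $\zeta$ sufficiently $\Sigma_n$-correct, set $\mu=j(\crit{j})$, pull the existence of a type-$\langle\mu,\lambda\rangle$ structure in $\Ce$ back through $j$ to get $A\in\Ce\cap X$ of type $\langle\crit{j},\lambda\rangle$, and let $j\restriction A:A\to j(A)$ be the witnessing embedding. The only differences are cosmetic (you take $\zeta\in C^{(n+1)}$ rather than the least element of $C^{(n)}$ above $\lambda$, and you spell out the correctness and absoluteness-of-satisfaction steps that the paper leaves implicit).
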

 
 \begin{proof}
 Suppose $\lambda$ is exacting and let  $j:X\to V_\zeta$ be an elementary embedding witnessing  it, with $\zeta$ being the least element of $C^{(n)}$ greater than $\lambda$. Let $\kappa$ be the critical point of $j$, and let  $\mu=j(\kappa)$.

Let $\Ce$ be a $\Sigma_n$-definable, with $\lambda$ as a parameter, class of $\mathcal{L}$-structures and suppose there exists $B\in \Ce$ of type $\langle  \mu ,\lambda\rangle$. Then this is true in $V_\zeta$, and by elementarity there must exist $A\in \Ce$ of type $\langle \kappa ,\lambda\rangle$ in $X$, and therefore also in $V$.  
Now note that  the restriction embedding $j\restriction A:A\to j(A)$ is  elementary, with $j(A)$ being in $\Ce$ and of type $\langle \mu , \lambda\rangle$, so it witnesses $\XSR_\Ce(\lambda)$. 
\end{proof}

Lemmas \ref{sqrtimpliesultraexact} and \ref{lemmaUESR} now yield  the following characterization of exacting cardinals in terms of Exacting  Structural Reflection.

\begin{theorem}\label{TheoremExactingSR}
A cardinal $\lambda$ is exacting if and only if the principle $\XSR_\Ce(\lambda)$ holds for all definable, with parameter $\lambda$, classes $\Ce$ (equivalently, for the particular $\Delta_3$ class used in the proof of Lemma \ref{srimpliesexact}) of $\mathcal{L}$-structures. \end{theorem}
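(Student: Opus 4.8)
The plan is to obtain Theorem \ref{TheoremExactingSR} by combining Lemmas \ref{srimpliesexact} and \ref{lemmaESR}, in exact parallel with the way Theorem \ref{TheoremUltraexactingSR} is derived from Lemmas \ref{sqrtimpliesultraexact} and \ref{lemmaUESR}. For the forward implication, suppose $\lambda$ is exacting. Any class $\Ce$ of $\mathcal{L}$-structures definable with parameter $\lambda$ is $\Sigma_n$-definable with parameter $\lambda$ for some $n$, which we may take to be at least $2$. Lemma \ref{lemmaESR} then supplies a single cardinal $\mu<\lambda$, depending only on $n$ and not on the particular class, witnessing $\XSR_\Ce(\lambda)$. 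Hence $\XSR_\Ce(\lambda)$ holds for every definable class, and in particular for the $\Delta_3$ (hence $\Sigma_3$) class $\mathcal{E}$ from the proof of Lemma \ref{srimpliesexact}.

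For the converse, assume $\XSR_\Ce(\lambda)$ holds for all classes definable with parameter $\lambda$. The only delicate point is the hypothesis $\lambda\in C^{(1)}$ required by Lemma \ref{srimpliesexact}, and I would extract it first. Applying $\XSR$ to the simple class $\Ce$ of structures $\langle V_\xi,\in,\alpha\rangle$ with $\alpha<\xi$, the remark following the definition of $\XSR$ (via Proposition \ref{naturalUXSR}) produces a nontrivial elementary embedding $j:V_\lambda\to V_\lambda$, that is, an $\mathrm{I3}$-embedding at $\lambda$. Consequently $\lambda$ is a strong limit cardinal, so $\lambda=\beth_\lambda$ and $V_\lambda=H_\lambda\prec_{\Sigma_1}V$, whence $\lambda\in C^{(1)}$. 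With $\lambda\in C^{(1)}$ secured, Lemma \ref{srimpliesexact} applied to $\XSR_{\mathcal{E}}(\lambda)$ yields that $\lambda$ is exacting, completing the equivalence.

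To justify the parenthetical claim that $\XSR$ for the single $\Delta_3$ class $\mathcal{E}$ already suffices, I would observe that the construction carried out in the proof of Lemma \ref{srimpliesexact} manufactures, from $\XSR_{\mathcal{E}}(\lambda)$ alone, an embedding $i:X\to V_\zeta$ whose restriction $i\restriction V_\lambda$ coincides with the restriction to $V_\lambda$ of the elementary embedding $j:A\to B$ supplied by $\XSR$ between the reducts $\langle V_\lambda,\in\rangle$ of the two structures. Since this restriction is itself an $\mathrm{I3}$-embedding of $V_\lambda$ into itself, $\XSR_{\mathcal{E}}(\lambda)$ by itself forces $\lambda$ to be a strong limit and therefore in $C^{(1)}$; this removes the apparent circularity, so that Lemma \ref{srimpliesexact} can be invoked with no standing assumption beyond $\XSR_{\mathcal{E}}(\lambda)$. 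Note that the relevant instance of $\XSR$ is non-vacuous, because for every $\mu<\lambda$ one can always build a structure $B\in\mathcal{E}$ of type $\langle\mu,\lambda\rangle$ by coding an elementary submodel $X\preceq V_\zeta$ with $V_\lambda\cup\{\lambda\}\subseteq X$.

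I expect the main obstacle to be precisely this $C^{(1)}$ bookkeeping in the converse: one must check that the $\mathrm{I3}$-embedding is genuinely available before Lemma \ref{srimpliesexact} is applied, and that being a strong limit cardinal is enough for $\Sigma_1$-correctness (which follows from the standard equivalence $\lambda=\beth_\lambda\iff V_\lambda=H_\lambda$ together with the $\Sigma_1$ absoluteness of $H_\lambda$). Everything else reduces to a direct appeal to the two lemmas, whose proofs already contain the substantive work of coding elementary submodels of $V_\zeta$ by structures in $\mathcal{E}$ and transferring elementary embeddings back and forth through the collapses $\tau$ and $\pi$.
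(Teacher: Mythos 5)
Your proposal is correct and takes essentially the same route as the paper: Theorem \ref{TheoremExactingSR} is obtained precisely by combining Lemma \ref{lemmaESR} (forward direction) with Lemma \ref{srimpliesexact} (converse); the paper's citation of Lemmas \ref{sqrtimpliesultraexact} and \ref{lemmaUESR} at that point is evidently a typo for these. Your explicit discharge of the $C^{(1)}$ hypothesis of Lemma \ref{srimpliesexact} --- observing that the embedding $j:A\to B$ supplied by $\XSR_{\mathcal{E}}(\lambda)$ restricts to an elementary embedding of $\langle V_\lambda,\in\rangle$ into itself, nontrivial because $\dot{P}^A$ and $\dot{P}^B$ have different ranks, whence $\lambda$ is a strong limit and so $\lambda\in C^{(1)}$ --- correctly fills in a step the paper leaves implicit, and your note that $\mathcal{E}$ contains structures of every type $\langle\mu,\lambda\rangle$ (so the instance of $\XSR$ is non-vacuous) is the right bookkeeping.
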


We conclude with another characterization of exacting cardinals in terms of Structural Reflection, which may be seen as a two-cardinal version of the characterization of $C^{(n)}$-strongly unfoldable cardinals given in \cite{BL2} (see also \S\ref{SectIntro}, where this characterization is re-stated), and which bears some similarity with the J\'onsson-like characterization from \cite{ABL}. Namely,

\begin{theorem}\label{TheoremExactingUnfoldable}
Let $n\geq 2$. A cardinal $\lambda$ is exacting if and only if for some $\mu$, for every class of structures $\mathcal{C}$ of the same signature, which is $\Sigma_n$-definable from parameters in $V_\mu \cup \{\lambda\}$, and every $B\in \mathcal{C}$ of type $\langle\mu,\lambda\rangle$, there is $A\in \mathcal{C}$ of type $\langle\nu,\lambda\rangle$ with \(\nu < \mu\) and an elementary embedding $j: A\to B$.   
\end{theorem}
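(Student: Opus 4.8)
The plan is to prove the two implications by adapting the two techniques already developed for exacting cardinals in this section: the $<_\OD$-least counterexample argument of Theorem \ref{lemma:exact_char} and Lemma \ref{lemmaESR} for the forward direction, and the encode/decode argument of Lemma \ref{srimpliesexact} for the converse. The one genuinely new idea, which makes the forward direction work \emph{for every} $B$ and with a single $\mu$, is to take $\mu$ to be the \emph{critical point} of one fixed exact embedding: this choice simultaneously fixes every admissible parameter (since they lie in $V_\mu=V_{\crit{j}}$) while still leaving room $\mu<j(\mu)$ for reflection.

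First I would fix, using exactingness, an exact embedding $j\colon X\to V_\zeta$ with $X\preceq V_\zeta$, $V_\lambda\cup\{\lambda\}\subseteq X$, $j(\lambda)=\lambda$, and $\zeta$ the least element of $C^{(n)}$ above $\lambda$, so that $\Sigma_n$-facts are absolute between $X$, $V_\zeta$ and $V$. Set $\mu:=\crit{j}<\lambda$; this is the required $\mu$ for all $\Ce$ and $B$ at once. Since any defining parameters of $\Ce$ lie in $V_\mu\cup\{\lambda\}=V_{\crit{j}}\cup\{\lambda\}$, they are fixed by $j$, whence $j(\Ce)=\Ce$. Suppose toward a contradiction that reflection fails for some $\Sigma_n$-class $\Ce$, and let $S\subseteq V_{\lambda+1}$ be the set, ordinal definable from $\mu,\lambda$ and the parameters of $\Ce$, of all $B'\in\Ce$ of type $\langle\mu,\lambda\rangle$ admitting no elementary embedding $A\to B'$ with $A\in\Ce$ of type $\langle\nu,\lambda\rangle$ and $\nu<\mu$. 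Let $B$ be the $<_\OD$-least element of $S$. Because $B$ is definable in $V_\zeta$ from $\mu,\lambda$ and the parameters of $\Ce$, all of which lie in $X$, we get $B\in X$; and since $V_\lambda\subseteq X$ the entire universe of $B$ lies in $X$, so $j\restriction B\colon B\to j(B)$ is a genuine elementary embedding. Elementarity of $j$ makes $j(B)$ the $<_\OD$-least element of $j(S)$, which (as $j(\mu)>\mu$ and the parameters are fixed) consists of structures of type $\langle j(\mu),\lambda\rangle$ in $\Ce$ into which no $\Ce$-structure of type $\langle\nu,\lambda\rangle$ with $\nu<j(\mu)$ embeds. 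But $B\in\Ce$ has type $\langle\mu,\lambda\rangle$ with $\mu<j(\mu)$ and embeds into $j(B)$ via $j\restriction B$, contradicting $j(B)\in j(S)$. Hence $S=\varnothing$, which is precisely the required reflection. The only bookkeeping is that membership in $\Ce$, the relation ``there is an elementary embedding'', and the $\Sigma_2$-wellorder $<_\OD$ are absolute between $V_\zeta$ and $V$, which holds because $\zeta\in C^{(n)}$ and $n\geq2$.

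For the converse I would argue as in Lemma \ref{srimpliesexact}. Assuming $\lambda\in C^{(1)}$ (as holds for exacting cardinals, or may be arranged), let $\mu$ be the cardinal given by the reflection hypothesis, let $\zeta$ be least in $C^{(2)}$ above $\lambda$, fix $Y\preceq V_\zeta$ of size $\lambda$ with $V_\lambda\cup\{\lambda\}\subseteq Y$ together with the coding bijection $\pi\colon Y\to V_\lambda$, and build the $\mathcal{L}^\ast$-structure $B$ extending $\langle V_\lambda,\in\rangle$ with $\dot E^B$ coding $Y$ via $\pi$ and $\dot P^B=\mu$. Then $B$ is a member of the $\Delta_3$ class $\mathcal{E}$ of type $\langle\mu,\lambda\rangle$, so applying the reflection hypothesis to $\Ce:=\mathcal{E}$ yields $A\in\mathcal{E}$ of type $\langle\nu,\lambda\rangle$ with $\nu<\mu$ and an elementary embedding $j\colon A\to B$. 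Decoding $A\in\mathcal{E}$ produces $X\preceq V_\zeta$ with $V_\lambda\cup\{\lambda\}\subseteq X$ and a bijection $\tau\colon X\to V_\lambda$, and exactly as in Lemma \ref{srimpliesexact} the composite $i:=\pi^{-1}\circ j\circ\tau\colon X\to V_\zeta$ is an exact embedding at $\lambda$, with $i\restriction\lambda\neq\Id$ forced by $\nu<\mu$, witnessing that $\lambda$ is exacting.

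The main obstacle I anticipate is the \emph{complexity calibration} of the converse rather than any conceptual difficulty. The natural witnessing class $\mathcal{E}$ is only $\Delta_3$, because it refers to ``the least $\zeta\in C^{(2)}$ above $\lambda$'' and to $X\preceq V_\zeta$, so the decoding applies directly to the hypothesis only for $n\geq3$; to cover the full range $n\geq2$ one must replace $\mathcal{E}$ by a $\Sigma_2$-definable class carrying the same data, or route the converse through the local characterization of Theorem \ref{lemma:exact_char} and Remark \ref{remark2} in terms of embeddings $(V_\lambda,x)\to(V_\lambda,y)$, whose defining predicate is $\Delta_2$ in $\lambda$ and $\zeta$. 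A secondary point to verify carefully is the two-cardinal type bookkeeping in the forward direction: that $j$ sends a structure of type $\langle\mu,\lambda\rangle$ to one of type $\langle j(\mu),\lambda\rangle$ using $j(\lambda)=\lambda$, and that taking $\mu=\crit{j}$ is exactly what keeps the parameters of $\Ce$ fixed while still guaranteeing the inequality $\mu<j(\mu)$ on which the contradiction rests.
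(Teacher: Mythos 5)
Your argument for the direction ``exacting implies the reflection property'' is essentially the paper's: take $\mu = \crit{j}$ for a fixed exact embedding $j \colon X \to V_\zeta$, note that $j(\Ce) = \Ce$ because the parameters lie in $V_\mu \cup \{\lambda\}$, and derive a contradiction by restricting $j$ to a counterexample $B$ of type $\langle\mu,\lambda\rangle$. However, one step would fail as written: you select $B$ as ``the $<_\OD$-least element of $S$''. The well-ordering $<_\OD$ orders only the ordinal definable sets, and the members of $S$ --- structures in $\Ce$ of type $\langle\mu,\lambda\rangle$ --- need not be ordinal definable; what is OD from parameters is the set $S$ itself, not its elements. (Indeed, since exacting cardinals imply $V_{\lambda+1}\nsubseteq\HOD$, no definable selection from arbitrary nonempty subsets of $V_{\lambda+1}$ can be expected.) The repair is immediate and is exactly the paper's move, for which you already have every ingredient: $S$ is nonempty and definable over $V_\zeta$ from parameters in $X$, so $X \preceq V_\zeta$ yields some $B \in S \cap X$; no canonical choice is needed.

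The genuine gap is in the converse direction. The theorem asserts the equivalence for all $n \geq 2$, and your argument via Lemma \ref{srimpliesexact} and the class $\mathcal{E}$ covers, as you acknowledge, only $n \geq 3$, because $\mathcal{E}$ is $\Delta_3$; gesturing at ``replace $\mathcal{E}$ by a $\Sigma_2$-definable class or route through Theorem \ref{lemma:exact_char}'' is the right instinct but is not carried out, and the case $n=2$ is precisely the optimal content of the theorem. The paper's $n=2$ argument is short but genuinely different from what you wrote: suppose $A \subseteq V_{\lambda+1}$ is the $<_\OD$-least counterexample to Definition \ref{def:exacting-zf} (here an OD-least choice \emph{is} legitimate, since that definition quantifies over OD sets), let $\mu$ witness the hypothesis, and let $\Ce$ be the class of structures $(V_\lambda,\gamma,y)$ with $\gamma<\lambda$ and $y \in A$; this class is $\Sigma_2$-definable from $\lambda$, since $\OD$ and $<_\OD$ are $\Sigma_2$ and the remaining clauses are local to $V_{\lambda+2}$. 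Applying the hypothesis to any $(V_\lambda,\mu,y)\in\Ce$ produces $(V_\lambda,\nu,x)\in\Ce$ with $\nu<\mu$ and an elementary $j\colon (V_\lambda,\nu,x)\to (V_\lambda,\mu,y)$; such a $j$ cannot be the identity (else $\nu=\mu$), so $x,y\in A$ together with $j\colon (V_\lambda,x)\to(V_\lambda,y)$ contradict the choice of $A$. Note that this argument also sidesteps your unresolved need to ``arrange'' $\lambda\in C^{(1)}$, which you cannot simply assume in this direction, since exactingness of $\lambda$ is what is being proved.
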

\proof
For \(n \geq 3\), the fact that the hypothesis of the theorem implies that $\lambda$ is exacting follows from Theorem \ref{TheoremExactingSR}. 
To prove the optimal result when \(n = 2\), suppose towards a contradiction that \(A\subseteq V_{\lambda+1}\) is the \(\OD\)-least counterexample to Definition \ref{def:exacting-zf}.
Fix some \(\mu < \lambda\) witnessing the hypothesis of the theorem, and let \(\mathcal C\) be class of structures \((V_{\lambda},\gamma,y)\) for \(\gamma < \lambda\) and \(y\in A\). Fix any structure 
\((V_{\lambda},\mu,y)\in \mathcal C\). By hypothesis, there is some 
\((V_{\lambda},\nu,x)\in \mathcal C\) and an elementary embedding
\(j : (V_{\lambda},\nu,x)\to (V_{\lambda},\mu,y).\) In particular, \(j\) is nontrivial and \(x,y\in A\); this contradicts that \(A\) is a counterexample to Definition \ref{def:exacting-zf}.

Conversely, fix $n$ and let let $\mu$ be the critical point of an exact embedding $j: X \to V_\zeta$, where $V_\zeta$ is sufficiently elementary in $V$. We claim that this $\mu$ witnesses the Structural Reflection property in the statement of the theorem. Suppose towards a contradiction that $\mathcal{C}$ is a counterexample. So by the elementarity of $X$ in $V$, we can find $B$ in $X$ so that 
\begin{itemize}
    \item $B \in \mathcal{C}$, both in V and in X,
    \item $B$ is of type $(\mu, \lambda)$,
    \item there is no $i: A \to B$, where $A$ is in $\mathcal{C}$ and $A$ is of type $(\nu, \lambda)$ for some $\nu<\mu$.
\end{itemize}

By elementarity and the fact that $j$ fixes all parameters from the definition of $\mathcal{C}$, we have $j(\mathcal{C}) = \mathcal{C}$, and $j(B)$ witnesses that $\mathcal{C}$ is a counterexample to the Structural Reflection property, so we have the following in $V_\zeta$ (and thus in $V$ by elementarity):
\begin{itemize}
    \item $j(B) \in j(\mathcal{C}) = \mathcal{C}$, 
    \item there is no $i: A \to j(B)$, where $A$ is in $\mathcal{C}$ and $A$ is of type $(\nu, \lambda)$ for some $\nu<j(\mu)$.
\end{itemize}
However, taking $A = B$, $i = j \upharpoonright B$, and $\nu = \mu$, we obtain a contradiction.
\endproof

Ultraexacting cardinals admit a similar characterization in which the embedding $j:A\to B$ is required to be a square root of a fixed embedding $f:V_\lambda \to V_\lambda$, as can be seen by arguing as in \S\ref{SectUXSR}.

%
%
%
%
%

\section{Open questions}

\begin{question}\label{QuestionUEHOD}
Does the theory $\mathrm{ZFC}$ + ``there is an extendible cardinal above an ultraexacting cardinal'' disprove the HOD Hypothesis?
\end{question}
By \cite{ABL}, the theory $\mathrm{ZFC}$ + ``there is an extendible cardinal \textit{below} an ultraexacting cardinal'' disproves the HOD Hypothesis, while Corollary \ref{coro5.6} shows that an extendible above an exacting cardinal does not. 

In view of Theorem \ref{TheoremEquiconsistencyGeneralized}, a negative answer to  Question \ref{QuestionUEHOD} might require the construction of canonical inner models for extendible cardinals. Therefore, Question \ref{QuestionUEHOD} could serve as a test question for inner model theory,
similar in spirit to the question of whether \(\text{OD}_\mathbb R\) determinacy is consistent with an extendible cardinal. On the other hand, it is conceivable that Question \ref{QuestionUEHOD} could be resolved by forcing the HOD Hypothesis over a model with an extendible above an ultraexacting cardinal. This raises a basic question, having nothing to do with large cardinals: given an ordinal \(\alpha\), is there a forcing extension \(V[G]\) that preserves \(V_\alpha\) and does not change OD subsets of \(V_\alpha\) but \(V[G]\) satisfies that every set is ordinal definable from parameters in \({V_{\alpha+\omega}}\)?
\subsection*{Acknowledgements}
The authors would like to thank Hugh Woodin for discussions.
The work of the first-listed author was partially supported by FWF grants ESP-3N and STA-139. 
The work of the second-listed author was supported by the Generalitat de Catalunya (Catalonian Government) under
grant 2021 SGR 00348, and by the Spanish Government under grant PID2023-147428NB-I00.
Goldberg's research was supported by the National Science Foundation under Grant No. DMS-2401789.
The  fourth-listed author gratefully acknowledges support from the Deutsche Forschungsgemeinschaft (Project number 522490605).
%
%
%


\addtocontents{toc}{\protect\setcounter{tocdepth}{1}}
\bibliographystyle{alpha} 
\bibliography{masterbiblio}
\end{document}